\numberwithin{equation}{section}
\newtheorem{theorem}{Theorem}[section]
\newtheorem{proposition}[theorem]{Proposition}
\newtheorem{lemma}[theorem]{Lemma}
\newtheorem{corollary}[theorem]{Corollary}
\theoremstyle{definition}
\newtheorem{definition}[theorem]{Definition}
\newtheorem{example}[theorem]{Example}
\newtheorem{remark}[theorem]{Remark}
\newcommand{\E}{\mathbb{E}}
\newcommand{\F}{\mathcal{F}}
\newcommand{\PP}{\mathbb{P}}
\newcommand{\nn}{{n \in \N}}
\newcommand{\n}{\nonumber}
\newcommand{\R}{\mathbb{R}}
\newcommand{\N}{\mathbb{N}}
\begin{document}

\title[Variable Martingale Hardy Spaces]
{Variable Martingale Hardy Spaces and Their Applications in Fourier Analysis}

\authors

\author{Yong Jiao}
\address{School of Mathematics and statistics, Central South University, Changsha 410075, China}
\email{jiaoyong@csu.edu.cn}

\author{Ferenc Weisz}
\address{Department of Numerical Analysis, E\"otv\"os L. University,
H-1117 Budapest, P\'azm\'any P. s\'et\'any 1/C., Hungary}
\email{weisz@inf.elte.hu}

\author{Lian Wu}
\address{School of Mathematics and statistics, Central South University, Changsha 410075, China}
\email{wulian@csu.edu.cn}

\author{Dejian Zhou}
\address{School of Mathematics and statistics, Central South University, Changsha 410075, China}
\email{zhoudejian@csu.edu.cn}

\keywords{variable exponent, martingale Hardy space, atomic decomposition,  martingale inequality, Walsh-Fourier series, Fej{\'e}r means, maximal Fej{\'e}r operator}
\subjclass[2010]{Primary 60G42, 42C10; Secondary 42B30, 60G46}

\begin{abstract}
Let $p(\cdot)$ be a measurable function defined on a probability space satisfying $0<p_-:={\rm ess}\inf_{x\in \Omega}p(x)\leq {\rm ess}\sup_{x\in\Omega}p(x)=:p_+<\infty$. We investigate five types of martingale Hardy spaces $H_{p(\cdot)}$ and $H_{p(\cdot),q}$ and prove their atomic decompositions when each $\sigma$-algebra $\mathcal F_n$ is generated by countably many atoms. Martingale inequalities and the relation of the different martingale Hardy spaces are proved as application of the atomic decomposition.
In order to get these results, we introduce the following condition to replace (generalize) the so-called log-H\"{o}lder continuity condition in harmonic analysis:
$$
\mathbb P(A)^{p_-(A)-p_+(A)}\leq C_{p(\cdot)} \quad \mbox{ for all atom $A$}.
$$
Some applications in Fourier analysis are given by use of the previous results. We generalize the classical results and show that the partial sums of the Walsh-Fourier series converge to the function in norm if $f \in L_{p(\cdot)}$ or $f \in L_{p(\cdot),q}$ and $p_->1$. The boundedness of the maximal Fej{\'e}r operator on $H_{p(\cdot)}$ and $H_{p(\cdot),q}$ is proved whenever $p_->1/2$ and the condition $\frac{1}{p_-}-\frac{1}{p_+} <1$ hold. It is surprising that this last condition does not appear for trigonometric Fourier series. One of the key points of the proof is that we introduce two new dyadic maximal operators and prove their boundedness on $L_{p(\cdot)}$ with $p_->1$. 
The method we use to prove these results is new even in the classical case. As a consequence, we obtain theorems about almost everywhere and norm convergence of the Fejér means.
\end{abstract}

\maketitle

 \section{Introduction}

Let $p(\cdot): \mathbb{R}^n\rightarrow (0,\infty)$ be a measurable
function (a variable exponent). The variable Lebesgue space $L_{p(\cdot)}(\mathbb{R}^n)$ consists of
 all measurable functions $f$ such that $\int_{\mathbb R^n}|f(x)|^{p(x)}dx<\infty$. It generalizes the classical Lebesgue space: when $p(\cdot)\equiv p$ is a constant, then
 $L_{p(\cdot)}(\mathbb{R}^n)\equiv L_{p}(\mathbb{R}^n)$. Interest in the variable Lebesgue spaces has increased since the 1990s because of their use in a variety of applications. As we all know, the variable function spaces can be well applied in fluid dynamics \cite{am02,am05,rm00}, image processing
\cite{clr06,hhlt13,Ti14}, partial differential equations and variational calculus
\cite{bds15,ds14,t17} and harmonic analysis \cite{ahhl15,Cruz2013, Diening2011, Izuki2014,ylk17}.

In order to extend the techniques and results of constant exponent case to the setting of variable Lebesgue spaces, a central problem is to determine conditions on an exponent $p(\cdot)$ such that the Hardy-Littlewood maximal operator is bounded on $L_{p(\cdot)}(\mathbb R^n)$.
The first major result is due to Diening \cite{Diening2004}, who showed that it is sufficient to assume that $p(\cdot)$ satisfies the so-called  local log-H\"{o}lder condition:
 \begin{equation}\label{local log}
 |p(x)-p(y)|\leq \frac{C}{-\log{|x-y|}},\quad \forall x,y\in \mathbb R^n,\quad  |x-y|<1/2,
 \end{equation}
 and is constant outside of a large ball. This result was generalized independently by Cruz-Uribe et al. \cite{Cruz2003} and Nekvinda \cite{Nekvinda2004},
 who in addition assumed that $p(\cdot)$ is log-H\"{o}lder continuous at infinity:
 there is $p_\infty>1$ such that
 \begin{equation}\label{log at infinity}
  |p(x)-p_\infty|\leq \frac{C}{\log({e+|x|})},\quad \forall x\in \mathbb R^n.
\end{equation}
The conditions \eqref{local log} and \eqref{log at infinity} are called log-H\"{o}lder continuity condition. We point out that the log-H\"{o}lder continuity condition is not necessary for the boundedness of the Hardy-Littlewood maximal operator on $L^{p(\cdot)}(\mathbb R^n)$ (see \cite{Nekvinda2008}).
Heavily basing on the log-H\"{o}lder continuity condition, harmonic analysis with variable exponent has gotten a rapid development. Nakai and Sawano \cite{Nakai2012} first introduced
the Hardy space $H^{p(\cdot)}(\mathbb R^n)$ with a variable exponent $p(\cdot)$ and established the atomic decompositions. As applications, they proved the duality and the boundedness of
singular integral operators. Independently, Cruz-Uribe and Wang \cite{Cruz2014} also investigated the variable Hardy space $H^{p(\cdot)}(\mathbb R^n)$ with $p(\cdot)$ satisfying
some conditions slightly weaker than those used in \cite{Nakai2012}.
Sawano \cite{Sawano2013} improved the results in \cite{Nakai2012}. Ho \cite{Ho2017} studied weighted Hardy spaces with variable exponents. Zhuo et al. \cite{ZSY2016} investigated Hardy spaces with variable exponents on RD-spaces and applications. Very recently, Yan et al. \cite{Yang2016} introduced the variable weak Hardy space $H_{p(\cdot),\infty}(\mathbb R^n)$ and characterized these spaces via the radial maximal functions, atoms and Littlewood-Paley functions. The Hardy-Lorentz spaces $H_{p(\cdot),q}(\mathbb R^n)$ were investigated by Jiao et al. in a very recent paper \cite{Jiao2019}. Similar results for the anisotropic Hardy spaces $H_{p(\cdot)}(\mathbb R^n)$ and $H_{p(\cdot),q}(\mathbb R^n)$ can be found in Liu et al. \cite{Liu2018,Liu2019}. 
Martingale Musielak–Orlicz Hardy spaces were investigated in Xie et al. \cite{Xie2019,Xie2019a,XWYJ2018}. 
We also refer to \cite{Almeida2010, Almeida2016,Yang2015} for Besov spaces with variable smoothness and integrability and their applications.

In the early 70's of the last century, with the development of the theory of Hardy spaces on $\mathbb R^n$ in harmonic analysis, martingale Hardy spaces theory was born. Until now, most of the important facts in harmonic analysis have been found to have their satisfactory counterparts in the martingale setting. For example, in martingale setting, the duality between $H_1$ and $BMO$, and the Doob maximal inequality can be found in Garsia \cite{Garsia1973}; the Burkholder martingale transforms \cite{Burkholder1966} can be considered as an analogue to the classical singular integral operators. On the other hand, the theory of martingale Hardy spaces has influenced the development of harmonic analysis. For example, the atomic decomposition of $H_p$, which is one of the most powerful tool in harmonic analysis nowadays, was first shown in martingale setting by Herz \cite{Herz974}. Later, the theory of atomic decomposition of martingale spaces was developed in Weisz \cite{Weisz1994book}. The good-$\lambda$ inequality, which is a useful tool to compare the integrability of two related measurable functions,
was discovered by Burkholder and Gundy \cite{Burkholder1970, Burkholder1973} in martingale setting. A much more simplified proof of $T(b)$ theorem was given by Coifman et al. \cite{Coifman1989} via martingale approach.
The theory of martingale Hardy spaces was considered in the books \cite{Garsia1973,Long1993book,Weisz1994book,Weisz2002book}. The applications of martingale theory to Fourier analysis were developed by many people, see for example monographs \cite{sws,Weisz1994book,Weisz2002book,Weisz2017book} and the references therein.

Although the theory of variable Hardy spaces on $\mathbb R^n$ has rapidly been developed in recent years,
the variable exponent framework has not yet been applied to the martingale setting. The first main difficulty we need to overcome is to find a suitable replacement for the log-H\"{o}lder continuity conditions \eqref{local log} and \eqref{log at infinity} when the variable exponent $p(\cdot)$ is defined on a probability space. Unlike the Euclidean space $\mathbb R^n$, there is no natural metric in a probability space. In order to better explain it, we first introduce some basic notation.
Let $(\Omega,\F,\mathbb P; (\mathcal F_n)_{n\geq0})$ be a complete probability space such that $\mathcal F=\sigma(\cup_n\mathcal F_n)$. A measurable function $p(\cdot)$ :
$\Omega\rightarrow(0,\infty)$ is called a variable exponent.
For a measurable set $A\subset\Omega$, we denote
$$ p_-(A):=\text{ess} \inf \limits_{x\in A}p(x),\quad p_+(A):=\text{ess} \sup\limits_{x\in A}p(x), $$
and for convenience
$$ p_-:=p_-(\Omega),\quad p_+:=p_+(\Omega).$$
Denote by $\mathcal{P}(\Omega)$ the collection of all variable exponents  $p(\cdot)$ such that $0<p_-\leq p_+<\infty.$
If $p(\cdot)\in \mathcal{P}(\Omega)$, the variable Lebesgue space $L_{p(\cdot)}(\Omega)$ consists of all measurable functions $f$ for which
$$
\|f\|_{p(\cdot)}=\inf\left\{\lambda>0: \int_\Omega \left(\frac{|f(x)|}{\lambda}\right)^{p(x)}d\mathbb P\leq 1 \right\} <\infty.
$$
For a martingale $f=(f_n)_{n\geq0}$ with respect to $(\Omega,\F,\mathbb P; (\mathcal F_n)_{n\geq0})$, the Doob maximal operator is defined by
$$M_mf= \sup_{0\leq n\leq m} |f_n|,\qquad Mf=\sup_{n\geq0} |f_n|.$$
The classical Doob maximal inequality implies that $M$ is bounded on $L_p(\Omega)$ for $1<p\leq \infty.$
However, according to the facts in \cite[Example 3.21]{Cruz2013}, the Doob maximal operator is not bounded on $L_{p(\cdot)}(\Omega)$ for general variable exponent $p(\cdot)$ with $p_->1$. It is natural to find sufficient condition imposed on $p(\cdot)$
such that the Doob maximal operator M is bounded on $L_{p(\cdot)}(\Omega)$.
Aoyama \cite{Aoyama2009} proved the Doob maximal inequality under the condition that $p(\cdot)$ is $\mathcal F_n$-measurable for all $n\geq0$. Obviously, this kind of condition is quite strong. Moreover, Nakai and Sadasue \cite{Nakai2013}
showed that $\mathcal F_0$-measurability of $p(\cdot)$ is not necessary for this maximal inequality. Note that a weak type inequality was proved in
\cite[Theorem 3.2]{Jiao2016}. Namely, given $p(\cdot)\in \mathcal{P}(\Omega)$ with $1\leq p_-\leq p_+<\infty,$
\begin{equation}\label{weak jiao}
\mathbb P(Mf>\lambda)\leq C_{p(\cdot)}\int_{\Omega}\Big(\frac{|f_\infty(x)|}{\lambda}\Big)^{p(x)}d\mathbb P,\quad \forall \lambda>0.
\end{equation}
Unfortunately, we can not obtain the Doob maximal inequality by means of the weak type inequality \eqref{weak jiao} as in the classical case. The essential reason is that the space $L_{p(\cdot)}(\Omega)$ is no longer a rearrangement invariant space and the important formula
$$\int_\Omega|f(x)|^pd\mathbb P=p\int_0^\infty t^{p-1}\mathbb P(x\in \Omega:|f(t)|>t)dt$$
has no variable exponent analogue. 

In this paper, we introduce a condition without metric characterization of $p(\cdot)$
to replace the log-H\"{o}lder continuity condition mentioned above. We suppose that there exists a constant $K_{p(\cdot)}\geq1$ depending only on $p(\cdot)$ such that
\begin{equation}
\label{log1}
 \mathbb{P}(A)^{p_-(A)-p_+(A)} \leq K_{p(\cdot)}, \quad\forall A\in \bigcup_n A(\mathcal{F}_n) ,
\end{equation}
where $\mathcal{F}_n$ is generated by countably many atoms and
$A(\mathcal{F}_n)$ denotes the family of all atoms in $\mathcal F_n$ for each $n\in \mathbb N$. It is known that the log-Hölder continuity \eqref{local log} and \eqref{log at infinity} imply our condition \eqref{log1} on $[0,1)$ or even on $\R^{n}$ (see Remark \ref{rem:variable}). We should mention that Nakai et al. \cite{Nakai2014, Nakai2013Morrey, Nakai2012S} and  Ho \cite{Ho2016} studied the martingale Morrey-Hardy and Campanato-Hardy spaces associated with $\mathcal{F}_n$ generated by countably many atoms.

We give a systematic study of martingale Hardy spaces $H_{p(\cdot)}$ and $H_{p(\cdot),q}$  associated with a variable exponent $p(\cdot)$.
A powerful tool used in the paper is the atomic decomposition of variable martingale Hardy and Hardy-Lorentz spaces. Our first main result, without any restriction on $p(\cdot)$,  is the $(1,p(\cdot),\infty)$-atomic
characterization of the Hardy spaces $H_{p(\cdot)}^s$ and $H_{p(\cdot),q}^s$ associated with the conditional square operator $s$, that is, $H_{p(\cdot)}^s=H_{p(\cdot)}^{\rm at,1,\infty}$ and $H_{p(\cdot),q}^s=H_{p(\cdot),q}^{\rm at,1,\infty}$ with equivalent
quasi-norms.
As one of the applications of atomic decompositions, we get martingale inequalities between different Hardy spaces $H_{p(\cdot)}$ and $H_{p(\cdot),q}$ in Section \ref{sec4}.

Finally, we consider the applications of the theory of variable martingale Hardy spaces in Fourier analysis. In the constant exponent case, the theory of martingales has an extensive  application in dyadic harmonic analysis; see for example the monographs \cite{sws,Weisz1994book,Weisz2002book,Weisz2017book}, the papers Gát and Goginava \cite{gat1998a,Gat1999,Gat2014d,Goginava2000,Goginava2003,Goginava2007b} and Schipp and Simon \cite{sch2,schs,si7,Simon2004,wgyenge}.
Particularly, in \cite[Theorem 3.10]{Weisz2002book}, by using dyadic martingale theory, Weisz proved that the maximal Fej{\'e}r operator $\sigma_*$ is bounded from $H_{p,q}$ to $L_{p,q}$ with $p> 1/2$. Then, it can be deduced from Weisz's result that the Fej{\'e}r means of $f$ converge almost everywhere to $f$. Inspired by this result, we generalize these theorems and prove that $\sigma_*$ is bounded from $H_{p(\cdot)}$ to $L_{p(\cdot)}$ and from $H_{p(\cdot),q}$ to $L_{p(\cdot),q}$ under the conditions $1/2<p_-< \infty$, $0<q\leq \infty$ and $\frac{1}{p_-}-\frac{1}{p_+} <1$. This last condition is very surprising because the corresponding results for Fourier transforms hold without this condition (see Liu et al. \cite{Liu2018,Liu2019} and Weisz \cite{wriesz-variable}). This gives a serious difference between the trigonometric Fourier analysis and Walsh-Fourier analysis. Unlike the classical case, our proof does not depend on interpolation method. One of the key points of the proof is that we introduce two new dyadic maximal operators and prove their boundedness on $L_{p(\cdot)}$ with $p(\cdot)$ satisfying \eqref{log1} and $p_->1$. This method is new even in the classical case \cite{wces3,Weisz2002book}. Finally, we show that the boundedness of $\sigma_*$ implies almost everywhere and norm convergence of the Fejér means as well.

The structure of the paper is as follows. In Section \ref{sec2}, we present preliminaries, definitions and lemmas used later in the paper.
We also introduce the definition of variable Lebesgue space $L_{p(\cdot)}(\Omega)$ and Lorentz space $L_{p(\cdot),q}(\Omega)$. Some basic properties of these spaces are given, including duality of $L_{p(\cdot)}(\Omega)$ and dominated
convergence theorem in variable Lorentz space $L_{p(\cdot),q}(\Omega)$. We also introduce five types of variable Hardy spaces
$H_{p(\cdot)}$ and $H_{p(\cdot),q}$ in this section.
Moreover, we show some basic inequalities for the Doob maximal operator,
including the boundedness of the operator on $L_{p(\cdot)}(\Omega)$ and $L_{p(\cdot),q}(\Omega)$ (see Theorem \ref{thm:maximal inequality} below) and the variable version
of the dual Doob inequality. We also
prove that the Doob maximal operator is bounded from $L_{p(\cdot)}(\Omega)$ to $L_{p(\cdot),\infty}(\Omega)$ with $p_-\geq 1$.  

The objective of Section \ref{sec3} is the atomic decomposition  for variable Hardy spaces $H_{p(\cdot)}$ and $H_{p(\cdot),q}$. We prove the desired atomic decompositions for all kinds of Hardy spaces.

Section  \ref{sec4}  is devoted to the applications of atomic decompositions established in Section \ref{sec3}.
In Section \ref{sec4}, we obtain some continuous embedding relationships among different variable martingale Hardy spaces and martingale Hardy-Lorentz spaces.
Moreover, if $(\mathcal F_n)_{n\geq0}$ is regular, then different kinds of $H_{p(\cdot)}$ (resp. $H_{p(\cdot),q}$) are all equivalent.

In the last section, we deal with some applications in Walsh-Fourier analysis. We prove the above mentioned results about the Walsh-Fourier series.

Throughout this paper, $\mathbb{Z}$ and $\mathbb{N}$
denote the integer set and nonnegative integer set, respectively. We denote by $C$ a positive constant,
which can vary from line to line, and denote by $C_{p(\cdot)}$ a
constant depending only on $p(\cdot).$ The symbol $A\lesssim B$
stands for the inequality $A \leq C B$ or $A \leq C_{p(\cdot)} B$.  If we
write $A\thickapprox B$, then it stands for $A\lesssim B\lesssim A$.
We will use $\chi_E$ or $1_E$ to denote the indicator function of the measurable set $E$.

We would like to thank the referee for reading the paper carefully and for his/her useful comments and suggestions.

\section{Preliminaries}\label{sec2}

\subsection{Variable Lebesgue spaces $L_{p(\cdot)}$}
Let $(\Omega,\F,\PP)$ be a complete probability space. A measurable function $p(\cdot)$ :
$\Omega\rightarrow(0,\infty)$ is called a variable exponent.
For a measurable set $A\subset\Omega$, we denote
$$ p_-(A):=\text{ess} \inf \limits_{x\in A}p(x),\quad p_+(A):=\text{ess} \sup\limits_{x\in A}p(x)$$
and for convenience
$$ p_-:=p_-(\Omega),\quad p_+:=p_+(\Omega).$$
Denote by $\mathcal{P}(\Omega)$ the collection of all variable exponents  $p(\cdot)$ such that $0<p_-\leq p_+<\infty.$
 The variable Lebesgue space $L_{p(\cdot)}=L_{p(\cdot)}(\Omega)$ is the
collection of all measurable functions $f$ defined on $(\Omega,\mathcal {F},\mathbb{P})$ such that for some $\lambda>0$,
$$\rho({f}\left/{\lambda}\right)=\int_{\Omega}\Bigg(\frac{|f(x)|}{\lambda}\Bigg)^{p(x)}d\mathbb{P}<\infty.$$
This becomes a quasi-Banach function space when it is equipped with the
quasi-norm
$$\|f\|_{p(\cdot)}:=\inf\{\lambda>0:\rho({f}\left/{\lambda}\right)\leq1\}.$$
For any $f\in L_{p(\cdot)}$, we have $\rho(f)\leq 1$   if and only if $\|f\|_{p(\cdot)}\leq 1$; see \cite[Theorem 1.3]{Fan2001}.
In the sequel, we always use the symbol
$$\underline{p}=\min\{p_-,1\}.$$
Throughout the paper, the variable exponent $p'(\cdot)$ is defined pointwise by
$$\frac{1}{p(x)}+\frac{1}{p'(x)}=1, \quad x\in \Omega. $$
We present some basic properties here (see \cite{Nakai2012}):
\begin{enumerate}
\item  $\|f\|_{p(\cdot)} \geq 0$;  $\|f\|_{p(\cdot)}=0
\Leftrightarrow f\equiv 0$.

\item  $\|cf\|_{p(\cdot)}=|c| \cdot \|f\|_{p(\cdot)}$ for
$c\in \mathbb{C}$.

\item for $0<b\leq \underline{p}$, we have
\begin{equation} \label{equation of b trangile inequality}
\|f+g\|_{p(\cdot)}^{b} \leq
\|f\|_{p(\cdot)}^{b} +\|g\|_{p(\cdot)}^{b}.\end{equation}
\end{enumerate}

\begin{lemma}[{\cite[Proposition 2.21]{Cruz2013}}] \label{lemma for equal 1} Let $p(\cdot) \in
\mathcal{P}(\Omega)$. If $f\in L_{p(\cdot)}(\Omega)$ and
$\|f\|_{p(\cdot)}\neq 0$, then
$$\int_\Omega \left| \frac{f(x)}{\|f\|_{p(\cdot)}} \right|^{p(x)} d\mathbb{P}=1.$$
\end{lemma}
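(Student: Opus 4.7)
The plan is to reduce the statement to a scaling argument exploiting two tools that are already in place: the basic equivalence $\rho(h) \leq 1 \iff \|h\|_{p(\cdot)} \leq 1$ stated just above the lemma, and the homogeneity property (2) of $\|\cdot\|_{p(\cdot)}$. First I would normalize by setting $g := f/\|f\|_{p(\cdot)}$, so that $\|g\|_{p(\cdot)} = 1$; the equivalence then gives $\rho(g) \leq 1$ for free. Since
$$\rho(g) = \int_\Omega \left|\frac{f(x)}{\|f\|_{p(\cdot)}}\right|^{p(x)} d\mathbb{P},$$
the content of the lemma reduces to proving the reverse inequality $\rho(g) \geq 1$.

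For the reverse inequality I would argue by contradiction: suppose $\rho(g) = c$ with $c < 1$. (The degenerate case $c = 0$ immediately forces $g = 0$ a.e.\ and contradicts $\|g\|_{p(\cdot)} = 1$, so I may assume $0 < c < 1$.) For any $\lambda > 1$, the crude pointwise bound $\lambda^{p(x)} \leq \lambda^{p_{+}}$ gives
$$\rho(\lambda g) = \int_\Omega \lambda^{p(x)} |g(x)|^{p(x)} d\mathbb{P} \leq \lambda^{p_{+}} c.$$
Choosing $\lambda$ so that $1 < \lambda < c^{-1/p_{+}}$ makes the right-hand side at most $1$, and the equivalence yields $\|\lambda g\|_{p(\cdot)} \leq 1$. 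By homogeneity this forces $\|g\|_{p(\cdot)} \leq 1/\lambda < 1$, contradicting $\|g\|_{p(\cdot)} = 1$.

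The step I expect to be the main obstacle is justifying the scaling bound $\rho(\lambda g) \leq \lambda^{p_+}\rho(g)$ for $\lambda > 1$; this is exactly where the finiteness of $p_{+}$ enters, allowing the pointwise scaling factor $\lambda^{p(x)}$ to be replaced by the single constant $\lambda^{p_{+}}$. Without $p_{+} < \infty$ one would have to argue via lower semicontinuity or continuity of $\lambda \mapsto \rho(g/\lambda)$ using monotone or dominated convergence, which is a considerably more delicate route. The standing assumption $p(\cdot) \in \mathcal{P}(\Omega)$ is precisely what makes the short contradiction argument go through.
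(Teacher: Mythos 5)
Your proof is correct. The paper does not prove Lemma \ref{lemma for equal 1} itself --- it simply cites Cruz-Uribe and Fiorenza --- so there is no in-text argument to compare against. Your normalization $g := f/\|f\|_{p(\cdot)}$, the use of the stated equivalence $\rho(h)\leq 1 \Leftrightarrow \|h\|_{p(\cdot)}\leq 1$ to get $\rho(g)\leq 1$, and the dilation-by-$\lambda$ contradiction via $\rho(\lambda g)\leq \lambda^{p_+}\rho(g)$ fit together correctly; the degenerate case $\rho(g)=0$ is handled, and you rightly identify that the only place $p_+<\infty$ enters is in making the pointwise estimate $\lambda^{p(x)}\leq \lambda^{p_+}$ uniform. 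This is somewhat more elementary than the route usually given for this proposition, which argues via left- and right-continuity of $\lambda\mapsto\rho(f/\lambda)$ (using dominated and monotone convergence) so as to also cover exponents that may take the value $\infty$; under the standing hypothesis $p(\cdot)\in\mathcal P(\Omega)$ both approaches work and yours is shorter.
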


\begin{lemma} [{\cite[Corollary 2.28]{Cruz2013}}] \label{lemma of Holer inequality} Let
$p(\cdot), q(\cdot),r(\cdot) \in \mathcal{P}(\Omega)$ satisfy
$$\frac{1}{p(x)} = \frac{1}{q(x)} + \frac{1}{r(x)},\quad x\in\Omega.$$
Then there exists a constant $C$ such that for all $f \in
L_{q(\cdot)}$ and $g\in L_{r(\cdot)}$, we have $fg \in L_{p(\cdot)}$ and
$$\|fg\|_{p(\cdot)} \leq C \|f\|_{q(\cdot)}\|g\|_{r(\cdot)}.$$
\end{lemma}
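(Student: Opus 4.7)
The plan is to reduce to the normalized case by positive homogeneity, invoke a pointwise Young inequality, and then convert the resulting modular estimate into a quasi-norm bound via the exponent $p_-$.

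First, I would exclude the trivial zero case and, using $\|cf\|_{p(\cdot)} = |c|\|f\|_{p(\cdot)}$ together with the analogous identities for $q(\cdot)$ and $r(\cdot)$, rescale $f$ and $g$ so that $\|f\|_{q(\cdot)} = \|g\|_{r(\cdot)} = 1$. The preceding lemma (equivalence of unit quasi-norm with unit modular) then yields $\int_\Omega |f|^{q(x)} d\mathbb{P} = \int_\Omega |g|^{r(x)} d\mathbb{P} = 1$, and it suffices to bound $\|fg\|_{p(\cdot)}$ by an absolute constant.

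Second, the hypothesis $1/p(x) = 1/q(x) + 1/r(x)$ forces $p(x) \leq q(x)$ and $p(x) \leq r(x)$ almost everywhere, so $q(x)/p(x)$ and $r(x)/p(x)$ are conjugate exponents $\geq 1$ at each point. The classical pointwise Young inequality applied with these exponents gives
$$|f(x)g(x)|^{p(x)} \leq \frac{p(x)}{q(x)}|f(x)|^{q(x)} + \frac{p(x)}{r(x)}|g(x)|^{r(x)} \leq |f(x)|^{q(x)} + |g(x)|^{r(x)}$$
for almost every $x$. Integration together with the two normalizations then produces the modular bound $\int_\Omega |fg|^{p(x)} d\mathbb{P} \leq 2$.

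Third, I would pass from this modular inequality to the quasi-norm bound. For any $\lambda \geq 1$, the pointwise estimate $(|fg|/\lambda)^{p(x)} \leq \lambda^{-p_-} |fg|^{p(x)}$ gives
$$\int_\Omega \left(\frac{|f(x)g(x)|}{\lambda}\right)^{p(x)} d\mathbb{P} \leq 2\lambda^{-p_-},$$
which is $\leq 1$ as soon as $\lambda \geq 2^{1/p_-}$. Hence $\|fg\|_{p(\cdot)} \leq 2^{1/p_-}$, and undoing the normalization proves the lemma with $C = 2^{1/p_-}$. There is no genuine obstacle in this argument; the only subtlety is the extraction of the norm from the modular, which introduces the factor $2^{1/p_-}$. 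This blow-up as $p_- \to 0$ is intrinsic to the quasi-Banach regime of $L_{p(\cdot)}$ and cannot be avoided by this approach. Notably, no log-Hölder or metric condition on $p(\cdot)$ (including \eqref{log1}) is needed: the entire argument is pointwise in $x$.
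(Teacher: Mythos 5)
Your proof is correct. The paper itself does not prove this lemma---it simply cites it as Corollary~2.28 of Cruz-Uribe and Fiorenza's book---so there is no internal proof to compare against. Your argument is the standard self-contained route: reduce by homogeneity to $\|f\|_{q(\cdot)}=\|g\|_{r(\cdot)}=1$, note that $1/p(x)=1/q(x)+1/r(x)$ with $r_+<\infty$ forces the exponents $q(x)/p(x)>1$ and $r(x)/p(x)>1$ to be conjugate, apply the pointwise Young inequality to obtain $|fg|^{p(x)}\le |f|^{q(x)}+|g|^{r(x)}$, integrate to get the modular bound $\rho(fg)\le 2$, and then extract $\|fg\|_{p(\cdot)}\le 2^{1/p_-}$ from the modular. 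Each step is valid; in particular, the normalization step correctly invokes Lemma~\ref{lemma for equal 1} (or merely the implication $\|h\|_{p(\cdot)}\le 1 \Rightarrow \rho(h)\le 1$, which suffices here), and the conversion from modular to norm via $\lambda^{-p(x)}\le \lambda^{-p_-}$ for $\lambda\ge 1$ is the standard device in the quasi-Banach range $0<p_-<1$. You are also right that no regularity condition such as \eqref{log} plays any role: the whole argument is pointwise and holds for any exponents in $\mathcal P(\Omega)$.

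One small remark worth making explicit: the Young inequality step tacitly uses that $q(x)/p(x),\,r(x)/p(x)\in(1,\infty)$ for a.e.\ $x$, which indeed holds because $q_+,r_+<\infty$ rules out the degenerate case $q(x)=p(x)$ (equivalently $r(x)=\infty$). Since $q,r\in\mathcal P(\Omega)$ is part of the hypothesis, this is automatic, but a careless reading might miss that the membership $r\in\mathcal P(\Omega)$ (rather than just $q\in\mathcal P(\Omega)$) is exactly what prevents the Young exponents from degenerating.
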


\begin{lemma} [{\cite[Theorem 2.8]{Kovacik1991}}] \label{lemma for embedding}
Let $p(\cdot),q(\cdot)\in \mathcal P(\Omega)$. If $p(\cdot)\leq q(\cdot)$, then for every $f\in L_{q(\cdot)}$ we have
$$\|f\|_{p(\cdot)}\leq 2\|f\|_{q(\cdot)}.$$
\end{lemma}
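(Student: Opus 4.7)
The plan is to split $f$ at the level $c := \|f\|_{q(\cdot)}$ into a bounded piece and a large piece, to estimate the $L_{p(\cdot)}$-quasi-norm of each by $c$, and then combine via the triangle inequality (or its quasi-norm variant \eqref{equation of b trangile inequality}). We may assume $c > 0$, for otherwise $f = 0$ a.e.\ and the claim is trivial.

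First I would set $f_1 := f\chi_{\{|f|\leq c\}}$ and $f_2 := f\chi_{\{|f|>c\}}$, so that $f = f_1 + f_2$. For $f_1$, the pointwise bound $|f_1|\leq c$ combined with monotonicity of the quasi-norm gives $\|f_1\|_{p(\cdot)} \leq c\,\|\chi_\Omega\|_{p(\cdot)}$. The crucial point is that on a probability space $\int_\Omega 1\,d\mathbb{P} = 1 \leq 1$, which directly shows $\|\chi_\Omega\|_{p(\cdot)} \leq 1$, and hence $\|f_1\|_{p(\cdot)} \leq c$. For $f_2$, the support of $f_2$ sits inside $\{|f|/c > 1\}$, where the pointwise inequality $(|f|/c)^{p(x)} \leq (|f|/c)^{q(x)}$ holds because $p(\cdot)\leq q(\cdot)$. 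Integrating and using Lemma~\ref{lemma for equal 1} to identify $\rho_{q(\cdot)}(f/c) = 1$ yields
$$\rho_{p(\cdot)}(f_2/c) \leq \rho_{q(\cdot)}(f_2/c) \leq \rho_{q(\cdot)}(f/c) = 1,$$
so that $\|f_2\|_{p(\cdot)} \leq c$.

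Finally I would combine the two estimates via the triangle inequality, which in the regime $p_- \geq 1$ gives directly $\|f\|_{p(\cdot)} \leq \|f_1\|_{p(\cdot)} + \|f_2\|_{p(\cdot)} \leq 2c$; when $p_- < 1$ one uses instead \eqref{equation of b trangile inequality} with $b = p_-$, at the cost of replacing the constant $2$ by $2^{1/p_-}$. The main (and essentially only) conceptual obstacle is recognizing how intrinsically the argument depends on $\mathbb{P}(\Omega) = 1$: the bound $\|\chi_\Omega\|_{p(\cdot)} \leq 1$ is precisely what makes the bounded piece controllable, and without a finite-measure hypothesis the embedding $L_{q(\cdot)} \hookrightarrow L_{p(\cdot)}$ would fail outright.
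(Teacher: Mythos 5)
Your truncation-at-level-$c$ argument is exactly the standard proof behind the cited result \cite[Theorem 2.8]{Kovacik1991}, which the paper imports without proof. The three ingredients you isolate --- that $\|\chi_\Omega\|_{p(\cdot)}\leq 1$ on a probability space controls the bounded piece $f_1$, that $(|f|/c)^{p(x)}\leq(|f|/c)^{q(x)}$ on $\{|f|>c\}$ together with Lemma~\ref{lemma for equal 1} controls the large piece $f_2$, and that one then reassembles --- are the right ones, and your estimates $\|f_1\|_{p(\cdot)}\leq c$, $\|f_2\|_{p(\cdot)}\leq c$ are correct. A small variant stays entirely at the level of the modular: since $f_1,f_2$ are disjointly supported,
$$
\rho_{p(\cdot)}(f/c)=\rho_{p(\cdot)}(f_1/c)+\rho_{p(\cdot)}(f_2/c)\leq \mathbb{P}(\Omega)+\rho_{q(\cdot)}(f/c)=2,
$$
and one then rescales $c\mapsto\lambda c$; but this yields the same constant as your route, namely $2$ when $p_-\geq1$ and $2^{1/p_-}$ when $p_-<1$.

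Your closing remark about $p_-<1$ is not a cosmetic caveat but flags a genuine imprecision in the lemma as printed: the constant $2$ is actually \emph{false} when $p_-<1$. On $\Omega=[0,1]$ with Lebesgue measure, take $q(x)\equiv1$, $p(x)=0.01$ on $[0,0.9]$ and $p(x)=1$ on $(0.9,1]$, and $f=0.01$ on $[0,0.9]$, $f=9.91$ on $(0.9,1]$; then $\|f\|_{q(\cdot)}=\|f\|_1=1$, while
$$
\rho_{p(\cdot)}(f/2)=0.9\cdot(0.005)^{0.01}+0.1\cdot(9.91/2)\approx 1.35>1,
$$
so $\|f\|_{p(\cdot)}>2$. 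The Kov\'a\v{c}ik--R\'akosn\'{\i}k theorem that the paper cites is stated for exponents at least $1$, where $2$ is correct; in the broader class $\mathcal{P}(\Omega)$ permitted here the constant must be allowed to depend on $p(\cdot)$, and your $2^{1/\underline p}$ is a valid (if not sharp) replacement. The discrepancy is harmless for the paper, since the lemma is invoked only qualitatively.
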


\begin{lemma}[{\cite[Theorem 2.34]{Cruz2013}}] \label{lem:duality for variabl p}
Let $p(\cdot)\in \mathcal P(\Omega)$ with $1\leq p_-$. Then
$$\|f\|_{p(\cdot)}\approx \sup\int_{\Omega}fgd\mathbb P,$$
where the supremum is taken over all $g\in L_{p'(\cdot)}$ with $\|g\|_{p'(\cdot)}\leq 1$.
\end{lemma}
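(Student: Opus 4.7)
The plan is to prove the norm conjugate formula by establishing two one-sided inequalities separately. Write $\Lambda(f) := \sup\bigl\{\int_\Omega fg \, d\mathbb P : g \in L_{p'(\cdot)},\ \|g\|_{p'(\cdot)} \leq 1\bigr\}$; I will show $\Lambda(f) \lesssim \|f\|_{p(\cdot)}$ and, conversely, $\|f\|_{p(\cdot)} \lesssim \Lambda(f)$, with constants depending only on $p(\cdot)$.

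The upper bound is immediate from the variable H\"older inequality (Lemma \ref{lemma of Holer inequality}) applied with exponents $1$, $p(\cdot)$ and $p'(\cdot)$, since $\frac{1}{1} = \frac{1}{p(x)} + \frac{1}{p'(x)}$. For any admissible $g$,
$\int_\Omega fg \, d\mathbb P \leq \|fg\|_1 \lesssim \|f\|_{p(\cdot)} \|g\|_{p'(\cdot)} \leq \|f\|_{p(\cdot)}$.

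For the lower bound, by positive homogeneity of both sides I may normalize $\|f\|_{p(\cdot)} = 1$. Lemma \ref{lemma for equal 1} then yields the modular identity $\int_\Omega |f(x)|^{p(x)} \, d\mathbb P = 1$. I would take the explicit trial function
\[
g(x) := \overline{\mathrm{sgn}(f(x))}\, |f(x)|^{p(x)-1},
\]
so that $fg = |f|^{p(x)}$ pointwise, and consequently $\int_\Omega fg \, d\mathbb P = 1$. It remains to control $\|g\|_{p'(\cdot)}$ by a constant. On the set $\{p(x) > 1\}$ the algebraic identity $(p(x)-1)p'(x) = p(x)$ gives $|g|^{p'(x)} = |f|^{p(x)}$; integrating over this set produces a modular bounded by $1$. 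Combined with a trivial bound on $\{p(x) = 1\}$, one obtains $\|g\|_{p'(\cdot)} \leq C_{p(\cdot)}$, and testing $\Lambda(f)$ with $g/\|g\|_{p'(\cdot)}$ gives $\Lambda(f) \geq 1/C_{p(\cdot)} = \|f\|_{p(\cdot)}/C_{p(\cdot)}$.

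The main obstacle is the set $E := \{x \in \Omega : p(x) = 1\}$, where $p'(x) = \infty$ and the exponent identity $(p-1)p' = p$ degenerates. There the modular for $L_{p'(\cdot)}$ carries an essential-supremum contribution, and one must verify that $|g| \leq 1$ a.e.\ on $E$. With the candidate $g$ above, $|g| = |f|^0 = 1$ on $E \cap \{f \neq 0\}$ and $|g| = 0$ otherwise, so this bound holds automatically and the $L_\infty$ component stays under control. If one prefers to avoid this case analysis, one can approximate $p(\cdot)$ from above by $p_k(\cdot) := p(\cdot) \vee (1 + 1/k)$, carry out the construction in the strictly conjugate regime, and pass to the limit $k \to \infty$, using monotone convergence on the modulars and the normalization provided by Lemma \ref{lemma for equal 1}.
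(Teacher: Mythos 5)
The paper does not prove this lemma; it is cited directly from \cite[Theorem 2.34]{Cruz2013} and used as a black box, so there is no in-paper argument to compare against. Your reconstruction of the norm conjugate formula is the standard one from that reference and is correct in substance. For the upper bound, note that Lemma \ref{lemma of Holer inequality} as stated in this paper requires all three exponents to lie in $\mathcal P(\Omega)$, hence $p'_+<\infty$, which forces $p_->1$; when $p_-=1$ you should invoke either the fuller Cruz-Uribe--Fiorenza H\"older inequality for exponents valued in $[1,\infty]$, or split $\Omega$ into $\{p=1\}$ (treated by the $L_1\times L_\infty$ pairing) and $\{p>1\}$. For the lower bound, after normalizing $\|f\|_{p(\cdot)}=1$ via Lemma \ref{lemma for equal 1}, the trial function $g=\overline{\operatorname{sgn}(f)}\,|f|^{p(\cdot)-1}$ indeed satisfies $\int_\Omega fg\,d\PP=1$, and the modular $\rho_{p'(\cdot)}(g)$ has contribution at most $1$ from $\{p>1\}$ (via $(p-1)p'=p$) and at most $1$ from the $L_\infty$ component on $\{p=1\}$, so $\rho_{p'(\cdot)}(g)\le 2$. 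The one step worth spelling out explicitly is the passage from this modular bound to the norm bound: since $p'(\cdot)\ge 1$ one has $\rho_{p'(\cdot)}(g/2)\le\tfrac12\rho_{p'(\cdot)}(g)\le1$, hence $\|g\|_{p'(\cdot)}\le 2$, and testing the supremum with $g/2$ gives $\Lambda(f)\ge 1/2$. Lastly, your alternative of approximating $p$ by $p_k=p\vee(1+1/k)$ is not as painless as suggested: $|f(x)|^{p_k(x)}$ is not pointwise monotone in $k$ (it decreases where $|f|>1$ and increases where $|f|<1$), and the conjugate modulars behave similarly, so monotone convergence does not apply directly; the case analysis you give is the cleaner route.
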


\subsection{Variable Lorentz spaces $L_{p(\cdot),q}$} In this subsection, we introduce the definition of Lorentz spaces $L_{p(\cdot),q}(\Omega)$ with variable exponents $p(\cdot)\in \mathcal P(\Omega)$ and $0<q\leq \infty $ is a constant. For more information about general cases
$L_{p(\cdot),q(\cdot)}(\Omega)$, we refer the reader to \cite{Kempka2014}.
Following \cite{Kempka2014}, we introduce the  definition below.
\begin{definition}
Let  $p(\cdot)\in\mathcal{P}(\Omega)$ and $0<q\leq \infty$. Then $L_{p(\cdot), q}(\Omega) $ is the collection of all measurable functions $f$
such that
 \begin{equation*}
 \|f\|_{L_{p(\cdot),q}}:=\left\{
 \begin{array}{lr}
 \left( \int_0^\infty \lambda^q \|\chi_{\{|f|>\lambda\}}\|_{p(\cdot)}^q\frac{d\lambda}{\lambda}\right)^{1/q}, \quad \quad &q<\infty,\\
 \sup_\lambda\lambda\|\chi_{\{|f|>\lambda\}}\|_{p(\cdot)},\quad \quad & q=\infty
 \end{array}
 \right.
 \end{equation*}
is finite.
\end{definition}

\begin{remark} \label{remark for lorentz norm} According to \cite[Theorem 3.1]{Kempka2014},  the spaces $L_{p(\cdot),q}$ are quasi-Banach spaces. Moreover,
\begin{enumerate}
\item It is similar to the classical case that  the equations above can be  discretized:
$$\|f\|_{L_{p(\cdot),q}}\approx \left(\sum_{k=-\infty}^\infty2^{kq}\|\chi_{\{|f|>2^k\}}\|_{p(\cdot)}^q\right)^{1/q},$$
and
$$\|f\|_{L_{p(\cdot),\infty}}\leq 2 \sup_{k\in \mathbb Z}2^k \|\chi_{\{|f|>2^k\}}\|_{p(\cdot)} \leq 2\|f\|_{L_{p(\cdot),\infty}}.$$

\item By \cite[Lemma 2.4]{Kempka2014}, we have
$$ \sup_{k\in \mathbb Z}2^k \|\chi_{\{|f|>2^k\}}\|_{p(\cdot)}=\inf\left\{\lambda>0:\sup_{k\in\mathbb Z}\int_\Omega\left(\frac{2^k\chi_{\{|f|>2^k\}}}{\lambda}\right)^{p(x)}d\mathbb P\leq 1\right\}.$$

\item  Let $A\in \mathcal F$. A simple calculation based on (1) above shows that
 $$\|\chi_{A}\|_{L_{p(\cdot),q}}\approx\|\chi_{A}\|_{p(\cdot)}.$$
\end{enumerate}
\end{remark}

We now  show the dominated convergence theorem in $L_{p(\cdot),q}$. We begin with the following definition.

\begin{definition}
Let $p(\cdot)\in\mathcal P(\Omega)$ and   $0<q\leq\infty$.
A function $f \in L_{p(\cdot),q}$ is said to have absolutely continuous quasi-norm in $L_{p(\cdot),q}$ if
$$\lim_{n\rightarrow \infty}\|f\chi_{A_n}\|_{L_{p(\cdot),q}}=0$$
for every sequence
$(A_n)_{n\geq0}$ satisfying $\mathbb P(A_n)\rightarrow 0$ as $n\rightarrow \infty$.
\end{definition}

The next result shows that if $0<p_-\leq p_+<\infty$ and $0<q<\infty$, then  all the elements in $L_{p(\cdot),q}(\Omega)$ have absolutely continuous quasi-norm.
\begin{lemma} \label{lemma of abs q}
Let $p(\cdot)\in \mathcal P(\Omega)$ and $0<q<\infty$. Then for every $f\in L_{p(\cdot),q}$,
$f$ has absolutely continuous quasi-norm.
\end{lemma}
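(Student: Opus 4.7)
The plan is to reduce the absolute continuity of the quasi-norm to termwise convergence in the discretized expression from Remark \ref{remark for lorentz norm}, and then conclude by the dominated convergence theorem for series.

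First, I would rewrite the quasi-norm using Remark \ref{remark for lorentz norm}(1): for any measurable $g$,
$$
\|g\|_{L_{p(\cdot),q}}^{q} \approx \sum_{k\in\Z} 2^{kq}\, \|\chi_{\{|g|>2^{k}\}}\|_{p(\cdot)}^{q}.
$$
Applied to $g=f\chi_{A_n}$ and noting that $\{|f\chi_{A_n}|>2^{k}\}=\{|f|>2^{k}\}\cap A_n$, this gives
$$
\|f\chi_{A_n}\|_{L_{p(\cdot),q}}^{q} \approx \sum_{k\in\Z} 2^{kq}\, \|\chi_{\{|f|>2^{k}\}\cap A_n}\|_{p(\cdot)}^{q}.
$$
Since each set in the sum is contained in $\{|f|>2^{k}\}$, the $k$-th term is dominated uniformly in $n$ by $2^{kq}\|\chi_{\{|f|>2^{k}\}}\|_{p(\cdot)}^{q}$, whose sum is finite because $f\in L_{p(\cdot),q}$.

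Next, I would establish the pointwise (in $k$) convergence. For a measurable set $E$ with $\PP(E)\leq 1$, taking $\lambda=\PP(E)^{1/p_+}$ gives
$$
\int_{\Omega}\left(\frac{\chi_{E}(x)}{\lambda}\right)^{p(x)}d\PP \leq \lambda^{-p_+}\PP(E)=1,
$$
so $\|\chi_{E}\|_{p(\cdot)}\leq \PP(E)^{1/p_+}$. Since $\PP(\{|f|>2^{k}\}\cap A_n)\leq \PP(A_n)\to 0$, it follows that for every fixed $k\in\Z$,
$$
\|\chi_{\{|f|>2^{k}\}\cap A_n}\|_{p(\cdot)}^{q}\longrightarrow 0 \quad \text{as } n\to\infty.
$$

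Finally, with the summable dominant $2^{kq}\|\chi_{\{|f|>2^{k}\}}\|_{p(\cdot)}^{q}$ in hand and termwise convergence to zero, the dominated convergence theorem for series (which uses $q<\infty$) yields
$$
\sum_{k\in\Z} 2^{kq}\, \|\chi_{\{|f|>2^{k}\}\cap A_n}\|_{p(\cdot)}^{q}\longrightarrow 0,
$$
hence $\|f\chi_{A_n}\|_{L_{p(\cdot),q}}\to 0$, as required. The main (and essentially only) obstacle is justifying the interchange of limit and summation, which is handled cleanly by supplying this summable majorant; the estimate $\|\chi_E\|_{p(\cdot)}\leq \PP(E)^{1/p_+}$ takes care of the pointwise convergence without needing any log-Hölder hypothesis on $p(\cdot)$.
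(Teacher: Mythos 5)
Your proof is correct and takes essentially the same route as the paper: both reduce to the discretized sum from Remark~\ref{remark for lorentz norm}(1), control the tail (large $k$) by the finiteness of $\|f\|_{L_{p(\cdot),q}}$, and control the head (small $k$) via the bound $\|\chi_{A_n}\|_{p(\cdot)}\lesssim \PP(A_n)^{1/p_+}\to 0$. The only cosmetic difference is that you invoke the dominated convergence theorem for series while the paper carries out the equivalent $\varepsilon$-splitting by hand (and cites Lemma~\ref{lemma for embedding} where you compute $\|\chi_E\|_{p(\cdot)}\leq \PP(E)^{1/p_+}$ directly).
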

\begin{proof}
Since $f\in L_{p(\cdot),q}$, for any $\varepsilon>0$, there exists $N_1\in \mathbb N$ such that
$$\left(\sum_{k=N_1}^\infty2^{kq}\|\chi_{\{|f|>2^k\}}\|_{p(\cdot)}^q\right)^{1/q}<\varepsilon.$$
By the definition of $\{A_n\}_{n}$, there exists $N_2\in \mathbb N $ such that $\mathbb P(A_n) <\big(\frac{\varepsilon}{2^{N_1}}\big)^{p_+}$
for $n\geq N_2$. Now let $n\geq N_2$. By Lemma \ref{lemma for embedding}, we have
\begin{align*}
\|f\chi_{A_n}\|_{L_{p(\cdot),q}} &\lesssim  \Big(\sum_{k=N_1}^\infty2^{kq}\|\chi_{\{|f|>2^k\}}\|_{p(\cdot)}^q\Big)^{1/q}+
\Big(\sum_{k=-\infty}^{N_1-1}2^{kq}\|\chi_{A_n}\|_{p(\cdot)}^q\Big)^{1/q}
\\&\leq \varepsilon + \Big(\sum_{k=-\infty}^{N_1-1}2^{kq}\cdot (2\|\chi_{A_n}\|_{p_+})^q\Big)^{1/q}
\\&< \varepsilon + 2\Big(\sum_{k=-\infty}^{N_1-1}2^{(k-N_1)q}\Big)^{1/q}\varepsilon =3\varepsilon.
\end{align*}
This finishes the proof.
\end{proof}

The following well-known example (see \cite[Example 2.5]{Liu2010}) shows that not all functions in $L_{p(\cdot),\infty}$ have absolutely continuous quasi-norm.
\begin{example}  Consider the function $f(x)=x^{-1/p}$ on $\Omega=(0,1)$ associated with Lebesgue measure $\mathbb P$. Then, by a simple calculation, $f\in L_{p,\infty}$ $(0<p<\infty)$ and $f$ does not have  absolutely continuous quasi-norm.
\end{example}

Next we introduce a closed subspace of $L_{p(\cdot),\infty}$, in which simple functions are dense.

\begin{definition}
Let $p(\cdot)\in \mathcal P(\Omega)$.
We define $\mathscr L_{p(\cdot),\infty}(\Omega)$ as the set of measurable functions $f$ such that
$$\lim_{n\rightarrow \infty}\|f\chi_{A_n}\|_{L_{p(\cdot),\infty}}=0$$
for every sequence
$(A_n)_{n\geq0}$ satisfying $\mathbb P(A_n)\rightarrow 0$ as $n\rightarrow \infty$.
\end{definition}

\begin{lemma} \label{lemma for closed absolute norm}Let $p(\cdot)\in \mathcal P(\Omega)$. Then  $\mathscr L_{p(\cdot),\infty}$ is a closed subspace of
 $ L_{p(\cdot),\infty}$.
\end{lemma}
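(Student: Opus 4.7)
The plan is to establish two things: (i) $\mathscr L_{p(\cdot),\infty}$ is closed under the vector-space operations of $L_{p(\cdot),\infty}$, and (ii) any $L_{p(\cdot),\infty}$-limit of a sequence in $\mathscr L_{p(\cdot),\infty}$ still belongs to $\mathscr L_{p(\cdot),\infty}$. Both rest on two elementary observations about the Lorentz quasi-norm that I first record.

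\emph{Observation A} (quasi-triangle inequality). The set inclusion $\{|f+g|>\lambda\}\subseteq\{|f|>\lambda/2\}\cup\{|g|>\lambda/2\}$, together with $\chi_{E\cup F}\leq \chi_E+\chi_F$ and property \eqref{equation of b trangile inequality} of $\|\cdot\|_{p(\cdot)}$, yields
\[
\|\chi_{\{|f+g|>\lambda\}}\|_{p(\cdot)}^{\underline{p}}\leq \|\chi_{\{|f|>\lambda/2\}}\|_{p(\cdot)}^{\underline{p}}+\|\chi_{\{|g|>\lambda/2\}}\|_{p(\cdot)}^{\underline{p}}.
\]
Multiplying by $\lambda^{\underline{p}}$ and passing to the supremum over $\lambda>0$, I obtain
\[
\|f+g\|_{L_{p(\cdot),\infty}}^{\underline{p}}\leq 2^{\underline{p}}\bigl(\|f\|_{L_{p(\cdot),\infty}}^{\underline{p}}+\|g\|_{L_{p(\cdot),\infty}}^{\underline{p}}\bigr).
\]
\emph{Observation B} (monotonicity under truncation). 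For every $A\in\mathcal F$ the inclusion $\{|f\chi_A|>\lambda\}\subseteq\{|f|>\lambda\}$ gives $\|f\chi_A\|_{L_{p(\cdot),\infty}}\leq \|f\|_{L_{p(\cdot),\infty}}$.

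For (i), homogeneity of the quasi-norm immediately yields closure under scalar multiplication. If $f,g\in \mathscr L_{p(\cdot),\infty}$ and $(A_n)$ satisfies $\PP(A_n)\to 0$, then applying Observation A to $f\chi_{A_n}$ and $g\chi_{A_n}$ shows that $\|(f+g)\chi_{A_n}\|_{L_{p(\cdot),\infty}}\to 0$, so $f+g\in\mathscr L_{p(\cdot),\infty}$.

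For (ii), let $(f_m)\subset \mathscr L_{p(\cdot),\infty}$ converge to $f$ in $L_{p(\cdot),\infty}$ and fix $(A_n)$ with $\PP(A_n)\to 0$. Given $\varepsilon>0$, first choose $m$ with $\|f-f_m\|_{L_{p(\cdot),\infty}}^{\underline{p}}<\varepsilon$; by Observation B this forces $\|(f-f_m)\chi_{A_n}\|_{L_{p(\cdot),\infty}}^{\underline{p}}<\varepsilon$ uniformly in $n$. For this fixed $m$, since $f_m\in\mathscr L_{p(\cdot),\infty}$, pick $N$ with $\|f_m\chi_{A_n}\|_{L_{p(\cdot),\infty}}^{\underline{p}}<\varepsilon$ for $n\geq N$. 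Applying Observation A to the decomposition $f\chi_{A_n}=f_m\chi_{A_n}+(f-f_m)\chi_{A_n}$ then gives
\[
\|f\chi_{A_n}\|_{L_{p(\cdot),\infty}}^{\underline{p}}\leq 2^{\underline{p}+1}\varepsilon\qquad(n\geq N),
\]
so $\|f\chi_{A_n}\|_{L_{p(\cdot),\infty}}\to 0$ and $f\in \mathscr L_{p(\cdot),\infty}$.

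No serious obstacle is anticipated. The only technical point demanding care is that $\|\cdot\|_{L_{p(\cdot),\infty}}$ carries only a quasi-triangle inequality rather than a genuine triangle inequality, which I handle by systematically working with $\underline{p}$-th powers, exactly as in property \eqref{equation of b trangile inequality} for $\|\cdot\|_{p(\cdot)}$.
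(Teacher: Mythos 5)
Your proof is correct and follows essentially the same route as the paper's: decompose $f\chi_{A_n}$ as $f_m\chi_{A_n}+(f-f_m)\chi_{A_n}$, apply the quasi-triangle inequality for $\|\cdot\|_{L_{p(\cdot),\infty}}$, use $\|(f-f_m)\chi_{A_n}\|_{L_{p(\cdot),\infty}}\leq\|f-f_m\|_{L_{p(\cdot),\infty}}$ to control the second term uniformly in $n$, and then let $f_m\in\mathscr L_{p(\cdot),\infty}$ dispose of the first. Your write-up is a bit more careful than the paper's (you derive the quasi-triangle inequality explicitly via $\underline p$-th powers, you use the correct $\varepsilon$-argument rather than the paper's loose ``take $n\to\infty$'', and you additionally verify linearity of $\mathscr L_{p(\cdot),\infty}$, which the paper leaves implicit), but the underlying argument is the same.
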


\begin{proof}Let $(f_n)_{n\geq1}\subset \mathscr L_{p(\cdot),\infty}$ be a Cauchy sequence in $ L_{p(\cdot),\infty}$. Then there exists $f\in  L_{p(\cdot),\infty}$ such that
$$\lim_{n\rightarrow \infty}\|f_n-f\|_{ L_{p(\cdot),\infty}}=0.$$
Choose $A_k \subset \Omega$ such that $\mathbb P (A_k)\rightarrow0$ as $k\rightarrow \infty$.
Hence, for any $k$ we have
\begin{align*}
\|f\chi_{A_k}\|_{ L_{p(\cdot),\infty}}&\leq 2^{\frac {1} {\underline p}}(\|f_n\chi_{A_k}\|_{ L_{p(\cdot),\infty}}+\|(f_n-f)\chi_{A_k}\|_{ L_{p(\cdot),\infty}})
\\& \leq 2^{\frac {1} {\underline p}}(\|f_n\chi_{A_k}\|_{ L_{p(\cdot),\infty}}+\|f_n-f\|_{ L_{p(\cdot),\infty}}).
\end{align*}
Since $f_n \in \mathscr L_{p(\cdot),\infty}$, by taking $n\rightarrow \infty$, we get
$$\lim_{k\rightarrow\infty}\|f\chi_{A_k}\|_{L_{p(\cdot),\infty}}=0,$$
which implies that $f\in \mathscr L_{p(\cdot),\infty}$.
\end{proof}

\begin{lemma}
Let $p(\cdot)\in \mathcal P(\Omega)$. Then we have $L_{p(\cdot)}\subset \mathscr L_{p(\cdot),\infty}\subset  L_{p(\cdot),\infty}$.
\end{lemma}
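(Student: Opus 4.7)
The plan is to prove the two inclusions separately. The second one is essentially by definition (the quasi-norm $\|f\chi_{A_n}\|_{L_{p(\cdot),\infty}}$ must at least be finite for large $n$), while the first uses a Chebyshev-type bound together with the absolute continuity of the modular integral.

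First I would record the elementary Chebyshev-type inequality
\[
\|g\|_{L_{p(\cdot),\infty}} \;=\; \sup_{\lambda>0} \lambda\,\|\chi_{\{|g|>\lambda\}}\|_{p(\cdot)}
\;\leq\; \sup_{\lambda>0} \|g\chi_{\{|g|>\lambda\}}\|_{p(\cdot)}
\;\leq\; \|g\|_{p(\cdot)},
\]
which follows from $\lambda\chi_{\{|g|>\lambda\}}\leq |g|\chi_{\{|g|>\lambda\}}$ and the monotonicity of $\|\cdot\|_{p(\cdot)}$. This already gives the continuous embedding $L_{p(\cdot)}\hookrightarrow L_{p(\cdot),\infty}$ with constant $1$.

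For the inclusion $L_{p(\cdot)}\subset \mathscr L_{p(\cdot),\infty}$, take $f\in L_{p(\cdot)}$ and a sequence $(A_n)$ with $\mathbb{P}(A_n)\to 0$. By the Chebyshev bound above,
\[
\|f\chi_{A_n}\|_{L_{p(\cdot),\infty}} \;\leq\; \|f\chi_{A_n}\|_{p(\cdot)},
\]
so it suffices to show the right-hand side tends to $0$. After rescaling we may assume $\|f\|_{p(\cdot)}\leq 1$, which forces $|f|^{p(\cdot)}\in L_{1}(\Omega)$; since $\mathbb{P}(A_n)\to 0$, absolute continuity of the Lebesgue integral yields $\rho(f\chi_{A_n})=\int_{A_n}|f|^{p(x)}\,d\mathbb{P}\to 0$. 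Finally, the standard modular-to-norm conversion $\|g\|_{p(\cdot)}\leq \rho(g)^{1/p_+}$ (valid once $\rho(g)\leq 1$, which is justified by testing $g/\lambda$ at $\lambda=\rho(g)^{1/p_+}$) gives $\|f\chi_{A_n}\|_{p(\cdot)}\to 0$, as desired.

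For the inclusion $\mathscr L_{p(\cdot),\infty}\subset L_{p(\cdot),\infty}$, let $f\in \mathscr L_{p(\cdot),\infty}$ (which we may assume is finite a.e.). Set $B_n:=\{|f|>n\}$; since $\Omega$ is a probability space and $f$ is finite a.e., $\mathbb{P}(B_n)\to 0$. The defining property of $\mathscr L_{p(\cdot),\infty}$ gives $\|f\chi_{B_n}\|_{L_{p(\cdot),\infty}}\to 0$, so this quantity is finite for some fixed $n_0$. Writing $f=f\chi_{\Omega\setminus B_{n_0}}+f\chi_{B_{n_0}}$ and applying the quasi-triangle inequality \eqref{equation of b trangile inequality} for $L_{p(\cdot),\infty}$,
\[
\|f\|_{L_{p(\cdot),\infty}}^{\underline p} \;\leq\; \|f\chi_{\Omega\setminus B_{n_0}}\|_{L_{p(\cdot),\infty}}^{\underline p}+\|f\chi_{B_{n_0}}\|_{L_{p(\cdot),\infty}}^{\underline p}.
\]
The first term is at most $(n_0\,\|\chi_\Omega\|_{L_{p(\cdot),\infty}})^{\underline p}\leq (n_0)^{\underline p}$ since $|f\chi_{\Omega\setminus B_{n_0}}|\leq n_0$ and $\|\chi_\Omega\|_{p(\cdot)}\leq 1$; the second is finite by choice of $n_0$. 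Hence $f\in L_{p(\cdot),\infty}$.

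The main technical point will be keeping the modular-versus-norm passage clean in the first inclusion; once the inequality $\|g\|_{p(\cdot)}\leq \rho(g)^{1/p_+}$ (for $\rho(g)\leq 1$) is in hand, everything collapses to absolute continuity of the integral. The second inclusion is entirely structural and only requires that $\Omega$ has finite measure so that truncation to $\{|f|\leq n_0\}$ yields a bounded function, and $L^\infty\subset L_{p(\cdot),\infty}$ in this setting.
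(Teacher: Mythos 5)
Your proof is correct, and for the main inclusion $L_{p(\cdot)}\subset \mathscr L_{p(\cdot),\infty}$ it follows essentially the same line as the paper: first the ``Chebyshev'' bound $\|g\|_{L_{p(\cdot),\infty}}\leq\|g\|_{p(\cdot)}$ (the paper derives this from its Lemma~\ref{lemma for equal 1} rather than from $\lambda\chi_{\{|g|>\lambda\}}\leq|g|$, but the content is identical), then absolute continuity of $\rho$, and finally the passage from modular convergence to norm convergence (where the paper cites \cite[Theorem 2.68]{Cruz2013} while you supply the explicit bound $\|g\|_{p(\cdot)}\leq\rho(g)^{1/p_+}$ for $\rho(g)\leq 1$, which is a perfectly valid substitute).

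Where you genuinely diverge is in the second inclusion. The paper dispatches $\mathscr L_{p(\cdot),\infty}\subset L_{p(\cdot),\infty}$ by invoking Lemma~\ref{lemma for closed absolute norm}, but that lemma already treats $\mathscr L_{p(\cdot),\infty}$ as a \emph{subspace} of $L_{p(\cdot),\infty}$, i.e.\ it presupposes the very inclusion being asserted; effectively the paper reads the definition of $\mathscr L_{p(\cdot),\infty}$ as implicitly requiring $f\in L_{p(\cdot),\infty}$. Your truncation argument (take $B_n=\{|f|>n\}$, note $\mathbb P(B_n)\to0$, split off a bounded piece and a piece with finite weak quasi-norm) makes this step honest under the literal definition, and this is a useful clarification. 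One small imprecision: equation~\eqref{equation of b trangile inequality} is stated for $L_{p(\cdot)}$, not for $L_{p(\cdot),\infty}$; but all your argument really needs is that $L_{p(\cdot),\infty}$ is closed under finite sums, which follows from it being a quasi-normed space, so nothing is lost. You should also convince yourself (as you tacitly do by assuming $f$ finite a.e.) that the degenerate case $\mathbb P(\{|f|=\infty\})>0$ is excluded; this is the standard convention and does not affect the lemma.
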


\begin{proof} By Lemma \ref{lemma for closed absolute norm}, it suffices to show  $L_{p(\cdot)}\subset \mathscr L_{p(\cdot),\infty}$.
Let $f\in L_{p(\cdot)}$. Applying  Lemma \ref{lemma for equal 1}, we get
\begin{align*}
\int_\Omega\left( \frac{2^k\chi_{\{|f|>2^k\}}}{\|f\|_{p(\cdot)}}(x)\right)^{p(x)}d\mathbb P &\leq \int_\Omega\left( \frac{|f(x)|}{\|f\|_{p(\cdot)}}\right)^{p(x)}d\mathbb P=1
\end{align*}
for any $k\in \mathbb Z$. We conclude by Remark \ref{remark for lorentz norm} that
\begin{equation}\label{inequality for embedding}
\|f\|_{L_{p(\cdot),\infty}} \leq \|f\|_{p(\cdot)}.
\end{equation}
Let $A_k \subset \Omega$ such that $\mathbb P (A_k)\rightarrow0$ as $k\rightarrow \infty$. We get $\rho (f\chi_{A_k})\rightarrow0$ as $k\rightarrow \infty$ since $f\in L_{p(\cdot)}$.
Note that $0<p_-\leq p_+<\infty$.
Hence, by \cite[Theorem 2.68]{Cruz2013}, we deduce that $\|f\chi_{A_k}\|_{p(\cdot)}\rightarrow0$ as $k\rightarrow \infty$.
It follows  from inequality \eqref{inequality for embedding} that
$$\lim_{k\rightarrow \infty}\|f\chi_{A_k}\|_{L_{p(\cdot),\infty}}=0.$$
Now, we obtain that $f\in \mathscr L_{p(\cdot),\infty}$. The proof is complete.
\end{proof}

\begin{lemma} \emph{(Dominated convergence theorem) }\label{lemma of control}Let $p(\cdot)\in \mathcal P(\Omega)$ and $0<q\leq \infty$. Assume  that $f_n,f, g\in L_{p(\cdot),q}$ satisfies $f_n\rightarrow f$ a.e. and $|f_n|\leq g$ for every $n\geq1$.
If $g$ has absolutely continuous quasi-norm, then
$$\lim_{n\rightarrow \infty}\|f_n-f\|_{L_{p(\cdot),q}}=0.$$
\end{lemma}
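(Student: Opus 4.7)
The plan is to adapt the classical Egorov-based proof of the dominated convergence theorem to the quasi-norm structure of $L_{p(\cdot),q}$. First I would observe that since $f_n\to f$ a.e.\ and $|f_n|\le g$, we have $|f|\le g$ a.e., so $|f_n-f|\le 2g$ a.e., and the problem reduces to showing $\|f_n-f\|_{L_{p(\cdot),q}}\to 0$.

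Fixing $\varepsilon>0$, I would exploit the absolute continuity of the quasi-norm of $g$: from the sequential definition, a standard contradiction argument produces $\delta>0$ such that $\mathbb{P}(A)<\delta$ implies $\|g\chi_A\|_{L_{p(\cdot),q}}<\varepsilon$. Since $\Omega$ has finite measure and $f_n\to f$ a.e., Egorov's theorem yields a measurable set $B\subset\Omega$ with $\mathbb{P}(B)<\delta$ such that $f_n\to f$ uniformly on $B^c$.

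Next, using that $L_{p(\cdot),q}$ is quasi-Banach (Remark \ref{remark for lorentz norm}), there exists $r\in(0,1]$ for which the $r$-triangle inequality $\|u+v\|_{L_{p(\cdot),q}}^r\le\|u\|_{L_{p(\cdot),q}}^r+\|v\|_{L_{p(\cdot),q}}^r$ holds. Splitting $f_n-f=(f_n-f)\chi_B+(f_n-f)\chi_{B^c}$, monotonicity of $\|\cdot\|_{L_{p(\cdot),q}}$ (which follows from $|u|\le|v|$ implying $\chi_{\{|u|>\lambda\}}\le\chi_{\{|v|>\lambda\}}$, combined with monotonicity of $\|\cdot\|_{p(\cdot)}$) together with the pointwise bound $|f_n-f|\chi_B\le 2g\chi_B$ gives
$$
\|(f_n-f)\chi_B\|_{L_{p(\cdot),q}}\le 2\|g\chi_B\|_{L_{p(\cdot),q}}<2\varepsilon.
$$
For the piece on $B^c$, uniform convergence and monotonicity yield
$$
\|(f_n-f)\chi_{B^c}\|_{L_{p(\cdot),q}}\le \sup_{x\in B^c}|f_n(x)-f(x)|\cdot\|\chi_\Omega\|_{L_{p(\cdot),q}},
$$
which tends to $0$ as $n\to\infty$ since $\|\chi_\Omega\|_{L_{p(\cdot),q}}\approx \|\chi_\Omega\|_{p(\cdot)}<\infty$ by Remark \ref{remark for lorentz norm}(3). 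Taking $\limsup_n$ of the $r$-power split and then letting $\varepsilon\to 0$ completes the proof.

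The main obstacle is essentially bookkeeping: one must explicitly verify monotonicity of $\|\cdot\|_{L_{p(\cdot),q}}$ and the $r$-triangle inequality (both immediate from the definitions and the quasi-Banach property), and handle the case $q=\infty$, where all the same monotonicity arguments still apply via the formula $\|f\|_{L_{p(\cdot),\infty}}\approx \sup_{k\in\mathbb{Z}}2^k\|\chi_{\{|f|>2^k\}}\|_{p(\cdot)}$ from Remark \ref{remark for lorentz norm}.
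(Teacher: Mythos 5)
Your proof is correct but takes a genuinely different route from the paper's. The paper truncates the dominating function: it splits $\Omega$ into $\{g>N_1\}$, handled directly by the absolute continuity hypothesis for $g$, and $\{g\le N_1\}$, where $|f_n-f|\le 2N_1$ is uniformly bounded and the contribution is controlled via Remark~\ref{remark for lorentz norm}(3) and the dominated convergence theorem in $L_{p(\cdot)}$. You instead split the domain via Egorov's theorem: a small-measure exceptional set $B$, where absolute continuity of $g$'s quasi-norm kills the contribution, and $B^c$, where uniform convergence together with $\|\chi_\Omega\|_{L_{p(\cdot),q}}\approx\|\chi_\Omega\|_{p(\cdot)}<\infty$ finishes the estimate. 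Your approach is self-contained (modulo Egorov) and avoids any appeal to the $L_{p(\cdot)}$-dominated convergence theorem, and your $\limsup$--then--$\varepsilon\to 0$ phrasing is slightly more careful about the implicit quasi-norm constants than the paper's direct $<3\varepsilon$ bound. One small observation: the full Aoki--Rolewicz $r$-triangle inequality is not really needed; once you take $\limsup_n$ first and then let $\varepsilon\to 0$, the ordinary quasi-triangle inequality with a fixed implicit constant already suffices, since that constant drops out in the limit.
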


\begin{proof}
Since $g$ has absolutely continuous quasi-norm, for any $\varepsilon>0$ there exists $N_1$ such that $\|g\chi_{\{g>N_1\}}\|_{L_{p(\cdot),q}}<\varepsilon.$ Clearly, $|f_n-f|\leq 2N_1$ on the set $\{g\leq N_1\}$.
Note that, by Remark \ref{remark for lorentz norm}(3),
 \begin{align*}
 \|(f_n-f)\chi_{\{g\leq N_1\}}\|_{L_{p(\cdot),q}}&=\|(f_n-f)\chi_{\{g\leq N_1\}}\chi_{\{f_n\neq f\}}\|_{L_{p(\cdot),q}}
 \\&\leq 2N_1\|\chi_{\{f_n\neq f\}}\|_{L_{p(\cdot),q}}=2N_1\|\chi_{\{f_n\neq f\}}\|_{p(\cdot)}.
 \end{align*}
Then, by the facts $\chi_{\{f_n\neq f\}}\rightarrow 0$ as $n\rightarrow\infty$ and the dominated convergence theorem in $L_{p(\cdot)}$ (see \cite{Cruz2013}), there exists $N_2$ such that $\|(f_n-f)\chi_{\{g\leq N_1\}}\|_{L_{p(\cdot),q}}<\varepsilon$ for $n\geq N_2$. Finally, for $n\geq N_2$,
\begin{align*}
\|f_n-f\|_{L_{p(\cdot),q}} &\lesssim \|(f_n-f)\chi_{\{g\leq N_1\}}\|_{L_{p(\cdot),q}}+\|(f_n-f)\chi_{\{g>N_1\}}\|_{L_{p(\cdot),q}}
\\& \lesssim \varepsilon + 2\|g\chi_{\{g>N_1\}}\|_{L_{p(\cdot),q}} <3\varepsilon,
\end{align*}
which completes the proof.
\end{proof}

\subsection{Variable martingale Hardy spaces }
In this subsection, we introduce some standard notations from martingale theory. We refer to the books  \cite{Garsia1973, Long1993book, Weisz1994book} for the theory of classical
martingale space. Let
$(\Omega,\mathcal {F},\mathbb{P})$ be a complete probability space.
Let the subalgebras $(\mathcal F_n)_{n\geq0}$ be increasing such that $\mathcal F= \sigma(\cup_{n\geq0}\mathcal F_n)$,
and let  $\E_{n}$ denote the conditional expectation operator relative to
${\mathcal{F}_n}$. A sequence of
measurable functions $f=(f_n)_{n\geq0}\subset L_1(\Omega)$ is called
a martingale with respect to $(\mathcal {F}_n)_{n\geq0}$ if $\E_{n}(f_{n+1})=f_n$ for every $n\geq0.$
For a martingale $f=(f_n)_{n\geq0}$ let $f_{-1}:=0$ and
$$
d_nf=f_n-f_{n-1},\quad n\geq0,
$$
denote the martingale difference.
If in addition $f_n\in L_{p(\cdot)}$, then $f$ is called an $L_{p(\cdot)}$-martingale with
respect to $(\mathcal {F}_n)_{n\geq 0}$. In this case, we set
$$\|f\|_{p(\cdot)}=\sup_{n\geq0}\|f_n\|_{p(\cdot)}.$$
If $\|f\|_{p(\cdot)}<\infty$, $f$ is called a bounded
$L_{p(\cdot)}$-martingale and it is denoted by $f\in L_{p(\cdot)}$. For a
martingale relative to $(\Omega,\mathcal {F},\mathbb{P};(\mathcal
{F}_n)_{n\geq 0})$, we define the maximal function,  the square
function and the conditional square
function of $f$, respectively, as follows $(f_{-1}=0)$:
$$M_m(f)=\sup_{0\leq n\leq m}{|f_n|}, \quad M(f)=\sup_{n\geq0} |f_n|;$$
$$S_m(f)=\left(\sum_{n=0}^m|d_nf|^2\right)^{1/2},\quad S(f)=\left(\sum_{n=0}^{\infty}|d_nf|^2\right)^{1/2};$$
$$s_m(f)=\left(\sum\limits_{n=0}^{m}\E_{{n-1}}|d_n f|^2\right)^{\frac{1}{2}},
\quad s(f)=\left(\sum\limits_{n=0}^{\infty}\E_{{n-1}}|d_n f|^2\right)^{\frac{1}{2}}.$$
 Denote by $\Lambda$ the collection of all sequences
 $(\lambda_n)_{n\geq0}$ of non-decreasing, non-negative and adapted
 functions with $\lambda_\infty=\lim_{n\rightarrow
 \infty}\lambda_n$. Let $p(\cdot)\in \mathcal P(\Omega) $ and $0<q\leq \infty$. The variable martingale Hardy spaces associated with variable Lebesgue spaces $L_{p(\cdot)}$ are defined as follows:
  $$H_{p(\cdot)}^M = \{f=(f_n)_{n\geq 0}:\|f\|_{H_{p(\cdot)}^M}=\|M(f)\|_{{p(\cdot)}}<\infty\};$$
 $$H_{p(\cdot)}^S = \{f=(f_n)_{n\geq
 0}:\|f\|_{H_{p(\cdot)}^S}=\|S(f)\|_{{p(\cdot)}}<\infty\};$$
 $$ H_{p(\cdot)}^s = \{f=(f_n)_{n\geq 0}:\|f\|_{H_{p(\cdot)}^s}=\|s(f)\|_{{p(\cdot)}}<\infty\};$$
 $${Q}_{p(\cdot)} = \{f=(f_n)_{n\geq 0}:\exists(\lambda_n)_{n\geq0}\in\Lambda,\,\, \textrm{s.t.}\,\, S_n(f)
 \leq \lambda_{n-1}, \lambda_\infty\in L_{p(\cdot)}\},$$
 $$\|f\|_{{Q}_{p(\cdot)}}=\inf_{(\lambda_n) \in \Lambda}\|\lambda_\infty\|_{{p(\cdot)}};$$
 $${P}_{p(\cdot)} = \{f=(f_n)_{n\geq 0}:\exists(\lambda_n)_{n\geq0}\in\Lambda,\,\, \textrm{s.t.}\,\,
 |f_n| \leq \lambda_{n-1}, \lambda_\infty\in L_{p(\cdot)}\},$$
 $$\|f\|_{{P}_{p(\cdot)}}=\inf_{(\lambda_n) \in \Lambda}\|\lambda_\infty\|_{{p(\cdot)}}.$$
Similarly, the variable martingale Lorentz-Hardy spaces associated with variable Lorentz spaces $L_{p(\cdot),q}$ are defined as follows:
 $$H_{p(\cdot),q}^M = \{f=(f_n)_{n\geq 0}:\|f\|_{H_{p(\cdot),q}^M}=\|M(f)\|_{L_{p(\cdot),q}}<\infty\};$$
 $$H_{p(\cdot),q}^S = \{f=(f_n)_{n\geq
 0}:\|f\|_{H_{p(\cdot),q}^S}=\|S(f)\|_{L_{p(\cdot),q}}<\infty\};$$
 $$ H_{p(\cdot),q}^s = \{f=(f_n)_{n\geq 0}:\|f\|_{H_{p(\cdot),q}^s}=\|s(f)\|_{L_{p(\cdot),q}}<\infty\};$$
 $${Q}_{p(\cdot),q} = \{f=(f_n)_{n\geq 0}:\exists(\lambda_n)_{n\geq0}\in\Lambda,\,\, \textrm{s.t.}\,\, S_n(f)
 \leq \lambda_{n-1}, \lambda_\infty\in L_{p(\cdot),q}\},$$
 $$\|f\|_{{Q}_{p(\cdot),q}}=\inf_{(\lambda_n) \in \Lambda}\|\lambda_\infty\|_{L_{p(\cdot),q}};$$
 $${P}_{p(\cdot),q} = \{f=(f_n)_{n\geq 0}:\exists(\lambda_n)_{n\geq0}\in\Lambda,\,\, \textrm{s.t.}\,\,
 |f_n| \leq \lambda_{n-1}, \lambda_\infty\in L_{p(\cdot),q}\},$$
 $$\|f\|_{{P}_{p(\cdot),q}}=\inf_{(\lambda_n) \in \Lambda}\|\lambda_\infty\|_{L_{p(\cdot),q}}.$$
We define $\mathscr H_{p(\cdot),\infty}^M$ as the space of all martingales such that $M(f)\in \mathscr L_{p(\cdot),\infty}$.
Analogously, we can define
 $\mathscr H_{p(\cdot),\infty}^S$ and $\mathscr H_{p(\cdot),\infty}^s$, respectively.

\begin{remark}
If $p(\cdot)=p$ is a constant, then the above definitions of variable Hardy spaces go back to the classical definitions stated in \cite{Garsia1973} and \cite{Weisz1994book}.
\end{remark}

\subsection{The Doob maximal operator}

In the sequel of the paper, we will often suppose that every
$\sigma$-algebra $\mathcal{F}_n$ is generated by countably many atoms. Recall that $B\in\mathcal{F}_n$ is called an atom, if for any $A\subset B$
with $A\in\mathcal{F}_n$ satisfying $\mathbb{P}(A)<\mathbb{P}(B)$, we have $\mathbb{P}(A)=0$. We denote by $A(\mathcal{F}_n)$ the set of all atoms in $\mathcal{F}_n$.
It is clear that for $f\in L_1(\Omega)$
$$\E_n(f)=\sum_{A\in A(\mathcal{F}_n)} \Bigg( \frac{1}{\mathbb{P}( A)} \int_{A}f(x) d\mathbb{P} \Bigg) \chi_{A}, \quad n\in \N.$$

We now recall the definition of regularity. The stochastic basis $(\mathcal {F}_n)_{n\geq 0}$ is said to be
 regular, if for $n\geq 0$ and $A\in \mathcal {F}_n$, there exists
 $B\in \mathcal {F}_{n-1} $ such that $A\subset B$ and $P(B)\leq R
 P(A)$, where $R$ is a positive constant independent of $n$. A martingale is
 said to be regular if it is adapted to a regular $\sigma$-algebra
 sequence. This implies that there exists a constant $R>0$
 such that
  \begin{equation}\label{regular constant}
  f_n\leq Rf_{n-1}
  \end{equation}
  for all non-negative
 martingales $(f_n)_{n\geq0}$ adapted to the stochastic basis $(\mathcal {F}_n)_{n\geq
 0}$. We refer the reader to \cite[Chapter 7]{Long1993book} for more details.

In the following example, the so-called dyadic stochastic basis $(\mathcal {F}_n)_{n\geq 0}$ is regular and every $\mathcal{F}_n$ is generated by finitely many atoms.
\begin{example}\label{ex:regular}
Let $([0,1),\mathcal F, \mu)$ be a probability space such that $\mu$ is the Lebesgue measure and the subalgebras $\{\F_n\}_{n\geq0}$ are defined by
$$\F_n= \sigma\left\{[j 2^{-n},(j+1)2^{-n}),j=0,\cdots,2^n-1 \right\}.
$$
Then $\{\mathcal{F}_n\}_{n\geq0}$ is regular. A martingale with respect to $\{\mathcal{F}_n\}_{n\geq0}$ is called a dyadic martingale. There are a lot of other examples for (regular) $\sigma$-algebras generated by finitely many atoms, see the Vilenkin $\sigma$-algebras in Weisz \cite{Weisz1994book}.
\end{example}

In the sequel of the paper, instead of the log-Hölder continuity \eqref{local log} and \eqref{log at infinity}, we will suppose that every $\sigma$-algebra $\mathcal{F}_n$ is generated by countably many atoms and there exists an absolute constant $K_{p(\cdot)}\geq1$ depending only on $p(\cdot)$ such that
\begin{equation}\label{log} 
\mathbb{P}(A)^{p_-(A)-p_+(A)} \leq K_{p(\cdot)}, \quad\forall A\in \bigcup_n A(\mathcal{F}_n).
\end{equation}

Note that in this paper, under condition \eqref{log}, we also mean that every $\sigma$-algebra $\mathcal{F}_n$ is generated by countably many atoms.

\begin{remark}\label{rem:variable}
There exist a lot of functions $p(\cdot)$ satisfying \eqref{log}. In fact, if the measurable function $p(\cdot)$, which is defined on $[0,1)$, satisfies the log-Hölder continuity \eqref{local log}, then, by
\cite[Lemma 3.24]{Cruz2013}, we find that $p(\cdot)$ satisfies \eqref{log}. For concrete examples we mention the function $a+cx$ for parameters $a$ and $c$ such that the function is positive ($x \in  [0,1)$). All positive Lipschitz functions with order $0< \alpha \leq 1$ also satisfy \eqref{log}. Note that the condition \eqref{log at infinity} disappears on $[0,1)$.
\end{remark}

In this subsection, we prove a variable version of the dual Doob inequality and the weak type inequality for Doob's maximal operator.
We  provide two lemmas from \cite{Hao2015}.

\begin{lemma} [{\cite[Lemma 4.1]{Hao2015}}]\label{lemma of norm of set}  Let $p(\cdot)\in \mathcal{P}(\Omega)$ satisfy \eqref{log}. Then,
for any atom $B\in \cup_n A(\mathcal F_n)$,
$$\mathbb{P}(B)^{1/{p_-(B)}}  \approx  \mathbb{P}(B)^{1/{p(x)}} \approx \mathbb{P}(B)^{1/{p_+(B)}} \approx  \|\chi_B\|_{p(\cdot)},\quad \forall x\in B.$$
\end{lemma}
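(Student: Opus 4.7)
My plan is to peel the equivalence into two independent pieces: first, compare the three powers $\mathbb{P}(B)^{1/p_-(B)}$, $\mathbb{P}(B)^{1/p(x)}$, $\mathbb{P}(B)^{1/p_+(B)}$ using only \eqref{log}; second, sandwich the Luxemburg quasi-norm $\|\chi_B\|_{p(\cdot)}$ between $\mathbb{P}(B)^{1/p_-(B)}$ and $\mathbb{P}(B)^{1/p_+(B)}$ by a short computation from the definition.

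For the first piece, since $B$ is an atom in a \emph{probability} space we always have $\mathbb{P}(B)\leq 1$, so $t\mapsto \mathbb{P}(B)^{t}$ is non-increasing. From $p_-(B)\leq p(x)\leq p_+(B)$ on $B$ I immediately obtain the chain
$$\mathbb{P}(B)^{1/p_-(B)}\leq \mathbb{P}(B)^{1/p(x)}\leq \mathbb{P}(B)^{1/p_+(B)}.$$
The key step is to show the two extremes are comparable. I would write
$$\frac{\mathbb{P}(B)^{1/p_+(B)}}{\mathbb{P}(B)^{1/p_-(B)}}=\mathbb{P}(B)^{\frac{p_-(B)-p_+(B)}{p_-(B)p_+(B)}}=\bigl(\mathbb{P}(B)^{p_-(B)-p_+(B)}\bigr)^{1/(p_-(B)p_+(B))}.$$
Condition \eqref{log} bounds the base by $K_{p(\cdot)}$, and since $p_-(B)p_+(B)\geq p_-^2$ and $K_{p(\cdot)}\geq 1$, raising to the positive exponent $1/(p_-(B)p_+(B))$ keeps the ratio bounded by $K_{p(\cdot)}^{1/p_-^2}$. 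That gives the three-way equivalence.

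For the second piece, I would work directly from the Luxemburg definition. For any $\lambda\in(0,1]$, the monotonicity of $t\mapsto \lambda^{-t}$ (increasing because $\lambda^{-1}\geq 1$) yields
$$\lambda^{-p_+(B)}\,\mathbb{P}(B)\;\geq\;\int_{B}\lambda^{-p(x)}\,d\mathbb{P}\;\geq\;\lambda^{-p_-(B)}\,\mathbb{P}(B).$$
Choosing $\lambda=\mathbb{P}(B)^{1/p_+(B)}$ makes the upper bound equal to $1$, so the modular is $\leq 1$ and hence $\|\chi_B\|_{p(\cdot)}\leq \mathbb{P}(B)^{1/p_+(B)}$. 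Conversely, any $\lambda<\mathbb{P}(B)^{1/p_-(B)}$ satisfies $\lambda^{-p_-(B)}\mathbb{P}(B)>1$, so the modular exceeds $1$; therefore $\|\chi_B\|_{p(\cdot)}\geq \mathbb{P}(B)^{1/p_-(B)}$. Combining with the equivalence established in the first piece finishes the proof.

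I do not foresee any serious obstacle. The only delicate point is bookkeeping of inequality directions, since powers of a number less than $1$ reverse the order of the exponents; and one must ensure the multiplicative constant produced in the first piece depends only on $p(\cdot)$ through $K_{p(\cdot)}$ and $p_-$, not on the particular atom $B$. The hypothesis $\mathbb{P}(B)\leq 1$ is used silently throughout and is automatic on a probability space, which is exactly why the condition \eqref{log} is imposed on atoms rather than on arbitrary measurable sets.
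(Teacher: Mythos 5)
Your proof is correct and complete. The paper itself does not give a proof of this lemma — it simply cites \cite[Lemma 4.1]{Hao2015} — so there is no internal argument to compare against; but the route you take is the natural one: use $\mathbb{P}(B)\leq 1$ to order the three powers, use condition \eqref{log} together with $p_-(B)p_+(B)\geq p_-^2$ to bound the ratio of the extremes by $K_{p(\cdot)}^{1/p_-^2}$, and then sandwich $\|\chi_B\|_{p(\cdot)}$ via the Luxemburg modular evaluated at $\lambda=\mathbb{P}(B)^{1/p_+(B)}$ and $\lambda<\mathbb{P}(B)^{1/p_-(B)}$. One tiny remark: $p_-(B)\leq p(x)\leq p_+(B)$ holds only for a.e.\ $x\in B$ (ess inf/ess sup), so the pointwise claim about $\mathbb{P}(B)^{1/p(x)}$ is really an a.e.\ statement — but that is already how the cited lemma is phrased and does not affect any use made of it.
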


\begin{lemma}[{\cite[Lemma 4.1]{Hao2015}}]\label{lemma of log condition}
Let $p(\cdot) \in \mathcal{P}(\Omega)$ satisfy \eqref{log}  with $p_->1$.
\begin{enumerate}
\item Then for any atom $B\in \cup_n A(\mathcal F_n)$,
 $$\|\chi_B\|_1 \approx \|\chi_B\|_{p(\cdot)}\|\chi_B\|_{p'(\cdot)}.$$

\item Let $q(\cdot) \in \mathcal{P}(\Omega)$ satisfy \eqref{log} with $q_->1$. Then,
for any atom $B\in \cup_n A(\mathcal F_n)$,
$$\|\chi_B\|_{r(\cdot)} \approx \|\chi_B\|_{p(\cdot)}\|\chi_B\|_{q(\cdot)},$$
where
$$\frac{1}{r(x)}= \frac{1}{p(x)}+\frac{1}{q(x)},\quad x\in \Omega.$$
\end{enumerate}
\end{lemma}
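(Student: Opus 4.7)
The plan is to deduce both parts from Lemma \ref{lemma of norm of set}, which already delivers $\|\chi_B\|_{p(\cdot)} \approx \mathbb P(B)^{1/p(x)}$ on every atom $B$ for each $x \in B$, whenever the exponent satisfies \eqref{log}. The driving identity is $\mathbb P(B)^{1/p(x)}\mathbb P(B)^{1/q(x)} = \mathbb P(B)^{1/r(x)}$, so once $r(\cdot)$ is known to satisfy \eqref{log} one applies Lemma \ref{lemma of norm of set} separately to $r$, $p$ and $q$ and part (2) drops out immediately. Part (1) then reduces to part (2) by specializing $q(\cdot) := p'(\cdot)$, which forces $r(\cdot) \equiv 1$ and $\|\chi_B\|_1 = \mathbb P(B)$.

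The crux is verifying that $r(\cdot)$ inherits \eqref{log}. I would first reformulate \eqref{log} via logarithms as the statement that $(p_+(A) - p_-(A))|\ln \mathbb P(A)| \leq \ln K_{p(\cdot)}$, with an analogous reformulation for $q$. The pointwise identity $1/r(x) = 1/p(x) + 1/q(x)$ combined with sub/superadditivity of esssup/essinf yields
$$\frac{1}{r_-(A)} - \frac{1}{r_+(A)} \leq \Bigl(\frac{1}{p_-(A)} - \frac{1}{p_+(A)}\Bigr) + \Bigl(\frac{1}{q_-(A)} - \frac{1}{q_+(A)}\Bigr).$$
Multiplying by $r_-(A)r_+(A)$ (uniformly bounded since $r_\pm$ are) and clearing the uniformly bounded denominators $p_-(A)p_+(A)$ and $q_-(A)q_+(A)$ on the right, one obtains $r_+(A) - r_-(A) \lesssim (p_+(A) - p_-(A)) + (q_+(A) - q_-(A))$. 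Multiplying by $|\ln \mathbb P(A)|$ and applying the log conditions for $p$ and $q$ finishes the verification of \eqref{log} for $r(\cdot)$, and Lemma \ref{lemma of norm of set} then gives part (2).

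For part (1), I would apply the same mechanism with $q(\cdot) := p'(\cdot)$. A short check shows $p'(\cdot) \in \mathcal P(\Omega)$ with $p'_- = p_+/(p_+-1) > 1$ and $p'_+ = p_-/(p_--1) < \infty$, using $1 < p_- \leq p_+ < \infty$. The log condition for $p'$ then follows from the direct computation
$$p'_+(A) - p'_-(A) = \frac{p_+(A) - p_-(A)}{(p_-(A) - 1)(p_+(A) - 1)},$$
whose denominator is bounded below away from $0$ (thanks to $p_- > 1$ and $p_+ < \infty$); hence $(p'_+(A) - p'_-(A))|\ln \mathbb P(A)| \lesssim (p_+(A) - p_-(A))|\ln \mathbb P(A)|$, which is bounded by \eqref{log} for $p$. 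Part (2) then applies with $1/r(x) = 1/p(x) + 1/p'(x) \equiv 1$, delivering part (1).

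The main obstacle is precisely the verification that the auxiliary exponents ($r$ in (2) and $p'$ in (1)) inherit \eqref{log}. The subtle point is that the hypothesis $p_- > 1$ in part (1) is essential: without it the denominator in the formula for $p'_+(A) - p'_-(A)$ degenerates, and the log condition for $p'$ can fail even when it holds for $p$.
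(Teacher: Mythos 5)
Your proof is correct. Since the paper imports this lemma verbatim from \cite[Lemma~4.1]{Hao2015} without proof, there is no in-paper argument to compare against, but your route is the natural one: deduce part (2) from Lemma~\ref{lemma of norm of set} once one checks that $r(\cdot)$ inherits \eqref{log} (via the logarithmic reformulation, subadditivity of $\operatorname{esssup}(1/p+1/q)$, and the uniform bounds on the products $r_\pm$, $p_\pm$, $q_\pm$), and then obtain part (1) as the special case $q=p'$, for which the identity $p'_+(A)-p'_-(A)=\frac{p_+(A)-p_-(A)}{(p_-(A)-1)(p_+(A)-1)}$ and the lower bound $(p_--1)^2>0$ give \eqref{log} for $p'$. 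All the individual steps check out, including $r,p'\in\mathcal P(\Omega)$ and the final multiplication $\mathbb P(B)^{1/p(x)}\mathbb P(B)^{1/q(x)}=\mathbb P(B)^{1/r(x)}$ on a fixed atom. Your closing remark correctly pinpoints why $p_->1$ is indispensable in part (1).
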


The following lemma can be proved in the same way as Lemma 3.4 in Jiao et al. \cite{Jiao2016}.

\begin{lemma}\label{lemma of conditional expc} Let $p(\cdot)\in \mathcal{P}(\Omega)$, $1\leq p_-\leq p_+ < \infty$, satisfy \eqref{log}.
Suppose that $f\in L_{p(\cdot)}$ with $\left\|f\right\|_{p(\cdot)}\leq 1/2$ and $f=f \chi_{\{|f| \geq 1\}}$. Then, for any atom $A\in \cup_n A(\mathcal F_n)$, $x\in A$,
$$
\left(\frac{1}{\mathbb{P}(A)} \int_A |f(t)| \, dt\right)^{p(x)} \leq
\left(\frac{K_{p(\cdot)}}{\mathbb{P}(A)} \int_A |f(t)|^{p(t)}\, dt\right).
$$
\end{lemma}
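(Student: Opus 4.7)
The plan is to split the argument according to whether the mean $I := \int_A |f(t)|\,dt$ is smaller or larger than $\mathbb{P}(A)$, using an a priori bound $I\leq 1/2$ extracted from the hypothesis. Since $\|f\|_{p(\cdot)}\leq 1/2$, the basic modular property (\cite[Theorem 1.3]{Fan2001} applied to $2f$) gives $\int_\Omega 2^{p(t)}|f(t)|^{p(t)}\,d\mathbb{P}\leq 1$. Because $|f(t)|\geq 1$ on the support of $f$ and $p(t)\geq p_-\geq 1$, this implies both $\int_\Omega |f(t)|^{p(t)}\,d\mathbb{P}\leq 1/2$ and the pointwise comparison $|f(t)|\leq |f(t)|^{p(t)}$ on the support of $f$. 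In particular $I\leq \int_A |f(t)|^{p(t)}\,dt\leq 1/2<1$.

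If $I\leq \mathbb{P}(A)$, then $I/\mathbb{P}(A)\leq 1$ and, since $p(x)\geq 1$,
$$
\left(\frac{I}{\mathbb{P}(A)}\right)^{p(x)} \leq \frac{I}{\mathbb{P}(A)} \leq \frac{1}{\mathbb{P}(A)}\int_A |f(t)|^{p(t)}\,dt,
$$
which already yields the claim with constant $1$. If instead $\mathbb{P}(A)<I\leq 1$, then Hölder's inequality applied with the constant exponent $p_-(A)\geq 1$ gives
$$
I^{p_-(A)} \leq \mathbb{P}(A)^{p_-(A)-1}\int_A |f(t)|^{p_-(A)}\,dt \leq \mathbb{P}(A)^{p_-(A)-1}\int_A |f(t)|^{p(t)}\,dt,
$$
where the last step again uses $|f(t)|\geq 1$ and $p(t)\geq p_-(A)$ on the support of $f$. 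Since $I\leq 1$ and $p(x)\geq p_-(A)$, one has $I^{p(x)}\leq I^{p_-(A)}$, so dividing by $\mathbb{P}(A)^{p(x)}$ yields
$$
\left(\frac{I}{\mathbb{P}(A)}\right)^{p(x)} \leq \mathbb{P}(A)^{p_-(A)-p(x)}\cdot \frac{1}{\mathbb{P}(A)}\int_A |f(t)|^{p(t)}\,dt.
$$
Finally, since $p(x)\leq p_+(A)$ and $\mathbb{P}(A)\leq 1$, the factor $\mathbb{P}(A)^{p_-(A)-p(x)}$ is dominated by $\mathbb{P}(A)^{p_-(A)-p_+(A)}$, which is at most $K_{p(\cdot)}$ by \eqref{log}.

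The main conceptual point is recognizing that the support assumption $f=f\chi_{\{|f|\geq 1\}}$ is precisely what makes $r\mapsto |f(t)|^r$ monotone increasing on the support of $f$, so that one may freely replace the variable exponent $p(t)$ by the constant $p_-(A)$ on an atom $A$ and absorb the resulting discrepancy using \eqref{log}. Once this is noticed, a single application of Hölder's inequality with a constant exponent does all the work on each atom, and no genuine variable-exponent machinery is required beyond the log-type bound \eqref{log} itself.
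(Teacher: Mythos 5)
Your proof is correct, and it takes the approach one would expect for such a lemma: use the support condition $f=f\chi_{\{|f|\geq 1\}}$ together with $p_-\geq 1$ to obtain $I:=\int_A |f|\leq \int_A |f|^{p(t)}\leq 1$, then apply H\"older with the constant exponent $p_-(A)$ on the atom $A$, and absorb the residual power $\mathbb{P}(A)^{p_-(A)-p(x)}$ by \eqref{log}. This is essentially the same computation that underlies Lemma 3.4 of \cite{Jiao2016}, which the paper cites as the model proof.

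One remark: the case split according to whether $I\leq\mathbb{P}(A)$ is unnecessary. Once you have $I\leq 1$, the chain
\begin{equation*}
I^{p(x)}\ \leq\ I^{p_-(A)}\ \leq\ \mathbb{P}(A)^{p_-(A)-1}\int_A |f(t)|^{p_-(A)}\,dt\ \leq\ \mathbb{P}(A)^{p_-(A)-1}\int_A |f(t)|^{p(t)}\,dt
\end{equation*}
holds without any assumption on the size of $I$ relative to $\mathbb{P}(A)$, and dividing by $\mathbb{P}(A)^{p(x)}$ then gives the desired bound with the factor $\mathbb{P}(A)^{p_-(A)-p(x)}\leq \mathbb{P}(A)^{p_-(A)-p_+(A)}\leq K_{p(\cdot)}$ directly. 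Your first case is therefore subsumed by the second; it is not wrong, merely redundant.
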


The next result is taken from \cite[Theorem 4.1]{Kempka2014}.
\begin{lemma}\label{lemma for interpolation}
Let $p(\cdot)\in \mathcal P(\Omega)$ with $p_+<\infty$, $0<q\leq \infty$, $0<\theta<1$ and
$$\frac{1}{\tilde p (\cdot)}=\frac {1-\theta}{p(\cdot)}.$$
Then
$$(L_{p(\cdot)}, L_\infty)_{\theta,q}=L_{\tilde p(\cdot),q}.$$
\end{lemma}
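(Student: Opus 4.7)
The plan is to apply the $K$-method of real interpolation and match the resulting integral with the dyadic description of the Lorentz quasi-norm given in Remark \ref{remark for lorentz norm}. For $f\in L_{p(\cdot)}+L_\infty$ and $t>0$, the natural truncation $f_1^\lambda=\mathrm{sign}(f)\min(|f|,\lambda)$, $f_0^\lambda=\mathrm{sign}(f)(|f|-\lambda)_+$ satisfies $\|f_1^\lambda\|_\infty\leq\lambda$, giving the upper bound $K(f,t;L_{p(\cdot)},L_\infty)\leq\|(|f|-\lambda)_+\|_{p(\cdot)}+t\lambda$. For the matching lower bound, any competing decomposition $f=f_0+f_1$ with $\|f_1\|_\infty\leq\lambda$ satisfies $|f_0|\geq(|f|-\lambda)_+$ pointwise, hence $\|f_0\|_{p(\cdot)}+t\|f_1\|_\infty\geq\|(|f|-\lambda)_+\|_{p(\cdot)}+t\lambda$. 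Taking the infimum one obtains
$$K(f,t;L_{p(\cdot)},L_\infty)=\inf_{\lambda>0}\bigl\{\|(|f|-\lambda)_+\|_{p(\cdot)}+t\lambda\bigr\}.$$

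Next I would pass to a dyadic, level-set form of this identity. The sandwich $\lambda\chi_{\{|f|>2\lambda\}}\leq(|f|-\lambda)_+\leq|f|\chi_{\{|f|>\lambda\}}$, combined with the $\underline p$-subadditivity \eqref{equation of b trangile inequality} applied to the decomposition $|f|\chi_{\{|f|>\lambda\}}=\sum_{l:\,2^l>\lambda}|f|\chi_{\{2^l<|f|\leq 2^{l+1}\}}$, allows one to restrict the infimum to $\lambda=2^k$ and yields
$$K(f,2^j;L_{p(\cdot)},L_\infty)\approx\inf_{k\in\mathbb Z}\bigl\{2^k\|\chi_{\{|f|>2^k\}}\|_{p(\cdot)}+2^{k+j}\bigr\}.$$
The crucial algebraic ingredient is the identity $\|\chi_A\|_{p(\cdot)}=\|\chi_A\|_{\tilde p(\cdot)}^{\,1-\theta}$ for every measurable $A\subset\Omega$; this follows at once from the unit ball property of Lemma \ref{lemma for equal 1} together with the pointwise relation $\tilde p(x)=p(x)/(1-\theta)$.

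Setting $a_k:=\|\chi_{\{|f|>2^k\}}\|_{\tilde p(\cdot)}$, so that $\|\chi_{\{|f|>2^k\}}\|_{p(\cdot)}=a_k^{1-\theta}$, it remains to insert the above estimate into the discrete form $\|f\|_{(L_{p(\cdot)},L_\infty)_{\theta,q}}^q\approx\sum_{j\in\mathbb Z}(2^{-j\theta}K(f,2^j))^q$ and match it against $\|f\|_{L_{\tilde p(\cdot),q}}^q\approx\sum_{k\in\mathbb Z}(2^k a_k)^q$ from Remark \ref{remark for lorentz norm}. For each $j$ the near-optimal index $k=k(j)$ is determined by balancing the two dyadic terms via $a_{k(j)}^{1-\theta}\approx 2^j$; since $a_k$ is monotone decreasing in $k$, this assignment is essentially bijective, and the change of summation variable delivers the claimed equivalence, with the case $q=\infty$ treated in parallel by replacing sums with suprema. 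The main obstacle is the absence of a decreasing rearrangement in the variable exponent setting, which rules out a direct import of the classical Holmstedt-type argument; the level-set reformulation of the $K$-functional and the accompanying dyadic bookkeeping are therefore the technical heart of the proof.
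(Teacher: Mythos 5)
The paper does not actually prove this lemma — it is quoted verbatim from Kempka and Vybíral \cite[Theorem 4.1]{Kempka2014} — so there is no in-paper argument to compare against. Your strategy (level-set formula for the $K$-functional plus a dyadic Holmstedt-type reduction) is the standard route and is in the spirit of the cited proof, but it contains a concrete algebraic error. Since $\tilde p(x)=p(x)/(1-\theta)$, Lemma \ref{lemma for equal 1} gives $\int_A \|\chi_A\|_{\tilde p(\cdot)}^{-\tilde p(x)}\,d\mathbb P=1$, that is $\int_A \bigl(\|\chi_A\|_{\tilde p(\cdot)}^{-1/(1-\theta)}\bigr)^{p(x)}\,d\mathbb P=1$; by the uniqueness in Lemma \ref{lemma for equal 1} this forces $\|\chi_A\|_{p(\cdot)}=\|\chi_A\|_{\tilde p(\cdot)}^{1/(1-\theta)}$, not $\|\chi_A\|_{\tilde p(\cdot)}^{1-\theta}$ as you state. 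The error is not cosmetic: with your exponent, balancing $a_{k(j)}^{1-\theta}\approx 2^j$ yields $2^{-j\theta}K(f,2^j)\approx 2^{k(j)}a_{k(j)}^{(1-\theta)^2}$, so the change of variable produces $\sum_k\bigl(2^k a_k^{(1-\theta)^2}\bigr)^q$ rather than $\|f\|_{L_{\tilde p(\cdot),q}}^q\approx\sum_k(2^k a_k)^q$; only the corrected exponent $1/(1-\theta)$ makes the dyadic terms match.

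Beyond that slip, the two central reductions are asserted rather than proved. The lower bound $K(f,2^j)\gtrsim\inf_k\{2^k\|\chi_{\{|f|>2^k\}}\|_{p(\cdot)}+2^{k+j}\}$ does follow from $\lambda\chi_{\{|f|>2\lambda\}}\leq(|f|-\lambda)_+$ after an index shift, but your upper bound via $\underline p$-subadditivity of the level-set decomposition produces a tail
$\bigl(\sum_{l\geq k}2^{l\underline p}\|\chi_{\{|f|>2^l\}}\|_{p(\cdot)}^{\underline p}\bigr)^{1/\underline p}$
in place of a single term $2^k\|\chi_{\{|f|>2^k\}}\|_{p(\cdot)}$; these are not comparable pointwise in $k$, and one must absorb the extra sum after inserting it into $\sum_j(2^{-j\theta}K(f,2^j))^q$, typically via a discrete Hardy inequality. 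Likewise, "the assignment $j\mapsto k(j)$ is essentially bijective" hides that $k(j)$ is constant on whole blocks of $j$ and can skip values of $k$ when $a_k$ drops sharply, and the range $2^j>\|\chi_\Omega\|_{p(\cdot)}$ (where no balance exists) needs a separate direct estimate using $K(f,t)\leq\|f\|_{p(\cdot)}$. None of this is unfixable, but these steps are precisely the technical content, and the sketch leaves them open.
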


With the help of Lemma \ref{lemma of conditional expc}, we can prove the following Doob maximal inequality similarly to \cite{Jiao2016}.

\begin{theorem}\label{thm:maximal inequality}
Let $p(\cdot)\in\mathcal P(\Omega)$ satisfy \eqref{log} with $1< p_-\leq p_+ < \infty$.
Then, for any  $f\in L_{p(\cdot)}$,
$$\|M(f)\|_{p(\cdot)}\lesssim \|f\|_{p(\cdot)}.$$
\end{theorem}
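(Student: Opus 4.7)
The plan is to reduce the maximal inequality to the conditional expectation bound of Lemma~\ref{lemma of conditional expc} via a homogeneity argument, a two-piece decomposition of $f$ tied to the level $\{|f|\geq 1\}$, and a stopping-time/layer-cake estimate. First I would use the positive homogeneity of the quasi-norm to reduce to the normalised statement: there exists $C=C(p(\cdot))$ such that $\|f\|_{p(\cdot)}\leq 1/2$ implies $\|M(f)\|_{p(\cdot)}\leq C$. Under this normalisation one has $\rho(f)\leq 1$. I would then split $f=f^{(1)}+f^{(2)}$, with $f^{(1)}:=f\chi_{\{|f|<1\}}$ and $f^{(2)}:=f\chi_{\{|f|\geq 1\}}$. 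Since $|f^{(1)}|\leq 1$, one gets $M(f^{(1)})\leq 1$ pointwise and hence $\|M(f^{(1)})\|_{p(\cdot)}\leq \|1\|_{p(\cdot)}=1$. Thus only $\|M(f^{(2)})\|_{p(\cdot)}$ has to be controlled, and the hypotheses of Lemma~\ref{lemma of conditional expc} are met: $\|f^{(2)}\|_{p(\cdot)}\leq 1/2$ and $f^{(2)}=f^{(2)}\chi_{\{|f^{(2)}|\geq 1\}}$.

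Applying Lemma~\ref{lemma of conditional expc} on every atom $A\in A(\F_n)$ and every $x\in A$ yields
\begin{equation*}
\bigl(\E_{n}(|f^{(2)}|)(x)\bigr)^{p(x)}\leq K_{p(\cdot)}\,\E_{n}\bigl(|f^{(2)}|^{p(\cdot)}\bigr)(x),
\end{equation*}
so taking the supremum in $n$ produces the pointwise majorisation $M(f^{(2)})(x)^{p(x)}\leq K_{p(\cdot)}M(|f^{(2)}|^{p(\cdot)})(x)$. To exploit this, I would introduce the stopping times $\tau_{\lambda}:=\inf\{n:\E_n(|f^{(2)}|)>\lambda\}$; then $\{\tau_{\lambda}<\infty\}=\{M(f^{(2)})>\lambda\}$ is a disjoint union of $\F_{\tau_{\lambda}}$-atoms, and applying Lemma~\ref{lemma of conditional expc} on each such atom together with the tower property gives the level-set estimate
\begin{equation*}
\int_{\{M(f^{(2)})>\lambda\}}\lambda^{p(x)}\,d\PP\leq K_{p(\cdot)}\int_{\{M(f^{(2)})>\lambda\}}|f^{(2)}|^{p(\cdot)}\,d\PP\leq K_{p(\cdot)}\rho(f^{(2)}).
\end{equation*}

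To close the argument, I would split the modular into $\{M(f^{(2)})\leq 1\}$ and $\{M(f^{(2)})>1\}$. On the low part, $M(f^{(2)})^{p(x)}\leq M(f^{(2)})^{p_-}$, and the classical Doob $L^{p_-}$-inequality is available because $p_->1$ and $\|f^{(2)}\|_{p_-}^{p_-}\leq \int |f^{(2)}|^{p(\cdot)}\,d\PP\leq 1$, using $|f^{(2)}|^{p_-}\leq |f^{(2)}|^{p(\cdot)}$ on the support of $f^{(2)}$. On the high part one inserts the level-set estimate into the layer-cake identity
\begin{equation*}
\int_{\{M(f^{(2)})>1\}}M(f^{(2)})^{p(x)}\,d\PP=\PP(M(f^{(2)})>1)+\int_{1}^{\infty}\int_{\{M(f^{(2)})>t\}}p(x)\,t^{p(x)-1}\,d\PP\,dt,
\end{equation*}
leading, after a Fubini swap, to a term controlled by $\int|f^{(2)}|^{p(\cdot)}\log^{+}M(f^{(2)})\,d\PP$. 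The main obstacle will be the absorption of this logarithmic factor: since $L^{p(\cdot)}$ is not rearrangement invariant, one cannot close the estimate directly. I would instead combine the elementary bound $\log^{+}b\leq b^{\delta}/\delta$ with the $L^{p_-}$ Doob inequality applied to $M(f^{(2)})^{\delta}$ for a sufficiently small $\delta>0$ (depending on $p_-$ and $p_+$), and work with the truncations $f^{(2)}\chi_{\{|f^{(2)}|\leq N\}}$ to guarantee a priori finiteness before absorbing the high part into the left-hand side; letting $N\to\infty$ then delivers the desired $\|M(f)\|_{p(\cdot)}\lesssim \|f\|_{p(\cdot)}$.
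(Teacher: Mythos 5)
Your opening moves (the homogeneity reduction to $\|f\|_{p(\cdot)}\leq\frac12$, splitting off $f^{(1)}=f\chi_{\{|f|<1\}}$ so that only $f^{(2)}=f\chi_{\{|f|\geq 1\}}$ remains, and applying Lemma~\ref{lemma of conditional expc} atom by atom) are correct, and the stopping-time level-set estimate you derive is exactly the mechanism the paper later uses for its weak-type Doob theorem. The gap is in the final absorption. After the layer-cake identity and a Fubini swap you do land on $\int_\Omega|f^{(2)}|^{p(\cdot)}\log^{+}M(f^{(2)})\,d\PP$, but the proposed fix $\log^{+}b\leq b^{\delta}/\delta$ does not close it: the integrand $|f^{(2)}|^{p(\cdot)}M(f^{(2)})^{\delta}$ cannot be split by a weighted Young inequality into $C_\eta|f^{(2)}|^{p(\cdot)}+\eta\,M(f^{(2)})^{p(\cdot)}$, because any choice of conjugate exponents matching $M(f^{(2)})^{p(\cdot)}$ forces a strictly larger exponent than $p(\cdot)$ on the $|f^{(2)}|$-piece, which is not controlled by $\rho(f^{(2)})\leq1$ since $|f^{(2)}|\geq1$ on its support; H\"older against $\|M(f^{(2)})^{\delta}\|_{p_-}\lesssim1$ fails for the same reason in the dual factor; and truncating $f^{(2)}$ only yields a priori finiteness, it does not make any coefficient small enough to absorb. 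The extra $\delta$-th power of $M(f^{(2)})$ has to be paid for by \emph{removing} a power from the exponent of $|f^{(2)}|$, so the shift must be built into the level-set estimate itself (bound $t^{p(x)-\varepsilon}$ via Lemma~\ref{lemma of conditional expc} applied with exponent $p(\cdot)-\varepsilon$, which satisfies \eqref{log} with the same constant), not into the bound for $\log^{+}$.

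There is, however, a much shorter route, and it is the one the phrase ``similarly to \cite{Jiao2016}'' refers to: deflate the exponent \emph{before} taking suprema, which makes stopping times, layer cake, logarithm and absorption all unnecessary. Fix $1<p_0<p_-$ (this is precisely where $p_->1$ is consumed) and set $q(\cdot):=p(\cdot)/p_0$. Then $q_->1$, $q(\cdot)$ satisfies \eqref{log} with constant $K_{p(\cdot)}^{1/p_0}$, and $g:=|f^{(2)}|$ still satisfies the hypotheses of Lemma~\ref{lemma of conditional expc} for $q(\cdot)$ (since $g\geq1$ on its support, $\int_\Omega g^{q(\cdot)}\,d\PP\leq\int_\Omega g^{p(\cdot)}\,d\PP$, so a marginally stronger normalisation of $\|f\|_{p(\cdot)}$ at the outset suffices). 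The lemma gives $(\E_n g)^{q(x)}\leq K\,\E_n\bigl(g^{q(\cdot)}\bigr)$ on atoms, hence $(Mg)^{q(x)}\leq K\,M\bigl(g^{q(\cdot)}\bigr)$ pointwise. Raising to the power $p_0$ and integrating,
\begin{equation*}
\int_\Omega (Mg)^{p(x)}\,d\PP\leq K^{p_0}\int_\Omega\bigl[M\bigl(g^{q(\cdot)}\bigr)\bigr]^{p_0}\,d\PP\lesssim\int_\Omega g^{q(\cdot)p_0}\,d\PP=\int_\Omega g^{p(\cdot)}\,d\PP\leq1,
\end{equation*}
the middle step being the classical Doob inequality in $L_{p_0}$ with $p_0>1$. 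Combined with your trivial bound for $M(f^{(1)})$, this finishes the proof.
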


\begin{corollary}\label{c10}
Let $p(\cdot)\in\mathcal P(\Omega)$ satisfy \eqref{log} with $1< p_-\leq p_+ < \infty$ and $0<q\leq \infty$. Then
$$
\left\|M(f)\right\|_{L_{p(\cdot),q}} \lesssim \left\|f\right\|_{L_{p(\cdot),q}}.
$$
\end{corollary}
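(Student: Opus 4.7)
\medskip
\noindent\textbf{Proof proposal.} The plan is to deduce the Lorentz bound from the already-established strong bound on $L_{p(\cdot)}$ (Theorem \ref{thm:maximal inequality}) via the real interpolation identity in Lemma \ref{lemma for interpolation}, using that $M$ is trivially bounded on $L_\infty$.

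First, I would choose a pivot exponent. Since $p_->1$, pick $\theta\in(0,1)$ small enough so that $(1-\theta)p_->1$, and set
$$
p_0(\cdot):=(1-\theta)p(\cdot),\qquad\text{so that}\qquad \frac{1}{p(\cdot)}=\frac{1-\theta}{p_0(\cdot)}.
$$
Then $1<(p_0)_-\leq (p_0)_+<\infty$. Next I would verify that $p_0(\cdot)$ inherits condition \eqref{log}. Indeed, for any atom $A$ we have $p_0^-(A)-p_0^+(A)=(1-\theta)\bigl(p_-(A)-p_+(A)\bigr)$, and since $p_-(A)-p_+(A)\leq 0$ while $\mathbb P(A)\leq 1$, multiplying the non-positive exponent by $1-\theta\in(0,1)$ only brings it closer to zero, so
$$
\mathbb P(A)^{p_0^-(A)-p_0^+(A)}\leq \mathbb P(A)^{p_-(A)-p_+(A)}\leq K_{p(\cdot)}.
$$
Hence Theorem \ref{thm:maximal inequality} applies to $p_0(\cdot)$ and yields $\|M(f)\|_{p_0(\cdot)}\lesssim\|f\|_{p_0(\cdot)}$.

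Second, the bound $\|M(f)\|_{\infty}\leq\|f\|_{\infty}$ is immediate from the definition of $M$. Thus $M$ is a sublinear operator which is bounded both on $L_{p_0(\cdot)}$ and on $L_\infty$. Applying the real interpolation identity of Lemma \ref{lemma for interpolation} with parameter $\theta$ and $p_0(\cdot)$ in place of $p(\cdot)$ there, one obtains
$$
(L_{p_0(\cdot)},L_\infty)_{\theta,q}=L_{p(\cdot),q},
$$
and the standard real interpolation theorem for quasi-sublinear operators between quasi-Banach couples then gives
$$
\|M(f)\|_{L_{p(\cdot),q}}\lesssim\|f\|_{L_{p(\cdot),q}},
$$
which is the desired inequality.

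The main point that requires some care is the check that $p_0(\cdot)=(1-\theta)p(\cdot)$ still satisfies \eqref{log} so that Theorem \ref{thm:maximal inequality} is applicable; the monotonicity observation above handles this cleanly. The rest of the argument reduces to invoking Lemma \ref{lemma for interpolation} and the well-known fact that real interpolation extends sublinear bounds on the endpoints to all intermediate spaces, so no further technical difficulty is expected.
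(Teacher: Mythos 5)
Your proof is correct and takes essentially the same route as the paper: deduce the Lorentz bound from the $L_{p(\cdot)}$ bound of Theorem \ref{thm:maximal inequality}, the trivial $L_\infty$ bound for $M$, and the real interpolation identity of Lemma \ref{lemma for interpolation}. You are in fact a bit more careful than the printed one-line proof, since you explicitly introduce the pivot $p_0(\cdot)=(1-\theta)p(\cdot)$ and verify that $p_0(\cdot)$ still satisfies \eqref{log} with $(p_0)_->1$, a re-labeling step that the paper leaves implicit.
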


\begin{proof}
It follows from Theorem \ref{thm:maximal inequality} that $M$ is bounded from $L_{p(\cdot)}$ to $L_{p(\cdot)}$.
Hence, by combining the fact that $M$ is bounded from $L_{\infty}$ to $L_{\infty}$ and  Lemma \ref{lemma for interpolation},  we find that $M$ is bounded from $L_{\tilde p(\cdot),q}$ to $L_{\tilde p(\cdot),q}$.
\end{proof}

The previous two results imply immediately the next corollary.

\begin{corollary}\label{c100}
Let $p(\cdot)\in\mathcal P(\Omega)$ satisfy \eqref{log} with $1< p_-\leq p_+ < \infty$ and $0<q\leq \infty$. Then $H_{p(\cdot)}^M$ is equivalent to $L_{p(\cdot)}$ and $H_{p(\cdot),q}^M$ to $L_{p(\cdot),q}$ with the inequalities
$$
\left\|f\right\|_{L_{p(\cdot)}}\leq \left\|f\right\|_{H_{p(\cdot)}^M} \lesssim \left\|f\right\|_{L_{p(\cdot)}},
\qquad
\left\|f\right\|_{L_{p(\cdot),q}}\leq \left\|f\right\|_{H_{p(\cdot),q}^M} \lesssim \left\|f\right\|_{L_{p(\cdot),q}}.
$$
\end{corollary}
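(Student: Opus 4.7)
The plan is that both claimed equivalences reduce to pairing (i) the trivial pointwise domination $|f_n|\le M(f)$ with the monotonicity of the ambient quasi-norm, and (ii) the maximal inequalities already furnished by Theorem \ref{thm:maximal inequality} and Corollary \ref{c10}; no new ideas are required beyond unwinding definitions.

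For the $L_{p(\cdot)}$ statement, I would fix a martingale $f=(f_n)_{n\ge 0}$. Monotonicity of the modular $\rho$ immediately yields monotonicity of the quasi-norm $\|\cdot\|_{p(\cdot)}$ under pointwise absolute-value ordering, so $|f_n|\le M(f)$ implies $\|f_n\|_{p(\cdot)}\le \|M(f)\|_{p(\cdot)}$ for every $n$. Taking the supremum over $n$ and recalling $\|f\|_{L_{p(\cdot)}}=\sup_n\|f_n\|_{p(\cdot)}$ produces the lower estimate $\|f\|_{L_{p(\cdot)}}\le\|f\|_{H_{p(\cdot)}^M}$. The reverse inequality is the direct content of Theorem \ref{thm:maximal inequality}, namely $\|f\|_{H_{p(\cdot)}^M}=\|M(f)\|_{p(\cdot)}\lesssim\|f\|_{p(\cdot)}=\|f\|_{L_{p(\cdot)}}$.

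For the Lorentz statement, the same two-step procedure applies. The monotonicity $\|g\|_{L_{p(\cdot),q}}\le\|h\|_{L_{p(\cdot),q}}$ whenever $|g|\le|h|$ follows at once from the level-set formula defining the quasi-norm, since $\{|g|>\lambda\}\subseteq\{|h|>\lambda\}$ forces $\|\chi_{\{|g|>\lambda\}}\|_{p(\cdot)}\le\|\chi_{\{|h|>\lambda\}}\|_{p(\cdot)}$ and the discretized expression from Remark \ref{remark for lorentz norm}(1) is monotone term-by-term. Applying this with $g=f_n$ and $h=M(f)$, then taking the supremum over $n$ under the natural convention $\|f\|_{L_{p(\cdot),q}}=\sup_n\|f_n\|_{L_{p(\cdot),q}}$, yields the lower bound; the upper bound is precisely Corollary \ref{c10}. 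I do not anticipate any serious obstacle: the only point requiring verification is the monotonicity of $\|\cdot\|_{L_{p(\cdot),q}}$, which is immediate from the definition, and the fact that Theorem \ref{thm:maximal inequality} and Corollary \ref{c10} are already formulated in a way that takes the martingale quasi-norm $\sup_n\|f_n\|$ as input, so no extra limit or convergence argument is needed.
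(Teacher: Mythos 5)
Your proof is correct and follows exactly the route the paper intends: the paper simply declares the corollary to be an immediate consequence of Theorem \ref{thm:maximal inequality} and Corollary \ref{c10}, and your write-up supplies precisely the routine verification, namely the pointwise domination $|f_n|\le M(f)$ plus monotonicity of the $L_{p(\cdot)}$ and $L_{p(\cdot),q}$ quasi-norms for the lower estimates, and the two maximal inequalities for the upper ones.
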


The following result is a variable version of the dual Doob's inequality. For its classical case, we refer the reader to \cite{Pisier2016} or \cite{Hytonen2016book}.

\begin{proposition}\label{dual Doob}
Let $p(\cdot)\mathcal \in P(\Omega)$ satisfy \eqref{log} with $1< p_-\leq p_+ < \infty$.
Let $(\theta_n)_{n\geq0}$ be a sequence of arbitrary random variables.
Then
$$\left\|\sum |\mathbb E_n(\theta_n)|\right\|_{p(\cdot)}\lesssim \left\|\sum |\theta_n|\right\|_{p(\cdot)}.$$
\end{proposition}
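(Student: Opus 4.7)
The plan is to mimic the classical duality proof, replacing the constant-exponent Hölder and Doob inequalities with their variable-exponent counterparts. First, since $|\mathbb{E}_n(\theta_n)| \leq \mathbb{E}_n(|\theta_n|)$, we may replace each $\theta_n$ by $|\theta_n|$ and so assume $\theta_n \geq 0$; write $\Theta := \sum_n \theta_n$ and $T := \sum_n \mathbb{E}_n(\theta_n)$. We may also assume $\Theta \in L_{p(\cdot)}$, otherwise there is nothing to prove.

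The core of the argument uses duality. By Lemma \ref{lem:duality for variabl p} (valid since $p'_- \geq 1$),
\[
\|T\|_{p(\cdot)} \lesssim \sup \int_\Omega T g \, d\mathbb{P},
\]
the supremum taken over $g \in L_{p'(\cdot)}$ with $g \geq 0$ and $\|g\|_{p'(\cdot)} \leq 1$. Expanding $T$, using the self-adjointness of conditional expectation and Fubini--Tonelli, and then dominating by the Doob maximal function,
\[
\int_\Omega T g \, d\mathbb{P} = \sum_n \int_\Omega \mathbb{E}_n(\theta_n)\, g \, d\mathbb{P} = \sum_n \int_\Omega \theta_n \, \mathbb{E}_n(g) \, d\mathbb{P} \leq \int_\Omega \Theta \cdot M(g) \, d\mathbb{P}.
\]
The variable-exponent Hölder inequality (Lemma \ref{lemma of Holer inequality}) gives
\[
\int_\Omega \Theta \cdot M(g) \, d\mathbb{P} \lesssim \|\Theta\|_{p(\cdot)} \, \|M(g)\|_{p'(\cdot)},
\]
so it remains only to invoke Doob's maximal inequality (Theorem \ref{thm:maximal inequality}) with respect to the exponent $p'(\cdot)$ to get $\|M(g)\|_{p'(\cdot)} \lesssim \|g\|_{p'(\cdot)} \leq 1$, finishing the estimate.

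The step requiring attention, and the only nontrivial point, is the verification that $p'(\cdot)$ is an admissible exponent for Theorem \ref{thm:maximal inequality}: namely $1 < p'_- \leq p'_+ < \infty$ and $p'(\cdot)$ satisfies \eqref{log}. Since $1 < p_- \leq p_+ < \infty$, the relation $p'(x) = p(x)/(p(x)-1)$ immediately yields $p'_- = p_+/(p_+-1) > 1$ and $p'_+ = p_-/(p_--1) < \infty$. For the log condition, on any atom $A \in \bigcup_n A(\mathcal{F}_n)$ a short computation gives
\[
p'_-(A) - p'_+(A) = -\,\frac{p_+(A)-p_-(A)}{(p_-(A)-1)(p_+(A)-1)},
\]
so, using $\mathbb{P}(A) \leq 1$ and $p_- > 1$,
\[
\mathbb{P}(A)^{p'_-(A)-p'_+(A)} = \bigl(\mathbb{P}(A)^{p_-(A)-p_+(A)}\bigr)^{\frac{1}{(p_-(A)-1)(p_+(A)-1)}} \leq K_{p(\cdot)}^{1/(p_--1)^2}.
\]
Thus \eqref{log} holds for $p'(\cdot)$ with a constant depending only on $K_{p(\cdot)}$ and $p_-$. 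With this in hand, the chain of inequalities above yields $\|T\|_{p(\cdot)} \lesssim \|\Theta\|_{p(\cdot)}$, which is exactly the claim. The main (and essentially only) obstacle is this duality check for $p'(\cdot)$; everything else reduces to assembling tools already proved earlier in the paper.
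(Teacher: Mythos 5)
Your proof is correct and follows the same strategy as the paper: reduce to nonnegative $\theta_n$, select a dual function $g$ with $\|g\|_{p'(\cdot)}\leq 1$ via Lemma~\ref{lem:duality for variabl p}, transfer $\mathbb{E}_n$ onto $g$ by self-adjointness, dominate by the Doob maximal function, apply H\"older's inequality, and close with the variable-exponent Doob inequality (Theorem~\ref{thm:maximal inequality}) for $p'(\cdot)$. Your explicit verification that $p'(\cdot)$ inherits condition \eqref{log} (with constant $K_{p(\cdot)}^{1/(p_--1)^2}$) is a welcome addition, filling in a small step the paper leaves implicit when it applies the maximal inequality to the conjugate exponent.
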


\begin{proof}
Since $|\mathbb E_n (\theta_n)|\leq \mathbb E_n (|\theta_n|)$,  it suffices to prove this result assuming $\theta_n\geq0$.
Consider $0\leq f\in L_{p'(\cdot)}$ with $\|f\|_{p'(\cdot)}=1$ (see Lemma \ref{lem:duality for variabl p}) such that
$$
\left\|\sum |\mathbb E_n(\theta_n)|\right\|_{p(\cdot)}= \mathbb E \Big(\sum \mathbb E_n(\theta_n) f\Big).
$$
Then, by Doob's inequality (Theorem \ref{thm:maximal inequality}),
\begin{align*}
\left\|\sum |\mathbb E_n(\theta_n)|\right\|_{p(\cdot)}&=\sum \mathbb E  [\theta_n\mathbb E_n(f)]
\lesssim \left\|\sum |\theta_n|\right\|_{p(\cdot)} \|\sup_n \mathbb E_n(f)\|_{p'(\cdot)}
\lesssim \left\|\sum |\theta_n|\right\|_{p(\cdot)}.
\end{align*}
\end{proof}

By a similar argument to the proof of the above proposition,  we may get the following Stein's inequality:
Let $p(\cdot)$ satisfy \eqref{log} with $1\leq r< p_-\leq p_+ < \infty$ for some $r$. Let $(\theta_n)_{n\geq0}$ be a sequence of arbitrary random variables.
Then
$$\left\|\left[\sum |\mathbb E_n(\theta_n)|^r\right]^{\frac 1r}\right\|_{p(\cdot)}\lesssim \left\|\left(\sum |\theta_n|^r\right)^{\frac 1r}\right\|_{p(\cdot)}.$$
Taking $(\theta_n)_{n\geq0}=(|d_{n+1}f|^2)_{n\geq0}$,  the following result can be deduced from above proposition.
\begin{corollary}
Let $p(\cdot)\in\mathcal P(\Omega)$ satisfy \eqref{log} with $2< p_-\leq p_+ < \infty$.
Then
$$\|f\|_{H_{p(\cdot)}^s}\lesssim\|f\|_{H_{p(\cdot)}^S}.$$
\end{corollary}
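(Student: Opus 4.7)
My plan is to follow the hint given immediately before the corollary: apply the dual Doob inequality (Proposition \ref{dual Doob}) with the choice $\theta_n=|d_{n+1}f|^{2}$, but at the rescaled exponent $\tilde p(\cdot):=p(\cdot)/2$. Since the hypothesis is $p_->2$, we have $\tilde p_->1$, which is exactly what Proposition \ref{dual Doob} requires.

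The first step is to verify that $\tilde p(\cdot)$ is admissible for Proposition \ref{dual Doob}. Clearly $\tilde p(\cdot)\in\mathcal P(\Omega)$ with $1<\tilde p_-\le\tilde p_+<\infty$. For condition \eqref{log}, note that for every atom $A$ we have $\tilde p_+(A)-\tilde p_-(A)=\tfrac12(p_+(A)-p_-(A))$, and since $\mathbb P(A)\le 1$ and the exponent $p_-(A)-p_+(A)$ is non-positive,
\[
\mathbb P(A)^{\tilde p_-(A)-\tilde p_+(A)}=\mathbb P(A)^{(p_-(A)-p_+(A))/2}\le \mathbb P(A)^{p_-(A)-p_+(A)}\le K_{p(\cdot)},
\]
so $\tilde p(\cdot)$ satisfies \eqref{log} with the same constant.

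The second step is the homogeneity identity $\|g^{2}\|_{\tilde p(\cdot)}=\|g\|_{p(\cdot)}^{2}$ for any measurable $g\ge 0$. This is immediate from the definition of the Luxemburg quasi-norm: setting $\mu=\lambda^{2}$ in
\[
\int_\Omega\Bigl(\frac{g(x)^{2}}{\mu}\Bigr)^{\tilde p(x)}d\mathbb P=\int_\Omega\Bigl(\frac{g(x)}{\lambda}\Bigr)^{p(x)}d\mathbb P,
\]
gives a bijection between the defining sets, hence the norms correspond by squaring.

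The third step is simply to chain the above together. Take $\theta_n=|d_{n+1}f|^{2}$ and apply Proposition \ref{dual Doob} at exponent $\tilde p(\cdot)$:
\[
\|s(f)^{2}\|_{\tilde p(\cdot)}=\Bigl\|\sum_{n\ge 0}\mathbb E_{n}(|d_{n+1}f|^{2})\Bigr\|_{\tilde p(\cdot)}\lesssim \Bigl\|\sum_{n\ge 0}|d_{n+1}f|^{2}\Bigr\|_{\tilde p(\cdot)}=\|S(f)^{2}\|_{\tilde p(\cdot)}.
\]
Invoking the identity from the second step on both sides yields $\|s(f)\|_{p(\cdot)}^{2}\lesssim\|S(f)\|_{p(\cdot)}^{2}$, and hence the desired inequality $\|f\|_{H^{s}_{p(\cdot)}}\lesssim\|f\|_{H^{S}_{p(\cdot)}}$.

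There is no genuine obstacle here: the only thing that could go wrong would be a subtlety in the rescaled log-condition or in the squaring identity, and both reduce to one-line checks. The condition $p_->2$ is exactly what is needed so that $\tilde p_->1$, which is the threshold in Proposition \ref{dual Doob}.
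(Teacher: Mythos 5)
Your proof is correct and is exactly the argument the paper intends: apply the dual Doob inequality at the rescaled exponent $p(\cdot)/2$ with $\theta_n=|d_{n+1}f|^2$, after checking that $p(\cdot)/2$ inherits condition \eqref{log} and using the scaling identity $\|g^2\|_{p(\cdot)/2}=\|g\|_{p(\cdot)}^2$. The paper only hints at the choice of $\theta_n$, so your write-up simply fills in the (correct) routine verifications that the paper leaves implicit.
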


Theorem \ref{thm:maximal inequality} says that $M$ is bounded from $L_{p(\cdot)}$ to $L_{p(\cdot)}$ and hence from $L_{p(\cdot)}$ to $L_{p(\cdot),\infty}$ if $p_->1$. In the next theorem, we extend the last statement to $p_-\geq 1$. This result covers the classical weak $(1,1)$ inequality for the Doob maximal operator.

\begin{theorem}
Let $p(\cdot)$ satisfy \eqref{log} with $1\leq p_-\leq p_+ < \infty$.
Then, for any  $f\in L_{p(\cdot)}$,
$$\sup_{\lambda>0} \lambda\|\chi_{\{M(f)>\lambda\}}\|_{p(\cdot)}\lesssim \|f\|_{p(\cdot)}.$$
\end{theorem}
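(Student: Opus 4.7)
The plan is to deduce this weak-type norm inequality from the atomic Calderón--Zygmund-type decomposition of the level set together with Lemma~\ref{lemma of conditional expc}, which plays the role of Chebyshev/log-Hölder in the variable exponent setting and is the only natural bridge between a modular estimate and a norm estimate on each atom. By homogeneity I reduce to $\|f\|_{p(\cdot)}\leq 1$; the case $\lambda \leq 4$ is immediate from $\|\chi_{\Omega}\|_{p(\cdot)} \leq 1$, so the work is the case $\lambda\geq 4$, which I will prove for the truncated maximal $M_{N}(f)$ uniformly in $N$ and then let $N\to\infty$ using the monotone continuity of the Luxemburg quasi-norm.

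For $\lambda\geq 4$ I use the stopping time $\tau_N:=\inf\{n\leq N:|f_n|>\lambda\}$, which decomposes $\{M_N(f)>\lambda\}$ into a disjoint union of atoms $B_i\in A(\mathcal F_{n_i})$. The martingale property yields $\lambda\,\mathbb P(B_i)<\int_{B_i}|f_N|\,d\mathbb P$. I then introduce
$$
g:=\tfrac{|f_N|}{2}\,\chi_{\{|f_N|\geq 2\}},
$$
which is supported in $\{|g|\geq 1\}$ and satisfies $\|g\|_{p(\cdot)}\leq 1/2$, so Lemma~\ref{lemma of conditional expc} applies. Splitting $\int_{B_i}|f_N|$ into its pieces where $|f_N|\geq 2$ and $|f_N|<2$ and using $\lambda\geq 4$ gives the key pointwise lower bound
$$
\frac{1}{\mathbb P(B_i)}\int_{B_i}|g|\,d\mathbb P \;>\;\frac{\lambda}{4}\;\geq\;1
$$
on every $B_i$. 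Applying Lemma~\ref{lemma of conditional expc} pointwise at $x\in B_i$, integrating in $x$ over $B_i$, and summing over the disjoint atoms produces the modular estimate
$$
\int_{\{M_N(f)>\lambda\}}\Bigl(\tfrac{\lambda}{4}\Bigr)^{p(x)}\,d\mathbb P \;\leq\; K_{p(\cdot)}\int_{\Omega}|g|^{p(t)}\,dt\;\leq\;K_{p(\cdot)}\,2^{-p_{-}},
$$
where the final inequality uses the pointwise bound $2^{-p(t)}\leq 2^{-p_{-}}$ and $\int_{\Omega}|f_N|^{p(t)}\,d\mathbb P\leq 1$.

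The main obstacle is the rescaling from this modular inequality to a \emph{Luxemburg norm} inequality for $\chi_{\{M_N(f)>\lambda\}}$; in contrast with the constant-exponent case, the factor $(4/C)^{p(x)}$ obtained after dividing by a constant $C$ is not itself constant. The cure is that, for any $C\geq 4$, one has $4/C\leq 1$ and therefore $(4/C)^{p(x)}\leq (4/C)^{p_{-}}$ pointwise, so
$$
\int_{\{M_N(f)>\lambda\}}\Bigl(\tfrac{\lambda}{C}\Bigr)^{p(x)}d\mathbb P \;\leq\; K_{p(\cdot)}\,2^{-p_{-}}\bigl(4/C\bigr)^{p_{-}}.
$$
Choosing $C:=\max\bigl(4,\,2K_{p(\cdot)}^{1/p_{-}}\bigr)$ makes the right-hand side $\leq 1$, which by definition of $\|\cdot\|_{p(\cdot)}$ gives $\|\chi_{\{M_N(f)>\lambda\}}\|_{p(\cdot)}\leq C/\lambda$, i.e.\ $\lambda\|\chi_{\{M_N(f)>\lambda\}}\|_{p(\cdot)}\leq C$ uniformly in $N$. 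Combining with the trivial bound for $\lambda\leq 4$, letting $N\to\infty$ (so that $\chi_{\{M_N(f)>\lambda\}}\uparrow \chi_{\{M(f)>\lambda\}}$ and the Luxemburg norm passes to the limit), and restoring the scaling by $\|f\|_{p(\cdot)}$ finishes the proof.
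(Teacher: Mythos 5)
Your proof is correct and follows the same route as the paper's: the same stopping-time decomposition of $\{M_N f>\lambda\}$ into atoms of $\mathcal{F}_{\tau_N}$, the same application of Lemma~\ref{lemma of conditional expc} on each such atom, and the same monotone-convergence passage from $M_N$ to $M$. You are somewhat more explicit than the paper about two small points --- the truncation $g=\tfrac{|f_N|}{2}\chi_{\{|f_N|\geq 2\}}$ that puts you in the hypotheses of Lemma~\ref{lemma of conditional expc}, and the rescaling by $(4/C)^{p_-}$ that converts the modular estimate into a Luxemburg-norm estimate --- but these are matters of exposition, not a different argument.
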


\begin{proof}For $\lambda>0$ and $n\in \mathbb N$, we define the stopping time
$$\tau_n=\inf\{i\leq n: |f_i|>\lambda\},$$
with the convention that $\inf \emptyset =\infty$. Then
$$
\{M_nf>\lambda\}=\{\tau_n<\infty\}.
$$
It is easy to see that the $\sigma$-algebra $\mathcal F_{\tau_n}$ is generated by countable atoms as well.
Without loss of generality, we assume that $\|f\|_{p(\cdot)}\leq 1/2$. Then,
 by Lemma \ref{lemma of conditional expc}, we have the following estimate,
\begin{align*}
\int(\lambda \chi_{\{M_nf>\lambda\}})^{p(x)}d\mathbb P
&\leq \int_{\{\tau_n<\infty\}}|f_{\tau_n}|^{p(x)}d\mathbb P
\\& = \int \sum_{A\in  A(\mathcal F_{\tau_n})} \Bigg( \frac{1}{\mathbb{P}(A)} \int_{A}|f(x)| d\mathbb{P} \Bigg)^{p(x)} \chi_{A}d\mathbb P
\\
& \leq K \int \sum_{A\in  A(\mathcal F_{\tau_n})} \Bigg( \frac{1}{\mathbb{P}(A)} \int_{A} |f(x)|^{p(x)}+1 d\mathbb{P} \Bigg) \chi_{A} d\mathbb P
\\
&=K \int \E_{\tau_n}(|f(x)|^{p(x)}+1) d\mathbb P
\leq C.
\end{align*}

Set
$$
g_n=(\lambda \chi_{\{M_nf>\lambda\}})^{p(x)}, \qquad g=(\lambda \chi_{\{M(f)>\lambda\}})^{p(x)}.
$$
Then $g_n\leq g_{n+1}$ and $g_n$ converges to $g$ a.e. as $n\rightarrow \infty$. Using the monotone convergence theorem, we have
$$\int (\lambda \chi_{\{M(f)>\lambda\}})^{p(x)}d\mathbb P=\lim_{n\rightarrow \infty}\int (\lambda \chi_{\{M_nf>\lambda\}})^{p(x)}d\mathbb P \leq C,$$
which completes the proof.
\end{proof}

\section{Atomic decompositions} \label{sec3}

In this section, we investigate atomic decompositions for variable martingale Hardy spaces.
Let $\mathcal{T}$ be the set of all stopping times with respect to $(\mathcal{F}_n)_{n \geq 0}$. For a martingale
$f=(f_n)_{n\geq0}$ and $\tau \in \mathcal{T}$, we denote the stopped martingale by $f^\tau=(f_n^\tau)_{n\geq0}=(f_{n\wedge\tau})_{n\geq0}$,
where $a\wedge b =\min(a,b)$. We recall the definition of an atom.
\begin{definition} \label{definition for atom}
Let $p(\cdot)\in\mathcal{P}(\Omega)$. A
measurable function $a$ is called a $(1,p(\cdot),\infty)$-atom (or $(2,p(\cdot),\infty)$-atom or $(3,p(\cdot),\infty)$-atom, respectively) if
there exists a stopping time $\tau\in\mathcal{T}$ such that
\begin{enumerate}
\item $\E(a|\mathcal {F}_n)=0$,  $\forall\; n\leq \tau$,
\item $\|s(a)\|_\infty(\,\text{or} \ \|S(a)\|_\infty \ \text{,} \ \|M(a)\|_\infty, \ \text{respectively})\leq \frac{1}{\|\chi_{\{\tau<\infty\}}\|_{p(\cdot)}}.$
\end{enumerate}
$a$ is called a simple $(i,p(\cdot),\infty)$-atom if $\tau$ is the special stopping time $\tau = n \chi_A$ for some $A \in A(\mathcal F_n)$ and $n\in \mathbb N$ $(i=1,2,3)$.
\end{definition}

\subsection{Atomic decompositions of $H_{p(\cdot)}$}

The atomic characterization of $H_{p(\cdot)}$ has been shown in \cite{Jiao2016}. In this section, we generalize this atomic decomposition.

\begin{definition} Let $p(\cdot)\in\mathcal{P}(\Omega)$  and  $1<r\leq \infty$.
The atomic Hardy space $H_{p(\cdot)}^{\rm at, d,r}$ is defined as the space of all martingales $f=(f_n)_{n\geq 0}$ such that
\begin{equation}\label{decomposition Leb1}
f_n= \sum_{k\in \mathbb Z} \mu_{k}a^{k}_n,
\end{equation}
where $(a_{k})_{k\in \mathbb Z}$ is a sequence of  $(d,p(\cdot),r)$-atoms $(d=1,2,3)$ associated with stopping times $(\tau_k)_{k\in\mathbb Z}$ and  $(\mu_k)_{k\in \mathbb Z}$ is a sequence of positive numbers.
For a fixed $0<t<\underline p$ and $f\in H_{p(\cdot)}^{\rm at,d,r}$, define
$$\|f\|_{H_{p(\cdot)}^{\rm at,d,r}} = \inf \left\| \left[\sum_{k\in \mathbb Z} \left(\frac{\mu_k\chi_{\{\tau_k<\infty\}}}{\|\chi_{\{\tau_k<\infty\}}\|_{p(\cdot)}}\right)^{t}\right]^{\frac{1}{t}}\right\|_{p(\cdot)},$$
where the infimum is taken over all the decompositions of the form \eqref{decomposition Leb1}.
\end{definition}

The following atomic decompositions can be proved in the same way as in \cite{Jiao2016}.
\begin{theorem}\label{ad Leb}
Let $p(\cdot)\in \mathcal P(\Omega)$. Then
$$H_{p(\cdot)}^s= H_{p(\cdot)}^{\rm at, 1,\infty}$$
with equivalent quasi-norms.
\end{theorem}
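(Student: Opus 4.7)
My plan is to establish both inclusions via the standard stopping-time atomic construction, carried over to the variable exponent setting along the lines of \cite{Jiao2016}.

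For the inclusion $H_{p(\cdot)}^{\rm at, 1, \infty} \hookrightarrow H_{p(\cdot)}^s$, I start from any atomic representation $f_n = \sum_k \mu_k a_n^k$ with $(1, p(\cdot), \infty)$-atoms $a^k$ associated with stopping times $\tau_k$. The key observation is that $s(a^k)$ is supported on $\{\tau_k < \infty\}$: the vanishing condition $\E_n(a^k) = 0$ for $n \le \tau_k$ forces $d_n a^k = 0$ on $\{\tau_k \ge n\}$, so $s(a^k) \equiv 0$ on $\{\tau_k = \infty\}$. Using the $\ell^2$-triangle inequality for $s$ together with the atomic size condition gives the pointwise bound
\[
s(f) \le \sum_k \mu_k \, s(a^k) \le \sum_k \frac{\mu_k \, \chi_{\{\tau_k<\infty\}}}{\|\chi_{\{\tau_k<\infty\}}\|_{p(\cdot)}}.
\]
Since $0 < t < \underline p \le 1$, the subadditivity inequality $\sum x_k \le \bigl(\sum x_k^t\bigr)^{1/t}$ (valid for $x_k \ge 0$ and $t \le 1$) dominates the right-hand side by the square-bracket expression defining $\|f\|_{H_{p(\cdot)}^{\rm at, 1, \infty}}$. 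Taking $\|\cdot\|_{p(\cdot)}$ and infimizing over decompositions yields $\|s(f)\|_{p(\cdot)} \lesssim \|f\|_{H_{p(\cdot)}^{\rm at, 1, \infty}}$.

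For the converse $H_{p(\cdot)}^s \hookrightarrow H_{p(\cdot)}^{\rm at, 1, \infty}$, I use the dyadic stopping times $\tau_k := \inf\{n \ge 0 : s_{n+1}(f) > 2^k\}$ for $k \in \mathbb Z$, and set
\[
\mu_k := 3 \cdot 2^k \, \|\chi_{\{\tau_k < \infty\}}\|_{p(\cdot)}, \qquad a_n^k := \mu_k^{-1}\bigl(f_n^{\tau_{k+1}} - f_n^{\tau_k}\bigr).
\]
The vanishing condition $\E_n(a^k) = 0$ for $n \le \tau_k$ is automatic from $(f^{\tau_{k+1}} - f^{\tau_k})_{n \wedge \tau_k} = 0$. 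For the size condition, $s(f^\tau) = s_\tau(f)$ and the very definition of $\tau_k$ give $s(f^{\tau_k}) \le 2^k$; subadditivity of $s$ then yields $\|s(a^k)\|_\infty \le \mu_k^{-1}(2^{k+1} + 2^k) = 1/\|\chi_{\{\tau_k<\infty\}}\|_{p(\cdot)}$. Pointwise telescoping $\sum_k \mu_k a_n^k = f_n$ holds on the full-measure set $\{s(f) < \infty\}$, because $\tau_k = 0$ (so $f^{\tau_k} = 0$) for all sufficiently negative $k$ and $\tau_k = \infty$ (so $f^{\tau_k} = f$) for all sufficiently positive $k$, both depending on $\omega$.

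The main obstacle is the norm bound. Using $\{\tau_k < \infty\} = \{s(f) > 2^k\}$ and $\mu_k/\|\chi_{\{\tau_k<\infty\}}\|_{p(\cdot)} = 3 \cdot 2^k$, the expression inside $\|\cdot\|_{p(\cdot)}$ in the definition of $\|f\|_{H_{p(\cdot)}^{\rm at, 1, \infty}}$ becomes
\[
\Bigl[\sum_{k \in \mathbb Z} (3 \cdot 2^k)^t \, \chi_{\{s(f) > 2^k\}}\Bigr]^{1/t}.
\]
A routine geometric-series comparison shows $\sum_k 2^{kt} \chi_{\{s(f) > 2^k\}} \approx s(f)^t$ pointwise, so the above quantity is $\lesssim s(f)$. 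Hence $\|f\|_{H_{p(\cdot)}^{\rm at, 1, \infty}} \lesssim \|s(f)\|_{p(\cdot)}$, completing the quasi-norm equivalence.
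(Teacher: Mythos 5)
Your proof is correct and follows the same stopping-time construction that the paper defers to via \cite{Jiao2016}: the easy inclusion via pointwise subadditivity of $s$ together with $\sum_k x_k \le (\sum_k x_k^t)^{1/t}$, and the hard direction via the stopping times $\tau_k$, verification of the atom properties using $s(f^{\tau_k})=s_{\tau_k}(f)\le 2^k$, and the geometric comparison of $\sum_k 2^{kt}\chi_{\{s(f)>2^k\}}$ with $s(f)^t$. One small inaccuracy to flag in your telescoping step: $\tau_k = 0$ gives $f^{\tau_k}_n = f_{n\wedge 0} = f_0$, not $0$, so the claim that $f^{\tau_k}=0$ for $k$ very negative requires either the normalization $f_0 = 0$ (which is in fact forced, since every $(1,p(\cdot),\infty)$-atom satisfies $a_0 = 0$, so the atomic space consists of martingales with vanishing zeroth term) or the standard convention of allowing the stopping time to take the value $-1$, whence $f^{\tau_k}=f_{-1}=0$; as written your justification passes over this point.
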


Similarly, we have the following result (the proof is omitted, and in fact, the proof is just similar to the one of Theorem \ref{theorem of atomic decomposition for rest}).

\begin{theorem}\label{ad 23 Leb}
Let $p(\cdot)\in \mathcal P(\Omega)$. Then
$$Q_{p(\cdot)}= H_{p(\cdot)}^{\rm at, 2,\infty}, \quad P_{p(\cdot)}= H_{p(\cdot)}^{\rm at, 3,\infty}$$
with equivalent quasi-norms.
\end{theorem}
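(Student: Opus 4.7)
The plan is to prove each equality by two continuous inclusions, mimicking the standard stopping-time decomposition underlying Theorem \ref{ad Leb} but replacing the conditional square function $s$ by the square function $S$ in the case of $Q_{p(\cdot)}$ and by the maximal function $M$ in the case of $P_{p(\cdot)}$. The two arguments are completely parallel, so I will describe $Q_{p(\cdot)} = H_{p(\cdot)}^{\rm at,2,\infty}$ in detail and indicate the trivial modifications needed for $P_{p(\cdot)} = H_{p(\cdot)}^{\rm at,3,\infty}$ at the end.

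For $Q_{p(\cdot)} \subset H_{p(\cdot)}^{\rm at,2,\infty}$, I take $f \in Q_{p(\cdot)}$ with a witness $(\lambda_n) \in \Lambda$ satisfying $S_n(f) \leq \lambda_{n-1}$, and introduce the stopping times $\tau_k := \inf\{n : \lambda_n > 2^k\}$ for $k \in \mathbb{Z}$, so that $\tau_k \leq \tau_{k+1}$, $\{\tau_k < \infty\} = \{\lambda_\infty > 2^k\}$, and $\lambda_{\tau_k - 1} \leq 2^k$ on $\{\tau_k < \infty\}$. Telescoping gives
$$
f_n = \sum_{k \in \mathbb{Z}} (f_n^{\tau_{k+1}} - f_n^{\tau_k}) = \sum_{k \in \mathbb{Z}} \mu_k a_n^k, \qquad \mu_k := 3 \cdot 2^k \,\|\chi_{\{\tau_k < \infty\}}\|_{p(\cdot)},
$$
with $a_n^k := \mu_k^{-1}(f_n^{\tau_{k+1}} - f_n^{\tau_k})$. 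The vanishing $a_n^k = 0$ for $n \leq \tau_k$ is automatic (both stopped martingales agree with $f_n$ there), and using $S(f^{\tau_j}) = S_{\tau_j}(f) \leq \lambda_{\tau_j - 1} \leq 2^j$ on $\{\tau_j < \infty\}$ one gets $\|S(a^k)\|_\infty \leq \|\chi_{\{\tau_k < \infty\}}\|_{p(\cdot)}^{-1}$, so each $a^k$ is a $(2,p(\cdot),\infty)$-atom in the sense of Definition \ref{definition for atom}. The atomic quasi-norm is controlled by the pointwise bound
$$
\sum_{k \in \mathbb{Z}} \Bigl(\frac{\mu_k \chi_{\{\tau_k<\infty\}}}{\|\chi_{\{\tau_k<\infty\}}\|_{p(\cdot)}}\Bigr)^{t} = 3^t \sum_{k \in \mathbb{Z}} 2^{kt} \chi_{\{\lambda_\infty > 2^k\}} \lesssim \lambda_\infty^t,
$$
obtained by a standard geometric-series argument and valid for any $t > 0$; taking the $t$-th root and the $L_{p(\cdot)}$ quasi-norm, then infimizing over $(\lambda_n)$, yields $\|f\|_{H_{p(\cdot)}^{\rm at,2,\infty}} \lesssim \|f\|_{Q_{p(\cdot)}}$.

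For the converse, starting from an atomic representation $f_n = \sum_k \mu_k a_n^k$ with $(2,p(\cdot),\infty)$-atoms $a^k$ and associated stopping times $\tau_k$, I set
$$
\lambda_n := \sum_{k \in \mathbb{Z}} \frac{\mu_k}{\|\chi_{\{\tau_k < \infty\}}\|_{p(\cdot)}}\, \chi_{\{\tau_k \leq n\}},
$$
which is non-negative, non-decreasing and $\mathcal{F}_n$-adapted, hence belongs to $\Lambda$. Since $d_j(a^k) = 0$ whenever $j \leq \tau_k$, the partial square function $S_n(a^k)$ is supported on $\{\tau_k \leq n-1\}$, and combined with $\|S(a^k)\|_\infty \leq \|\chi_{\{\tau_k<\infty\}}\|_{p(\cdot)}^{-1}$ this gives $S_n(f) \leq \sum_k \mu_k S_n(a^k) \leq \lambda_{n-1}$ as required. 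The estimate on $\|\lambda_\infty\|_{p(\cdot)}$ is the main point: writing $c_k := \mu_k \chi_{\{\tau_k < \infty\}}/\|\chi_{\{\tau_k < \infty\}}\|_{p(\cdot)}$ one has $\lambda_\infty = \sum_k c_k$, and the elementary pointwise inequality
$$
\sum_k c_k \leq \Bigl(\sum_k c_k^t\Bigr)^{1/t}, \quad c_k \geq 0,\ 0 < t \leq 1,
$$
applied with any $t \in (0,\underline{p})$ delivers $\|\lambda_\infty\|_{p(\cdot)} \leq \|[\sum_k c_k^t]^{1/t}\|_{p(\cdot)}$, whence $\|f\|_{Q_{p(\cdot)}} \lesssim \|f\|_{H_{p(\cdot)}^{\rm at,2,\infty}}$.

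The identity $P_{p(\cdot)} = H_{p(\cdot)}^{\rm at,3,\infty}$ is proved by the identical construction, with $|f_n| \leq \lambda_{n-1}$ and the bound $|f^{\tau_j}_n| \leq 2^j$ on $\{\tau_j < \infty\}$ replacing the square-function bound, and with $(3,p(\cdot),\infty)$-atoms replacing $(2,p(\cdot),\infty)$-atoms; the converse direction exploits the support property $a_n^k = |a_n^k|\chi_{\{\tau_k \leq n-1\}}$ in place of the corresponding property of $S_n(a^k)$. The principal technical obstacle, common to both halves, is that $L_{p(\cdot)}$ is not rearrangement-invariant and admits no useful layer-cake representation, so the classical distributional arguments are unavailable and the estimate of $\lambda_\infty$ must instead be reduced to the pointwise $t$-subadditivity inequality with $t < \underline{p} \leq 1$; one must also be careful to use an $\mathcal{F}_n$-adapted surrogate for the a priori non-adapted quantity $S(a^k)$ when defining $\lambda_n$. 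Beyond these points the proof is the standard Davis--Herz stopping-time decomposition transplanted to the variable-exponent setting.
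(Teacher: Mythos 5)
Your proof is correct and follows essentially the same route as the paper's: the paper only sketches this theorem as ``similar to Theorem \ref{theorem of atomic decomposition for rest},'' and your two-step argument (the Davis-type stopping-time decomposition $\tau_k=\inf\{n:\lambda_n>2^k\}$ for the inclusion $Q_{p(\cdot)}\subset H_{p(\cdot)}^{\rm at,2,\infty}$, then an explicit adapted majorant $\lambda_n$ and the pointwise $t$-subadditivity $\sum_k c_k\le(\sum_k c_k^t)^{1/t}$ for the converse) is exactly the mechanism the paper uses in Theorems \ref{theorem of atomic decomposition pq} and \ref{theorem of atomic decomposition for rest}. The only cosmetic deviation is that you set $\lambda_n=\sum_k \mu_k\,\|\chi_{\{\tau_k<\infty\}}\|_{p(\cdot)}^{-1}\,\chi_{\{\tau_k\le n\}}$ whereas the paper's sketch uses $\|S(a^k)\|_\infty$ (resp.\ $\|M(a^k)\|_\infty$) in place of $\|\chi_{\{\tau_k<\infty\}}\|_{p(\cdot)}^{-1}$; since the atom condition gives $\|S(a^k)\|_\infty\le\|\chi_{\{\tau_k<\infty\}}\|_{p(\cdot)}^{-1}$, your majorant is at least as large and the estimate goes through identically, so this is a matter of taste, not substance.
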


\begin{theorem}\label{proposition of atomic decomposition of MS Leb}
 Let $p(\cdot)\in\mathcal{P}(\Omega)$ satisfy  condition \eqref{log}. If $\{\mathcal {F}_n\}_{n\geq 0}$ is regular, then
 $$H_{p(\cdot)}^S= H_{p(\cdot)}^{\rm at,2,\infty},\quad H_{p(\cdot)}^M= H_{p(\cdot)}^{\rm at,3,\infty}$$
 with equivalent quasi-norms.
\end{theorem}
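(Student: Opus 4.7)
The plan is to establish each equality via two inclusions; the easy direction is handled uniformly for both. The containments $H_{p(\cdot)}^{\mathrm{at},2,\infty}\subseteq H_{p(\cdot)}^S$ and $H_{p(\cdot)}^{\mathrm{at},3,\infty}\subseteq H_{p(\cdot)}^M$ need neither regularity nor \eqref{log}: given an atomic representation $f_n=\sum_k\mu_k a^k_n$ with atoms $a^k$ associated to stopping times $\tau_k$, Minkowski's inequality in $\ell^2$ over the index of martingale differences yields $S(f)\leq\sum_k\mu_k S(a^k)$, while the ordinary triangle inequality yields $M(f)\leq\sum_k\mu_k M(a^k)$. Since $S(a^k)$ (resp.\ $M(a^k)$) is supported in $\{\tau_k<\infty\}$ and bounded by $\|\chi_{\{\tau_k<\infty\}}\|_{p(\cdot)}^{-1}$, an application of the $t$-subadditivity of $x\mapsto x^t$ with $0<t\leq\underline p\leq 1$ gives
\[
S(f)\leq\Bigl(\sum_k\bigl(\mu_k\chi_{\{\tau_k<\infty\}}/\|\chi_{\{\tau_k<\infty\}}\|_{p(\cdot)}\bigr)^t\Bigr)^{1/t}
\]
(and the analogous bound for $M$), whence passage to the $L_{p(\cdot)}$-quasi-norm and the infimum over decompositions yields the desired estimates.

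For the converse $H_{p(\cdot)}^S\subseteq H_{p(\cdot)}^{\mathrm{at},2,\infty}$ I would exploit regularity through its central consequence: every non-negative $\mathcal F_n$-measurable $X$ satisfies $X\leq R\,\mathbb E_{n-1}X$. This follows by checking it for indicators $\chi_A$ with $A\in A(\mathcal F_n)$ (using $\mathbb P(A')\leq R\,\mathbb P(A)$ for the enclosing atom $A'\in A(\mathcal F_{n-1})$) and extending by linearity to all non-negative $\mathcal F_n$-measurable functions. Specializing to $X=|d_nf|^2$ and summing in $n$ yields $S(f)\leq\sqrt R\,s(f)$ pointwise, so $\|S(a)\|_\infty\leq\sqrt R\,\|s(a)\|_\infty$ for any $(1,p(\cdot),\infty)$-atom $a$; equivalently, $a/\sqrt R$ is a $(2,p(\cdot),\infty)$-atom. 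Combined with the equivalence $\|f\|_{H_{p(\cdot)}^s}\approx\|f\|_{H_{p(\cdot)}^S}$ established in Section~\ref{sec4} under regularity, the canonical $(1,p(\cdot),\infty)$-atomic decomposition of $f\in H_{p(\cdot)}^s$ provided by Theorem~\ref{ad Leb} immediately yields a $(2,p(\cdot),\infty)$-atomic decomposition of $f\in H_{p(\cdot)}^S$ with quasi-norm bounded by a constant multiple of $\|f\|_{H_{p(\cdot)}^S}$.

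The main obstacle is the inclusion $H_{p(\cdot)}^M\subseteq H_{p(\cdot)}^{\mathrm{at},3,\infty}$, since no pointwise inequality of the form $M(a)\lesssim s(a)$ is available. I would instead construct the $(3,p(\cdot),\infty)$-atomic decomposition directly, using the auxiliary non-negative martingale $H_n=\mathbb E_n M(f)$, which under regularity satisfies $H_n\leq R\,H_{n-1}$. Setting $\tau_k=\inf\{n:H_n>2^k\}$ gives $H_{\tau_k}\leq R\cdot 2^k$ on $\{\tau_k<\infty\}$ whenever $\tau_k\geq 1$; the chain $|f_n|\leq M_n(f)\leq R\,\mathbb E_{n-1}M_n(f)\leq R\,H_{n-1}$ transfers this to $|f_{\tau_k}|\leq R\cdot 2^k$ and $|f_n|\leq R\cdot 2^{k+1}$ for $\tau_k<n\leq\tau_{k+1}$. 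Consequently $a^k=\mu_k^{-1}(f^{\tau_{k+1}}-f^{\tau_k})$ with $\mu_k=3R\cdot 2^k\|\chi_{\{\tau_k<\infty\}}\|_{p(\cdot)}$ is a $(3,p(\cdot),\infty)$-atom, and the telescoping sum $\sum_k\mu_k a^k=f-f_0$ reconstructs $f$. The atomic quasi-norm estimate then reduces to bounding $\|\sup_n H_n\|_{p(\cdot)}$ by $\|M(f)\|_{p(\cdot)}$, which follows directly from Theorem~\ref{thm:maximal inequality} (the $L_{p(\cdot)}$-Doob inequality) when $p_->1$; for the range $p_-\leq 1$ a truncation argument on $M(f)$, combined with the pointwise regularity $H_n\leq R\,H_{n-1}$ and the Section~\ref{sec4} embeddings, provides the necessary estimate. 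The construction and robust control of the auxiliary martingale $H$ across all admissible ranges of $p_-$ is the main technical step of the proof.
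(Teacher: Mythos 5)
Your argument for the easy containments $H_{p(\cdot)}^{\mathrm{at},2,\infty}\subseteq H_{p(\cdot)}^S$ and $H_{p(\cdot)}^{\mathrm{at},3,\infty}\subseteq H_{p(\cdot)}^M$ is correct and standard. Both converse directions, however, contain genuine gaps.

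For $H_{p(\cdot)}^S\subseteq H_{p(\cdot)}^{\mathrm{at},2,\infty}$, you invoke the equivalence $\|f\|_{H_{p(\cdot)}^s}\approx\|f\|_{H_{p(\cdot)}^S}$ ``established in Section~\ref{sec4}.'' This is circular: in the paper, the direction $\|f\|_{H_{p(\cdot)}^s}\lesssim\|f\|_{H_{p(\cdot)}^S}$ under regularity is obtained in Theorem~\ref{theorem of martingale inequalities Leb} by passing through Corollary~\ref{cor:equi Leb}, whose proof uses precisely Theorem~\ref{proposition of atomic decomposition of MS Leb}. Your pointwise bound $S(f)\leq\sqrt R\,s(f)$ gives only the reverse embedding $H_{p(\cdot)}^s\hookrightarrow H_{p(\cdot)}^S$ and does not close the loop.

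For $H_{p(\cdot)}^M\subseteq H_{p(\cdot)}^{\mathrm{at},3,\infty}$ the difficulty is more fundamental. With $H_n=\E_n M(f)$ and $\tau_k=\inf\{n:H_n>2^k\}$, your atomic quasi-norm is (as you note) comparable to $\|\sup_n H_n\|_{p(\cdot)}$. But the inequality $\|\sup_n \E_n M(f)\|_{p(\cdot)}\lesssim\|M(f)\|_{p(\cdot)}$ is \emph{false} whenever $p_-\leq1$, and this is exactly the interesting range. Take the dyadic filtration on $[0,1)$, constant $p\leq 1$, and a single simple $(3,p,\infty)$-atom $a=2^{N/p}(\chi_{[0,2^{-N-1})}-\chi_{[2^{-N-1},2^{-N})})$, so that $M(f)=2^{N/p}\chi_{[0,2^{-N})}$ with $\|M(f)\|_p=1$. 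Then for $x\in[2^{-m-1},2^{-m})$ with $m<N$ one finds $\sup_n H_n(x)=2^{N/p}2^{m-N}$, whence
\begin{equation*}
\left\|\sup_n H_n\right\|_p^p\;\gtrsim\;2^{N(1-p)}\sum_{m=0}^{N-1}2^{m(p-1)}\;\longrightarrow\;\infty\quad\text{as }N\to\infty
\end{equation*}
while $\|M(f)\|_p$ stays bounded. No truncation of $M(f)$ will repair this: the operator $g\mapsto\sup_n\E_n g$ is simply not bounded on $L_p$ for $p\leq1$, even on functions of the form $g=M(f)$. So your construction produces a valid decomposition, but one whose atomic quasi-norm is not controlled by $\|f\|_{H_{p(\cdot)}^M}$.

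The paper sidesteps this by never passing to the auxiliary martingale $\E_n M(f)$. Instead it starts from $\rho_k=\inf\{n:|f_n|>2^k\}$, whose level sets are exactly $\{M(f)>2^k\}$, and replaces $\rho_k$ by a stopping time $\tau_k$ obtained by enlarging each $\{\rho_k=j\}$ to the smallest $\F_{j-1}$-set $F_j^k$ containing it and shifting the index down one step. Regularity enters twice: once to ensure the stopped martingales are bounded by $2^k$ (so the $a^k$ are genuine $(3,p(\cdot),\infty)$-atoms with the right normalization), and again, via Lemma~\ref{lem:new ad2}, to show $\|\chi_{\{\tau_k<\infty\}}\|_{p(\cdot)}\lesssim\|\chi_{\{\rho_k<\infty\}}\|_{p(\cdot)}$. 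The quasi-norm bound is then closed by a duality--H\"older argument that applies the Doob maximal inequality (Theorem~\ref{thm:maximal inequality}) on the conjugate side, where the exponent exceeds $1$ after dividing by $t<\underline p$ -- this is what makes the argument valid for all $0<p_-\leq p_+<\infty$, not just $p_->1$.
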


\begin{proof}
We only give the proof for the second equality since the other one is similar.
Take $f\in H_{p(\cdot)}^M$.
Consider the following stopping times with respect to $(\mathcal F_n)$,
$$\rho_k:=\inf\{n\in \mathbb N:|f_n|>2^k\}, \quad k\in \mathbb Z.$$
For fixed $k\in \mathbb Z$, let $F_j^k\in \mathcal F_{j-1}$ be the smallest set which contains $\{\rho_k=j\}$. In other words, if $\{\rho_k=j\}\in \mathcal F_j$ is decomposed into the disjoint union of atoms $\overline {I_{k,j,i}}\in A(\mathcal F_j)$ and $ {I_{k,j,i}}\in A(\mathcal F_{j-1})$ denotes the atom which
contains $\overline {I_{k,j,i}}$ and $\mathbb P({I_{k,j,i}}) \leq R \mathbb P(\overline{ {I_{k,j,i}}})$ (this is due to regularity, and $R$ is the constant as in \eqref{regular constant}), then $F_j^k=\cup_i I_{k,j,i}$. Define a new family of stopping times by
$$\tau_k(x):=\inf\{n\in \mathbb N:x\in F_{n+1}^k\}.$$
It is obvious that $\tau_k$ is non-decreasing. By Lemma \ref{lem:new ad2}, we have
$$\|\chi_{\{\tau_k<\infty\}}\|_{p(\cdot)}\lesssim \|\chi_{\{\rho_k<\infty\}}\|_{p(\cdot)}=\|\chi_{\{M(f)>2^k\}}\|_{p(\cdot)}\leq 2^{-k}\|M(f)\|_{p(\cdot)}\rightarrow 0$$
as $k\rightarrow \infty$,
which  deduces that
$$\lim_{k\rightarrow \infty }\mathbb P(\tau_k=\infty)=1.$$
Thus $\lim_{k\rightarrow \infty } \tau_k=\infty $ a.e. and
$$\lim_{k\rightarrow \infty } f_n^{\tau_k} =f_n \quad \mbox{a.e.}\quad (n\in \mathbb N).$$

We  define
$$\mu_k=3\cdot2^k \|\chi_{\{\tau_k<\infty\}} \|_{p(\cdot)}\quad\mbox{and}\quad a_n^k= \ \frac{ f_n^{\tau_{k+1}}-f_n^{\tau_{k}} }{\mu_k}.$$
It is not hard to check that $a^{k}=(a_n^k)_n$ is a $(3,p(\cdot),\infty)$-atom,
and $f=\sum_{k\in\mathbb{Z}} \mu_ka_k$.
Note that for every $k\in \mathbb Z$,
$$\{\tau_k<\infty\}=\sum_{j=0}^\infty\{\tau_k=j\}=\sum_{j=0}^\infty\sum_i I_{k,j,i},$$
where $I_{k,j,i}$'s are atoms in $A(\mathcal F_j)$.

Obviously,
\begin{align*}
Z:= \left\|\left[\sum_{k\in \mathbb Z}\left(\frac{\mu_{k}\chi_{\{\tau_k<\infty\}}}
{\|\chi_{\{\tau_k<\infty\}}\|_{p(\cdot)}}\right)^{t}\right]^{\frac{1}{t}}\right\|_{p(\cdot)} =
\left\|\left[\sum_{k\in \mathbb Z}\sum_{j=0}^{\infty}\sum_i \left(3\cdot 2^k\right)^{t} {\chi_{I_{k,j,i}}}
\right]^{\frac{1}{t}}\right\|_{p(\cdot)}.
\end{align*}
Using Lemma \ref{lem:duality for variabl p}, we may choose a positive function $g\in L_{(\frac{p(\cdot)}{t})'}$ with $\|g\|_{{(\frac{p(\cdot)}{t})'}}\leq 1$ such that
\begin{align*}
Z^{t}&= \int_\Omega \sum_{k\in \mathbb Z}\sum_{j=0}^{\infty}\sum_i \left(3\cdot 2^k\right)^{t}  \chi_{I_{k,j,i}} gd\mathbb P.
\end{align*}
Applying H\"older's inequality for some $r$ satisfying $\max(1,p_+/t)<r<\infty$, we find that
\begin{align*}
Z^{t}&\leq \sum_{k\in \mathbb Z}\sum_{j=0}^{\infty}\sum_i \left(3\cdot 2^k\right)^{t}  \mathbb P( {I_{k,j,i}})^{\frac {1}{r}} \left(\int_\Omega \chi_{ {I_{k,j,i}}} g^{r'}d\mathbb P\right)^{\frac 1{r'}}
\\&= \sum_{k\in \mathbb Z}\sum_{j=0}^{\infty}\sum_i \left(3\cdot 2^k\right)^{t}  \mathbb P( {I_{k,j,i}})
\left(\frac{1}{\mathbb P( {I_{k,j,i}})}\int_\Omega \chi_{{I_{k,j,i}}} g^{r'}d\mathbb P\right)^{\frac 1{r'}}
\\&\lesssim \sum_{k\in \mathbb Z}\sum_{j=0}^{\infty}\sum_i \left(3\cdot 2^k\right)^{t}  \mathbb P(\overline {I_{k,j,i}})
\left(\frac{1}{\mathbb P( {I_{k,j,i}})}\int_\Omega \chi_{{I_{k,j,i}}} g^{r'}d\mathbb P\right)^{\frac 1{r'}}
\\&\leq   \sum_{k\in \mathbb Z}\sum_{j=0}^{\infty}\sum_i \left(3\cdot 2^k\right)^{t} \int_\Omega \chi_{\overline {I_{k,j,i}}} [M(g^{r'})]^{\frac 1{r'}}d\mathbb P
\\& \lesssim \left\|\sum_{k\in \mathbb Z}\sum_{j=0}^{\infty}\sum_i \left(3\cdot 2^k\right)^{t} \chi_{\overline {I_{k,j,i}}}\right\|_{p(\cdot)/t} \|[M(g^{r'})]^{\frac 1{r'}}\|_{{(p(\cdot)/t)'}},
\end{align*}
where the first ``$\lesssim$" is due to the regularity.
Since ${p_+}/t<r<\infty$, we have $((p(\cdot)/t)')_+<\infty$ and
$r'<(p(\cdot)/t)'$.
Then, using Theorem \ref{thm:maximal inequality}, we obtain
$$\|[M(g^{r'})]^{\frac 1{r'}}\|_{{(p(\cdot)/t)'}} = \|M(g^{r'})\|_{{\frac 1{r'} (p(\cdot)/t)'}}^{\frac 1{r'}}\lesssim
\|g^{r'}\|_{{\frac 1{r'} (p(\cdot)/t)'}}^{\frac 1{r'}}
=\|g\|_{{(p(\cdot)/t)'}}\leq 1.$$
Observe that
$$
\left\|\sum_{k\in \mathbb Z}\sum_{j=0}^{\infty}\sum_i \left(3\cdot 2^k\right)^{t} \chi_{\overline {I_{k,j,i}}}\right\|_{p(\cdot)/t}^{\frac 1 {t}}
= \left\|\left[\sum_{k\in \mathbb Z} (3\cdot 2^k \chi_{\{M(f)>2^k\}})^{t} \right]^{\frac {1} {t}}\right\|_{p(\cdot)}.
$$
Since
$$\sum_{k\in \mathbb Z}  2^{kt} \chi_{\{M(f)>2^k\}}=\sum_{k\in\mathbb{Z}} 2^{kt}\sum_{j=k}^{\infty}\chi_{\{2^j<M(f)<2^{j+1}\}}\approx\sum_{j\in\mathbb{Z}}2^{jt}\chi_{\{2^j<M(f)<2^{j+1}\}}\lesssim M(f)^t,
$$
we have
$\|f\|_{H_{p(\cdot)}^{\rm at,3,\infty}} \leq Z\lesssim \|f\|_{H_{p(\cdot)}^{M}}.$
The converse inequality
$\|f\|_{H_{p(\cdot)}^{M}} \lesssim\|f\|_{H_{p(\cdot)}^{\rm at,3,\infty}}$
 can be easily  proved.  The proof is complete.
\end{proof}

The next lemma is used in the proof of the previous theorem.

\begin{lemma} \label{lem:new ad2} Let $p(\cdot)\in\mathcal{P}(\Omega)$ satisfy  condition \eqref{log} and $\{\mathcal {F}_n\}_{n\geq 0}$ be regular. Take the same stopping tines $\tau_k$ and $\rho_k$ as in the proof of Theorem \ref{proposition of atomic decomposition of MS Leb}. Then
$$
\|\chi_{\{\tau_k<\infty\}}\|_{p(\cdot)}\lesssim \|\chi_{\{\rho_k<\infty\}}\|_{p(\cdot)}.
$$
\end{lemma}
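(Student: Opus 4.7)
The plan is to cover $\{\tau_k<\infty\}$ by a Doob-type maximal level set built from $\chi_{\{\rho_k<\infty\}}$, and then reduce the norm comparison to the weak-type maximal inequality of Section~\ref{sec2}, using a rescaling that avoids needing $p_-\ge 1$.

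First, I would exploit regularity pointwise. Any $x\in\{\tau_k=n\}$ lies in some atom $I=I_{k,n+1,i}\in A(\mathcal F_n)$, and by the construction recalled in the proof of Theorem~\ref{proposition of atomic decomposition of MS Leb} there is an accompanying child $\overline I=\overline{I_{k,n+1,i}}\in A(\mathcal F_{n+1})$ with $\overline I\subseteq I$, $\overline I\subseteq\{\rho_k=n+1\}$, and $\mathbb P(I)\le R\,\mathbb P(\overline I)$. Consequently
\[
M\bigl(\chi_{\{\rho_k<\infty\}}\bigr)(x)\;\ge\;\mathbb E_n\bigl(\chi_{\overline I}\bigr)(x)\;=\;\frac{\mathbb P(\overline I)}{\mathbb P(I)}\;\ge\;\frac{1}{R},
\]
so that $\{\tau_k<\infty\}\subseteq\{M(\chi_{\{\rho_k<\infty\}})>1/(2R)\}$. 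Taking $\|\cdot\|_{p(\cdot)}$ reduces the lemma to a weak-type bound for the Doob operator applied to the single indicator $\chi_{\{\rho_k<\infty\}}$.

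The delicate point is that the weak-type Doob inequality from the last theorem of Section~\ref{sec2} is only stated for $p_-\ge 1$, whereas here $p(\cdot)$ is only required to belong to $\mathcal P(\Omega)$. My plan is to rescale the exponent: pick any $q\in(0,p_-]$. Then $p(\cdot)/q\in\mathcal P(\Omega)$ with $(p/q)_-\ge 1$, and since $(p/q)_-(A)-(p/q)_+(A)=(p_-(A)-p_+(A))/q$, the exponent $p(\cdot)/q$ inherits condition \eqref{log}. Applying the weak-type maximal inequality in $L_{p(\cdot)/q}$ to $\chi_{\{\rho_k<\infty\}}$ at level $1/(2R)$ yields
\[
\bigl\|\chi_{\{M(\chi_{\{\rho_k<\infty\}})>1/(2R)\}}\bigr\|_{p(\cdot)/q}\;\lesssim\;\|\chi_{\{\rho_k<\infty\}}\|_{p(\cdot)/q}.
\]
The substitution $\lambda\mapsto\lambda^{1/q}$ in the Luxemburg definition gives the elementary identity $\|\chi_E\|_{p(\cdot)/q}=\|\chi_E\|_{p(\cdot)}^{q}$ for every measurable $E$; raising the previous display to the power $1/q$ and combining with the inclusion from the first step yields the desired inequality. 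The only nontrivial step is this rescaling, which is exactly what makes the argument insensitive to whether $p_-$ is larger or smaller than one.
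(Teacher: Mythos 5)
Your proof is correct and takes a genuinely different route from the paper. The paper reduces to the rescaled exponent $p(\cdot)/\varepsilon$ (with $\varepsilon<\underline{p}$), then argues via duality (Lemma~\ref{lem:duality for variabl p}): it pairs $\sum_{j,i}\chi_{I_{k,j,i}}$ against a test function $g$, uses H\"older's inequality and regularity to pass from $I_{k,j,i}$ to the smaller set $\overline{I_{k,j,i}}$ and absorb a factor $[M(g^{r'})]^{1/r'}$, and finally invokes the \emph{strong} Doob maximal inequality (Theorem~\ref{thm:maximal inequality}) on the dual space $L_{(p(\cdot)/\varepsilon)'}$. You instead encode regularity as the pointwise level-set inclusion $\{\tau_k<\infty\}\subseteq\{M(\chi_{\{\rho_k<\infty\}})>1/(2R)\}$ (noting that for $x\in I_{k,j,i}$ one has $\mathbb E_{j-1}\chi_{\overline{I_{k,j,i}}}(x)=\mathbb P(\overline{I_{k,j,i}})/\mathbb P(I_{k,j,i})\geq 1/R$), and then apply the \emph{weak-type} maximal inequality directly to the single indicator $\chi_{\{\rho_k<\infty\}}$. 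The rescaling $q\le p_-$ together with the identity $\|\chi_E\|_{p(\cdot)/q}=\|\chi_E\|_{p(\cdot)}^q$ correctly transports the weak-type bound (which needs $p_-\ge 1$) to the general case and preserves condition~\eqref{log}, since $(p/q)_-(A)-(p/q)_+(A)=(p_-(A)-p_+(A))/q$. Your argument is shorter and more elementary: it avoids duality and the dual-exponent manipulations entirely, using only the weak-type estimate on an indicator, which is precisely what is needed when comparing indicator norms; the paper's approach, while heavier, follows the same template as the estimate for $Z$ in Theorem~\ref{proposition of atomic decomposition of MS Leb} and so fits more naturally alongside that computation.
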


\begin{proof}
It is easy too see that the lemma is equivalent to the inequality
$$
\|\chi_{\{\tau_k<\infty\}}\|_{\frac{p(\cdot)}\varepsilon}\lesssim \|\chi_{\{\rho_k<\infty\}}\|_{\frac{p(\cdot)}\varepsilon}
$$
for some $0<\varepsilon< \underline p$.

Observe that
$$\|\chi_{\{\tau_k<\infty\}}\|_{\frac{p(\cdot)}\varepsilon} \leq \left\|\sum_{j=1}^\infty \chi_{F_{j}^k}\right\|_{\frac{p(\cdot)}\varepsilon}=\left\|\sum_{j=1}^\infty \sum_i\chi_{\overline {I_{k,j,i}}}\right\|_{\frac{p(\cdot)}\varepsilon}=:Y
$$
and
$$\left\|\sum_{j=1}^\infty \sum_i\chi_{ I_{k,j,i}}\right\|_{\frac{p(\cdot)}\varepsilon}= \|\chi_{\{\rho_k<\infty\}}\|_{\frac{p(\cdot)}\varepsilon}.$$
Choose a positive function $g\in L_{(\frac{p(\cdot)}{\varepsilon})'}$ with $\|g\|_{L_{(\frac{p(\cdot)}{\varepsilon})'}}\leq 1$ such that
\begin{align*}
Y&= \int_\Omega \sum_{j=1}^\infty \sum_i\chi_{\overline {I_{k,j,i}}} gd\mathbb P.
\end{align*}
Applying H\"older's inequality for some $\frac{p_+}{\varepsilon}<r<\infty$ and regular property (one just follow the proof for $Z$ in Theorem \ref{proposition of atomic decomposition of MS Leb}), we find that
\begin{align*}
Y \lesssim \|\sum_{j=1}^\infty \sum_i \chi_{I_{k,j,i}}\|_{p(\cdot)/\varepsilon} \|[M(g^{r'})]^{\frac 1{r'}}\|_{{(p(\cdot)/\varepsilon)'}},
\end{align*}
where the first ``$\lesssim$" is due to the regularity.
Since $\frac{p_+}{\varepsilon}<r<\infty$, we have
$r'<(p(\cdot)/\varepsilon)'$.
Using Theorem \ref{thm:maximal inequality}, we obtain
$$\|[M(g^{r'})]^{\frac 1{r'}}\|_{{(p(\cdot)/\varepsilon)'}} \lesssim \|g\|_{{(p(\cdot)/\varepsilon)'}}\leq 1,$$
which completes the proof.
\end{proof}

Combining Theorem \ref{ad 23 Leb} and Theorem \ref{proposition of atomic decomposition of MS Leb}, we have the following corollary.
\begin{corollary} \label{cor:equi Leb}
 Let $p(\cdot)\in\mathcal{P}(\Omega)$ satisfy  condition \eqref{log}. If $\{\mathcal {F}_n\}_{n\geq 0}$ is regular, then
 $$H_{p(\cdot)}^S= Q_{p(\cdot)},\quad H_{p(\cdot)}^M= P_{p(\cdot)}$$
 with equivalent quasi-norms.
\end{corollary}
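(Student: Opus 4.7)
The statement is a corollary, so my plan is to derive it as an immediate chain of equalities from Theorem \ref{ad 23 Leb} and Theorem \ref{proposition of atomic decomposition of MS Leb}. Concretely, Theorem \ref{ad 23 Leb} gives, with no regularity assumption, the identifications
$$
Q_{p(\cdot)}=H_{p(\cdot)}^{\rm at,2,\infty},\qquad P_{p(\cdot)}=H_{p(\cdot)}^{\rm at,3,\infty},
$$
while Theorem \ref{proposition of atomic decomposition of MS Leb}, which requires both the log-type condition \eqref{log} and regularity, supplies
$$
H_{p(\cdot)}^S=H_{p(\cdot)}^{\rm at,2,\infty},\qquad H_{p(\cdot)}^M=H_{p(\cdot)}^{\rm at,3,\infty},
$$
again with equivalent quasi-norms. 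Composing the two pairs of identifications then yields $H_{p(\cdot)}^S=Q_{p(\cdot)}$ and $H_{p(\cdot)}^M=P_{p(\cdot)}$ with equivalent quasi-norms, which is exactly the claim.

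So the plan of the write-up is just to invoke these two theorems in sequence and note that the quasi-norm equivalences transfer by composition. I would first observe that the inclusions $Q_{p(\cdot)}\hookrightarrow H_{p(\cdot)}^S$ and $P_{p(\cdot)}\hookrightarrow H_{p(\cdot)}^M$ are the ``easy'' direction (they hold without regularity and use only the elementary domination $S(f)\leq \lambda_\infty$ and $|f_n|\leq \lambda_{n-1}\leq \lambda_\infty$, together with the lattice property of $\|\cdot\|_{p(\cdot)}$), and then emphasize that the reverse inclusions are exactly where regularity enters, namely through Theorem \ref{proposition of atomic decomposition of MS Leb}, whose proof uses the stopping time construction built on the enlargements $F_j^k$ and Lemma \ref{lem:new ad2}.

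There is no real obstacle here: the hard analytic work, i.e. the construction of the atomic decomposition via the maximal function and the regularity-based control of $\|\chi_{\{\tau_k<\infty\}}\|_{p(\cdot)}$ by $\|\chi_{\{\rho_k<\infty\}}\|_{p(\cdot)}$, has already been carried out in Theorem \ref{proposition of atomic decomposition of MS Leb}. The only thing to be careful about is to confirm that the constants in the two chained equivalences depend only on $p(\cdot)$ and the regularity constant $R$ in \eqref{regular constant}, so that the composition yields genuine quasi-norm equivalences $\|f\|_{H_{p(\cdot)}^S}\approx \|f\|_{Q_{p(\cdot)}}$ and $\|f\|_{H_{p(\cdot)}^M}\approx \|f\|_{P_{p(\cdot)}}$, and this is immediate from the statements of the two preceding theorems.
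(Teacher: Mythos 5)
Your proof is exactly the paper's argument: the corollary is stated immediately after the sentence ``Combining Theorem \ref{ad 23 Leb} and Theorem \ref{proposition of atomic decomposition of MS Leb}, we have the following corollary,'' and you chain the same two atomic-decomposition identifications through $H_{p(\cdot)}^{\rm at,2,\infty}$ and $H_{p(\cdot)}^{\rm at,3,\infty}$. The extra commentary on where regularity enters is accurate but not needed; the composition of the two quasi-norm equivalences is all that is required.
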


\subsection{Atomic decompositions of $H_{p(\cdot),q}$}
In this subsection, we give the atomic decomposition for variable Lorentz-Hardy spaces $H_{p(\cdot),q}$. If $p(\cdot)=p$ is a constant, the corresponding results are studied in \cite{Jiao2009}, \cite{Jiao2016Wu}. 

\begin{definition} Let $p(\cdot)\in\mathcal{P}(\Omega)$, $0<q\leq \infty$.
The atomic Hardy space $H_{p(\cdot),q}^{\rm at, d,\infty}$ is defined as the space of all martingales $f=(f_n)_{n\geq 0}$ such that, for all $n\in\mathbb N,$
\begin{equation}\label{decomposition}
f_n= \sum_{k\in \mathbb Z} \mu_{k}a^{k}_n,\quad \mbox{a.e.}
\end{equation}
where $(a_{k})_{k\in \mathbb Z}$ is a sequence of  $(d,p(\cdot),\infty)$-atoms $(d=1,2,3)$ associated with stopping times $(\tau_k)_{k\in\mathbb Z}$ and $\mu_k=3\cdot 2^k \|\chi_{\{\tau_k<\infty\}}\|_{p(\cdot)}$ for each $k$.
For $f\in H_{p(\cdot),q}^{\rm at,d,\infty}$, define
$$\|f\|_{H_{p(\cdot),q}^{\rm at,d,\infty}} = \inf \left(\sum_{k\in \mathbb Z} 2^{kq}\|\chi_{\{\tau_k<\infty\}}\|_{p(\cdot)}^q\right)^{\frac 1q}\approx \inf\|(\mu_k)_{k\in\mathbb Z}\|_{\ell_q},$$
where the infimum is taken over all the decompositions of $f$ of the form \eqref{decomposition}.
\end{definition}

\begin{theorem} \label{theorem of atomic decomposition pq}
Let $p(\cdot)\in\mathcal{P}(\Omega)$, $0<q\leq \infty$.
 Then
  $$H_{p(\cdot),q}^s= H_{p(\cdot),q}^{\rm at, 1,\infty}$$
with equivalent quasi-norms.
\end{theorem}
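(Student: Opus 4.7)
My approach is to mimic the construction of Theorem \ref{ad Leb} but use the discretized variable Lorentz norm from Remark \ref{remark for lorentz norm}(1) in place of the $L_{p(\cdot)}$ norm, proving the two continuous inclusions separately.

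For $H_{p(\cdot),q}^s \hookrightarrow H_{p(\cdot),q}^{\rm at,1,\infty}$, I would pick the canonical stopping times $\tau_k := \inf\{n \in \mathbb N : s_{n+1}(f) > 2^k\}$ (well-defined since $s_{n+1}(f)$ is $\mathcal F_n$-measurable), set $\mu_k := 3\cdot 2^k \|\chi_{\{\tau_k<\infty\}}\|_{p(\cdot)}$, and define $a^k_n := (f^{\tau_{k+1}}_n - f^{\tau_k}_n)/\mu_k$. A telescoping argument combined with $\tau_k \nearrow \infty$ a.e.\ gives $f_n = \sum_k \mu_k a^k_n$; each $a^k$ is a $(1,p(\cdot),\infty)$-atom because $a^k_n=0$ for $n\leq \tau_k$ (which forces $\mathbb E_n(a^k)=0$ for such $n$) and $s(a^k) \leq s_{\tau_{k+1}}(f)/\mu_k \leq 2^{k+1}/\mu_k \leq \|\chi_{\{\tau_k<\infty\}}\|_{p(\cdot)}^{-1}$. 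Exploiting $\{\tau_k<\infty\}=\{s(f)>2^k\}$ together with the Lorentz discretization, the atomic quasi-norm is controlled (up to constants) by $\|s(f)\|_{L_{p(\cdot),q}} = \|f\|_{H_{p(\cdot),q}^s}$.

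For the reverse inclusion, given any admissible decomposition $f=\sum_k \mu_k a^k$ with $\mu_k = 3\cdot 2^k \|\chi_{E_k}\|_{p(\cdot)}$ and $E_k := \{\tau_k<\infty\}$, the condition $\mathbb E_n(a^k)=0$ for $n\leq \tau_k$ localizes $s(a^k)$ on $E_k$, so the atom bound reads $s(a^k)\leq \chi_{E_k}/\|\chi_{E_k}\|_{p(\cdot)}$; sublinearity of $s$ then yields $s(f) \leq 3\sum_k 2^k \chi_{E_k}$. Taking $L_{p(\cdot),q}$ quasi-norms, the problem reduces to the key estimate
\begin{equation}\label{eq:keyplan}
\Big\|\sum_k 2^k \chi_{E_k}\Big\|_{L_{p(\cdot),q}} \lesssim \Big(\sum_k 2^{kq}\|\chi_{E_k}\|_{p(\cdot)}^q\Big)^{1/q},
\end{equation}
which I expect to be the main obstacle because the family $\{E_k\}$ is arbitrary (not necessarily nested) and $q$ may exceed $\underline{p}$.

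My plan for \eqref{eq:keyplan} is as follows. Set $A_j := \{\sum_k 2^k \chi_{E_k}>2^j\}$. The geometric identity $\sum_{k<j}2^k = 2^j$ forces $A_j \subset \bigcup_{k\geq j} E_k$, hence $\chi_{A_j} \leq \sum_{k\geq j}\chi_{E_k}$, and the $\underline{p}$-quasi-triangle inequality \eqref{equation of b trangile inequality} gives $\|\chi_{A_j}\|_{p(\cdot)}^{\underline{p}} \leq \sum_{k\geq j}\|\chi_{E_k}\|_{p(\cdot)}^{\underline{p}}$. When $q\leq \underline{p}$, subadditivity of $t\mapsto t^{q/\underline{p}}$ closes the argument directly. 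When $q>\underline{p}$, I would insert a weight $2^{-k\varepsilon}$ with $0<\varepsilon<1$ and apply H\"older with exponents $q/\underline{p}$ and $(q/\underline{p})'$ to convert the $\ell^{\underline{p}}$-sum into $\ell^q$-form, yielding $\|\chi_{A_j}\|_{p(\cdot)}^q \lesssim 2^{-jq\varepsilon}\sum_{k\geq j}2^{kq\varepsilon}\|\chi_{E_k}\|_{p(\cdot)}^q$. Summing over $j$, exchanging the order of summation, and exploiting the geometric series $\sum_{j\leq k}2^{jq(1-\varepsilon)}\lesssim 2^{kq(1-\varepsilon)}$ produces $\sum_j 2^{jq}\|\chi_{A_j}\|_{p(\cdot)}^q \lesssim \sum_k 2^{kq}\|\chi_{E_k}\|_{p(\cdot)}^q$, which is precisely \eqref{eq:keyplan} after invoking the discretized Lorentz norm; the case $q=\infty$ is reached by the same argument with $\sup_j$ replacing the $j$-sum.
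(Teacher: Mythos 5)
Your proposal is correct and follows the paper's proof closely. The forward inclusion is identical: same stopping times $\tau_k=\inf\{n:s_{n+1}(f)>2^k\}$, same $\mu_k$ and $a^k$, same use of $\{\tau_k<\infty\}=\{s(f)>2^k\}$ to bound $\|(\mu_k)\|_{\ell_q}$ by $\|s(f)\|_{L_{p(\cdot),q}}$. In the reverse inclusion you reach the same core estimate by a cleaner route: the paper splits $f=F_1+F_2$ at a level $k_0$, bounds $\|s(F_1)\|_\infty\leq 3\cdot 2^{k_0}$ to pass from $\{s(f)>6\cdot 2^{k_0}\}$ to $\{s(F_2)>3\cdot 2^{k_0}\}$, and then uses the set inclusion $\{s(F_2)>3\cdot 2^{k_0}\}\subset\bigcup_{k\geq k_0}\{\tau_k<\infty\}$; you instead use the pointwise bound $s(f)\leq 3\sum_k 2^k\chi_{E_k}$ and observe directly that $\big\{\sum_k 2^k\chi_{E_k}>2^j\big\}\subset\bigcup_{k\geq j}E_k$ (from $\sum_{k<j}2^k=2^j$), abstracting the analytic content into a self-contained key inequality. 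For $q<\infty$ the subsequent computation (insert $2^{-k\varepsilon}$, apply H\"older in $\ell^q/\ell^{q'}$, interchange sums, geometric series) matches the paper's (with $\delta$ playing the role of your $\varepsilon$). For $q=\infty$ the paper instead runs a modular estimate based on Lemma \ref{lemma for equal 1} to get $2^{k_0}\|\chi_{\{s(F_2)>3\cdot 2^{k_0}\}}\|_{p(\cdot)}\lesssim\sup_k\mu_k$, whereas you exploit the $\underline p$-quasi-triangle inequality and the geometric series $\sum_{k\geq j}2^{-k\underline p}\lesssim 2^{-j\underline p}$; both give the weak-type bound. The two proofs are mathematically equivalent, but your version is marginally more modular (the key estimate $\big\|\sum_k 2^k\chi_{E_k}\big\|_{L_{p(\cdot),q}}\lesssim\big(\sum_k 2^{kq}\|\chi_{E_k}\|_{p(\cdot)}^q\big)^{1/q}$ can be quoted independently), at no cost.
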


\begin{proof}
Assume that $f \in H_{p(\cdot),q}^s$. Let us consider the following
stopping times for all $k \in \mathbb{Z}$,
$$\tau_k=\inf\{n\in\mathbb{N}: s_{n+1}(f)>2^k \}.$$
The sequence of these stopping times is obviously non-decreasing.
It is easy to see that for each $n\in \mathbb N$,
$$f_n=\sum_{k\in\mathbb{Z}}(f_n^{\tau_{k+1}}-f_n^{\tau_{k}}).$$
For every $k\in\mathbb Z$, $n\in\mathbb N$, let
$$\mu_k=3\cdot2^k \left\|\chi_{\{ \tau_k<\infty \}} \right\|_{p(\cdot)} \quad\mbox{and}\quad a_n^k= \frac{ f_n^{\tau_{k+1}}-f_n^{\tau_{k}} }{\mu_k}.$$
If $\mu_k=0$, then set $a_n^k=0$ for all
$k\in\mathbb{Z},n\in\mathbb{N}$. Then $(a_n^k)_{n \geq 0}$ is a martingale for each fixed $k\in
\mathbb{Z}$. Since $s(f^{\tau_k}) = s_{\tau_k}(f) \leq 2^k$, and by the sublinearity of the operator $s$, we get
$$ s\left((a_n^k)_{n \geq 0} \right) \leq \frac{s(f^{\tau_{k+1}})+s(f^{\tau_k})}{\mu_k} \leq
\left\| \chi_{\{ \tau_k<\infty \}} \right\|_{p(\cdot)}^{-1}.$$ Hence
 $(a_n^k)_{n \geq 0}$ is a bounded $L_2$-martingale. Consequently, there exists an element $a^k \in L_2$ such
that $\E_{n}a^k= a_n^k$. If $n \leq \tau_k$, then $a_n^k=0$.
Thus we
conclude that $a^k$ is  a $(1,p(\cdot),\infty)$-atom. For $q=\infty$, we have
\begin{align*}
\sup_k \mu_k&=  3 \sup_k 2^k \|\chi_{\{\tau_k<\infty\}}\|_{p(\cdot)}
\\&=3 \sup_k 2^k \|\chi_{\{s(f)>2^k\}}\|_{p(\cdot)}
\\&\leq C \|s(f)\|_{L_{p(\cdot),\infty}}=C\|f\|_{H_{p(\cdot),\infty}^s}.
\end{align*}
For $q<\infty$, we have
\begin{eqnarray*}
\Big(\sum_{k\in\mathbb{Z}}|\mu_k|^q\Big)^{\frac{1}{q}}
&=&3\Big(\sum_{k\in\mathbb{Z}}2^{kq}\|\chi_{\{\tau_k<\infty\}}\|_{p(\cdot)}^q\Big)
^{\frac{1}{q}}\\
&=&3\Big(\sum_{k\in\mathbb{Z}}2^{kq}\|\chi_{\{s(f)>2^k\}}\|_{p(\cdot)}^q\Big)^{\frac{1}{q}}
\\&\leq&C\Big(\sum_{k\in\mathbb{Z}}\int_{2^{k-1}}^{2^k}\lambda^{q-1}d\lambda \|\chi_{\{s(f)>2^k\}}\|_{p(\cdot)}^q\Big)^{\frac{1}{q}}
\\&\leq&C\Big(\int_0^\infty \lambda^{q-1}\|\chi_{\{s(f)>\lambda\}}\|_{p(\cdot)}^qd\lambda \Big)^{\frac{1}{q}}
\\&\leq&C\|s(f)\|_{L_{p(\cdot),q}}=C\|f\|_{H_{p(\cdot),q}^s}.
\end{eqnarray*}

Conversely, assume that a martingale $f$  has the decomposition \eqref{decomposition}. For an arbitrary integer $k_0$, set
 $$f=\sum_k \mu_k a^k:= F_1+F_2,$$
 where
 $$ F_1=\sum_{k=-\infty}^{k_0-1} \mu_k a^k\quad \mbox{and}\quad F_2=\sum_{k=k_0}^{\infty} \mu_k a^k.$$
By Remark \ref{remark for lorentz norm},
$$\|f\|_{H_{p(\cdot),q}^s}\lesssim \|s(F_1)\|_{L_{p(\cdot),q}}+ \|s(F_2)\|_{L_{p(\cdot),q}}.$$
Note that
$$s(F_1)\leq  \sum_{k=-\infty}^{k_0-1} \mu_ks(a^k),\quad s(F_2)\leq  \sum_{k=k_0}^{\infty} \mu_ks(a^k).$$

We deal with  $q=\infty$ firstly.
Since $a^k$ is a $(1,p(\cdot),\infty)$-atom for every $k\in \mathbb Z$, we find that
\begin{align*}
\|s(F_1)\|_\infty
&\leq  \sum_{k=-\infty}^{k_0-1} \mu_k \|s(a^k)\|_\infty
\leq \sum_{k=-\infty}^{k_0-1} \mu_k
\|\chi_{\{\tau_k<\infty\}}\|_{p(\cdot)}^{-1}\leq 3 \cdot 2^{k_0}.
\end{align*}
Thus we can deduce that
\begin{align} \label{weak_norm}
2^{k_0}\|\chi_{\{s(f)>6\cdot 2^{k_0}\}}\|_{p(\cdot)} &\leq 2^{k_0}\|\chi_{\{s(F_1)>3\cdot 2^{k_0}\}}
+\chi_{\{s(F_2)>3 \cdot 2^{k_0}\}}\|_{p(\cdot)} = 2^{k_0}\|\chi_{\{s(F_2)>3\cdot 2^{k_0}\}}\|_{p(\cdot)}.
\end{align}
Next we  estimate this expression.
Since $s(a^k)=0$ on the set $\{\tau_k=\infty\}$, we have
$\{s(a^k)>0\}\subset\{\tau_k<\infty\}$. Then,
\begin{equation} \label{estimate for F2}
 \{s(F_2)>3\cdot2^{k_0}\} \subset \{s(F_2)>0\}\subset\bigcup\limits_{k=k_0}^\infty\{s(a^k)>0\}\subset\bigcup\limits_{k=k_0}^\infty\{\tau_k<\infty\}.
\end{equation}
Hence, by Lemma \ref{lemma for equal 1} we obtain
\begin{align*}
\int_\Omega \left(\frac{3\cdot 2^{k_0}\chi_{\{s(F_2)>3\cdot 2^{k_0}\}}}{\sup_{k\in\mathbb Z}\mu_k}\right)^{p(x)}d\mathbb P & \leq\sum_{k=k_0}^\infty  \int_{\{\tau_k<\infty\}} \left(\frac{3\cdot 2^{k_0}}{\sup_{k\in\mathbb Z}\mu_k}\right)^{p(x)}d\mathbb P
\\&\leq \sum_{k=k_0}^\infty  \int_{\{\tau_k<\infty\}} \left(\frac{3\cdot 2^{k_0}}{3\cdot 2^k \|\chi_{\{\tau_k<\infty\}}\|_{p(\cdot)}}\right)^{p(x)}d\mathbb P
\\&  \leq \sum_{k=k_0}^\infty 2^{-(k-k_0)p_-}\int_{\Omega} \left(\frac{\chi_{\{\tau_k<\infty\}}}{ \|\chi_{\{\tau_k<\infty\}}\|_{p(\cdot)}}\right)^{p(x)}d\mathbb P \leq C_{p_{-}},
\end{align*}
which implies that
\begin{equation*}
3\cdot 2^{k_0}\|\chi_{\{s(F_2)>3\cdot 2^{k_0}\}}\|_{p(\cdot)}\leq C_{p_{-}} \sup_{k\in\mathbb Z}\mu_{k}.
\end{equation*}
Combining \eqref{weak_norm} and the above inequality, we have
$$
\|f\|_{H_{p(\cdot),\infty}^s}=\|s(f)\|_{L_{p(\cdot),\infty}} \leq  2 C_{p_{-}}  \inf\sup_{k\in\mathbb Z}\mu_k \lesssim \|f\|_{H_{p(\cdot),\infty}^{\rm at, 1,\infty}},
$$
where the infimum is taken over all the decompositions of $f$ of the form \eqref{decomposition}.

Now we consider the case  $q<\infty$.
 According to \eqref{weak_norm}, it suffices to estimate  $\|\chi_{\{s(F_2)>3\cdot 2^{k_0}\}}\|_{p(\cdot)}$.
Let
$$
0<\varepsilon <\min(\underline{p},q)\quad\text{ and}\quad 0<\delta<1.
$$
Applying \eqref{estimate for F2} and \eqref{equation of b trangile inequality}, we have
\begin{align*}
\|\chi_{\{s(F_2)>3\cdot 2^{k_0}\}}\|_{p(\cdot)} &\leq\left\| \sum_{k=k_0}^\infty\chi_{\{\tau_k<\infty\}}\right\|_{p(\cdot)}
\leq \left(\sum_{k=k_0}^\infty \|\chi_{\{\tau_k<\infty\}}\|_{p(\cdot)}^{{\varepsilon}}\right)^{1/{\varepsilon}} \n
\\&=  \left(\sum_{k=k_0}^\infty 2^{-k\delta{\varepsilon}}2^{k\delta{\varepsilon}}\|\chi_{\{\tau_k<\infty\}}\|_{p(\cdot)}
^{{\varepsilon}}\right)^{1/{\varepsilon}}.\n
\end{align*}
Using H\"{o}lder's inequality for $\frac{ q-{\varepsilon}}{q}+\frac{{\varepsilon}}{q}=1$, we get
\begin{align*}
\|\chi_{\{s(F_2)>3\cdot 2^{k_0}\}}\|_{p(\cdot)}&\leq \left(\sum_{k=k_0}^\infty 2^{-k\delta{\varepsilon}\frac{q}{q-{\varepsilon}}}\right)^{\frac{q-{\varepsilon}}{{\varepsilon}q}}
\left(\sum_{k=k_0}^\infty 2^{k\delta q}\|\chi_{\{\tau_k<\infty\}}\|_{p(\cdot)}
^q\right)^{1/q}
\\&\lesssim 2^{-k_0\delta} \left(\sum_{k=k_0}^\infty 2^{k\delta q}\|\chi_{\{\tau_k<\infty\}}\|_{p(\cdot)}
^q\right)^{1/q}.
 \end{align*}
 Consequently,
 \begin{align*}
 \sum_{k_0=-\infty}^\infty 2^{k_0q}\|\chi_{\{s(F_2)>3\cdot 2^{k_0}\}}\|_{p(\cdot)}^q &\lesssim \sum_{k_0=-\infty}^\infty2^{k_0(1-\delta)q} \sum_{k=k_0}^\infty 2^{k\delta q}\|\chi_{\{\tau_k<\infty\}}\|_{p(\cdot)}^q
 \\&= \sum_{k=-\infty}^\infty 2^{k\delta q}\|\chi_{\{\tau_k<\infty\}}\|_{p(\cdot)}^q \sum_{k_0=-\infty}^k 2^{k_0(1-\delta)q}
 \\&\lesssim \sum_{k=-\infty}^\infty 2^{kq}\|\chi_{\{\tau_k<\infty\}}\|_{p(\cdot)}^q,
 \end{align*}
 where the last $``\lesssim"$ is due to $1-\delta>0$. Then we obtain
$$
\|F_2\|_{H_{p(\cdot),q}^s}=\|s(F_2)\|_{L_{p(\cdot),q}}\lesssim \inf \Bigg(\sum_{k=-\infty}^\infty\mu_k^q\Bigg)^{1/q}\lesssim  \|f\|_{H_{p(\cdot),q}^{\rm at,1,\infty}},
$$
where the infimum is taken over all decompositions of the form \eqref{decomposition}.
\end{proof}

\begin{theorem}\label{theorem of atomic decomposition for rest}
Let $p(\cdot)\in\mathcal{P}(\Omega)$ and $0<q\leq \infty$.
Then
$$Q_{p(\cdot),q}= H_{p(\cdot),q}^{\rm at,2,\infty},\quad  P_{p(\cdot),q}=H_{p(\cdot),q}^{\rm at,3,\infty}$$
with equivalent quasi-norms.
\end{theorem}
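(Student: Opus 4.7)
The plan is to mirror the scheme used in Theorem \ref{theorem of atomic decomposition pq}, replacing the conditional square function $s$ by $S$ in the $Q_{p(\cdot),q}$ case and by the maximal function $M$ in the $P_{p(\cdot),q}$ case, but routing the construction through the non-decreasing adapted majorant $(\lambda_n)\in\Lambda$ that witnesses membership in $Q_{p(\cdot),q}$ or $P_{p(\cdot),q}$. I will write out the $Q$-case in detail; the $P$-case is then identical up to replacing every instance of $S$ by $M$.

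For the inclusion $Q_{p(\cdot),q}\hookrightarrow H_{p(\cdot),q}^{\mathrm{at},2,\infty}$, fix $f\in Q_{p(\cdot),q}$ with $(\lambda_n)\in\Lambda$ such that $S_n(f)\leq \lambda_{n-1}$ and $\lambda_\infty\in L_{p(\cdot),q}$. Define the stopping times
$$
\tau_k=\inf\{n\in\mathbb{N}:\lambda_n>2^k\},\qquad k\in\mathbb{Z},
$$
which are non-decreasing in $k$ and satisfy $\{\tau_k<\infty\}=\{\lambda_\infty>2^k\}$. Set $\mu_k=3\cdot 2^k\|\chi_{\{\tau_k<\infty\}}\|_{p(\cdot)}$ and $a_n^k=\mu_k^{-1}(f_n^{\tau_{k+1}}-f_n^{\tau_k})$, so that $f_n=\sum_k \mu_k a_n^k$. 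The condition $\E_n(a^k)=0$ for $n\leq\tau_k$ is automatic from the telescoping, and the key bound is $S(f^{\tau_k})\leq \lambda_{\tau_k-1}\leq 2^k$ (using that $\lambda_n>2^k$ is first reached at $n=\tau_k$), which by sublinearity of $S$ gives $\|S(a^k)\|_\infty\leq 3\cdot 2^k/\mu_k=\|\chi_{\{\tau_k<\infty\}}\|_{p(\cdot)}^{-1}$. Thus each $a^k$ is a $(2,p(\cdot),\infty)$-atom. The discretization of the Lorentz quasi-norm in Remark \ref{remark for lorentz norm}(1) together with $\{\tau_k<\infty\}=\{\lambda_\infty>2^k\}$ yields
$$
\|(\mu_k)\|_{\ell_q}\approx \left(\sum_k 2^{kq}\|\chi_{\{\lambda_\infty>2^k\}}\|_{p(\cdot)}^q\right)^{1/q}\approx \|\lambda_\infty\|_{L_{p(\cdot),q}},
$$
and taking the infimum over $(\lambda_n)\in\Lambda$ provides $\|f\|_{H_{p(\cdot),q}^{\mathrm{at},2,\infty}}\lesssim \|f\|_{Q_{p(\cdot),q}}$.

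For the converse, given an atomic decomposition $f=\sum_k \mu_k a^k$ with $a^k$ a $(2,p(\cdot),\infty)$-atom and stopping time $\tau_k$, I would put
$$
\lambda_n:=\sum_k \mu_k\,\|\chi_{\{\tau_k<\infty\}}\|_{p(\cdot)}^{-1}\,\chi_{\{\tau_k\leq n\}},
$$
which is non-decreasing and $\mathcal F_n$-measurable, hence in $\Lambda$. Because $d_j(a^k)=0$ for $j\leq\tau_k$, one has $S_{n+1}(a^k)\leq \|S(a^k)\|_\infty\chi_{\{\tau_k\leq n\}}$, giving $S_{n+1}(f)\leq \lambda_n$. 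At infinity, $\lambda_\infty = 3\sum_k 2^k\chi_{\{\tau_k<\infty\}}$, so the problem reduces to proving $\|\lambda_\infty\|_{L_{p(\cdot),q}}\lesssim \|(\mu_k)\|_{\ell_q}$. For this I would run verbatim the truncation argument from the second half of the proof of Theorem \ref{theorem of atomic decomposition pq}: split $\lambda_\infty=U_1+U_2$ at an arbitrary level $k_0$, observe the deterministic bound $U_1\leq 3\cdot 2^{k_0}$, include $\{U_2>3\cdot 2^{k_0}\}\subset\bigcup_{k\geq k_0}\{\tau_k<\infty\}$, and for $q<\infty$ apply inequality \eqref{equation of b trangile inequality} with exponent $0<\varepsilon<\min(\underline{p},q)$ followed by discrete H\"older with a weight $2^{-k\delta\varepsilon}$, $0<\delta<1$; for $q=\infty$ one uses the direct modular estimate with the subunit sums of $2^{-(k-k_0)p_-}$. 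This yields the desired Lorentz estimate and hence $\|f\|_{Q_{p(\cdot),q}}\lesssim \|f\|_{H_{p(\cdot),q}^{\mathrm{at},2,\infty}}$.

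The $P_{p(\cdot),q}$ identity is obtained by running the same two arguments with $|f_n|\leq\lambda_{n-1}$ replacing $S_n(f)\leq\lambda_{n-1}$: the forward direction uses $|f_n^{\tau_k}|\leq\lambda_{\tau_k-1}\leq 2^k$ to produce $(3,p(\cdot),\infty)$-atoms, and in the converse direction the same $\lambda_n$ constructed from $\|M(a^k)\|_\infty$ dominates $|f_n|$. I expect the main obstacle to be purely bookkeeping, namely the second half of the converse direction: the shifting-index H\"older trick for $q<\infty$ must interact correctly with the quasi-triangle inequality for $\|\cdot\|_{p(\cdot)}$ (which only holds to the $\underline p$-th power), and one must be careful that the exponents $\varepsilon,\delta$ chosen for the $s$-version of the argument transfer without modification here, since the $S$- and $M$-norms of the atoms satisfy the same size bound by $\|\chi_{\{\tau_k<\infty\}}\|_{p(\cdot)}^{-1}$. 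Once this is checked, the proof is complete.
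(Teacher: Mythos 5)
Your proposal is correct and takes essentially the same route as the paper's sketch: the same stopping times $\tau_k=\inf\{n:\lambda_n>2^k\}$ built from the majorant in the definition of $Q_{p(\cdot),q}$ (resp.\ $P_{p(\cdot),q}$), the same atoms $a^k=\mu_k^{-1}(f^{\tau_{k+1}}-f^{\tau_k})$, and in the converse direction the same adapted control $\lambda_n$ followed by the $k_0$-truncation argument recycled from Theorem \ref{theorem of atomic decomposition pq}. The only cosmetic deviation is that you use the bound $\|\chi_{\{\tau_k<\infty\}}\|_{p(\cdot)}^{-1}$ in place of the paper's $\|S(a^k)\|_\infty$ (resp.\ $\|M(a^k)\|_\infty$) when defining $\lambda_n$, which dominates it and yields the identical $\lambda_\infty\lesssim 3\sum_k 2^k\chi_{\{\tau_k<\infty\}}$, so nothing is lost.
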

\begin{proof}The proof is similar to those of Theorems \ref{theorem of atomic decomposition pq}, so we only sketch the outline.
Let $ f=(f_n)_{n\geq0}\in Q_{p(\cdot),q}$ (or $P_{p(\cdot),q}$). The stopping times $\tau_k$ are defined  by
$$\tau_k=\inf\{n\in\mathbb{N}: \lambda_n>2^k\}, \ \ \ (\inf\emptyset=\infty),$$
where $(\lambda_n)_{n\geq0}$ is the sequence in the definition of
$Q_{p(\cdot),q}$. Let $a_n^k$ and $\mu_k$ $(k\in\mathbb{Z})$
be the same  as in the proof of Theorem \ref{theorem of atomic decomposition pq}.
Then we get \eqref{decomposition},
where $(a^k)_{k\in\mathbb Z}$ is a sequence of $(2,p(\cdot),\infty)$-atoms (or $(3,p(\cdot),\infty)$-atoms).
 Moreover,
$$
\|(\mu_k)_{k\in\mathbb{Z}}\|_{\ell_q}\lesssim \|f\|_{Q_{p(\cdot),q}} \quad \mbox{(or} \quad \|(\mu_k)_{k\in\mathbb{Z}}\|_{\ell_q}
\lesssim\|f\|_{P_{p(\cdot),q}})
$$
still holds.

To prove the converse part, let
$$\lambda_n=\sum\limits_{k\in\mathbb{Z}}\mu_k\chi_{\{\tau_k\leq n\}}\|S(a^k)\|_\infty \quad\,(\text{or}  \quad\lambda_n=\sum\limits_{k\in\mathbb{Z}}\mu_k\chi_{\{\tau_k\leq n\}}\|M(a^k)\|_\infty).$$
Then $(\lambda_n)_{n\geq 0}$ is a nondecreasing, nonnegative and
adapted sequence with $S_{n+1}(f)\leq\lambda_n$ (or $|f_{n+1}|\leq \lambda_n$) for any $n\geq 0$.
For any given integer $k_0$, let
$$\lambda_\infty=\lambda_\infty^{(1)}+\lambda_\infty^{(2)},$$
where
$$\lambda_\infty^{(1)}=\sum\limits_{k=-\infty}^{k_0-1}\mu_k\chi_{\{\tau_k<\infty\}}\|S(a^k)\|_\infty \quad\,(\text{or}  \quad \lambda_\infty^{(1)}=\sum\limits_{k=-\infty}^{k_0-1}\mu_k\chi_{\{\tau_k<\infty\}}\|M(a^k)\|_\infty),$$
and
$$\lambda_\infty^{(2)}=
\sum\limits_{k=k_0}^\infty\mu_k
\chi_{\{\tau_k<\infty\}}\|S(a^k)\|_\infty \quad\,(\text{or} \quad \lambda_\infty^{(2)}=\sum\limits_{k=k_0}^\infty\mu_k\chi_{\{\tau_k<\infty\}}\|M(a^k)\|_\infty).$$

By replacing $s(F_1)$ and $s(F_2)$ in the proof of Theorem \ref{theorem of atomic decomposition pq} with
$\lambda_\infty^{(1)}$ and $\lambda_\infty^{(2)}$, respectively,
we obtain $f\in Q_{p(\cdot),q}$ (or $f\in P_{p(\cdot),q}$) and
$$
\|f\|_{Q_{p(\cdot),q}}\approx \inf\|(\mu_k)_{k\in\mathbb{Z}}\|_{\ell_q} \quad (\text{or} \quad \|f\|_{P_{p(\cdot),q}}\approx \inf\|(\mu_k)_{k\in\mathbb{Z}}\|_{\ell_q}),
$$
where the infimum is taken over all the decompositions \eqref{decomposition}.
\end{proof}

\begin{remark}
Theorems \ref{ad Leb}, \ref{ad 23 Leb}, \ref{theorem of atomic decomposition pq} and \ref{theorem of atomic decomposition for rest} do not need any restriction on $p(\cdot)$ and they do not need $\mathcal {F}_n$ to be generated by countably many
atoms. If $p(\cdot)=p$ is a constant, then the above atomic decompositions go back to \cite{Jiao2009}, \cite{Jiao2016Wu} and \cite{Weisz1994book}.
\end{remark}

\begin{remark} \label{remark of desenty}
 (1) In \eqref{decomposition}, if $q<\infty$, then
 the sum $\sum_{k=m}^n \mu_ka^k$ converges to $f$ in $H_{p(\cdot),q}^s$
as $m\rightarrow -\infty$, $n\rightarrow \infty$.
 Indeed,
 $$\sum_{k=m}^n \mu_ka^k = \sum_{k=m}^n (f^{\nu_{k+1}}-f^{\nu_k})=f^{\nu_{n+1}}-f^{\nu_m}.$$
 By the sublinearity of $s$, we have
 \begin{align*}
 \left\|f-\sum_{k=m}^n \mu_ka^k\right\|_{H_{p(\cdot),q}^s}&=\|s(f-f^{\nu_{n+1}}+f^{\nu_m})\|_{L_{p(\cdot),q}}\\&\leq
 \|s(f-f^{\nu_{n+1}})+s(f^{\nu_m})\|_{L_{p(\cdot),q}}
 \\&\lesssim  \|s(f-f^{\nu_{n+1}})\|_{L_{p(\cdot),q}}+
 \|s(f^{\nu_m})\|_{L_{p(\cdot),q}}.
 \end{align*}
Observe that
$$
s(f-f^{\nu_{n+1}})^2=s(f)^2- s(f^{\nu_{n+1}})^2, \quad
s(f-f^{\nu_{n+1}})\leq s(f), \qquad s(f^{\nu_m})\leq s(f)
$$
and
$$
s(f-f^{\nu_{n+1}}),\; s(f^{\nu_m})\rightarrow 0 \qquad \mbox{a.e. as $m\rightarrow -\infty$, $n\rightarrow \infty$}.
$$
Thus, by Lemma \ref{lemma of abs q} and  \ref{lemma of control}, we have
 $$\|s(f-f^{\nu_{n+1}})\|_{L_{p(\cdot),q}},\;\|s(f^{\nu_m})\|_{L_{p(\cdot),q}} \rightarrow 0\;\;\;\; as\; m\rightarrow -\infty,\;n\rightarrow \infty,$$
 which implies
$$
\left\|f-\sum_{k=m}^n \mu_k a^k\right\|_{H_{p(\cdot),q}^s}\rightarrow 0 \qquad \mbox{as $m\rightarrow -\infty$, $n\rightarrow
 \infty$}.
$$
Further, for $k\in \mathbb Z$, $a^k=(a_n^k)_{n\geq 0}$ (here $a^k$ is a $(1,p(\cdot),\infty)$-atom) is $L_2$
 bounded, hence $H_2^s=L_2$ is dense in $H_{p(\cdot),q}^s.$
 Similarly, $L_\infty$ is dense in $P_{p(\cdot),q}$.

 (2) If $q=\infty$ and $s(f)\in \mathscr L_{p(\cdot),\infty}(\Omega)$,  then by Lemma \ref{lemma of control}, the sum $\sum_{k=m}^n \mu_ka^k$ converges to $f$ in $H_{p(\cdot),\infty}^s$
as $m\rightarrow -\infty$, $n\rightarrow \infty$.
 \end{remark}

We can show the next atomic decomposition corresponding to Theorem \ref{proposition of atomic decomposition of MS Leb}. The proof is omitted.

\begin{theorem}\label{proposition of atomic decomposition of MS}
 Let $p(\cdot)\in\mathcal{P}(\Omega)$ satisfy  condition \eqref{log} and  $0<q\leq \infty$. If $\{\mathcal {F}_n\}_{n\geq 0}$ is regular, then
 $$H_{p(\cdot),q}^S= H_{p(\cdot),q}^{\rm at,2,\infty},\quad H_{p(\cdot),q}^M= H_{p(\cdot),q}^{\rm at,3,\infty}$$
 with equivalent quasi-norms.
\end{theorem}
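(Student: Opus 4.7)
The two identities are proved by the same argument, so I will sketch the proof of $H_{p(\cdot),q}^M = H_{p(\cdot),q}^{\mathrm{at},3,\infty}$; the statement for $H_{p(\cdot),q}^S$ is obtained by replacing the maximal function $|f_n|$ by the square function $S_n(f)$ and invoking the $(2,p(\cdot),\infty)$-version of every auxiliary result. The strategy is to recycle the atomic construction from Theorem \ref{proposition of atomic decomposition of MS Leb} essentially verbatim, and only redo the final norm estimate in the Lorentz scale, where the discretization in Remark \ref{remark for lorentz norm}(1) takes the place of the duality argument used in the Lebesgue case.

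\textbf{Easy direction} ($H_{p(\cdot),q}^{\mathrm{at},3,\infty}\hookrightarrow H_{p(\cdot),q}^M$). By Theorem \ref{theorem of atomic decomposition for rest} we have $H_{p(\cdot),q}^{\mathrm{at},3,\infty}=P_{p(\cdot),q}$ with equivalent quasi-norms. If $|f_n|\le \lambda_{n-1}$ for some $(\lambda_n)\in\Lambda$ with $\lambda_\infty\in L_{p(\cdot),q}$, then $M(f)\le \lambda_\infty$, hence $\|f\|_{H_{p(\cdot),q}^M}\le\|\lambda_\infty\|_{L_{p(\cdot),q}}$ and the embedding follows after taking the infimum.

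\textbf{Hard direction} ($H_{p(\cdot),q}^M\hookrightarrow H_{p(\cdot),q}^{\mathrm{at},3,\infty}$). Fix $f\in H_{p(\cdot),q}^M$ and, exactly as in the proof of Theorem \ref{proposition of atomic decomposition of MS Leb}, define $\rho_k=\inf\{n:|f_n|>2^k\}$, use regularity to build the atoms $F_{n+1}^k$ and the stopping times $\tau_k(x)=\inf\{n:x\in F_{n+1}^k\}$, and set
\[
\mu_k=3\cdot 2^k\,\|\chi_{\{\tau_k<\infty\}}\|_{p(\cdot)},\qquad a_n^k=\frac{f_n^{\tau_{k+1}}-f_n^{\tau_k}}{\mu_k}.
\]
That the $a^k$ are $(3,p(\cdot),\infty)$-atoms, that $f=\sum_k\mu_k a^k$ pointwise, and that $\tau_k\uparrow\infty$ a.e.\ were already established in Theorem \ref{proposition of atomic decomposition of MS Leb}, so these facts can simply be quoted.

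\textbf{The norm estimate.} This is the only step that really has to be redone. Using Lemma \ref{lem:new ad2} and then the discretization in Remark \ref{remark for lorentz norm}(1), for $0<q<\infty$ one computes
\[
\sum_{k\in\mathbb Z}\mu_k^q
= 3^q\sum_{k\in\mathbb Z}2^{kq}\,\|\chi_{\{\tau_k<\infty\}}\|_{p(\cdot)}^q
\lesssim \sum_{k\in\mathbb Z}2^{kq}\,\|\chi_{\{\rho_k<\infty\}}\|_{p(\cdot)}^q
= \sum_{k\in\mathbb Z}2^{kq}\,\|\chi_{\{M(f)>2^k\}}\|_{p(\cdot)}^q
\approx \|M(f)\|_{L_{p(\cdot),q}}^q,
\]
and the obvious supremum-analogue works for $q=\infty$ via the second line of Remark \ref{remark for lorentz norm}(1). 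This gives $\|f\|_{H_{p(\cdot),q}^{\mathrm{at},3,\infty}}\lesssim\|f\|_{H_{p(\cdot),q}^M}$.

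\textbf{Main obstacle.} There is essentially no new obstacle: Lemma \ref{lem:new ad2} already packages the regularity-plus-maximal-inequality ingredient that was the technical heart of the Lebesgue version, and once this lemma is available the Lorentz estimate collapses to a one-line discretization. The only mild care needed is to verify that Lemma \ref{lem:new ad2}, which is stated as an estimate of characteristic-function quasi-norms in $L_{p(\cdot)}$, is applied $k$-by-$k$ (which it is) so that no genuinely new Lorentz-scale inequality is required; the $\ell_q$-summation then sits outside the lemma. The $H_{p(\cdot),q}^S$ statement is identical after replacing $\rho_k$ by $\inf\{n:S_{n+1}(f)>2^k\}$ and using the $(2,p(\cdot),\infty)$-atom condition $\|S(a^k)\|_\infty\le\|\chi_{\{\tau_k<\infty\}}\|_{p(\cdot)}^{-1}$ in place of the maximal-function condition.
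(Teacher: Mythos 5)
Your proof is correct, and it follows the approach the paper clearly intends (the paper omits the proof, referring back to the Lebesgue version, Theorem~\ref{proposition of atomic decomposition of MS Leb}). The atom construction — stopping times $\rho_k$, the regularity-enlarged sets $F^k_j$, the shifted stopping times $\tau_k$, and the coefficients $\mu_k = 3\cdot 2^k\|\chi_{\{\tau_k<\infty\}}\|_{p(\cdot)}$ — is lifted verbatim, and Lemma~\ref{lem:new ad2} is applied exactly as intended, $k$-by-$k$. Your key observation is a genuine simplification worth making explicit: the atomic quasi-norm in the Lorentz scale is $\inf\|(\mu_k)\|_{\ell_q}$, so the step that was the technical heart of the Lebesgue proof (estimating a weighted $L_{p(\cdot)/t}$-norm via duality, Hölder and the Doob maximal inequality) is replaced by the one-line $\ell_q$-discretization of $\|M(f)\|_{L_{p(\cdot),q}}$ from Remark~\ref{remark for lorentz norm}(1). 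This is the same mechanism the paper uses in Theorems~\ref{theorem of atomic decomposition pq} and \ref{theorem of atomic decomposition for rest}, so your argument is fully aligned with the paper's methodology.

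One small caveat you gloss over: you write that the fact $\tau_k\uparrow\infty$ a.e.\ ``can simply be quoted'' from Theorem~\ref{proposition of atomic decomposition of MS Leb}, but that proof used $\|\chi_{\{M(f)>2^k\}}\|_{p(\cdot)}\le 2^{-k}\|M(f)\|_{p(\cdot)}$, which needs $M(f)\in L_{p(\cdot)}$. Here you only know $M(f)\in L_{p(\cdot),q}$. The needed decay $\|\chi_{\{M(f)>2^k\}}\|_{p(\cdot)}\to 0$ still holds — for $q<\infty$ because $\sum_k 2^{kq}\|\chi_{\{M(f)>2^k\}}\|_{p(\cdot)}^q<\infty$ forces the individual terms to zero, and for $q=\infty$ directly from $\sup_k 2^k\|\chi_{\{M(f)>2^k\}}\|_{p(\cdot)}\lesssim\|M(f)\|_{L_{p(\cdot),\infty}}$ — but this one line does need to be said rather than quoted.
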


Similarly to Corollary \ref{cor:equi Leb}, we have a corresponding result for variable Lorentz-Hardy spaces.
\begin{corollary}\label{cor:equi Lor}
 Let $p(\cdot)\in\mathcal{P}(\Omega)$ satisfy  condition \eqref{log} and  $0<q\leq \infty$. If $\{\mathcal {F}_n\}_{n\geq 0}$ is regular, then
 $$H_{p(\cdot),q}^S= Q_{p(\cdot),q},\quad H_{p(\cdot),q}^M= P_{p(\cdot),q}$$
 with equivalent quasi-norms.
\end{corollary}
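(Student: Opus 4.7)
The plan is essentially to chain together the two atomic characterizations that have just been proved for the Lorentz-scale spaces, exactly in parallel with how Corollary \ref{cor:equi Leb} was obtained from Theorem \ref{ad 23 Leb} and Theorem \ref{proposition of atomic decomposition of MS Leb}. Concretely, Theorem \ref{theorem of atomic decomposition for rest} identifies
$$Q_{p(\cdot),q}=H_{p(\cdot),q}^{\rm at,2,\infty},\qquad P_{p(\cdot),q}=H_{p(\cdot),q}^{\rm at,3,\infty}$$
with equivalent quasi-norms, and this identification needs no condition on $p(\cdot)$ beyond $p(\cdot)\in\mathcal{P}(\Omega)$ and no structural assumption on $(\mathcal{F}_n)$. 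On the other hand, Theorem \ref{proposition of atomic decomposition of MS} shows that under the condition \eqref{log} together with regularity of $(\mathcal{F}_n)_{n\geq 0}$ one has
$$H_{p(\cdot),q}^S=H_{p(\cdot),q}^{\rm at,2,\infty},\qquad H_{p(\cdot),q}^M=H_{p(\cdot),q}^{\rm at,3,\infty}$$
with equivalent quasi-norms.

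The proof of the corollary is then to simply compare the two: since the right-hand sides of the two displays agree, the first display gives $H_{p(\cdot),q}^S=Q_{p(\cdot),q}$ as a set with equivalent quasi-norms, and likewise the second gives $H_{p(\cdot),q}^M=P_{p(\cdot),q}$. Because both equalities are actually quasi-norm equivalences via a common atomic quasi-norm $\|\cdot\|_{H_{p(\cdot),q}^{\rm at,i,\infty}}$ for $i=2,3$, no extra work is required; the transitivity of the relation $\approx$ between quasi-norms yields
$$\|f\|_{H_{p(\cdot),q}^S}\approx \|f\|_{H_{p(\cdot),q}^{\rm at,2,\infty}}\approx \|f\|_{Q_{p(\cdot),q}},\qquad \|f\|_{H_{p(\cdot),q}^M}\approx \|f\|_{H_{p(\cdot),q}^{\rm at,3,\infty}}\approx \|f\|_{P_{p(\cdot),q}}.$$

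There is no real obstacle: the only non-trivial input is Theorem \ref{proposition of atomic decomposition of MS}, which is where the regularity hypothesis \eqref{regular constant} and the condition \eqref{log} are genuinely used (through the auxiliary Lemma \ref{lem:new ad2} and the dual Doob-type estimate via Theorem \ref{thm:maximal inequality}). The corollary itself is just the bookkeeping step that identifies the two atomic descriptions of the same set.
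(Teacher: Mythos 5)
Your proof is correct and matches the paper's approach exactly: the paper states that Corollary \ref{cor:equi Lor} follows "similarly to Corollary \ref{cor:equi Leb}," i.e.\ by combining Theorem \ref{theorem of atomic decomposition for rest} with Theorem \ref{proposition of atomic decomposition of MS} through the common atomic quasi-norms $\|\cdot\|_{H_{p(\cdot),q}^{\rm at,2,\infty}}$ and $\|\cdot\|_{H_{p(\cdot),q}^{\rm at,3,\infty}}$. You also correctly locate where the hypotheses (regularity and \eqref{log}) are actually used, namely only in Theorem \ref{proposition of atomic decomposition of MS}.
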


\section{Boundedness of the martingale operators} \label{sec4}
This section is devoted to the applications of the atomic characterizations established in Section \ref{sec3} while we are dealing with martingale inequalities between different Hardy spaces. Furthermore, if we suppose that $\{\mathcal F_n\}_{n\geq0}$ is regular, then the equivalence of different Hardy spaces will be proved.

\subsection{Martingale inequalities between $H_{p(\cdot)}$}

As an application of the atomic decompositions, we shall  obtain a sufficient condition for a $\sigma$-sublinear operator to be bounded from the martingale Hardy spaces to $L_{p(\cdot)}$.

An operator $T:X\rightarrow Y$ is called a $\sigma$-sublinear operator if for any $\alpha\in \mathbb C$ it satisfies
$$
\left|T\left(\sum_{k=1}^{\infty}f_k\right)\right|\leq \sum_{k=1}^{\infty} |T(f_k)|\quad \text{and}\quad |T(\alpha f)|= |\alpha||T(f)|,$$
where $X$ is a martingale space and $Y$ is a measurable function space.

Suppose that $\tau$ is a stopping time. Denote
$$\mathcal{F}_{\tau}=\{F\in \mathcal{F}:F\cap \{\tau\leq n\}\in \mathcal{F}_n, \quad n\geq1\}.$$
$\mathcal{F}_{\tau}$ is a sub-$\sigma$-algebra of $\mathcal{F}$. Then the conditional expectation with respect to $\mathcal{F}_{\tau}$ is denoted by $\mathbb{E}_{\tau}$.

We need the following property taken from \cite{XWYJ2018}.

\begin{lemma}[{\cite[Lemma 5.1]{XWYJ2018}}] \label{lem-op}
Let $a$ be a $(1,p(\cdot),\infty)$-atom associated with stopping time $\tau$.
If $T\in\{s,S,M\}$, then
$$T(a\chi_F)=T(a)\chi_F,\quad \forall F\in \mathcal{F}_{\tau}.$$
\end{lemma}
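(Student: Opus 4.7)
The plan is to reduce everything to a single key identity, namely that the martingale generated by $a\chi_F$ coincides pointwise with the process $(\chi_F a_n)_{n\geq 0}$, and then to derive the three claimed identities for $M$, $S$, and $s$ as straightforward consequences. First I would use the atom condition $\E_n(a)=0$ for $n\leq \tau$ to record the fundamental support property
$$a_n = a_n\chi_{\{\tau\leq n-1\}},\qquad d_na = d_na\,\chi_{\{\tau\leq n-1\}},$$
so that the martingale $a$ and all its differences live on $\{\tau<\infty\}$, and more precisely on $\{\tau\leq n-1\}$ at level $n$.

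Next I would establish the identity
$$\E_n(a_\infty \chi_F) = \chi_F a_n \qquad (n\in\N)$$
for every $F\in \mathcal{F}_\tau$. Since $a_\infty=0$ on $\{\tau=\infty\}$, I decompose $a_\infty\chi_F = \sum_{k\geq 0} a_\infty\chi_F\chi_{\{\tau=k\}}$. The definition of $\mathcal{F}_\tau$ yields $F\cap\{\tau=k\}\in\mathcal{F}_k$, so $\chi_F\chi_{\{\tau=k\}}$ is $\mathcal{F}_k$-measurable. For $n\geq k$ this immediately gives $\E_n(a_\infty\chi_F\chi_{\{\tau=k\}}) = \chi_F\chi_{\{\tau=k\}}a_n$, while for $n<k$ the tower property combined with $a_k\chi_{\{\tau\geq k\}}=0$ yields $\E_n(a_\infty\chi_F\chi_{\{\tau=k\}})=0$. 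Summing and using $a_n\chi_{\{\tau\leq n\}}=a_n$ gives the desired identity. In particular, $d_n(a\chi_F) = \chi_F\,d_na$ for every $n$.

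From these pointwise identities, the three cases follow directly. For the maximal function, $M(a\chi_F) = \sup_n|\chi_F a_n| = \chi_F M(a)$. For the square function, $S(a\chi_F)^2 = \sum_n|\chi_F d_na|^2 = \chi_F S(a)^2$. For the conditional square function, the point I expect to be the subtlest is that $\chi_F$ is not globally $\mathcal{F}_{n-1}$-measurable; however, since $d_na$ is supported on $\{\tau\leq n-1\}\in\mathcal{F}_{n-1}$ and $F\cap\{\tau\leq n-1\}\in\mathcal{F}_{n-1}$, one may pull out the factor $\chi_F\chi_{\{\tau\leq n-1\}}$ from the conditional expectation to obtain $\E_{n-1}(\chi_F|d_na|^2) = \chi_F\E_{n-1}(|d_na|^2)$. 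Summing yields $s(a\chi_F)^2 = \chi_F s(a)^2$, which finishes the proof.

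The main obstacle is really just the conditional square function step, because unlike for $M$ and $S$ one cannot simply factor $\chi_F$ out of each term; the argument must combine the $\mathcal{F}_\tau$-measurability of $F$ with the support property $d_na\equiv 0$ on $\{\tau\geq n\}$. Everything else is a direct consequence of the structural identity $\E_n(a_\infty\chi_F)=\chi_F a_n$.
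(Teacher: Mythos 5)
Your proof is correct, and it follows what one would expect the argument in the cited reference to be: reduce everything to the pointwise identity $\E_n(a\chi_F)=\chi_F\,a_n$, and then read off the three cases. The only case that requires any extra care is $s$, where you correctly observe that although $\chi_F$ is not $\mathcal{F}_{n-1}$-measurable, the product $\chi_F\chi_{\{\tau\le n-1\}}$ is (since $F\in\mathcal{F}_\tau$), and $d_n a$ vanishes on $\{\tau\ge n\}$, so the factor can still be pulled through the conditional expectation. This is essentially the same approach as the paper's source.
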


\begin{lemma}\label{lem-atom}
Let $p(\cdot)\in\mathcal{P}(\Omega)$ and let $a$ be a $(1,p(\cdot),\infty)$-atom associated with stopping time $\tau$. If for $p_+<r<\infty$ and $T:H_r^s \rightarrow L_r$ is a bounded $\sigma$-sublinear operator and
\begin{equation}\label{condition for boundedness}
T(a)\chi_F=T(a\chi_F),\quad \forall F\in \mathcal{F}_{\tau},
\end{equation}
then $T(a)=T(a)\chi_{\{\tau<\infty\}}$ and
$$\mathbb{E}_{\tau}(|T(a)|^r) \lesssim \|\chi_{\{\tau<\infty\}}\|_{p(\cdot)}^{-r}. $$
\end{lemma}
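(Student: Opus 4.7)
The plan is to establish the two conclusions separately, leveraging the defining properties of the $(1,p(\cdot),\infty)$-atom $a$, the commutation hypothesis \eqref{condition for boundedness}, the $L_r$-boundedness of $T$, and Lemma \ref{lem-op} applied to the conditional square operator $s$. First I would record that $\{\tau<\infty\}\in\mathcal F_\tau$, since $\{\tau<\infty\}\cap\{\tau\leq n\}=\{\tau\leq n\}\in\mathcal F_n$ by the stopping time property, so that \eqref{condition for boundedness} is available with this choice of $F$.

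For the identity $T(a)=T(a)\chi_{\{\tau<\infty\}}$, the key preliminary step is to show that the atom itself satisfies $a=a\chi_{\{\tau<\infty\}}$. Indeed, the condition $\mathbb E_n(a)=0$ for $n\leq\tau$ forces $a_n(x)=0$ whenever $n\leq\tau(x)$, so on $\{\tau=\infty\}$ every level $a_n$ vanishes; since $\|s(a)\|_\infty<\infty$ places $a$ in $H_2^s=L_2$, the convergence $a_n\to a$ a.e.\ then gives $a=0$ on $\{\tau=\infty\}$. Applying \eqref{condition for boundedness} with $F=\{\tau<\infty\}$ now yields $T(a)\chi_{\{\tau<\infty\}}=T(a\chi_{\{\tau<\infty\}})=T(a)$.

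For the conditional expectation estimate, I would fix an arbitrary $F\in\mathcal F_\tau$ and chain
\begin{align*}
\int_F |T(a)|^r\,d\mathbb P
&=\|T(a)\chi_F\|_r^r=\|T(a\chi_F)\|_r^r
\lesssim \|a\chi_F\|_{H_r^s}^r \\
&=\|s(a\chi_F)\|_r^r=\|s(a)\chi_F\|_r^r
\leq\|s(a)\|_\infty^r\,\mathbb P(F)
\leq\|\chi_{\{\tau<\infty\}}\|_{p(\cdot)}^{-r}\,\mathbb P(F),
\end{align*}
where the second equality uses \eqref{condition for boundedness}, the $\lesssim$ is the assumed $H_r^s\to L_r$ boundedness of $T$, the fourth equality invokes Lemma \ref{lem-op} for $s$, and the final step is the atom size bound. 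Since $X:=\mathbb E_\tau(|T(a)|^r)$ is $\mathcal F_\tau$-measurable and nonnegative with $\int_F X\,d\mathbb P=\int_F |T(a)|^r\,d\mathbb P$ for every $F\in\mathcal F_\tau$, the above chain shows $\int_F X\,d\mathbb P\lesssim\|\chi_{\{\tau<\infty\}}\|_{p(\cdot)}^{-r}\,\mathbb P(F)$, and testing with $F=\{X>C\,\|\chi_{\{\tau<\infty\}}\|_{p(\cdot)}^{-r}\}\in\mathcal F_\tau$ for sufficiently large $C$ forces $\mathbb P(F)=0$, which is exactly the desired pointwise bound. The only mildly delicate ingredient is the verification that $a=a\chi_{\{\tau<\infty\}}$; the remaining work is a routine unpacking of the definitions and hypotheses.
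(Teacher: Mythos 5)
Your proof is correct and follows essentially the same route as the paper's: the core chain (commutation with $\chi_F$, $H_r^s\to L_r$ boundedness, Lemma \ref{lem-op}, and the atom size bound) is identical, and the only variations are cosmetic — you establish the first assertion by showing directly that $a=a\chi_{\{\tau<\infty\}}$ rather than reading off $T(a)=0$ on $\{\tau=\infty\}$ from the $L_r$ estimate with $F=\{\tau=\infty\}$, and you spell out the "$F$ arbitrary" step that the paper leaves implicit.
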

\begin{proof}
Take $F\in \mathcal{F}_{\tau}$.
Then, by \eqref{condition for boundedness} and the boundedness of $T$,  we have
$$\int_F|T(a)|^rd\mathbb{P}= \int_{\Omega}|T(a\chi_F)|^rd\mathbb{P} \lesssim \int_{\Omega}s(a\chi_F)^rd\mathbb{P}.$$
Since $F$ is arbitrary and $s(a)$ is supported by $\{\tau<\infty\}$, we obtain the first assertion. By Lemma \ref{lem-op} and the fact that $a$ is a $(1,p(\cdot),\infty)$-atom, we obtain
$$\int_F|T(a)|^rd\mathbb{P}\lesssim \int_{\Omega}s(a)^r\chi_F d\mathbb{P}\leq \int_{\Omega}\|\chi_{\{\tau<\infty\}}\|_{p(\cdot)}^{-r}\chi_Fd\mathbb{P}.$$
Since $F$ is arbitrary, the second assertion follows.
\end{proof}

\begin{theorem} \label{theorem of boundedness 1 Leb}
 Let $p(\cdot)\in\mathcal{P}(\Omega)$ satisfy condition \eqref{log} and $1<r< \infty$ with $p_+<r$.
If $T:H_r^s \rightarrow L_r$ is a bounded $\sigma$-sublinear operator and
\begin{equation}\label{condition for boundedness 1 Leb}
T(a)\chi_F=T(a\chi_F),\quad \forall F\in \mathcal{F}_{\tau}
\end{equation}
for all $(1,p(\cdot),\infty)$-atoms $a$, where $\tau$ is the stopping time associated with $a$, then
$$\|Tf\|_{L_{p(\cdot)}}\lesssim \|f\|_{H_{p(\cdot)}^s},\quad f\in H_{p(\cdot)}^s.$$
\end{theorem}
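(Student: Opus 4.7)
The plan is to lift the boundedness from atoms to the full space via the atomic decomposition of Theorem \ref{ad Leb}. Given $f \in H_{p(\cdot)}^s$ and any fixed $0 < t < \underline p$, this theorem produces a decomposition $f_n = \sum_{k \in \mathbb Z} \mu_k a_n^k$ into $(1,p(\cdot),\infty)$-atoms $a^k$ with stopping times $\tau_k$ such that the atomic quasi-norm is comparable to $\|f\|_{H_{p(\cdot)}^s}$. By $\sigma$-sublinearity, $|Tf| \leq \sum_k \mu_k |T(a^k)|$ pointwise. The commutation hypothesis \eqref{condition for boundedness 1 Leb} lets me apply Lemma \ref{lem-atom} to each atom, giving both the support property $T(a^k) = T(a^k)\chi_{\{\tau_k<\infty\}}$ and the crucial conditional moment bound $\mathbb E_{\tau_k}(|T(a^k)|^r) \lesssim \|\chi_{\{\tau_k<\infty\}}\|_{p(\cdot)}^{-r}$.

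Since $t<\underline p\le 1$, subadditivity gives $|Tf|^t \leq \sum_k \mu_k^t |T(a^k)|^t$, and since $p_-/t>1$ the duality in Lemma \ref{lem:duality for variabl p} applies on $L_{p(\cdot)/t}$, yielding
$$
\|Tf\|_{p(\cdot)}^t \leq \Bigl\|\sum_k \mu_k^t |T(a^k)|^t\Bigr\|_{p(\cdot)/t} \approx \sup_{\|h\|_{(p(\cdot)/t)'} \leq 1} \sum_k \mu_k^t \int |T(a^k)|^t |h| \, d\mathbb P.
$$
For each $k$ I would condition on $\mathcal F_{\tau_k}$ and use conditional Hölder with exponents $r/t$ and $s := r/(r-t)$, combined with the Lemma \ref{lem-atom} estimate and the pointwise majorization $\mathbb E_{\tau_k}(|h|^s)^{1/s} \leq M(|h|^s)^{1/s}$ on $\{\tau_k<\infty\}$, to obtain
$$
\int |T(a^k)|^t |h| \, d\mathbb P \lesssim \|\chi_{\{\tau_k<\infty\}}\|_{p(\cdot)}^{-t} \int \chi_{\{\tau_k<\infty\}} M(|h|^s)^{1/s} \, d\mathbb P.
$$
Summing in $k$ and applying Hölder in $L_{p(\cdot)/t}$--$L_{(p(\cdot)/t)'}$ then separates the coefficient sum (which is exactly the $t$-th power of the atomic quasi-norm $\|f\|_{H_{p(\cdot)}^{\mathrm{at},1,\infty}}^t$) from the Doob factor $\|M(|h|^s)^{1/s}\|_{(p(\cdot)/t)'} = \|M(|h|^s)\|_{(p(\cdot)/t)'/s}^{1/s}$.

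The main obstacle I anticipate is verifying that Theorem \ref{thm:maximal inequality} is applicable to $M(|h|^s)$ on the variable exponent space $L_{(p(\cdot)/t)'/s}$. Writing $(p(\cdot)/t)'/s = p(\cdot)/(s(p(\cdot)-t))$, a direct calculation shows that its essential infimum equals $p_+/(s(p_+-t))$, and the condition that this exceed $1$ simplifies precisely to $p_+ < r$, which is the standing hypothesis. The log-type condition \eqref{log} is likewise inherited, since the oscillation $p_+(A)-p_-(A)$ is altered only by a bounded factor under the scalar operations producing this exponent, so the constant in \eqref{log} merely changes. Combining all the estimates gives $\|Tf\|_{p(\cdot)}^t \lesssim \|f\|_{H_{p(\cdot)}^{\mathrm{at},1,\infty}}^t$; taking the infimum over decompositions and invoking Theorem \ref{ad Leb} yields the required inequality $\|Tf\|_{p(\cdot)} \lesssim \|f\|_{H_{p(\cdot)}^s}$.
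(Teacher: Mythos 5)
Your proof is correct and follows essentially the same route as the paper: atomic decomposition from Theorem \ref{ad Leb}, $\sigma$-sublinearity together with $t$-subadditivity, duality in $L_{p(\cdot)/t}$, conditional H\"older with exponents $r/t$ and $s=(r/t)'$, the conditional moment bound of Lemma \ref{lem-atom} to cancel $\|\chi_{\{\tau_k<\infty\}}\|_{p(\cdot)}^{t}$, domination of the conditional expectation by the Doob maximal function, and closing with the maximal inequality of Theorem \ref{thm:maximal inequality} on $L_{(p(\cdot)/t)'/s}$. Your explicit check that $((p(\cdot)/t)'/s)_- = p_+/(s(p_+-t))>1$ is equivalent to $p_+<r$ spells out what the paper states more tersely as $(r/t)'<(p(\cdot)/t)'$, but the underlying argument is identical.
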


\begin{proof} Let a martingale $f\in H_{p(\cdot)}^s$. By  Theorem \ref{ad Leb},  we know that
 $f$  has a decomposition as \eqref{decomposition Leb1} such that $a^{k}$  is a $(1,p(\cdot),\infty)$-atom and $\mu_{k}=3\cdot 2^k \|\chi_{\{\tau<\infty\}}\|_{p(\cdot)}$. According to the boundedness of $T$,
$$\|T(a^{k})\|_r \lesssim \|s(a^{k})\|_r \leq \frac {\|\chi_{\{\tau<\infty\}}\|_r}{\|\chi_{\{\tau<\infty\}}\|_{p(\cdot)}}.$$
By  the $\sigma$-sublinearity  of the
operator $T$, we have
$$
\left| T(f) \right|\leq  \sum_{k\in \mathbb Z} \mu_{k}|T(a^{k})|.
$$
Then, for $0<t<\underline p\leq 1$, we have
$$\|T(f)\|_{p(\cdot)}\leq  \left\|\left[\sum_{k\in \mathbb Z} \left(\mu_{k}T(a^{k})\right)^{t}\right]^{\frac {1}{t}}\right\|_{p(\cdot)}=:Z.$$

By  Lemma \ref{lem:duality for variabl p}, we may choose a positive function $g\in L_{(\frac{p(\cdot)}{t})'}$ with $\|g\|_{{(\frac{p(\cdot)}{t})'}}\leq 1$ such that
\begin{align*}
Z^{t}&= \int_\Omega \sum_{k\in \mathbb Z}  \left[ 3\cdot 2^k \|\chi_{\{\tau<\infty\}}\|_{p(\cdot)}|T(a^{k})|\right]^{t} gd\mathbb P\\
&=\int_\Omega \sum_{k\in \mathbb Z}  3\cdot 2^{kt} \|\chi_{\{\tau<\infty\}}\|_{p(\cdot)}^t \chi_{\{\tau<\infty\}} \mathbb{E}_{\tau_k}(|T(a^{k})|^tg)d\mathbb P.
\end{align*}
The H\"{o}lder inequality for conditional expectation implies that
$$\mathbb{E}_{\tau_k}(|T(a^{k})|^tg)\leq \mathbb{E}_{\tau_k}(|T(a^{k})|^r)^{t/r}\mathbb{E}_{\tau_k}(g^{(r/t)'})^{1/{(r/t)'}}.$$
Applying Lemma \ref{lem-atom}, we can see that
\begin{align*}
Z^t&\lesssim \int_\Omega \sum_{k\in \mathbb Z}  3\cdot 2^{kt}  \chi_{\{\tau_k<\infty\}} \mathbb{E}_{\tau_k}(g^{(r/t)'})^{1/{(r/t)'}} d\mathbb P\\
\\&\leq \int_\Omega \sum_{k\in \mathbb Z}  3\cdot 2^{kt}  \chi_{\{\tau_k<\infty\}} [M(g^{(\frac{r}{t})'})]^{1/(\frac{r}{t})'} d\mathbb P
\\&\leq  \left \|\sum_{k\in \mathbb Z}  (3\cdot 2^k)^{t} \chi_{\{\tau_k<\infty\}} \right \|_{{p(\cdot)/t}}  \|[M(g^{(\frac{r}{t})'})]^{1/(\frac{r}{t})'}\|_{{(p(\cdot)/t)'}}.
\end{align*}
 Since  $p_+<r$, we deduce that
 $$\frac r {t}> \frac {p_+} {t}  \quad \mbox{and}
\quad \left(\frac{r}{t}\right)' <\left(\frac{p(\cdot)}{t}\right)'.$$
Note that $t<p_-$. Hence, $((p(\cdot)/t)')_+<\infty$. Using the maximal inequality (Theorem \ref{thm:maximal inequality}), we have
$$\|[M(g^{(\frac{r}{t})'})]^{1/(\frac{r}{t})'}\|_{{(p(\cdot)/t)'}} \lesssim \|g\|_{{(p(\cdot)/t)'}}\leq 1.$$
Thus, by Theorem \ref{ad Leb}, we obtain
$$Z\lesssim  \left \|\sum_{k\in \mathbb Z}  (3\cdot 2^k)^{t} \chi_{\{\tau_k<\infty\}} \right \|_{{p(\cdot)/t}}^{\frac 1t} \lesssim \|f\|_{H_{p(\cdot)}^s}, $$
which completes the proof.
\end{proof}

Similarly to Theorem \ref{theorem of boundedness 1 Leb}, we obtain the following theorem by applying Theorem \ref{ad 23 Leb}.

\begin{theorem} \label{theorem of boundedness 2 and 3 Leb}
 Let $p(\cdot)\in\mathcal{P}(\Omega)$ satisfy condition \eqref{log} and $1<r<\infty$ with $p_+<r$.
If $T:H_r^S\rightarrow L_r$ (or $H_r^M\rightarrow L_r$) is a bounded $\sigma$-sublinear operator and
\eqref{condition for boundedness 1 Leb} holds
for all $(2,p(\cdot),\infty)$-atoms (or $(3,p(\cdot),\infty)$-atoms), then
\begin{align*}
&\|Tf\|_{L_{p(\cdot)}}\lesssim \|f\|_{Q_{p(\cdot)}},\quad f\in Q_{p(\cdot)},
\\ (or \quad  &\|Tf\|_{L_{p(\cdot)}}\lesssim \|f\|_{P_{p(\cdot)}},\quad f\in P_{p(\cdot)}).
\end{align*}
\end{theorem}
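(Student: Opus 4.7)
The plan is to parallel the proof of Theorem \ref{theorem of boundedness 1 Leb}, substituting Theorem \ref{ad 23 Leb} for Theorem \ref{ad Leb} and the conditional square function $s$ by $S$ (respectively the maximal function $M$) at each appearance. I describe the $Q_{p(\cdot)}$ case in detail; the $P_{p(\cdot)}$ case is obtained verbatim by replacing $(2,p(\cdot),\infty)$-atoms with $(3,p(\cdot),\infty)$-atoms and $S$ with $M$.

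Given $f\in Q_{p(\cdot)}$, Theorem \ref{ad 23 Leb} provides a decomposition $f=\sum_{k\in\mathbb Z}\mu_k a^k$ where each $a^k$ is a $(2,p(\cdot),\infty)$-atom associated to a stopping time $\tau_k$ and $\mu_k=3\cdot 2^k\|\chi_{\{\tau_k<\infty\}}\|_{p(\cdot)}$. The $\sigma$-sublinearity of $T$ yields $|T(f)|\le \sum_k \mu_k|T(a^k)|$. Before estimating this sum I would first record a $(2,p(\cdot),\infty)$-version of Lemma \ref{lem-atom}: for such an atom $a$ with stopping time $\tau$, the hypothesis \eqref{condition for boundedness 1 Leb}, the boundedness $T:H_r^S\to L_r$, and the bound $\|S(a)\|_\infty\le\|\chi_{\{\tau<\infty\}}\|_{p(\cdot)}^{-1}$ imply $T(a)=T(a)\chi_{\{\tau<\infty\}}$ and $\mathbb E_\tau(|T(a)|^r)\lesssim\|\chi_{\{\tau<\infty\}}\|_{p(\cdot)}^{-r}$. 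This is proved word for word as in Lemma \ref{lem-atom}; the requisite support property from Lemma \ref{lem-op} extends to $(2,\cdot)$- and $(3,\cdot)$-atoms since its argument depends only on $\mathbb E_n(a)=0$ for $n\le\tau$ together with the $\mathcal F_\tau$-measurability of $F$.

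Now fix $0<t<\underline{p}$. By Lemma \ref{lem:duality for variabl p} I would pick a nonnegative $g\in L_{(p(\cdot)/t)'}$ with $\|g\|_{(p(\cdot)/t)'}\le 1$ realizing
$$\|T(f)\|_{p(\cdot)}^t \le Z^t:=\int_\Omega\sum_{k\in\mathbb Z}\mu_k^t|T(a^k)|^t g\,d\mathbb P.$$
Using $T(a^k)=T(a^k)\chi_{\{\tau_k<\infty\}}$, insert the conditional expectation $\mathbb E_{\tau_k}$, apply conditional Hölder with exponents $r/t$ and $(r/t)'$, and invoke the atomic bound above to get
$$Z^t\lesssim \int_\Omega\sum_{k\in\mathbb Z}(3\cdot 2^k)^t\chi_{\{\tau_k<\infty\}}\bigl[M(g^{(r/t)'})\bigr]^{1/(r/t)'}d\mathbb P.$$
Since $p_+<r$ and $t<\underline{p}$, we have $(r/t)'<(p(\cdot)/t)'$ with $((p(\cdot)/t)')_+<\infty$, so Theorem \ref{thm:maximal inequality} gives $\|[M(g^{(r/t)'})]^{1/(r/t)'}\|_{(p(\cdot)/t)'}\lesssim \|g\|_{(p(\cdot)/t)'}\le 1$. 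Hölder's inequality in $L_{p(\cdot)/t}$ then bounds $Z^t$ by $\|\sum_k (3\cdot 2^k)^t\chi_{\{\tau_k<\infty\}}\|_{p(\cdot)/t}$, which equals $\|f\|_{Q_{p(\cdot)}}^t$ up to a constant by Theorem \ref{ad 23 Leb}. Taking $t$-th roots closes the $Q_{p(\cdot)}$ estimate.

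The main (and essentially only) obstacle is verifying the $(2,\cdot)$- and $(3,\cdot)$-atom analogues of Lemmas \ref{lem-op} and \ref{lem-atom}; once those are in hand the argument is a mechanical rerun of Theorem \ref{theorem of boundedness 1 Leb}, and the $P_{p(\cdot)}$ statement follows from the same template with $S$ replaced by $M$.
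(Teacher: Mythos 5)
Your proposal is correct and follows essentially the same route as the paper, which itself gives no separate argument for this theorem but simply states that it follows "similarly to Theorem~\ref{theorem of boundedness 1 Leb} by applying Theorem~\ref{ad 23 Leb}." You correctly identify the one nontrivial checkpoint, namely that Lemmas~\ref{lem-op} and~\ref{lem-atom} must be adapted to $(2,p(\cdot),\infty)$- and $(3,p(\cdot),\infty)$-atoms, and your justification (the arguments use only condition~(1) of Definition~\ref{definition for atom} and the $\mathcal F_\tau$-measurability of $F$, together with the respective $\|S(a)\|_\infty$ or $\|M(a)\|_\infty$ bound from condition~(2)) matches what the paper relies on implicitly.
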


Now we prove our main result of this section.

\begin{theorem}\label{theorem of martingale inequalities Leb}
 Let $p(\cdot)\in\mathcal{P}(\Omega)$ satisfy  condition \eqref{log}.
Then the following inequalities hold:
 \begin{equation}\label{martingale inequality 1 Leb}
 \|f\|_{H_{p(\cdot)}^M}\lesssim\|f\|_{H_{p(\cdot)}^s},\quad  \|f\|_{H_{p(\cdot)}^S}\lesssim \|f\|_{H_{p(\cdot)}^s},\quad \mbox{if}\quad0<p_-\leq p_+<2;
 \end{equation}
 \begin{equation}\label{martingale inequality 2 Leb}
\|f\|_{H_{p(\cdot)}^M}\leq \|f\|_{P_{p(\cdot)}},\quad  \|f\|_{H_{p(\cdot)}^S}\leq \|f\|_{Q_{p(\cdot)}};
\end{equation}
 \begin{equation}\label{martingale inequality 3 Leb}
\|f\|_{H_{p(\cdot)}^S}\lesssim\|f\|_{P_{p(\cdot)}},\quad  \|f\|_{H_{p(\cdot)}^M}\lesssim\|f\|_{Q_{p(\cdot)}};
\end{equation}
 \begin{equation}\label{martingale inequality 4 Leb}
\|f\|_{H_{p(\cdot)}^s}\lesssim\|f\|_{P_{p(\cdot)}},\quad  \|f\|_{H_{p(\cdot)}^s}\lesssim\|f\|_{Q_{p(\cdot)}};
\end{equation}
\begin{equation}\label{martingale inequality 5 Leb}
\|f\|_{P_{p(\cdot)}} \lesssim \|f\|_{Q_{p(\cdot)}} \lesssim\|f\|_{P_{p(\cdot)}}.
\end{equation}
Moreover, if $\{\mathcal {F}_n\}_{n\geq 0}$ is regular, then
$$H_{p(\cdot)}^S=Q_{p(\cdot)}=P_{p(\cdot)}=H_{p(\cdot)}^M =
H_{p(\cdot)}^s$$
with equivalent quasi-norms.
\end{theorem}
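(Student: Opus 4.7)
The plan is to derive each of the five inequalities \eqref{martingale inequality 1 Leb}--\eqref{martingale inequality 5 Leb} from the atomic characterizations of Section \ref{sec3} together with the abstract boundedness criteria Theorems \ref{theorem of boundedness 1 Leb} and \ref{theorem of boundedness 2 and 3 Leb}. In each application of those criteria I only need a classical constant-exponent martingale estimate $\|Tf\|_r\lesssim\|f\|_{X_r}$ for some fixed $r>p_+$, together with the cut-off identity \eqref{condition for boundedness 1 Leb}, which is Lemma \ref{lem-op} for $(1,p(\cdot),\infty)$-atoms and an analogous stopping-time computation for $(2,p(\cdot),\infty)$- and $(3,p(\cdot),\infty)$-atoms. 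For \eqref{martingale inequality 1 Leb} I take $r=2$ (permissible because $p_+<2$): here $\|S(f)\|_2=\|s(f)\|_2$ by orthogonality of differences and $\|M(f)\|_2\lesssim\|s(f)\|_2$ by Doob, so Theorem \ref{theorem of boundedness 1 Leb} with $T=S$ and $T=M$ yields both estimates. Inequality \eqref{martingale inequality 2 Leb} is immediate from the pointwise bounds $M(f),S(f)\le\lambda_\infty$. For \eqref{martingale inequality 3 Leb} I apply Theorem \ref{theorem of boundedness 2 and 3 Leb} to $T=S$ and $T=M$ at any $r>p_+$, using the Burkholder--Davis--Gundy equivalence $\|S(f)\|_r\approx\|M(f)\|_r$; for \eqref{martingale inequality 4 Leb} I apply the same theorem to $T=s$, now taking $r>\max(p_+,2)$ so that Stein's inequality $\|s(f)\|_r\lesssim\|S(f)\|_r$ combines with BDG to control $s$ on both $H_r^S$ and $H_r^M$.

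The one step that requires a genuinely new construction is \eqref{martingale inequality 5 Leb}. To prove $\|f\|_{Q_{p(\cdot)}}\lesssim\|f\|_{P_{p(\cdot)}}$, given $(\lambda_n)\in\Lambda$ with $|f_n|\le\lambda_{n-1}$, I set
\[
\mu_n:=\sqrt{S_n(f)^2+4\lambda_n^2},
\]
which is non-negative, non-decreasing and adapted. The bound $|d_nf|\le|f_n|+|f_{n-1}|\le 2\lambda_{n-1}$ gives $S_n(f)^2\le S_{n-1}(f)^2+4\lambda_{n-1}^2\le\mu_{n-1}^2$, and from $\mu_\infty\le S(f)+2\lambda_\infty$ together with \eqref{martingale inequality 3 Leb} (which yields $\|S(f)\|_{p(\cdot)}\lesssim\|f\|_{P_{p(\cdot)}}$) I conclude $\|\mu_\infty\|_{p(\cdot)}\lesssim\|\lambda_\infty\|_{p(\cdot)}$ via the $\underline p$-triangle inequality \eqref{equation of b trangile inequality}. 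For the reverse direction, given $(\mu_n)\in\Lambda$ with $S_n(f)\le\mu_{n-1}$, I take $\lambda_n:=M_n(f)+\mu_n$; then $|f_n|\le M_{n-1}(f)+|d_nf|\le M_{n-1}(f)+\mu_{n-1}=\lambda_{n-1}$, and $\|M(f)\|_{p(\cdot)}\lesssim\|f\|_{Q_{p(\cdot)}}$ (again from \eqref{martingale inequality 3 Leb}) produces $\|f\|_{P_{p(\cdot)}}\lesssim\|f\|_{Q_{p(\cdot)}}$.

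For the final assertion under regularity, Corollary \ref{cor:equi Leb} identifies $Q_{p(\cdot)}=H_{p(\cdot)}^S$ and $P_{p(\cdot)}=H_{p(\cdot)}^M$, so \eqref{martingale inequality 3 Leb} collapses to $H_{p(\cdot)}^S\approx H_{p(\cdot)}^M$ and \eqref{martingale inequality 4 Leb} gives $\|f\|_{H_{p(\cdot)}^s}\lesssim\|f\|_{H_{p(\cdot)}^M}$. The only remaining direction $\|f\|_{H_{p(\cdot)}^M}\lesssim\|f\|_{H_{p(\cdot)}^s}$ uses the classical regular-case inequality $\|M(f)\|_r\approx\|s(f)\|_r$, valid for every $0<r<\infty$ when $(\mathcal F_n)_{n\ge 0}$ is regular, applied at some $r>p_+$ via Theorem \ref{theorem of boundedness 1 Leb} with $T=M$. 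The main obstacle I foresee is essentially bookkeeping: verifying the analogues of Lemma \ref{lem-op} for $(2,p(\cdot),\infty)$- and $(3,p(\cdot),\infty)$-atoms so that Theorem \ref{theorem of boundedness 2 and 3 Leb} can in fact be invoked. These reduce to the same stopping-time argument as in Lemma \ref{lem-op} but must be written out.
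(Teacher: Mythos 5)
Your proposal is correct and follows essentially the same route as the paper: each of \eqref{martingale inequality 1 Leb}--\eqref{martingale inequality 4 Leb} comes from Theorems \ref{theorem of boundedness 1 Leb} and \ref{theorem of boundedness 2 and 3 Leb} together with the classical constant-exponent Doob/Burkholder--Gundy/Stein inequalities, and \eqref{martingale inequality 5 Leb} is derived from \eqref{martingale inequality 3 Leb} by building an explicit control sequence. Your choice $\mu_n=\sqrt{S_n(f)^2+4\lambda_n^2}$ plays the role of the paper's $S_n(f)+2\lambda_n^{(2)}$, and in the regularity step the paper instead observes directly that $R^{1/2}s_n(f)\in\F_{n-1}$ bounds $S_n(f)$ (giving $\|f\|_{Q_{p(\cdot)}}\lesssim\|f\|_{H_{p(\cdot)}^s}$ without another appeal to Theorem \ref{theorem of boundedness 1 Leb}), but both routes close the same chain of embeddings.
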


\begin{proof}
According to Lemma \ref{lem-op}, we know that the operators $M,S$ and $s$ all satisfy \eqref{condition for boundedness 1 Leb}.
First we show \eqref{martingale inequality 1 Leb}. Let $f\in
H_{p(\cdot)}^s$. The maximal operator $T(f)=M(f)$ is $\sigma$-sublinear
and $\|M(f)\|_2\lesssim \|s(f)\|_2$ (see \cite[Theorem 2.11(i)]{Weisz1994book}).
Thus it follows from Theorem \ref{theorem of boundedness 1 Leb} that
$$\|f\|_{H_{p(\cdot)}^M}= \|M(f)\|_{{p(\cdot)}}\lesssim\|f\|_{H_{p(\cdot)}^s}.$$
Similarly, considering the operator $T(f)=S(f)$,  we get the second
inequality of \eqref{martingale inequality 1 Leb} by Theorem \ref{theorem of boundedness 1 Leb}.

\eqref{martingale inequality 2 Leb} comes easily from the definition of these martingale spaces.

Next we show \eqref{martingale inequality 3 Leb}. Consider the  operator $T(f)=M(f)$ or $S(f)$. Then  \eqref{martingale inequality 3 Leb} follows from
the combination of the Burkholder-Gundy and Doob maximal inequalities
$$
\|S(f)\|_r\approx \|M(f)\|_r \approx \|f\|_r \qquad (1<r<\infty)
$$
(see \cite[Theorem 2.12]{Weisz1994book}) and Theorem \ref{theorem of boundedness 2 and 3 Leb}.

 \eqref{martingale inequality 4 Leb} can be deduced by applying the inequalities (see \cite[Theorem 2.11(ii)]{Weisz1994book})
 $$\|s(f)\|_r\lesssim \|M(f)\|_{r},\quad \|s(f)\|_r\lesssim \|M(f)\|_{r} \approx \|S(f)\|_r, \quad 2<r<\infty ,$$
 and  Theorem \ref{theorem of boundedness 2 and 3 Leb}.

To prove \eqref{martingale inequality 5 Leb}, we use \eqref{martingale inequality 3 Leb}.
Assume
that $f=(f_n)_{n\geq 0}\in Q_{p(\cdot)}$, then there exists an
optimal control $(\lambda_n^{(1)})_{n\geq 0}$ such that $S_n(f)\leq
\lambda_{n-1}^{(1)}$ with $\lambda_\infty^{(1)}\in L_{p(\cdot)}$. Since
$$|f_n|\leq M_{n-1}(f) +\lambda_{n-1}^{(1)},$$
by the second inequality of \eqref{martingale inequality 3 Leb} we have
$$\|f\|_{P_{p(\cdot)}}\leq C \big(\|f\|_{H_{p(\cdot)}^M}+\|\lambda_\infty^{(1)}\|_{{p(\cdot)}} \big)\lesssim\|f\|_{Q_{p(\cdot)}}.$$
On the other hand, if $f=(f_n)_{n\geq 0}\in P_{p(\cdot)}$,
then there exists an optimal control $(\lambda_n^{(2)})_{n\geq 0}$
such that $|f_n|\leq \lambda_{n-1}^{(2)}$ with
$\lambda_\infty^{(2)}\in L_{p(\cdot)}$. Notice that
$$S_n(f)\leq
S_{n-1}(f)+2\lambda_{n-1}^{(2)}.$$
Using the first inequality of \eqref{martingale inequality 3 Leb},  we  get the rest of \eqref{martingale inequality 5 Leb}.

Further, assume that $\{\mathcal {F}_n\}_{n\geq 0}$ is regular.
Then according to \cite[p. 33]{Weisz1994book}, we have
$$S_n(f)\leq R^{1/2}s_n(f)\quad\text{and}\quad \|f\|_{H_{p(\cdot)}^S}\lesssim \|f\|_{H_{p(\cdot)}^s}.$$
Since $s_n(f)\in \mathcal {F}_{n-1}$, by the definition of
$Q_{p(\cdot)}$ we have
$$\|f\|_{Q_{p(\cdot)}}\lesssim \|s(f)\|_{{p(\cdot)}}=\|f\|_{H_{p(\cdot)}^s}.$$
Hence, by \eqref{martingale inequality 4 Leb} we obtain
$$Q_{p(\cdot)}=H_{p(\cdot)}^s.$$
Combining  this and Corollary \ref{cor:equi Leb}, we get
$$H_{p(\cdot)}^S=Q_{p(\cdot)}=H_{p(\cdot)}^s=P_{p(\cdot)}=H_{p(\cdot)}^M.$$
\end{proof}

The next result follows from Theorem \ref{ad Leb} and  Theorem \ref{theorem of martingale inequalities Leb}.
\begin{corollary}\label{cor:ad for M}
Let $p(\cdot)\in\mathcal{P}(\Omega)$ satisfy  condition \eqref{log}.
If $\{\mathcal {F}_n\}_{n\geq 0}$ is regular, then
$$H_{p(\cdot)}=H_{p(\cdot)}^{\rm at, d,\infty},\quad d=1,2,3$$
with equivalent quasi-norms. Here, $H_{p(\cdot)}$ denotes any one of the five Hardy spaces in Theorem \ref{theorem of martingale inequalities Leb}.
\end{corollary}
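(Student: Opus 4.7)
The plan is to derive this corollary as a short bookkeeping argument, chaining together three previously established results: Theorem \ref{ad Leb} (the $(1,p(\cdot),\infty)$-atomic decomposition of $H_{p(\cdot)}^s$), Theorem \ref{ad 23 Leb} (the $(2,p(\cdot),\infty)$- and $(3,p(\cdot),\infty)$-atomic decompositions of $Q_{p(\cdot)}$ and $P_{p(\cdot)}$ respectively), and Theorem \ref{theorem of martingale inequalities Leb} (which under regularity collapses all five variable Hardy spaces $H_{p(\cdot)}^s$, $H_{p(\cdot)}^S$, $H_{p(\cdot)}^M$, $Q_{p(\cdot)}$, $P_{p(\cdot)}$ into a single space with equivalent quasi-norms).

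First, I would recall that without any extra assumption on $p(\cdot)$ or on the filtration, Theorem \ref{ad Leb} gives
\[
H_{p(\cdot)}^s = H_{p(\cdot)}^{\mathrm{at},1,\infty},
\]
and Theorem \ref{ad 23 Leb} gives
\[
Q_{p(\cdot)} = H_{p(\cdot)}^{\mathrm{at},2,\infty}, \qquad P_{p(\cdot)} = H_{p(\cdot)}^{\mathrm{at},3,\infty},
\]
all with equivalent quasi-norms. These three identifications are exactly what the definition of the atomic Hardy spaces needs, and none of them uses regularity or the log condition \eqref{log}.

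Next, I would invoke the last assertion of Theorem \ref{theorem of martingale inequalities Leb}, which says that under condition \eqref{log} and regularity of $(\mathcal F_n)_{n\geq 0}$,
\[
H_{p(\cdot)}^S = Q_{p(\cdot)} = P_{p(\cdot)} = H_{p(\cdot)}^M = H_{p(\cdot)}^s
\]
with equivalent quasi-norms. Writing $H_{p(\cdot)}$ for the common space, combining this with the three atomic identifications above yields
\[
H_{p(\cdot)} = H_{p(\cdot)}^s = H_{p(\cdot)}^{\mathrm{at},1,\infty},
\quad
H_{p(\cdot)} = Q_{p(\cdot)} = H_{p(\cdot)}^{\mathrm{at},2,\infty},
\quad
H_{p(\cdot)} = P_{p(\cdot)} = H_{p(\cdot)}^{\mathrm{at},3,\infty},
\]
which is exactly the statement of the corollary for $d=1,2,3$.

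There is no real obstacle: the argument is a one-line concatenation, and the condition \eqref{log} together with regularity has already done all the work inside Theorem \ref{theorem of martingale inequalities Leb}. The only thing worth checking carefully is that the constants implicit in the ``$\lesssim$'' bounds compose correctly, i.e.\ that the equivalence constants from Theorems \ref{ad Leb} and \ref{ad 23 Leb} are independent of the particular decomposition chosen and depend only on $p(\cdot)$ (and on the regularity constant $R$ via Theorem \ref{theorem of martingale inequalities Leb}). Since each equivalence in the chain is already stated with constants depending only on $p(\cdot)$ and $R$, the composed equivalence inherits the same dependence, completing the proof.
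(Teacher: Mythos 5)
Your proposal is correct and is essentially the paper's own argument: the paper also derives the corollary as an immediate chaining of the unconditional atomic decompositions with the equivalence of the five Hardy spaces under regularity and \eqref{log}. In fact your write-up is slightly more careful than the paper's one-line justification, which only cites Theorem \ref{ad Leb} and Theorem \ref{theorem of martingale inequalities Leb} and tacitly omits Theorem \ref{ad 23 Leb}, which is needed for $d=2,3$; you supply that reference explicitly, so the derivation is airtight.
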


Next, we consider a special case of martingale transforms.

\begin{definition}\label{d1}
Let the martingale transform $T_b$ be defined by
$$
(T_b f)_n=\sum\limits_{k=1}^nb_{k-1} d_kf, \quad n\in\mathbb N,
$$
where $b_k$ is $\mathcal F_k$ measurable and $|b_k|\leq 1$.
\end{definition}

\begin{theorem}\label{t18 Leb}
Let $p(\cdot)$ satisfy \eqref{log} with $1< p_-\leq p_+ < \infty$. If $\{\mathcal {F}_n\}_{n\geq 0}$ is regular, then
$$
\left\|T_b f\right\|_{{p(\cdot)}} \lesssim \left\|f\right\|_{{p(\cdot)}}.
$$
\end{theorem}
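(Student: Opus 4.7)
The proof is essentially a corollary of the preceding machinery, so the plan is to chain together three facts: a trivial pointwise inequality for the square function under a martingale transform, the equivalence of Hardy spaces under regularity (Theorem \ref{theorem of martingale inequalities Leb}), and the equivalence $H^M_{p(\cdot)} \approx L_{p(\cdot)}$ when $p_->1$ (Corollary \ref{c100}).

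First, I would note that by Definition \ref{d1} the martingale differences satisfy $d_k(T_b f) = b_{k-1}\, d_k f$. Since $|b_{k-1}|\leq 1$, this gives the pointwise bound
$$
S(T_b f) = \Bigl(\sum_{k\geq 1} |b_{k-1}|^2 |d_k f|^2\Bigr)^{1/2} \leq S(f).
$$

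Next, since $p_->1$, Corollary \ref{c100} implies
$$
\|T_b f\|_{p(\cdot)} \leq \|T_b f\|_{H^M_{p(\cdot)}}.
$$
The regularity assumption, together with condition \eqref{log}, allows us to invoke Theorem \ref{theorem of martingale inequalities Leb}, which asserts that all five variable martingale Hardy spaces coincide with equivalent quasi-norms. In particular,
$$
\|T_b f\|_{H^M_{p(\cdot)}} \lesssim \|T_b f\|_{H^S_{p(\cdot)}} = \|S(T_b f)\|_{p(\cdot)}.
$$
Now I apply the pointwise estimate above to get $\|S(T_b f)\|_{p(\cdot)} \leq \|S(f)\|_{p(\cdot)} = \|f\|_{H^S_{p(\cdot)}}$, and then chain backwards: by Theorem \ref{theorem of martingale inequalities Leb} again, $\|f\|_{H^S_{p(\cdot)}} \lesssim \|f\|_{H^M_{p(\cdot)}}$, and by Corollary \ref{c100} the latter is $\lesssim \|f\|_{p(\cdot)}$. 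Stringing these together yields the claim.

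There is no genuine obstacle here, as the hard work has already been carried out: the delicate points (atomic decomposition, Doob's maximal inequality under condition \eqref{log}, and the equivalence of Hardy spaces under regularity) are all packaged in the previously proven Corollary \ref{c100} and Theorem \ref{theorem of martingale inequalities Leb}. The only thing to verify is the trivial pointwise bound $S(T_b f)\leq S(f)$, which uses just $|b_{k-1}|\leq 1$.
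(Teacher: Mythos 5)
Your proof is correct and follows essentially the same route as the paper: both start from the pointwise bound $S(T_b f)\leq S(f)$, pass to $H^M_{p(\cdot)}=H^S_{p(\cdot)}$ via Theorem \ref{theorem of martingale inequalities Leb} under regularity, and finish with the Doob maximal inequality (you invoke it through Corollary \ref{c100}, the paper through Theorem \ref{thm:maximal inequality}, which is the same content).
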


\begin{proof}
Since $|b_k|\leq1$ for each $k$, we have $S(T_b f)\leq S(f)$.
By Theorem \ref{theorem of martingale inequalities Leb} and Burkholder-Gundy inequality, we have
$$
\left\|T_b f \right\|_{{p(\cdot)}} \leq \left\|M(T_b f)\right\|_{{p(\cdot)}} \lesssim \left\|S(T_b f)\right\|_{{p(\cdot)}} \lesssim \left\|S(f)\right\|_{{p(\cdot)}} \lesssim \left\|M(f)\right\|_{{p(\cdot)}}.
$$
Now the result follows from Theorem \ref{thm:maximal inequality}.
\end{proof}

\subsection{Martingale inequalities between $H_{p(\cdot),q}$}
In this subsection, we extend Theorem \ref{theorem of martingale inequalities Leb} to the variable Lorentz-Hardy setting; see Theorem \ref{theorem of martingale inequalities}. First, we prove a result that is corresponding to Theorem \ref{theorem of boundedness 1 Leb}.

\begin{theorem} \label{theorem of boundedness 1}
 Let $p(\cdot)\in\mathcal{P}(\Omega)$ satisfy condition \eqref{log}, $0<q\leq \infty$ and $1<r\leq \infty$ with $p_+<r$.
If $T:H_r^s \rightarrow L_r$ is a bounded $\sigma$-sublinear operator and
satisfies \eqref{condition for boundedness 1 Leb},
 then
$$\|Tf\|_{L_{p(\cdot),q}}\lesssim \|f\|_{H_{p(\cdot),q}^s},\quad f\in H_{p(\cdot),q}^s.$$
\end{theorem}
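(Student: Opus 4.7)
The plan is to follow the blueprint of the proof of Theorem \ref{theorem of boundedness 1 Leb} and to upgrade each $L_{p(\cdot)}$-step to its $L_{p(\cdot),q}$ analogue. First I would invoke the atomic decomposition from Theorem \ref{theorem of atomic decomposition pq} to write $f=\sum_{k\in\mathbb Z}\mu_k a^k$, where $a^k$ is a $(1,p(\cdot),\infty)$-atom associated with the stopping time $\tau_k$, $\mu_k=3\cdot 2^k\|\chi_{\{\tau_k<\infty\}}\|_{p(\cdot)}$, and $\|(\mu_k)_k\|_{\ell_q}\approx\|f\|_{H_{p(\cdot),q}^s}$. Fix any $t$ with $0<t<\min(\underline p,q)$ (using the convention $\min(\underline p,\infty):=\underline p$). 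By $\sigma$-sublinearity of $T$ and the scalar inequality $(a+b)^t\leq a^t+b^t$ valid for $t\in(0,1]$, we have $|Tf|^t\leq\sum_k \mu_k^t|T(a^k)|^t$, hence
\begin{equation*}
\|Tf\|_{L_{p(\cdot),q}}\leq Z:=\Big\|\Big(\sum_k (\mu_k T(a^k))^t\Big)^{1/t}\Big\|_{L_{p(\cdot),q}},\qquad Z^t\approx\Big\|\sum_k (\mu_k T(a^k))^t\Big\|_{L_{p(\cdot)/t,q/t}},
\end{equation*}
the second identity being the Lorentz homogeneity relation $\|h^t\|_{L_{p(\cdot)/t,q/t}}\approx\|h\|_{L_{p(\cdot),q}}^t$, verified directly from the discretized quasi-norm in Remark \ref{remark for lorentz norm}.

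Next, since $(p(\cdot)/t)_->1$ and $q/t>1$, the variable Lorentz space $L_{p(\cdot)/t,q/t}$ is a Banach space admitting duality with $L_{(p(\cdot)/t)',(q/t)'}$. Choosing $g\geq 0$ in the dual unit ball yields $Z^t\lesssim\int\sum_k \mu_k^t|T(a^k)|^t g\, d\mathbb P$. By Lemma \ref{lem-atom}, $T(a^k)=T(a^k)\chi_{\{\tau_k<\infty\}}$ with $\{\tau_k<\infty\}\in\mathcal F_{\tau_k}$; therefore, exactly as in the proof of Theorem \ref{theorem of boundedness 1 Leb}, the tower property, conditional H\"older with exponent pair $(r/t,(r/t)')$, and the bound $\mathbb E_{\tau_k}(|T(a^k)|^r)^{t/r}\lesssim\|\chi_{\{\tau_k<\infty\}}\|_{p(\cdot)}^{-t}$ from Lemma \ref{lem-atom} produce
\begin{equation*}
Z^t\lesssim\int\sum_k (3\cdot 2^k)^t\chi_{\{\tau_k<\infty\}}\bigl[M(g^{(r/t)'})\bigr]^{1/(r/t)'}d\mathbb P.
\end{equation*}
Applying H\"older's inequality for Lorentz spaces and then the Doob maximal inequality on variable Lorentz spaces from Corollary \ref{c10} (applicable because $(p(\cdot)/t)'/(r/t)'>1$, as $p_+<r$) bounds the second factor by $\|g\|_{L_{(p(\cdot)/t)',(q/t)'}}\leq 1$, leaving
\begin{equation*}
Z^t\lesssim\Big\|\sum_k (3\cdot 2^k)^t\chi_{\{\tau_k<\infty\}}\Big\|_{L_{p(\cdot)/t,q/t}}.
\end{equation*}

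Finally, for the canonical decomposition constructed in Theorem \ref{theorem of atomic decomposition pq} we have $\{\tau_k<\infty\}=\{s(f)>2^k\}$, so $\sum_k 2^{kt}\chi_{\{s(f)>2^k\}}\lesssim s(f)^t$, and the homogeneity relation used earlier gives
\begin{equation*}
Z^t\lesssim\|s(f)^t\|_{L_{p(\cdot)/t,q/t}}\approx\|s(f)\|_{L_{p(\cdot),q}}^t=\|f\|_{H_{p(\cdot),q}^s}^t,
\end{equation*}
which is the required estimate. The main obstacle is the endpoint case $q=\infty$: both the duality and the H\"older step in $L_{p(\cdot)/t,q/t}$ strictly use $q/t>1$, so this case must be handled separately, either by invoking the duality $L_{p(\cdot)/t,\infty}\leftrightarrow L_{(p(\cdot)/t)',1}$ (which requires establishing the associate space in the variable setting) or by reducing to the $q<\infty$ case through the real interpolation identity in Lemma \ref{lemma for interpolation}. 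A secondary technical point that has to be checked carefully is the validity of the H\"older-type estimate and the Lorentz homogeneity identity for variable exponents, both of which follow from the discretized characterization in Remark \ref{remark for lorentz norm}.
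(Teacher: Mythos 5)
Your approach diverges from the paper's and is considerably more ambitious. The paper never uses duality for variable Lorentz spaces; it works entirely at the level of distribution functions. Concretely, after invoking the atomic decomposition, the paper fixes an arbitrary $k_0\in\mathbb Z$, splits $f=F_1+F_2$ with $F_1=\sum_{k<k_0}\mu_k a^k$, passes from $\|\chi_{\{T(F_1)>2^{k_0}\}}\|_{p(\cdot)}$ to a scalar via a Chebyshev/power trick, applies H\"older for sums (and Theorem \ref{theorem of boundedness 1 Leb} for the $L_{p(\cdot)}$ pieces) to $F_1$, uses the support inclusion $\{T(F_2)>0\}\subset\bigcup_{k\geq k_0}\{\tau_k<\infty\}$ for $F_2$, and sums over $k_0$. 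The only duality it ever needs is the $L_{p(\cdot)}$ duality already proved in Lemma \ref{lem:duality for variabl p} (inside Theorem \ref{theorem of boundedness 1 Leb}). Both endpoints $q<\infty$ and $q=\infty$ are handled by the same scheme.

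Your proposal replaces this with a direct $L_{p(\cdot)/t,q/t}$ duality argument. Two of the steps you use are not established anywhere in the paper and cannot simply be cited: (i) that $L_{p(\cdot)/t,q/t}$ is a Banach space whose K\"othe dual is $L_{(p(\cdot)/t)',(q/t)'}$ in the strong sense that lets you extract a single $g$ in the unit ball realizing (up to constants) the norm of $\sum_k\mu_k^t|T(a^k)|^t$; and (ii) a H\"older-type inequality for variable Lorentz spaces pairing $L_{p(\cdot)/t,q/t}$ with $L_{(p(\cdot)/t)',(q/t)'}$. Even in the constant-exponent case these require a nontrivial renorming of $L_{p,q}$ (the Lorentz quasi-norm is not subadditive), and the variable case has further subtleties; the paper deliberately sidesteps all of this. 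You also correctly flag that the entire scheme fails at $q=\infty$ because $q/t>1$ is used at both the duality and the H\"older step; the suggested remedies (establishing the associate space of $L_{p(\cdot)/t,\infty}$, or interpolating via Lemma \ref{lemma for interpolation}) each introduce new unproved claims — the latter in particular would need the interpolation identity $(H^s_{p_0(\cdot)},H^s_r)_{\theta,q}=H^s_{\tilde p(\cdot),q}$ for variable exponents, which is not in the paper.

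To be clear: the structural skeleton is plausible and the computations you do carry out (the homogeneity identity $\|h^t\|_{L_{p(\cdot)/t,q/t}}\approx\|h\|_{L_{p(\cdot),q}}^t$, the reduction via Lemma \ref{lem-atom}, conditional H\"older with exponent $r/t$, and the final bound $\sum_k 2^{kt}\chi_{\{s(f)>2^k\}}\lesssim s(f)^t$) are all correct. But the argument rests on the two Lorentz-space functional-analytic facts above plus a separate treatment of $q=\infty$, none of which you supply, so as written the proof has genuine gaps. The paper's two-piece decomposition with dyadic level $k_0$ is specifically designed to avoid all three issues.
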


\begin{proof} Let a martingale $f\in H_{p(\cdot),q}^s$. By  Theorem \ref{theorem of atomic decomposition pq},  we know that
 $f$  has a decomposition as \eqref{decomposition} such that $a^{k}$  is a $(1,p(\cdot),\infty)$-atom and $\mu_{k}=3\cdot 2^k \|\chi_{\{\tau_k<\infty\}}\|_{p(\cdot)}$.  For an arbitrary integer $k_0$, we set again
$$f= \sum_{k\in \mathbb Z} \mu_{k}a^{k}=F_1+F_2,$$
where
$$F_1=\sum_{k=-\infty}^{k_0-1} \mu_{k}a^{k},\quad F_2=f-F_1.$$
By  the $\sigma$-sublinearity  of the
operator $T$, we have
$$
| T(F_1) |\leq  \sum_{k=-\infty}^{k_0-1} \mu_{k}T(a^{k}),\quad | T(F_2) |\leq  \sum_{k=k_0}^{\infty} \mu_{k}T(a^{k}).
$$
We need to estimate  $\|T(F_1)\|_{L_{p(\cdot),q}}$ and $\|T(F_2)\|_{L_{p(\cdot),q}}$, separately.

To estimate  $\|T(F_1)\|_{L_{p(\cdot),q}}$,  we let $0<\varepsilon<\underline{p}$. Fix
 $L\in (1,\frac{1}{\varepsilon})$ such that $L<r/{p_+}$ and choose $\ell$ such that $0<\ell <1-1/L.$ By H\"{o}lder's inequality for $\frac 1L+\frac 1{L'}=1$, we have
 \begin{align*}
 T(F_1)\leq \Big(\sum_{k=-\infty}^{k_0-1}2^{k\ell L'}\Big)^{1/{L'}}\Big(\sum_{k=-\infty}^{k_0-1}[2^{-k\ell }\mu_kT(a^k)]^L\Big)^{1/L}\lesssim 2^{k_0\ell}\Big(\sum_{k=-\infty}^{k_0-1}[2^{-k\ell }\mu_kT(a^k)]^L\Big)^{1/L}.
 \end{align*}
 Then, we have
\begin{align*}
\|\chi_{\{T(F_1)>2^{k_0}\}}\|_{p(\cdot)}&\lesssim \Big\|\frac{T(F_1)^L}{2^{k_0L}}\Big\|_{p(\cdot)}
\lesssim 2^{k_0L(\ell-1)} \left\|\sum_{k=-\infty}^{k_0-1}[2^{-k\ell }\mu_kT(a^k)]^L \right\|_{p(\cdot)}\\
&\leq 2^{k_0L(\ell-1)} \left\{\sum_{k=-\infty}^{k_0-1} 2^{(1-\ell) kL\varepsilon}\|\chi_{\{\tau_k<\infty\}} \|_{p(\cdot)}^{L\varepsilon} \||T(a^{k}) |^{L\varepsilon}\|_{{p(\cdot)}/\varepsilon}\right\}^{\frac 1\varepsilon}\\
&\lesssim 2^{k_0L(\ell-1)} \left\{\sum_{k=-\infty}^{k_0-1} 2^{(1-\ell) kL\varepsilon}\|\chi_{\{\tau_k<\infty\}} \|_{p(\cdot)}^{L\varepsilon} \|s(a^{k})\|_{p(\cdot)L}^{L\varepsilon}\right\}^{\frac 1\varepsilon},
 \end{align*}
 where the last ``$\lesssim $" is due to Theorem \ref{theorem of boundedness 1 Leb} (note that $p_+L<r$). Since $\|s(a^k)\|_{\infty}\leq \|\chi_{\{\tau_k<\infty\}}\|_{p(\cdot)}^{-1}$ for every $k\in\mathbb{Z}$, it follows
 $$
 \|\chi_{\{T(F_1)>2^{k_0}\}}\|_{p(\cdot)} \leq 2^{k_0L(\ell-1)} \left\{\sum_{k=-\infty}^{k_0-1} 2^{(1-\ell) kL\varepsilon}\|\chi_{\{\tau_k<\infty\}} \|_{p(\cdot)}^{\varepsilon} \right\}^{\frac 1\varepsilon}.$$
 For the case $q=\infty$, we have
 \begin{align*}
\|\chi_{\{T(F_1)>2^{k_0}\}}\|_{p(\cdot)} &\lesssim  2^{k_0L(\ell-1)} \left(\sum_{k=-\infty}^{k_0-1} 2^{k((1-\ell) L-1)) \varepsilon} \right)^{1/\varepsilon} \sup_{k\in\mathbb Z} 2^k\left\|\chi_{\{\tau_k<\infty\}}\right\|_{{p(\cdot)}} \\
&\lesssim 2^{-k_0}  \sup_{k\in\mathbb Z} 2^k\left\|\chi_{\{\tau_k<\infty\}}\right\|_{{p(\cdot)}},
 \end{align*}
(note that $(1-\ell)L-1>0$) which implies that
 $$\|T(F_1)\|_{L_{p(\cdot),\infty}}\lesssim \|f\|_{H_{p(\cdot),\infty}^s}.$$
Now we turn to deal with the case $q<\infty$. Set
$$\delta=\frac{(1-\ell)L+1}{2}>1.$$
By the H\"{o}lder inequality for $\frac{\varepsilon}{q}+\frac{q-\varepsilon}{q}=1$, we have
\begin{align*}
\|\chi_{\{T(F_1)>2^{k_0}\}}\|_{p(\cdot)} &\leq 2^{k_0L(\ell-1)} \left( \sum_{k=-\infty}^{k_0-1} 2^{k((1-\ell)L-\delta)\varepsilon \frac{q}{q-\varepsilon}} \right)^{(q-\varepsilon)/q}\left(\sum_{k=-\infty}^{k_0-1} 2^{k\delta q}\|\chi_{\{\tau_k<\infty\}} \|_{p(\cdot)}^{q} \right)^{\frac 1q}\\
&\lesssim 2^{-k_0\delta} \left(\sum_{k=-\infty}^{k_0-1} 2^{k\delta q}\|\chi_{\{\tau_k<\infty\}} \|_{p(\cdot)}^{q} \right)^{\frac 1q}.
\end{align*}
From this, by basic calculation, we get
 \begin{align*}
 \sum_{k_0=-\infty}^\infty 2^{k_0q}\|\chi_{\{T(F_1)>2^{k_0}\}}\|_{p(\cdot)}^{q}&\lesssim   \sum_{k=-\infty}^\infty 2^{kq}\left\| \chi_{\{\tau_k<\infty\}}\right\|_{{p(\cdot)}}^{q}.
 \end{align*}
We deduce that
$$
\|T(F_1)\|_{L_{p(\cdot),q}}\lesssim \|f\|_{H_{p(\cdot),q}^s}.
$$

Now we start to estimate $\|T(F_2)\|_{L_{p(\cdot),q}}$. According to  condition \eqref{condition for boundedness 1 Leb} and Lemma \ref{lem-atom},
\begin{equation*}
 \{T(F_2)>2^{k_0}\} \subset \{T(F_2)>0\}\subset\bigcup\limits_{k=k_0}^\infty\{T(a^{k})>0\}\subset\bigcup\limits_{k\geq k_0} \{\tau_k<\infty\}.
\end{equation*}
 Then
$$\|\chi_{\{T(F_2)>2^{k_0}\}}\|_{p(\cdot)} \leq\left\| \sum_{k=k_0}^\infty\chi_{\{\tau_k<\infty\}}\right\|_{p(\cdot)}.$$
So, repeating  the  calculation of the proof of Theorem \ref{theorem of atomic decomposition pq}, we  easily obtain
$$\|T(F_2)\|_{L_{p(\cdot),q}}\lesssim \|f\|_{H_{p(\cdot),q}^s}.$$
The proof is complete now.
\end{proof}

The following result can be similarly prove. We omit the proof.

\begin{theorem} \label{theorem of boundedness 2 and 3}
 Let $p(\cdot)\in\mathcal{P}(\Omega)$ satisfy condition \eqref{log}, $0<q\leq \infty$ and $1<r\leq \infty$ with $p_+<r$.
If $T:H_r^S\rightarrow L_r$ (or $H_r^*\rightarrow L_r$) is a bounded $\sigma$-sublinear operator and
\eqref{condition for boundedness 1 Leb} holds
for all $(2,p(\cdot),\infty)$-atoms (or $(3,p(\cdot),\infty)$-atoms), then we have
\begin{align*}
&\|Tf\|_{L_{p(\cdot),q}}\lesssim \|f\|_{Q_{p(\cdot),q}},\quad f\in Q_{p(\cdot),q},
\\ (or \quad  &\|Tf\|_{L_{p(\cdot),q}}\lesssim \|f\|_{P_{p(\cdot),q}},\quad f\in P_{p(\cdot),q}).
\end{align*}
\end{theorem}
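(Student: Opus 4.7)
The plan is to follow the template of the proof of Theorem \ref{theorem of boundedness 1} essentially verbatim, replacing the ``$s$-atom'' ingredients by their ``$S$-atom'' (respectively ``$M$-atom'') analogues. First I would invoke Theorem \ref{theorem of atomic decomposition for rest} to write any $f\in Q_{p(\cdot),q}$ (resp.\ $P_{p(\cdot),q}$) as $f=\sum_{k\in\mathbb Z}\mu_k a^k$, where each $a^k$ is a $(2,p(\cdot),\infty)$-atom (resp.\ $(3,p(\cdot),\infty)$-atom) associated with a stopping time $\tau_k$, and $\mu_k=3\cdot 2^k\|\chi_{\{\tau_k<\infty\}}\|_{p(\cdot)}$ with $\|(\mu_k)\|_{\ell_q}\lesssim \|f\|_{Q_{p(\cdot),q}}$ (resp.\ $\|f\|_{P_{p(\cdot),q}}$). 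For an arbitrary integer $k_0$, I split $f=F_1+F_2$ with $F_1=\sum_{k<k_0}\mu_k a^k$ and $F_2=\sum_{k\geq k_0}\mu_k a^k$, and by $\sigma$-sublinearity $|T(F_j)|$ is dominated by the corresponding tail sum of $\mu_k|T(a^k)|$. The goal is then to estimate $\|\chi_{\{T(F_j)>2^{k_0}\}}\|_{p(\cdot)}$ and sum over $k_0$ (with the appropriate weight) to recover $\|T(f)\|_{L_{p(\cdot),q}}$.

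For the $F_2$ term the argument is the shorter one. I need the support property $T(a^k)=T(a^k)\chi_{\{\tau_k<\infty\}}$. By hypothesis \eqref{condition for boundedness 1 Leb}, one has $T(a^k)\chi_{\{\tau_k=\infty\}}=T(a^k\chi_{\{\tau_k=\infty\}})$ for every $k$; since Lemma \ref{lem-op} (which covers $T\in\{s,S,M\}$) gives $S(a^k\chi_{\{\tau_k=\infty\}})=S(a^k)\chi_{\{\tau_k=\infty\}}=0$ for a $(2,p(\cdot),\infty)$-atom (respectively $M(a^k\chi_{\{\tau_k=\infty\}})=0$ for a $(3,p(\cdot),\infty)$-atom), the boundedness of $T$ from $H_r^S$ (resp.\ $H_r^M$) to $L_r$ forces $T(a^k\chi_{\{\tau_k=\infty\}})=0$. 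Therefore $\{T(F_2)>2^{k_0}\}\subset \bigcup_{k\geq k_0}\{\tau_k<\infty\}$, and then the very same computation as in the proof of Theorem \ref{theorem of atomic decomposition pq} (separate the cases $q=\infty$ and $q<\infty$, apply the $\varepsilon$-quasi-triangle inequality for $\varepsilon<\underline p$, then Hölder in $\ell_q$) yields $\|T(F_2)\|_{L_{p(\cdot),q}}\lesssim \|f\|_{Q_{p(\cdot),q}}$ (resp.\ $\|f\|_{P_{p(\cdot),q}}$).

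For the $F_1$ term I would repeat the Hölder device used for Theorem \ref{theorem of boundedness 1}: choose $L\in(1,1/\varepsilon)$ with $L<r/p_+$ and $\ell\in(0,1-1/L)$, and get the pointwise bound
$T(F_1)\lesssim 2^{k_0\ell}\Bigl(\sum_{k<k_0}[2^{-k\ell}\mu_k T(a^k)]^L\Bigr)^{1/L}$. Passing to quasi-norms gives
$\|\chi_{\{T(F_1)>2^{k_0}\}}\|_{p(\cdot)}\lesssim 2^{k_0 L(\ell-1)}\Bigl\{\sum_{k<k_0} 2^{(1-\ell)kL\varepsilon}\,\|\chi_{\{\tau_k<\infty\}}\|_{p(\cdot)}^{L\varepsilon}\,\|\,|T(a^k)|^{L\varepsilon}\|_{p(\cdot)/\varepsilon}\Bigr\}^{1/\varepsilon}$. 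At this point the crucial substitution is to apply Theorem \ref{theorem of boundedness 2 and 3 Leb} (instead of Theorem \ref{theorem of boundedness 1 Leb}) to the exponent $Lp(\cdot)$, whose upper index satisfies $Lp_+<r$, giving $\|T(a^k)\|_{Lp(\cdot)}\lesssim \|a^k\|_{Q_{Lp(\cdot)}}$ (resp.\ $\|a^k\|_{P_{Lp(\cdot)}}$); since $a^k$ is a $(2,p(\cdot),\infty)$- (resp.\ $(3,p(\cdot),\infty)$-) atom, the control $\lambda_n=\|S(a^k)\|_\infty\chi_{\{\tau_k\leq n\}}$ (resp.\ $\|M(a^k)\|_\infty\chi_{\{\tau_k\leq n\}}$) witnesses $\|a^k\|_{Q_{Lp(\cdot)}}\leq \|\chi_{\{\tau_k<\infty\}}\|_{p(\cdot)}^{-1}\|\chi_{\{\tau_k<\infty\}}\|_{Lp(\cdot)}$. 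After summing over $k<k_0$ and then summing $2^{k_0 q}(\cdot)^q$ over $k_0$, one obtains $\|T(F_1)\|_{L_{p(\cdot),q}}\lesssim \|f\|_{Q_{p(\cdot),q}}$ (resp.\ $\|f\|_{P_{p(\cdot),q}}$) exactly as in Theorem \ref{theorem of boundedness 1}.

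The main obstacle to making this rigorous is the $F_1$ estimate: one must verify that replacing the conditional square function $s$ by $S$ or $M$ in the role of the defining quantity of an atom does not break the quantitative chain $\|T(a^k)\|_{Lp(\cdot)}\lesssim \|\chi_{\{\tau_k<\infty\}}\|_{p(\cdot)}^{-1}\|\chi_{\{\tau_k<\infty\}}\|_{Lp(\cdot)}$, which in turn relies on Theorem \ref{theorem of boundedness 2 and 3 Leb} being applicable at the variable exponent $Lp(\cdot)$ (hence the log-Hölder assumption \eqref{log} passes through, since $Lp(\cdot)$ inherits \eqref{log} with the same constant raised to $L$). Everything else—the $\sigma$-sublinearity bookkeeping, the $\varepsilon$-quasi-triangle inequality, the $\ell_q$-Hölder, and the suppression of $F_2$ outside $\bigcup_{k\geq k_0}\{\tau_k<\infty\}$—is literally the same as in Theorem \ref{theorem of boundedness 1}, which is why the authors can simply assert ``the proof is similar''.
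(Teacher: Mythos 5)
Your proposal is correct and reconstructs exactly the omitted proof the authors had in mind: the same split $f=F_1+F_2$, the same quasi-norm arithmetic and $\ell_q$-Hölder as in Theorem \ref{theorem of boundedness 1}, with the two needed substitutions made accurately — Theorem \ref{theorem of atomic decomposition for rest} in place of Theorem \ref{theorem of atomic decomposition pq}, and Theorem \ref{theorem of boundedness 2 and 3 Leb} at the exponent $Lp(\cdot)$ (noting $Lp(\cdot)$ inherits \eqref{log} and $Lp_+<r$) in place of Theorem \ref{theorem of boundedness 1 Leb}. The explicit control sequence $\lambda_n=\|S(a^k)\|_\infty\chi_{\{\tau_k\le n\}}$ (resp.\ $\|M(a^k)\|_\infty\chi_{\{\tau_k\le n\}}$) giving $\|a^k\|_{Q_{Lp(\cdot)}}\le \|\chi_{\{\tau_k<\infty\}}\|_{p(\cdot)}^{-1}\|\chi_{\{\tau_k<\infty\}}\|_{Lp(\cdot)}$ is the right replacement for the bound $\|s(a^k)\|_{Lp(\cdot)}\le\|\chi_{\{\tau_k<\infty\}}\|_{p(\cdot)}^{-1}\|\chi_{\{\tau_k<\infty\}}\|_{Lp(\cdot)}$ used in the $s$-atom case, and your $F_2$ argument is precisely the $(2,\cdot)$- and $(3,\cdot)$-atom analogue of Lemma \ref{lem-atom}.
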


Similarly to Theorem \ref{theorem of martingale inequalities Leb}, applying the two theorems above, we can prove the result below. We omit the details
of the proof.

\begin{theorem}\label{theorem of martingale inequalities}
 Let $p(\cdot)\in\mathcal{P}(\Omega)$ satisfy  condition \eqref{log} and $0<q\leq \infty$.
Then the following inequalities hold:
 \begin{equation}\label{martingale inequality 1}
 \|f\|_{H_{p(\cdot),q}^M}\lesssim\|f\|_{H_{p(\cdot),q}^s},\quad  \|f\|_{H_{p(\cdot),q}^S}\lesssim\|f\|_{H_{p(\cdot),q}^s},\quad \mbox{if}\quad0<p_-\leq p_+<2;
 \end{equation}
 \begin{equation}\label{martingale inequality 2}
\|f\|_{H_{p(\cdot),q}^M}\leq \|f\|_{P_{p(\cdot),q}},\quad  \|f\|_{H_{p(\cdot),q}^S}\leq \|f\|_{Q_{p(\cdot),q}};
\end{equation}
 \begin{equation}\label{martingale inequality 3}
\|f\|_{H_{p(\cdot),q}^S}\lesssim\|f\|_{P_{p(\cdot),q}},\quad  \|f\|_{H_{p(\cdot),q}^M}\lesssim\|f\|_{Q_{p(\cdot),q}};
\end{equation}
 \begin{equation}\label{martingale inequality 4}
\|f\|_{H_{p(\cdot),q}^s}\lesssim\|f\|_{P_{p(\cdot),q}},\quad  \|f\|_{H_{p(\cdot),q}^s}\lesssim\|f\|_{Q_{p(\cdot),q}};
\end{equation}
\begin{equation}\label{martingale inequality 5}
\|f\|_{P_{p(\cdot),q}} \lesssim \|f\|_{Q_{p(\cdot),q}} \lesssim\|f\|_{P_{p(\cdot),q}}.
\end{equation}
Moreover, if $\{\mathcal {F}_n\}_{n\geq 0}$ is regular, then
$$H_{p(\cdot),q}^S=Q_{p(\cdot),q}=P_{p(\cdot),q}=H_{p(\cdot),q}^M =
H_{p(\cdot),q}^s$$
with equivalent quasi-norms.
\end{theorem}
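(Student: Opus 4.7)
The strategy is to reproduce the argument of Theorem \ref{theorem of martingale inequalities Leb}, replacing the Lebesgue building blocks by their Lorentz analogues: use Theorem \ref{theorem of boundedness 1} in place of Theorem \ref{theorem of boundedness 1 Leb}, use Theorem \ref{theorem of boundedness 2 and 3} in place of Theorem \ref{theorem of boundedness 2 and 3 Leb}, and use Corollary \ref{cor:equi Lor} (the Lorentz version of Corollary \ref{cor:equi Leb}) in the regular case. The key point is that the maximal operators $M,\,S,\,s$ are $\sigma$-sublinear and, by Lemma \ref{lem-op}, they satisfy the commutation hypothesis \eqref{condition for boundedness 1 Leb} with atoms, so the operator-boundedness theorems of Section \ref{sec4} apply unconditionally.

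For \eqref{martingale inequality 1}, since $p_+<2$, choose $r$ with $p_+<r<2$. By the classical $L_r$ inequalities \cite[Theorem 2.11(i)]{Weisz1994book}, both $\|M(f)\|_r\lesssim \|s(f)\|_r$ and $\|S(f)\|_r\lesssim \|s(f)\|_r$ hold, so $T=M$ and $T=S$ are bounded from $H_r^s$ to $L_r$. Theorem \ref{theorem of boundedness 1} then yields \eqref{martingale inequality 1}. Inequality \eqref{martingale inequality 2} is immediate from the definitions: if $(\lambda_n)\in\Lambda$ is a control for $(|f_n|)$ then $M(f)\leq\lambda_\infty$, and similarly for $S$ and $Q_{p(\cdot),q}$. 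For \eqref{martingale inequality 3} apply Theorem \ref{theorem of boundedness 2 and 3} to $T=S$ and $T=M$; the hypothesis $\|Sf\|_r\approx\|Mf\|_r\approx\|f\|_r$ for any $1<r<\infty$ with $r>p_+$ is the Burkholder--Gundy/Doob inequality \cite[Theorem 2.12]{Weisz1994book}. For \eqref{martingale inequality 4} apply Theorem \ref{theorem of boundedness 2 and 3} to $T=s$; the required $L_r$-bound $\|s(f)\|_r\lesssim\|M(f)\|_r\approx\|S(f)\|_r$ for any fixed $r>\max(p_+,2)$ is \cite[Theorem 2.11(ii)]{Weisz1994book}.

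Inequality \eqref{martingale inequality 5} follows exactly as in Theorem \ref{theorem of martingale inequalities Leb}. If $f\in Q_{p(\cdot),q}$ with optimal control $(\lambda_n^{(1)})$ satisfying $S_n(f)\leq\lambda_{n-1}^{(1)}$ and $\lambda_\infty^{(1)}\in L_{p(\cdot),q}$, then $|f_n|\leq M_{n-1}(f)+\lambda_{n-1}^{(1)}$, so $(M_{n-1}(f)+\lambda_{n-1}^{(1)})_{n\geq0}\in\Lambda$ is a control and by the second half of \eqref{martingale inequality 3},
$$\|f\|_{P_{p(\cdot),q}}\lesssim \|M(f)\|_{L_{p(\cdot),q}}+\|\lambda_\infty^{(1)}\|_{L_{p(\cdot),q}}\lesssim\|f\|_{Q_{p(\cdot),q}}.$$
The reverse inequality follows symmetrically from $S_n(f)\leq S_{n-1}(f)+2\lambda_{n-1}^{(2)}$ if $(\lambda_n^{(2)})$ is an optimal control for $(|f_n|)$, combined with the first half of \eqref{martingale inequality 3}.

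For the regular case, by the standard regularity estimate \cite[p.~33]{Weisz1994book} one has $S_n(f)\leq R^{1/2} s_n(f)$, so the predictable sequence $R^{1/2}s_n(f)\in\mathcal{F}_{n-1}$ controls $S_{n+1}(f)$ (after an index shift), which gives $\|f\|_{Q_{p(\cdot),q}}\lesssim \|s(f)\|_{L_{p(\cdot),q}}=\|f\|_{H_{p(\cdot),q}^s}$. Combined with \eqref{martingale inequality 4} and Corollary \ref{cor:equi Lor}, this yields $H_{p(\cdot),q}^S=Q_{p(\cdot),q}=H_{p(\cdot),q}^s=P_{p(\cdot),q}=H_{p(\cdot),q}^M$. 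The only delicate ingredient throughout is the operator-boundedness transfer from $L_r$ to $L_{p(\cdot),q}$; all the genuinely new work was already carried out in Theorems \ref{theorem of boundedness 1} and \ref{theorem of boundedness 2 and 3}, so here the main care is simply to verify each invocation that the chosen $r$ satisfies $p_+<r$ while preserving the relevant classical $L_r$-inequality.
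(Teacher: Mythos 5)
Your proposal is correct and follows exactly the route the paper indicates when it writes ``Similarly to Theorem~\ref{theorem of martingale inequalities Leb}, applying the two theorems above, we can prove the result below. We omit the details of the proof.'' You have faithfully spelled out that sketch: the $\sigma$-sublinearity of $M,S,s$, Lemma~\ref{lem-op}, the classical $L_r$-inequalities from~\cite[Theorem~2.11, 2.12]{Weisz1994book}, Theorems~\ref{theorem of boundedness 1} and~\ref{theorem of boundedness 2 and 3} (the Lorentz analogues of Theorems~\ref{theorem of boundedness 1 Leb} and~\ref{theorem of boundedness 2 and 3 Leb}), the $|f_n|\leq M_{n-1}(f)+\lambda_{n-1}^{(1)}$ and $S_n(f)\leq S_{n-1}(f)+2\lambda_{n-1}^{(2)}$ tricks for~\eqref{martingale inequality 5}, and the regularity bound $S_n(f)\leq R^{1/2}s_n(f)$ together with Corollary~\ref{cor:equi Lor} for the final chain of equivalences — all precisely as in the Lebesgue case. (One cosmetic note: in~\eqref{martingale inequality 1} you should pick $r\in(\max(1,p_+),2]$ so that $r>1$ is guaranteed even when $p_+<1$; taking $r=2$, as the Lebesgue proof does, works just as well since~\cite[Theorem~2.11(i)]{Weisz1994book} holds at $p=2$ and Theorem~\ref{theorem of boundedness 1} requires only $p_+<r$, not $p_+<r<2$.)
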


 Combining Theorem \ref{theorem of atomic decomposition pq}  and Theorem \ref{theorem of martingale inequalities}, we obtain the following result.

\begin{corollary}\label{cor:equivalence}
Let $p(\cdot)\in\mathcal{P}(\Omega)$ satisfy  condition \eqref{log},  $0<q\leq \infty$ and $1<r\leq \infty$ with $p_+<r$. If $\{\mathcal {F}_n\}_{n\geq 0}$ is regular, then
 $$H_{p(\cdot),q}= H_{p(\cdot),q}^{{\rm at},d,\infty},\quad d=1,2,3$$
 with equivalent quasi-norms. Here, $H_{p(\cdot),q}$ denotes any one of the five Hardy spaces in Theorem \ref{theorem of martingale inequalities}.
\end{corollary}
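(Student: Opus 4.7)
The statement is essentially a bookkeeping consequence of three results that were already established earlier: the atomic decomposition for $H^s_{p(\cdot),q}$ in Theorem \ref{theorem of atomic decomposition pq}, the atomic decompositions for $Q_{p(\cdot),q}$ and $P_{p(\cdot),q}$ in Theorem \ref{theorem of atomic decomposition for rest}, and the coincidence of the five Hardy spaces under regularity in Theorem \ref{theorem of martingale inequalities}. My plan is therefore just to chain these identifications together; no new estimate is required, and the condition $p_+<r$ plays no role in the argument itself, since it does not enter the atomic theorems we need.

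First I would invoke Theorem \ref{theorem of atomic decomposition pq} to write
\[
H_{p(\cdot),q}^s \;=\; H_{p(\cdot),q}^{{\rm at},1,\infty}
\]
with equivalent quasi-norms, using no assumption on $p(\cdot)$ beyond $p(\cdot)\in\mathcal P(\Omega)$. Next, Theorem \ref{theorem of atomic decomposition for rest} gives
\[
Q_{p(\cdot),q} \;=\; H_{p(\cdot),q}^{{\rm at},2,\infty}, \qquad P_{p(\cdot),q} \;=\; H_{p(\cdot),q}^{{\rm at},3,\infty},
\]
again with equivalent quasi-norms and without using regularity.

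Finally, assuming that $\{\mathcal F_n\}_{n\geq 0}$ is regular and that $p(\cdot)$ satisfies \eqref{log}, the last assertion of Theorem \ref{theorem of martingale inequalities} yields
\[
H_{p(\cdot),q}^S \;=\; Q_{p(\cdot),q} \;=\; P_{p(\cdot),q} \;=\; H_{p(\cdot),q}^M \;=\; H_{p(\cdot),q}^s
\]
with equivalent quasi-norms. Substituting each of these Hardy spaces by its atomic counterpart from the first two steps produces the chain
\[
H_{p(\cdot),q}^{{\rm at},1,\infty} \;\approx\; H_{p(\cdot),q}^s \;\approx\; Q_{p(\cdot),q} \;\approx\; H_{p(\cdot),q}^{{\rm at},2,\infty}, \qquad H_{p(\cdot),q}^{{\rm at},3,\infty} \;\approx\; P_{p(\cdot),q} \;\approx\; H_{p(\cdot),q}^s,
\]
so that, letting $H_{p(\cdot),q}$ denote any fixed one of the five spaces,
\[
H_{p(\cdot),q} \;=\; H_{p(\cdot),q}^{{\rm at},d,\infty}, \qquad d=1,2,3,
\]
with equivalent quasi-norms.

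There is no serious obstacle in this proof: all of the work has already been carried out in Section \ref{sec3} (the construction of the atomic decompositions for each stopping-time-based operator) and in Theorem \ref{theorem of martingale inequalities} (where regularity was used to show that $S$, $s$, $M$, $Q$ and $P$ produce the same space). The only point worth flagging is that the hypothesis $1<r\leq\infty$ with $p_+<r$ appearing in the statement is not actually needed for the identifications displayed above — it is only used implicitly via Theorems \ref{theorem of boundedness 1} and \ref{theorem of boundedness 2 and 3}, which underlie the chain of martingale inequalities \eqref{martingale inequality 1}--\eqref{martingale inequality 5}. Since such an $r$ can always be chosen whenever $p_+<\infty$, no extra work is required.
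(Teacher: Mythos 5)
Your proof is correct and follows essentially the same route the paper takes: the paper's one-line justification ("Combining Theorem \ref{theorem of atomic decomposition pq} and Theorem \ref{theorem of martingale inequalities}") tacitly also relies on Theorem \ref{theorem of atomic decomposition for rest} for $d=2,3$, which you correctly make explicit. Your observation that the hypothesis $1<r\le\infty$, $p_+<r$ is superfluous in the statement is also accurate — such an $r$ always exists because $p_+<\infty$, and indeed the parallel Corollary \ref{cor:ad for M} in the $L_{p(\cdot)}$ case carries no $r$ in its hypotheses.
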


The following result is corresponding to Theorem \ref{t18 Leb}. The proof is omitted.

\begin{theorem}\label{t18}
Let $p(\cdot)$ satisfy \eqref{log} with $1< p_-\leq p_+ < \infty$ and $0<q\leq \infty$. If $\{\mathcal {F}_n\}_{n\geq 0}$ is regular, then
$$
\left\|T_b f\right\|_{L_{p(\cdot),q}} \lesssim \left\|f\right\|_{L_{p(\cdot),q}}.
$$
\end{theorem}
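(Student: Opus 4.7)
The proof will follow the same route as the Lebesgue-space version in Theorem \ref{t18 Leb}, simply replacing each ingredient with its Lorentz counterpart, all of which are already available earlier in the paper. The main structural observation is that $T_b$ contracts square functions, while under regularity all five variable Hardy(-Lorentz) spaces are equivalent; chaining these together reduces the statement to the Doob maximal inequality on $L_{p(\cdot),q}$.

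More concretely, first I would note that since $b_{k-1}$ is $\mathcal{F}_{k-1}$-measurable with $|b_{k-1}|\leq 1$, the martingale difference of $T_b f$ at step $k$ is exactly $b_{k-1} d_k f$, and so pointwise
$$
S(T_b f)=\Bigl(\sum_{k}|b_{k-1}|^{2}|d_k f|^{2}\Bigr)^{1/2}\leq S(f).
$$
Hence $\|S(T_b f)\|_{L_{p(\cdot),q}} \leq \|S(f)\|_{L_{p(\cdot),q}}$, i.e.\ $\|T_b f\|_{H_{p(\cdot),q}^{S}}\leq \|f\|_{H_{p(\cdot),q}^{S}}$.

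Next, I would invoke Theorem \ref{theorem of martingale inequalities}: under the regularity of $\{\mathcal{F}_n\}_{n\geq 0}$ and the condition \eqref{log}, we have the equivalences $H_{p(\cdot),q}^{M}=H_{p(\cdot),q}^{S}$ with comparable quasi-norms. Combined with the trivial estimate $\|g\|_{L_{p(\cdot),q}}\leq \|M(g)\|_{L_{p(\cdot),q}}$ applied to $g=T_b f$, this yields
$$
\|T_b f\|_{L_{p(\cdot),q}}\leq \|T_b f\|_{H_{p(\cdot),q}^{M}}\lesssim \|T_b f\|_{H_{p(\cdot),q}^{S}}\leq \|S(f)\|_{L_{p(\cdot),q}}=\|f\|_{H_{p(\cdot),q}^{S}}\lesssim \|f\|_{H_{p(\cdot),q}^{M}}=\|M(f)\|_{L_{p(\cdot),q}}.
$$
Finally, the Doob maximal inequality on variable Lorentz spaces (Corollary \ref{c10}, valid because $p_->1$ and $p(\cdot)$ satisfies \eqref{log}) gives $\|M(f)\|_{L_{p(\cdot),q}}\lesssim \|f\|_{L_{p(\cdot),q}}$, closing the chain.

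There is really no main obstacle here: the only nontrivial inputs are the equivalence of the Hardy-Lorentz spaces under regularity, which is Theorem \ref{theorem of martingale inequalities}, and the Doob maximal inequality Corollary \ref{c10}, both proved in preceding sections. The role of the hypothesis $p_->1$ is exactly to make the Doob maximal operator bounded on $L_{p(\cdot),q}$; the hypothesis $p_+<\infty$ and \eqref{log} ensure the applicability of the equivalence theorem; and regularity is what collapses $H_{p(\cdot),q}^{S}$ and $H_{p(\cdot),q}^{M}$. If one wanted to emphasize the parallel with Theorem \ref{t18 Leb}, the proof can be written in a single displayed chain of inequalities as above.
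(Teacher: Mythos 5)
Your proof is correct and follows exactly the route the paper intends: the paper omits the proof of Theorem \ref{t18} precisely because it is the Lorentz analogue of Theorem \ref{t18 Leb}, replacing Theorem \ref{theorem of martingale inequalities Leb} by Theorem \ref{theorem of martingale inequalities} and the Doob inequality of Theorem \ref{thm:maximal inequality} by Corollary \ref{c10}. Your chain $\|T_b f\|_{L_{p(\cdot),q}}\leq \|M(T_b f)\|_{L_{p(\cdot),q}}\lesssim \|S(T_b f)\|_{L_{p(\cdot),q}}\leq \|S(f)\|_{L_{p(\cdot),q}}\lesssim \|M(f)\|_{L_{p(\cdot),q}}\lesssim \|f\|_{L_{p(\cdot),q}}$ is exactly the intended argument.
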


\section{Applications in Fourier analysis}\label{sec5}

This section is devoted to applications of the previous results in Fourier Analysis. We mainly investigate the boundedness of the maximal Fej{\'e}r operator on variable Hardy space $H_{p(\cdot)}$ and variable Lorentz-Hardy space $H_{p(\cdot),q}$ (see Corollary \ref{cor:ad for M} and Corollary \ref{cor:equivalence}). To this end, in Section \ref{sec71}, we first introduce two new dyadic maximal operators $U$ and $V$ which play a crucial role in this section. We also prove that they are bounded on $L_{p(\cdot)}$ with $p(\cdot)$ satisfying \eqref{log} and $1<p_-\leq p_+<\infty$.

\subsection{Walsh system and Fej{\'e}r means}\label{sec71}

Let us investigate the dyadic martingales. Namely, let $\Omega=[0,1)$, $\mathbb P$ be the Lebesgue measure  and $\mathcal F$ be the Lebesgue measurable sets. By a {\it dyadic interval}, we mean one of the form $[k2^{-n},(k+1)2^{-n})$ for some $k,n \in \mathbb N$, $0 \leq k <2^n$. Given $n\in \mathbb N$ and $x \in [0,1)$,
let $I_n(x)$ denote the dyadic interval of length $2^{-n}$ which
contains $x$. The $\sigma$-algebras generated by the dyadic intervals $\{I_{n}(x): x\in [0,1)\}$
will be denoted by $\mathcal F_{n}$ $(n \in \mathbb N)$. Such $(\mathcal F_n)_{n\geq0}$ is regular, see Example \ref{ex:regular} or \cite{Long1993book}.

The \emph{Rademacher functions} are defined by
$$
r(x):= \left\{
         \begin{array}{ll}
           1, & \hbox{if $x\in [0,\frac{1}{2})$;} \\
           -1, & \hbox{if $x\in [\frac{1}{2},1)$,}
         \end{array}
       \right.
$$
and
$$
r_n(x):=r(2^n x) \qquad (x\in [0,1), \nn).
$$
The product system generated by the Rademacher functions is the {\it Walsh system}:
$$
w_{n}:=\prod_{k=0}^{\infty}{r_k}^{n_k}\qquad (n\in\N),
$$
where
\begin{equation}\label{e25}
n=\sum_{k=0}^{\infty} n_k 2^k, \qquad (0 \leq n_k <2).
\end{equation}

Recall (see Fine \cite{{Fine1949}}) that the {\it Walsh-Dirichlet kernels}
$$
D_n := \sum_{k=0}^{n-1} w_k
$$
satisfy
\begin{equation}\label{e5}
D_{2^n}(x) = \left\{
               \begin{array}{ll}
                 2^n, & \hbox{if $x \in [0,2^{-n})$;} \\
                 0, & \hbox{if $x \in [2^{-n},1)$}
               \end{array} \qquad (n\in \N).
             \right.
\end{equation}
If $f \in L_1$, then the number
$$
\widehat {f}(n) := \mathbb E(f w_{n}) \qquad (n\in\N)
$$
is said to be the $n$th {\it Walsh-Fourier coefficient} of $f$. We can extend this definition to martingales as follows. If $f=(f_{k})_{k\geq0}$ is a martingale, then let
$$
\widehat {f}(n) := \lim_{k \to \infty} \mathbb  E(f_{k} w_{n}) \qquad (n \in \N).
$$
Since $w_{n}$ is $\mathcal F_{k}$ measurable for $n<2^k$,
it can immediately be seen that this limit does exist.
We remember that if $f \in L_1$, then $\mathbb E_{k}f \to f$ in the $L_1$-norm as
$k \to \infty$, hence
$$
\widehat {f}(n) = \lim_{k \to \infty} \mathbb E(( \mathbb E_{k}f) w_{n}) \qquad (n \in \N).
$$
Thus the Walsh-Fourier coefficients of $f \in L_1$ are the same as the ones of
the martingale $(\mathbb E_{k}f)_{k\geq0}$ obtained from $f$.

Denote by $s_{n}f$ the $n$th partial sum of the Walsh-Fourier series of a martingale $f$, namely,
$$
s_{n}f := \sum_{k=0}^{n-1} \widehat {f}(k)w_{k}.
$$
If $f\in L_1$, then
$$
s_{n} f(x) = \int_0^1 f(t) D_{n}(x \dot + t) \, dt \qquad (n\in\N),
$$
where $\dot +$ denotes the dyadic addition (see e.g. Schipp, Wade, Simon and P{\'a}l \cite{sws} or Golubov, Efimov and Skvortsov \cite{Golubov1991}).
It is easy to see that
$$
s_{2^n}f=f_{n} \qquad (n\in \N)
$$
and so, by martingale results,
$$
\lim_{n\to\infty} s_{2^n}f=f \qquad \mbox{in the $L_p$-norm}
$$
when $f\in L_p$ and $1\leq p<\infty$. This theorem was extended in Schipp, Wade, Simon and P{\'a}l \cite{sws} (see also  Golubov, Efimov and Skvortsov \cite{Golubov1991}) for the partial sums $s_nf$ and for $1<p<\infty$. More exactly,
\begin{equation}\label{e100}
\lim_{n\to\infty} s_nf=f \qquad \mbox{in the $L_p$-norm}
\end{equation}
when $f\in L_p$ and $1<p<\infty$. We generalize this theorem as follows.

\begin{theorem}\label{t19 Leb}
Let $p(\cdot)$ satisfy \eqref{log} with $1< p_-\leq p_+ < \infty$.  If $f\in L_{p(\cdot)}$, then
$$
\sup_{n\in \mathbb N} \left\|s_nf\right\|_{{p(\cdot)}} \lesssim \left\|f\right\|_{{p(\cdot)}}.
$$
\end{theorem}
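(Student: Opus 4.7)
The plan is to leverage the two $L_{p(\cdot)}$-estimates already established for the dyadic, hence regular, Walsh filtration: the Doob maximal inequality (Theorem \ref{thm:maximal inequality}) and the boundedness of martingale transforms (Theorem \ref{t18 Leb}). Both are available because $p(\cdot)$ satisfies \eqref{log} and $1 < p_- \le p_+ < \infty$. The starting point is the Paley identity $D_n = D_{2^N} + w_{2^N}\,D_{n-2^N}$ for the Walsh-Dirichlet kernel, which, upon convolution with $f$, translates into
\[
s_n f = f_N + w_{2^N}\, s_{n-2^N}(f\, w_{2^N}), \qquad 2^N \le n < 2^{N+1}.
\]
Iterating this identity through the positions $k_1 > k_2 > \cdots > k_m$ of the $1$-bits in the binary expansion of $n$ will give the decomposition $s_n f = \sum_{i=1}^{m} W_i\, \E_{k_i}(f\, W_i)$, where $W_i := w_{2^{k_1} + \cdots + 2^{k_{i-1}}}$ (and $W_1 := 1$) is a Walsh function of modulus one.

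For each individual term, multiplication by $W_i$ is an isometry on $L_{p(\cdot)}$ and $|\E_{k_i}(f\, W_i)| \le M(f\, W_i)$, so Theorem \ref{thm:maximal inequality} yields $\|W_i\, \E_{k_i}(f\, W_i)\|_{p(\cdot)} \lesssim \|f\|_{p(\cdot)}$. The chief obstacle, however, is that a term-by-term triangle inequality picks up a factor equal to the number of $1$'s in the binary expansion of $n$, which can be as large as $\log_2 n$; removing this logarithmic loss is what will require the Walsh-specific structure of the sum and not just pointwise bounds on the summands.

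To overcome this, the plan is to follow the strategy announced in the paragraph preceding Section \ref{sec71}: introduce two auxiliary dyadic maximal operators $U$ and $V$ tailored to Walsh partial sums, with $|s_n f| \le Uf + Vf$ pointwise for every $n$. One of them will control the ``completed blocks'' $f_{k_i}$ via Doob's inequality, while the other will regroup the remaining telescoped terms $W_i\, \E_{k_i}(f\, W_i)$ block-by-block as a martingale transform $T_b f$ with $|b_k| \le 1$, to which Theorem \ref{t18 Leb} applies. The hypothesis \eqref{log} enters through atomic estimates in the spirit of Lemma \ref{lemma of conditional expc}, needed to bound the maximal operators. Once $\|Uf\|_{p(\cdot)}$ and $\|Vf\|_{p(\cdot)}$ are shown to be $\lesssim \|f\|_{p(\cdot)}$, the pointwise majorization finishes the proof, and specializing to $p(\cdot) \equiv p$ recovers the classical Paley theorem on the uniform $L_p$-boundedness of Walsh partial sums for $1 < p < \infty$.
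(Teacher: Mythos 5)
Your telescoping $s_n f = \sum_{i=1}^m W_i \E_{k_i}(fW_i)$ is correct; it is an unwound form of the identity $s_nf = w_n\, T_0(fw_n)$ proved in Schipp, Wade, Simon and P\'al \cite[p.~95]{sws}, where $T_0 g := \sum_{k\geq 1} n_{k-1}\, d_k g$ is a martingale transform with constant coefficients $n_{k-1}\in\{0,1\}$. The paper's proof simply quotes that identity and applies Theorem \ref{t18 Leb}, using that multiplication by the unimodular factor $w_n$ is an isometry of $L_{p(\cdot)}$. You correctly identify martingale-transform boundedness as the key ingredient, but you never carry out the crucial regrouping: on its face $\sum_i W_i\, \E_{k_i}(fW_i)$ is a sum of conditional expectations modulated by Walsh characters, not a sum of martingale differences $\sum_k b_k\, d_k g$ of a single fixed function $g$. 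Converting one into the other is exactly the content of the cited identity (one pulls the factor $w_n$ out and observes that the modulated projections reassemble into martingale differences of $g=fw_n$), a nontrivial Walsh computation that your sketch leaves entirely implicit.

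The proposed detour through maximal operators $U,V$ with a pointwise bound $|s_nf|\leq Uf+Vf$ for every $n$ is both unnecessary and would not work as stated. In the paper, $U_s$ and $V_{\alpha,s}$ are built from the Walsh--Fej\'er kernel estimates \eqref{e16}--\eqref{e17}; they majorize $\sigma_*f$, not $s_n f$. The Dirichlet kernel $D_n$ admits no such decaying estimate for a single $n$, and a uniform pointwise bound of $\sup_n|s_nf|$ by $L_{p(\cdot)}$-bounded dyadic maximal operators would amount to a Walsh--Carleson theorem, far stronger than what is claimed (where $\sup_n$ sits \emph{outside} the quasi-norm). Likewise Doob's inequality plays no separate role here: once the martingale-transform form is in hand, Theorem \ref{t18 Leb} applied to $T_0$ and $fw_n$ finishes the proof.
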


\begin{proof}
It was proved by Schipp, Wade, Simon and  P{\'a}l \cite[p. 95]{sws} that
$$
s_nf = w_n T_0(fw_n),
$$
where
$$
T_0 f := \sum_{k=1}^{\infty} n_{k-1} d_kf
$$
and the binary coefficients $n_k$ are defined in (\ref{e25}). Obviously $T_0$ is a martingale transform and Theorem \ref{t18 Leb} implies that
$$
\left\|s_nf\right\|_{{p(\cdot)}} = \left\|T_0(fw_n)\right\|_{{p(\cdot)}}
\lesssim \left\|f\right\|_{{p(\cdot)}},
$$
which shows the result.
\end{proof}

\begin{corollary}\label{c11 Leb}
Let $p(\cdot)$ satisfy \eqref{log} with $1< p_-\leq p_+ < \infty$. If $f\in L_{p(\cdot)}$, then
$$
\lim_{n\to\infty} s_{n} f=f \qquad \mbox{in the $L_{p(\cdot)}$-norm.}
$$
\end{corollary}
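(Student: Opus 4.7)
The plan is the classical Banach–Steinhaus/density argument, transplanted to the variable Lebesgue setting. The uniform bound
\[
\sup_{n\in\N}\|s_n f\|_{p(\cdot)}\lesssim \|f\|_{p(\cdot)}
\]
is already in hand from Theorem \ref{t19 Leb}, so the only remaining task is to exhibit a dense subset of $L_{p(\cdot)}$ on which the convergence $s_n f\to f$ is trivial.

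The natural dense subset is the class $\mathcal{W}$ of Walsh polynomials, i.e.\ finite linear combinations $P=\sum_{k=0}^{2^N-1}c_k w_k$; equivalently, $\mathcal{F}_N$-measurable bounded functions for some $N\in\N$. For any such $P$ one has $\widehat{P}(k)=0$ for $k\geq 2^N$, so $s_n P=P$ for every $n\geq 2^N$. To prove density I would take $f\in L_{p(\cdot)}$ and show that $f_N=\E_N f\to f$ in $L_{p(\cdot)}$-norm as $N\to\infty$. Pointwise a.e.\ convergence $f_N\to f$ is classical martingale convergence, and the pointwise bound $|f_N-f|\leq 2M(f)$ holds with $M(f)\in L_{p(\cdot)}$ by the maximal inequality (Theorem \ref{thm:maximal inequality}, which is available because $p_->1$ and $p(\cdot)$ satisfies \eqref{log}). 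Since $p_+<\infty$, the dominated convergence theorem in $L_{p(\cdot)}$ (see Cruz–Uribe and Fiorenza \cite{Cruz2013}, invoked in the excerpt in the proof of Lemma \ref{lemma of control}) then yields $\|f_N-f\|_{p(\cdot)}\to 0$.

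With density in place, fix $\varepsilon>0$, choose $P\in\mathcal{W}$ with $\|f-P\|_{p(\cdot)}<\varepsilon$, and let $N_0$ be such that $s_n P=P$ for all $n\geq N_0$. Since $p_-\geq 1$ we have $\underline{p}=1$, so the triangle inequality in $L_{p(\cdot)}$ and Theorem \ref{t19 Leb} give, for $n\geq N_0$,
\[
\|s_n f-f\|_{p(\cdot)}\leq \|s_n(f-P)\|_{p(\cdot)}+\|s_n P-P\|_{p(\cdot)}+\|P-f\|_{p(\cdot)}\lesssim \|f-P\|_{p(\cdot)}<C\varepsilon.
\]
Letting $\varepsilon\to 0$ completes the proof.

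The only genuinely non-routine step is the verification that $\E_N f\to f$ in $L_{p(\cdot)}$-norm, and this is precisely where one uses the two main hypotheses simultaneously: $p_->1$ (to get the Doob maximal majorant $M(f)\in L_{p(\cdot)}$) and $p_+<\infty$ (to upgrade pointwise convergence with a dominant in $L_{p(\cdot)}$ to norm convergence). The rest is a soft functional-analytic packaging of Theorem \ref{t19 Leb}.
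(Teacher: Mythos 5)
Your proof is correct and shares the paper's overall strategy: combine the uniform bound $\sup_n\|s_nf\|_{p(\cdot)}\lesssim\|f\|_{p(\cdot)}$ from Theorem \ref{t19 Leb} with density of Walsh polynomials in $L_{p(\cdot)}$, the fact that $s_nT=T$ for a Walsh polynomial $T$ of low enough degree, and a triangle inequality. The one place you and the paper part ways is the density step. The paper appeals to the classical density of Walsh polynomials in the constant-exponent $L_p$ spaces (a consequence of \eqref{e100}) and then remarks that density in $L_{p(\cdot)}$ ``is easy to see'' --- presumably via truncation to bounded functions together with the embedding $\|\cdot\|_{p(\cdot)}\leq 2\|\cdot\|_{p_+}$ of Lemma \ref{lemma for embedding}; that route only uses $p_+<\infty$ and needs no maximal inequality. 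You instead prove $\|\E_N f-f\|_{p(\cdot)}\to 0$ directly from martingale a.e.\ convergence, the pointwise dominant $|\E_N f-f|\leq 2M(f)$, the Doob maximal inequality of Theorem \ref{thm:maximal inequality} (to put $M(f)$ in $L_{p(\cdot)}$, which is where $p_->1$ and \eqref{log} enter), and the dominated convergence theorem in $L_{p(\cdot)}$. Your version is more self-contained and makes the role of the hypotheses explicit, at the mild cost of invoking a stronger tool than the paper strictly needs for that step; both arguments are valid and give the corollary.
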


\begin{proof}
Note that (\ref{e100}) implies that the Walsh polynomials are dense in $L_p$ for $1\leq p<\infty$. Then it is easy to see that the Walsh polynomials are dense in $L_{p(\cdot)}$ as well.
Notice for any Walsh polynomial $T$ and any integer $n$ which exceeds the degree of this polynomial, that $s_n(T)=T$. Using Theorem \ref{t19 Leb} for such $n$, we obtain
$$\|T-s_n(f)\|_{p(\cdot)}=\|s_n(T-f)\|_{p(\cdot)}\leq C\|T-f\|_{p(\cdot)}.$$
We choose $T$ such that $\|T-f\|_{p(\cdot)}<\varepsilon/(C+1)$ for any $\varepsilon>0$.  Consequently,
$$\|f-s_n(f)\|_{p(\cdot)}\leq \|f-T\|_{p(\cdot)}+\|T-s_n(f)\|_{p(\cdot)}\leq (C+1)\|T-f\|_{p(\cdot)}<\varepsilon.$$
This finishes the proof.
\end{proof}

Similarly, for variable Lorentz spaces, we can prove the following two results.
\begin{theorem}\label{t19}
Let $p(\cdot)$ satisfy \eqref{log} with $1< p_-\leq p_+ < \infty$ and $0<q\leq \infty$.  If $f\in L_{p(\cdot),q}$, then
$$
\sup_{n\in \N} \left\|s_nf\right\|_{L_{p(\cdot),q}} \lesssim \left\|f\right\|_{L_{p(\cdot),q}}.
$$
\end{theorem}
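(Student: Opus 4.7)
The plan is to mimic the proof of Theorem \ref{t19 Leb}, replacing the variable Lebesgue boundedness of martingale transforms (Theorem \ref{t18 Leb}) with its variable Lorentz counterpart (Theorem \ref{t18}). The key identity, due to Schipp, Wade, Simon and P\'al \cite{sws}, represents the partial sum operator $s_n$ as a twist of a martingale transform by the Walsh function $w_n$: namely,
$$
s_n f = w_n \, T_0(f w_n),
$$
where $T_0 f = \sum_{k=1}^\infty n_{k-1} d_k f$ and $n_k$ are the binary digits of $n$ as in \eqref{e25}. Since each coefficient $n_{k-1} \in \{0,1\}$ is $\mathcal{F}_{k-1}$-measurable and bounded by $1$, $T_0$ falls under Definition \ref{d1}.

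First, I would observe that $|w_n| \equiv 1$, so for every measurable function $g$ and every $\lambda>0$, the level sets $\{|w_n g|>\lambda\}$ and $\{|g|>\lambda\}$ coincide, whence
$$
\|w_n g\|_{L_{p(\cdot),q}} = \|g\|_{L_{p(\cdot),q}}.
$$
In particular $\|f w_n\|_{L_{p(\cdot),q}} = \|f\|_{L_{p(\cdot),q}}$. Second, since the dyadic stochastic basis in Section \ref{sec71} is regular (Example \ref{ex:regular}), Theorem \ref{t18} applies and yields
$$
\|T_0(f w_n)\|_{L_{p(\cdot),q}} \lesssim \|f w_n\|_{L_{p(\cdot),q}} = \|f\|_{L_{p(\cdot),q}},
$$
with an implicit constant independent of $n$, since the martingale transform bound in Theorem \ref{t18} depends only on the uniform bound on the multipliers $b_k$, not on which particular sequence is chosen. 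Combining the two observations,
$$
\|s_n f\|_{L_{p(\cdot),q}} = \|w_n T_0(f w_n)\|_{L_{p(\cdot),q}} = \|T_0(f w_n)\|_{L_{p(\cdot),q}} \lesssim \|f\|_{L_{p(\cdot),q}},
$$
and taking the supremum over $n \in \N$ gives the claim.

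There is essentially no obstacle here beyond invoking the machinery already developed: the crucial work lies in Theorem \ref{t18}, whose proof rests on Theorem \ref{theorem of martingale inequalities}, Burkholder–Gundy, and Corollary \ref{c10}. The only subtlety worth checking is the uniformity of the constant in $n$, which follows because the boundedness estimate for $T_b$ in Theorem \ref{t18} depends only on $p(\cdot)$ and the regularity constant $R$, not on the particular bounded adapted sequence $(b_k)$.
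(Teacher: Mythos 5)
Your proof is correct and takes essentially the same approach the paper has in mind: the paper states Theorem \ref{t19} immediately after Theorem \ref{t19 Leb} with the remark that the Lorentz case is proved "similarly," i.e., by the same Schipp–Wade–Simon–P\'al identity $s_nf = w_n T_0(fw_n)$ together with the Lorentz-space martingale transform bound (Theorem \ref{t18}) in place of its Lebesgue counterpart. Your observations about $|w_n|\equiv 1$ preserving the $L_{p(\cdot),q}$ quasi-norm and about the uniformity in $n$ of the constant in Theorem \ref{t18} are exactly the points that make this transference work.
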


\begin{corollary}\label{c11}
Let $p(\cdot)$ satisfy \eqref{log} with $1< p_-\leq p_+ < \infty$ and $0<q\leq \infty$. If $f\in L_{p(\cdot),q}$, then
$$
\lim_{n\to\infty} s_{n} f=f \qquad \mbox{in the $L_{p(\cdot),q}$-norm.}
$$
\end{corollary}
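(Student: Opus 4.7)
The plan is to mirror the proof of Corollary \ref{c11 Leb}, combining Theorem \ref{t19} (uniform boundedness of $s_n$ on $L_{p(\cdot),q}$) with density of Walsh polynomials in $L_{p(\cdot),q}$. Once density is in hand, the conclusion follows by a routine approximation argument: given $\varepsilon>0$, I would choose a Walsh polynomial $T$ with $\|f-T\|_{L_{p(\cdot),q}}$ small; note that $s_nT=T$ for all $n$ exceeding the degree of $T$; and apply the quasi-triangle inequality from \eqref{equation of b trangile inequality} (extended to the Lorentz quasi-norm) together with Theorem \ref{t19} to the decomposition $f-s_nf=(f-T)-s_n(f-T)$.

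The essential ingredient is therefore density, for which I would approximate $f$ by the dyadic conditional expectations $f_n:=\mathbb{E}_n f$ with respect to the $\sigma$-algebras from Example \ref{ex:regular}. Each $f_n$ is a step function on dyadic intervals of length $2^{-n}$ and hence lies in the linear span of $\{w_0,\ldots,w_{2^n-1}\}$, i.e.\ is a Walsh polynomial. Martingale convergence gives $f_n\to f$ almost everywhere, with the pointwise domination $|f_n|\leq M(f)$; by Corollary \ref{c10}, $M(f)\in L_{p(\cdot),q}$, so we have a majorant of the type required by Lemma \ref{lemma of control}.

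For $q<\infty$, Lemma \ref{lemma of abs q} guarantees that $M(f)$ has absolutely continuous quasi-norm, so the Lorentz dominated convergence theorem (Lemma \ref{lemma of control}) produces $\|f_n-f\|_{L_{p(\cdot),q}}\to 0$ directly, and the density argument closes. The main obstacle is the case $q=\infty$, since elements of $L_{p(\cdot),\infty}$ need not have absolutely continuous quasi-norm (cf.\ the example $x^{-1/p}$ discussed earlier in the paper). In this case I would restrict the statement to the closed subspace $\mathscr{L}_{p(\cdot),\infty}$ introduced in Section \ref{sec2}, and verify that $M(f)\in\mathscr{L}_{p(\cdot),\infty}$ whenever $f$ is; this should follow by approximating $f$ in the $L_{p(\cdot),\infty}$-quasi-norm by bounded functions (which trivially have absolutely continuous quasi-norm by Lemma \ref{lemma of abs q} applied with any finite $q$) and invoking Corollary \ref{c10} together with Lemma \ref{lemma for closed absolute norm}. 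This absolute-continuity step for the maximal function is where I expect the real work to concentrate; the density-implies-convergence step is then routine and identical in form to the proof of Corollary \ref{c11 Leb}.
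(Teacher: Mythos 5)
Your plan mirrors the paper's approach (density of Walsh polynomials $+$ Theorem \ref{t19} $+$ a routine approximation), and for $q<\infty$ it is correct and supplies exactly the details the paper glosses over with ``similarly''. In particular, approximating $f$ by $f_n=\mathbb E_n f$ (which are Walsh polynomials of degree $<2^n$), using $|f_n|\leq M(f)\in L_{p(\cdot),q}$ via Corollary \ref{c10}, and applying Lemma \ref{lemma of abs q} plus the dominated convergence Lemma \ref{lemma of control} is precisely the right way to establish density; the final approximation step with the quasi-triangle inequality then matches the proof of Corollary \ref{c11 Leb} verbatim. One small point worth recording: you need $f\in L_1$ for $\mathbb E_n f$ to make sense, which holds here since $L_{p(\cdot),q}\subset L_{p(\cdot),\infty}\subset L_r$ for any $1\leq r<p_-$ on a probability space.

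Your concern about $q=\infty$ is well-founded and, in fact, exposes that Corollary \ref{c11} as literally stated cannot hold for $q=\infty$: the partial sums $s_nf$ are bounded, and the closure in $L_{p(\cdot),\infty}$ of the bounded functions is exactly $\mathscr L_{p(\cdot),\infty}$ (bounded functions have absolutely continuous quasi-norm, this subspace is closed by Lemma \ref{lemma for closed absolute norm}, and any $f\in\mathscr L_{p(\cdot),\infty}$ is the limit of its truncations $f\chi_{\{|f|\leq N\}}$). Thus $s_nf\to f$ in $L_{p(\cdot),\infty}$ would force $f\in\mathscr L_{p(\cdot),\infty}$, while the paper's own example $x^{-1/p}$ lies in $L_{p,\infty}\setminus\mathscr L_{p,\infty}$. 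Your proposed fix --- restrict the $q=\infty$ case to $\mathscr L_{p(\cdot),\infty}$ --- is the correct reading, consistent with the convention the paper adopts in Theorems \ref{t7} and \ref{t8}. Your sketch that $M$ preserves $\mathscr L_{p(\cdot),\infty}$ (approximate $f$ by truncations, use the pointwise bound $|Mf-Mg|\leq M(f-g)$, Corollary \ref{c10}, and Lemma \ref{lemma for closed absolute norm}) is sound and closes that case. In short, the proposal is correct, takes essentially the paper's route, and additionally identifies that the corollary's $q=\infty$ statement should be understood on $\mathscr L_{p(\cdot),\infty}$.
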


The above results are not true if $p_-\leq 1$, see e.g. \cite{Carleson1966} and Example 5.4.2 in \cite{Golubov1991}. However, in this case we can consider a summability method. For $n\in \mathbb N$ and a martingale $f$, the {\it Fej{\'e}r mean} of order $n$
of the Walsh-Fourier series of $f$ is given by
$$
\sigma_{n}f := \frac{1}{n} \sum_{k=1}^{n} s_{k} f.
$$
Of course, $\sigma_{n}f$ has better convergence properties than $s_{k} f$. It is simple to show that
$$
\sigma_{n}f(x) = \int_0^1 f(t) K_n(x \dot + t) \, dt \qquad (\nn)
$$
if $f \in L_1$, where the {\it Walsh-Fej\'er kernels} are defined by
$$
K_n := \frac{1}{n} \sum_{k=1}^n D_k \qquad (\nn).
$$
The maximal operator $\sigma_*$ is defined by
$$\sigma_*f =\sup_{n\in\mathbb N} |\sigma_nf|.$$

It is known (see Fine \cite{{Fine1949}} or Schipp, Wade, Simon and P{\'a}l \cite{sws}) that
\begin{equation}\label{e16}
|K_n(x)| \leq \sum_{j=0}^{N-1} 2^{j-N} \sum_{i=j}^{N-1}
\left(D_{2^i}(x) + D_{2^i}(x \dot + 2^{-j-1})\right)
\end{equation}
and
\begin{equation}\label{e17}
K_{2^n}(x) = \frac{1}{2} \left(2^{-n} D_{2^n}(x) + \sum_{j=0}^{n} 2^{j-n} D_{2^n}(x \dot + 2^{-j-1})\right),
\end{equation}
where $x \in [0,1)$, $n,N \in\N$ and $2^{N-1} \leq n < 2^N$.

\begin{remark}
In this section, if there is no special statement, we always assume that $(\mathcal F_n)_{n\geq0}$ is the sequence of the dyadic $\sigma$-algebras.
It follows from Theorem \ref{theorem of martingale inequalities Leb} that the five variable Hardy spaces in the theorem are equivalent if $(\mathcal F_n)_{n\geq0}$ is regular. We use $H_{p(\cdot)}$ to denote  one of them.
Similarly, according to Theorem \ref{theorem of martingale inequalities}, we use $H_{p(\cdot),q}$ to denote any one of the variable Lorentz Hardy spaces.
\end{remark}

\subsection{The maximal operator $U$}
Let us define $I_{k,n}:= [k2^{-n},(k+1)2^{-n})$ with $0\leq k<2^{n}$, $n\in \N$. Motivating by the kernel functions (\ref{e16}) and (\ref{e17}), we introduce two versions of dyadic maximal functions. For a martingale $f=(f_n)$, the first one is given by
$$
U_sf(x):= \sup_{x\in I} \sum_{j=0}^{n-1} 2^{(j-n)s}
\frac{1}{\mathbb P(I\dot + 2^{-j-1})} \left| \int_{I\dot + 2^{-j-1}} f_n d\mathbb{P}\right| ,
$$
where $I$ is a dyadic interval with length $2^{-n}$ and $s$ is a positive constants.
Of course, if $f\in L_1$, then we can write in the definition $f$ instead of $f_n$. The definition can be rewritten to
$$
U_sf(x)= \sup_{n\in \mathbb N} \sum_{k=0}^{2^n-1} \chi_{I_{k,n}}(x) \sum_{j=0}^{n-1} 2^{(j-n)s}
\frac{1}{\mathbb P(I_{k,n}^{j})} \left| \int_{I_{k,n}^{j}} f_n d\mathbb{P} \right|,
$$
where, for brevity, we use the notation
\[
	I_{k,n}^{j}:= I_{k,n}\dot + 2^{-j-1}.
\]

In order to show that $\sigma_*$ is bounded from $H_{p(\cdot)}$ to $L_{p(\cdot)}$, first we have to prove that $U$ is bounded from $L_{p(\cdot)}$ to $L_{p(\cdot)}$ $(p_->1)$.
We need to apply the following well-known theorem in martingale theory (see e.g. Weisz \cite{Weisz2002book}).

\begin{theorem}\label{t14}
Let $p$ be a constant and $0<p\leq 1<r\leq \infty$. Suppose that $T:L_r \rightarrow L_r$ is a bounded $\sigma$-sublinear operator and
\begin{equation}\label{e19}
\left\|Ta \chi_{I^c} \right\|_{p} \leq C_{p}
\end{equation}
for all  simple $(3,p,\infty)$-atoms $a$, where $I$ is the support of $a$. Then we have
$$
\|Tf\|_{{p}}\lesssim \|f\|_{H_{p}},\quad f\in H_{p}.
$$
\end{theorem}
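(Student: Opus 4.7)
The plan is to reduce everything to a single atom via a simple-atom decomposition of $H_p$, then to split each $|Ta^k|^p$ into its contributions over the support $I_k$ of $a^k$ and over the complement $I_k^c$. The support part will be handled by the $L_r \to L_r$ boundedness combined with H\"older, while the complement part is exactly what hypothesis \eqref{e19} is designed to control.

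First I would invoke the classical simple-atom decomposition (the constant-exponent specialization of Theorem \ref{proposition of atomic decomposition of MS Leb} / Corollary \ref{cor:ad for M}): every $f\in H_p$ can be written as $f=\sum_k \mu_k a^k$, where each $a^k$ is a \emph{simple} $(3,p,\infty)$-atom with support $I_k=\{\tau_k<\infty\}\in A(\mathcal F_{n_k})$ and $\bigl(\sum_k \mu_k^p\bigr)^{1/p}\lesssim \|f\|_{H_p}$. Using $\sigma$-sublinearity of $T$ and the elementary inequality $(\sum x_k)^p\le \sum x_k^p$ valid for $0<p\le 1$ and $x_k\ge 0$, I obtain
$$
\|Tf\|_p^p \;\le\; \sum_k \mu_k^p \int_\Omega |Ta^k|^p\, d\mathbb P
\;=\; \sum_k \mu_k^p \Bigl(\int_{I_k} |Ta^k|^p\, d\mathbb P + \|Ta^k \chi_{I_k^c}\|_p^p\Bigr).
$$
The second term inside the parenthesis is bounded by $C_p^p$ by the assumption \eqref{e19}.

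For the first (local) term, I would apply H\"older's inequality with exponent $r/p>1$ in the case $r<\infty$:
$$
\int_{I_k} |Ta^k|^p\, d\mathbb P \;\le\; \|Ta^k\|_r^p\, \mathbb P(I_k)^{1-p/r}
\;\lesssim\; \|a^k\|_r^p\, \mathbb P(I_k)^{1-p/r},
$$
where the last inequality uses the $L_r$-boundedness of $T$. Since $a^k$ is a simple $(3,p,\infty)$-atom, $|a^k|\le M(a^k)\le \mathbb P(I_k)^{-1/p}\chi_{I_k}$, so $\|a^k\|_r \le \mathbb P(I_k)^{1/r-1/p}$, and the product collapses to an absolute constant. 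The case $r=\infty$ is even simpler: $\|Ta^k\|_\infty \lesssim \|a^k\|_\infty \le \mathbb P(I_k)^{-1/p}$ and $\int_{I_k}|Ta^k|^p\, d\mathbb P \lesssim \mathbb P(I_k)^{-1}\mathbb P(I_k)=1$. Thus each term $\int_\Omega |Ta^k|^p\, d\mathbb P$ is uniformly bounded, and summing yields $\|Tf\|_p^p \lesssim \sum_k \mu_k^p \lesssim \|f\|_{H_p}^p$.

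The main obstacle is ensuring the availability of a \emph{simple}-atom decomposition with control of $\bigl(\sum \mu_k^p\bigr)^{1/p}$ by $\|f\|_{H_p}$; in the constant-exponent dyadic setting this is standard since the stopping times $\tau_k$ used in the construction (as in Theorem \ref{proposition of atomic decomposition of MS Leb}) take the value $n$ on the atoms of $\mathcal F_{n-1}$ that meet $\{M(f)>2^k\}$, so after decomposing $\{\tau_k<\infty\}$ into the disjoint atoms $I_{k,j,i}$ of $\mathcal F_{j-1}$, each difference $f^{\tau_{k+1}}-f^{\tau_k}$ splits as a sum of pieces supported on single atoms, each of which is (up to a scalar) a simple $(3,p,\infty)$-atom, with the $\ell_p$-norm of the coefficients still controlled by $\|f\|_{H_p}$. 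The remaining steps are entirely routine manipulations that rely only on $p\le 1$ and the two hypotheses on $T$.
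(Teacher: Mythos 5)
Your proof is correct and follows the standard martingale-atomic-decomposition argument that the paper itself refers to (Weisz, \textit{Summability of Multi-Dimensional Fourier Series and Hardy Spaces}): decompose $f$ into simple $(3,p,\infty)$-atoms with $\ell_p$-controlled coefficients, use $\sigma$-sublinearity and $(\sum x_k)^p\le\sum x_k^p$ to localize to a single atom, handle the piece on the supporting dyadic atom by $L_r$-boundedness together with H\"older, and handle the complement by hypothesis \eqref{e19}. The one point you state without full verification --- that restricting each non-simple atom $a^k$ to the dyadic atoms $I_{k,j,i}$ making up $\{\tau_k<\infty\}$ and rescaling by $\bigl(\mathbb P(I_{k,j,i})/\mathbb P(\tau_k<\infty)\bigr)^{1/p}$ produces simple atoms while preserving the $\ell_p$-norm of coefficients --- is indeed routine to check (the mean-zero property over $I_{k,j,i}$ follows from $\mathbb E_{j-1}(a^k)=0$ on $\{\tau_k\ge j-1\}$ and the fact that $I_{k,j,i}$ is an atom of $\mathcal F_{j-1}$), so the argument is complete.
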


\begin{theorem}\label{t12}
For all $0<p\leq \infty$ and all $0<s<\infty$, we have
\begin{equation}\label{e3}
\|U_s f\|_{p} \leq C_{p} \|f\|_{H_{p}} \qquad (f\in H_{p}).
\end{equation}
\end{theorem}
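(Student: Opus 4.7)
The strategy is to apply Theorem \ref{t14}: establish that $U_s$ is $\sigma$-sublinear and bounded on $L_\infty$, then verify the atomic condition $\|U_s a\,\chi_{I^c}\|_p \leq C_p$ for every simple $(3,p,\infty)$-atom $a$ supported on a dyadic interval $I$. This gives the bound for $0<p\leq 1$. The case $p=\infty$ is immediate from the $L_\infty$ estimate, while $1<p<\infty$ follows by real interpolation of the $L_\infty$ bound against the $H_{p_0}\to L_{p_0}$ bound ($0<p_0<1$), combined with the identification $H_p=L_p$ on the regular dyadic filtration (Theorem \ref{theorem of martingale inequalities Leb}).

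For the $L_\infty$-boundedness, since $|\mathbb E_n f|\leq \|f\|_\infty$, each quantity $\mathbb P(I_n(x)^j)^{-1}\left|\int_{I_n(x)^j} f_n\,d\mathbb P\right|$ is bounded by $\|f\|_\infty$, and the geometric sum gives
$$\|U_s f\|_\infty \leq \|f\|_\infty \sup_n \sum_{j=0}^{n-1} 2^{(j-n)s} \leq \frac{1}{2^s-1}\,\|f\|_\infty.$$
The $\sigma$-sublinearity of $U_s$ is clear from the triangle inequality applied inside the absolute values together with the subadditivity of the supremum.

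The heart of the proof is the atomic estimate. Let $a$ be a simple $(3,p,\infty)$-atom for which $\{\tau<\infty\}=I$, a dyadic interval of length $2^{-n_0}$; then $a$ is supported on $I$, $\int_I a\,d\mathbb P=0$, and $\|a\|_\infty \leq \mathbb P(I)^{-1/p}$. Fix $x\notin I$. Each $I_n(x)^j:=I_n(x)\dot+2^{-j-1}$ is a dyadic interval of length $2^{-n}$, hence in the nested dyadic tree it is either disjoint from $I$, equal to $I$, a strict subset of $I$, or a strict superset of $I$; the mean-zero and support properties of $a$ force $\int_{I_n(x)^j} a\,d\mathbb P=0$ in every case except the third. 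A direct binary-digit analysis shows that $I_n(x)^j\subsetneq I$ is possible only when $n>n_0$, $j+1\leq n_0$, and the first $n_0$ binary digits of $x$ coincide with those of $I$ except at position $j+1$. Consequently $U_s a$ vanishes off $I\cup \bigsqcup_{l=1}^{n_0} I^{(l)}$, where $I^{(l)}$ denotes the dyadic interval of length $2^{-n_0}$ obtained from $I$ by flipping its $l$-th binary digit. On $I^{(l)}$ only the index $j=l-1$ contributes, and the bound $\|a\|_\infty \leq \mathbb P(I)^{-1/p}$ combined with the supremum over $n>n_0$ (attained at $n=n_0+1$) yields
$$U_s a(x) \leq 2^{(l-n_0-2)s}\,\mathbb P(I)^{-1/p}\qquad (x\in I^{(l)}).$$
Since $\mathbb P(I^{(l)})=\mathbb P(I)$, summing over $l$ gives
$$\int_{I^c}(U_s a)^p\,d\mathbb P \leq \sum_{l=1}^{n_0} 2^{(l-n_0-2)sp} \leq \frac{2^{-2sp}}{1-2^{-sp}},$$
uniformly in $n_0$, which establishes the atomic condition.

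Combining this with Theorem \ref{t14} (applied with $r=\infty$) gives $\|U_s f\|_p \lesssim \|f\|_{H_p}$ for every $0<p\leq 1$; the case $p=\infty$ is the $L_\infty$ estimate, and $1<p<\infty$ is handled by interpolation as outlined. The main technical obstacle is the binary-digit argument that localizes $U_s a$ outside $I$ to the $n_0$ shadow intervals $I^{(l)}$; once this support picture is secured, the geometric decay factor $2^{(j-n)s}$ controls the resulting sum uniformly in the size of $I$.
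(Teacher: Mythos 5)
Your proof is correct and follows essentially the same route as the paper: reduce to Theorem \ref{t14} with $r=\infty$ via the $L_\infty$ bound and the atomic estimate on $I^c$, where the key observation is that $\int_{I_n(x)^j}a$ can be nonzero only when $j<n_0$, $n>n_0$, and $x$ lies in the interval obtained from $I$ by a single digit flip at position $j$. The only (harmless) differences from the paper are cosmetic: you name the shadow intervals $I^{(l)}$ explicitly and take the exact supremum at $n=n_0+1$ (giving $2^{(j-n_0-1)s}$) whereas the paper uses the looser bound $2^{(j-n_0)s}$, and you index binary digits starting from $1$.
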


\begin{proof}
The theorem will be proved by applying Theorem \ref{t14} with $r=\infty$. Observe that \eqref{e3} holds for $p=\infty$. Indeed,
$$
\left\|U_sf\right\|_\infty \leq \sup_{n\in \N} \sum_{j=0}^{n-1} 2^{(j-n)s}
\left\|f\right\|_\infty \leq  C\left\|f\right\|_\infty.
$$
By interpolation, the proof will be complete if we show that the operator $U_s$ satisfies (\ref{e19}) for each $0<p\leq 1$. Choose a simple $(3,p,\infty)$-atom $a$ with support $I$, where $I$ is a dyadic interval with length $|I|= 2^{-K}$ $(K\in\N)$. We can assume that $I=[0,2^{-K})$. It is
easy to see that
$$
\sum_{j=0}^{n-1} 2^{(j-n)s}
\frac{1}{\mathbb P(J\dot + 2^{-j-1})} \left| \int_{J\dot + 2^{-j-1}} a d\mathbb{P}\right| =0
$$
if $n\leq K$, where $J$ is a dyadic interval with lenght $2^{-n}$. Therefore we can suppose that $n>K$.
Observe that $x \not\in [0,2^{-K})$ and $x\in  J$ imply that $J\dot + 2^{-j-1} \cap [0,2^{-K})=\emptyset$ if $j\geq K$. Thus $\int_{J\dot + 2^{-j-1}} a =0$ for $j\geq K$. Hence, we may assume that $j<K$. The same holds if $x\in [2^{-j-1}+2^{-K},2^{-j})$, because $x \dot + 2^{-j-1} \not\in [0,2^{-K})$. Hence
\begin{align*}
|U_sa(x) | &\leq \sup_{n>K} \chi_J(x) \sum_{j=0}^{K-1} 2^{(j-n)s} \chi_{[2^{-j-1},2^{-j-1}+2^{-K})}(x) \frac{1}{\mathbb P(J\dot + 2^{-j-1})} \left| \int_{J\dot + 2^{-j-1}} a d\mathbb{P}\right| \\
&\leq  2^{K/p} \sum_{j=0}^{K-1} 2^{(j-K)s} \chi_{[2^{-j-1},2^{-j-1}+2^{-K})}(x)
\end{align*}
and
$$
\int_{I^c} \left| U_sa(x) \right|^p \leq 2^{K} \sum_{j=0}^{K-1} 2^{(j-K)s p} 2^{-K} \leq C_p,
$$
which completes the proof of the theorem.
\end{proof}

Since $H_p$ is equivalent to $L_p$ when $1<p\leq \infty$ (see also Corollary \ref{c100}), the preceding result implies that
\begin{equation*}
\|U_s f\|_{p} \leq C_{p} \|f\|_{p} \qquad (1<p\leq \infty,0<s<\infty,f\in L_{p}).
\end{equation*}
This inequality remains true for Lebesgue spaces with variable exponents.

\begin{theorem} \label{t15}
Let $p(\cdot)\in\mathcal{P}(\Omega)$ satisfy condition \eqref{log}, $1<p_-\leq p_+<\infty$ and $0<s<\infty$. If
\begin{equation}\label{e102}
	\frac{1}{p_-}-\frac{1}{p_+} <s,
\end{equation}
then
$$
\|U_s f\|_{p(\cdot)} \leq C_{p(\cdot)} \|f\|_{{p(\cdot)}} \qquad (f\in L_{p(\cdot)}).
$$
\end{theorem}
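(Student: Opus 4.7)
The plan is to reduce the theorem to a uniform estimate on simple atoms. Since $p_- > 1$, Corollary \ref{c100} and Corollary \ref{cor:ad for M} (the dyadic $\sigma$-algebras being regular) identify $L_{p(\cdot)}$ with $H_{p(\cdot)}^M$ and provide a simple $(3,p(\cdot),\infty)$-atomic decomposition. Since $U_s$ is manifestly $\sigma$-sublinear, a reduction in the spirit of Theorem \ref{theorem of boundedness 1 Leb} makes it sufficient to show $\|U_s a\|_{p(\cdot)} \leq C$ uniformly over simple $(3,p(\cdot),\infty)$-atoms $a$ supported in a dyadic interval $I = I_{k,K}$.

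First I would rewrite $U_s$ by noting that $f_n$ is constant on each atom of $\mathcal F_n$, giving $U_s f(x) = \sup_n \sum_{j=0}^{n-1} 2^{-(n-j)s} |f_n(x \dot+ 2^{-j-1})|$. For a simple atom $a$ with $\|a\|_\infty \leq \|\chi_I\|_{p(\cdot)}^{-1}$ and $\int_I a = 0$, set $I^{(j)} := I \dot+ 2^{-j-1}$; for $0 \leq j < K$ these are pairwise disjoint length-$2^{-K}$ intervals disjoint from $I$, while for $j \geq K$ we have $I \dot+ 2^{-j-1} = I$. Analyzing when $x \dot+ 2^{-j-1}$ lands in $I$, and using that $a_n$ vanishes outside $I$ for $n \geq K$ and vanishes identically for $n < K$, one obtains the pointwise bound
$$U_s a(x) \leq C_s \|a\|_\infty \chi_I(x) + \|a\|_\infty \sum_{j=0}^{K-1} 2^{-(K-j)s} \chi_{I^{(j)}}(x).$$
The contribution on $I$ gives $\|U_s a\,\chi_I\|_{p(\cdot)} \leq C_s \|a\|_\infty \|\chi_I\|_{p(\cdot)} \leq C_s$. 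For the contribution on $I^c$, the triangle inequality (valid since $p_- > 1$) and disjointness yield
$$\|U_s a\,\chi_{I^c}\|_{p(\cdot)} \leq \sum_{j=0}^{K-1} 2^{-(K-j)s} \frac{\|\chi_{I^{(j)}}\|_{p(\cdot)}}{\|\chi_I\|_{p(\cdot)}}.$$

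The crux is to control this last sum. By Lemma \ref{lemma of norm of set} the ratio is $\asymp 2^{K(1/p_-(I) - 1/p_-(I^{(j)}))}$. Both $I$ and $I^{(j)}$ lie in the level-$j$ atom $J_j$ (of length $2^{-j}$), and applying condition \eqref{log} to $J_j$ gives $p_+(J_j) - p_-(J_j) \leq \log K_{p(\cdot)}/(j \log 2)$, hence $|1/p_-(I) - 1/p_-(I^{(j)})| \leq C_{p(\cdot)}/j$ for $j \geq 1$. Combined with the universal estimate $\leq 1/p_- - 1/p_+$, we obtain
$$\frac{\|\chi_{I^{(j)}}\|_{p(\cdot)}}{\|\chi_I\|_{p(\cdot)}} \lesssim 2^{K \delta_j}, \qquad \delta_j := \min\Big(\tfrac{1}{p_-} - \tfrac{1}{p_+},\ \tfrac{C_{p(\cdot)}}{j}\Big).$$
Splitting at the threshold $j_\star := C_{p(\cdot)}/(1/p_- - 1/p_+)$ (a constant independent of $K$): for $j < j_\star$ the summand is bounded by $2^{K[(1/p_- - 1/p_+) - s] + js}$, which is controlled uniformly in $K$ precisely because of the strict hypothesis $\frac{1}{p_-} - \frac{1}{p_+} < s$; for $j \geq j_\star$ we use $\delta_j = C_{p(\cdot)}/j$ and optimize the exponent $-(K-j)s + KC_{p(\cdot)}/j$ over $j$ (its minimum occurs near $j \sim \sqrt{KC_{p(\cdot)}/s}$), yielding a tail sum that is uniformly bounded in $K$ by an elementary geometric-integral comparison.

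The principal obstacle is this two-regime summation: the log-Hölder condition alone is powerless when $j$ is small (and in particular at $j = 0$, where $J_0 = [0,1)$ is the whole space and the universal ratio $2^{K(1/p_- - 1/p_+)}$ is truly attained). It is exactly the strict inequality $\frac{1}{p_-} - \frac{1}{p_+} < s$ that provides the extra geometric decay needed from the weight $2^{-(K-j)s}$ to dominate this worst-case ratio, and this is the only place in the argument where the hypothesis is essential.
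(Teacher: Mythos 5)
Your pointwise analysis of $U_s a$ for a simple $(3,p(\cdot),\infty)$-atom is correct, and so is the control of the sum $\sum_{j=0}^{K-1}2^{-(K-j)s}\|\chi_{I^{(j)}}\|_{p(\cdot)}/\|\chi_I\|_{p(\cdot)}$ via Lemma \ref{lemma of norm of set} and condition \eqref{log}; in particular you correctly identify that the $j=0$ term is precisely where the strict inequality $1/p_--1/p_+<s$ is indispensable, which is the heart of why \eqref{e102} appears. However, the reduction step that you wave at — namely, that a uniform bound $\|U_s a\|_{p(\cdot)}\leq C$ over simple atoms suffices to conclude the boundedness of the $\sigma$-sublinear operator $U_s$ on $L_{p(\cdot)}=H^M_{p(\cdot)}$ — is a genuine gap. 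Theorem \ref{theorem of boundedness 1 Leb} (whose ``spirit'' you invoke) is not applicable here: its hypothesis \eqref{condition for boundedness 1 Leb}, $T(a)\chi_F=T(a\chi_F)$ for $F\in\mathcal F_\tau$, is a locality property that $U_s$ manifestly violates, since $U_s a$ is supported precisely on the dyadically shifted copies $I^{(j)}$ of the support $I$ of $a$. Without that commutation property one does not get the conditional-expectation bound $\mathbb E_\tau(|Ta|^r)\lesssim\|\chi_{\{\tau<\infty\}}\|_{p(\cdot)}^{-r}$ of Lemma \ref{lem-atom}, and the global estimate $\|U_s a\|_{p(\cdot)}\leq C$ is strictly weaker. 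Moreover, for $p_->1$ the atomic norm in $H^{\rm at,3,\infty}_{p(\cdot)}$ is not $\ell^1$-like in the coefficients $\mu_k$, so one cannot simply sum $\mu_k\|U_sa^k\|_{p(\cdot)}\leq C\sum_k\mu_k$; the classical per-atom reduction (Theorem \ref{t14}, used to prove Theorem \ref{t12}) works only for constant $p\leq1$, and its consequence for $p>1$ is obtained by interpolation, a tool that is unavailable in the variable exponent setting.

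In fact the paper avoids atoms entirely for Theorem \ref{t15}: after normalizing $\|f\|_{p(\cdot)}\leq 1/2$ and reducing to the case $|f|\geq1$ or $f=0$, it works directly with the modular $\int_\Omega|U_sf(x)|^{p(x)}\,dx$, splits $I_{k,n}$ into the sets where $p(x)\leq p_+(I_{k,n}^j)$ and $p(x)>p_+(I_{k,n}^j)$, and then uses convexity, Lemma \ref{lemma of conditional expc}, Hölder's inequality, and condition \eqref{log} to reduce to the constant exponent bound $\|U_s(|f|^{q(\cdot)})\|_{p_0}\lesssim\||f|^{q(\cdot)}\|_{p_0}$ of Theorem \ref{t12}. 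This is the step your proposal would need to replace; appealing to the atomic machinery cannot close the gap because the very joint atomic condition \eqref{e1 Leb} that Theorem \ref{t1 Leb} requires is, for $U_s$, essentially as hard as Theorem \ref{t15} itself — indeed, later in the paper (Theorem \ref{t3 Leb}) such joint estimates for $\sigma_*$ are \emph{derived from} Theorems \ref{t15} and \ref{t17}.
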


\begin{proof}
We assume that $\|f\|_{p(\cdot)}\leq 1/2$. Then
\begin{align*}
	\int_{\Omega}|U_sf(x)|^{p(x)} \, dx &\lesssim \int_{\Omega}|U_s(f \chi_{|f| \geq 1})(x)|^{p(x)} \, dx + \int_{\Omega}|U_s(f \chi_{|f|<1})(x)|^{p(x)} \, dx \\
	&\lesssim \int_{\Omega}|U_s(f \chi_{|f| \geq 1})(x)|^{p(x)} \, dx + C.
\end{align*}
So it is enough to prove that
\begin{equation*}
	\int_{\Omega}|U_s(f \chi_{|f| \geq 1})(x)|^{p(x)} \,dx \lesssim C.
\end{equation*}
Let us denote by $\sum_{j=0}^{n-1}{}'$ the sum over all $j=1,\ldots,n-1$ for which
$$
\frac{1}{\mathbb P(I_{k,n}^{j})} \int_{I_{k,n}^{j}}|f(t)| \, dt \leq 1.
$$
In this case
\begin{align*}
	&\int_{\Omega} \left( \sup_{n\in \N} \sum_{k=0}^{2^n-1} \chi_{I_{k,n}}(x)\sum_{j=0}^{n-1}{}' 2^{(j-n)s} \frac{1}{\mathbb P(I_{k,n}^{j})} \int_{I_{k,n}^{j}}|f(t)| \, dt\right)^{p(x)} \, dx \\
	& \qquad \lesssim \int_{\Omega} \left(\sup_{n\in \N} \sum_{k=0}^{2^n-1} \chi_{I_{k,n}}(x)\sum_{j=0}^{n-1}{}' 2^{(j-n)s} \right)^{p(x)} \, dx \leq C.
\end{align*}
Hence, we may suppose that $\|f\|_{p(\cdot)}\leq 1/2$, $|f| \geq 1$ or $f=0$ and
\begin{equation}\label{e110}
\frac{1}{\mathbb P(I_{k,n}^{j})} \int_{I_{k,n}^{j}}|f(t)| \, dt > 1
\end{equation}
for  all $j=1,\ldots,n-1$, $k=0,\ldots,2^n-1$, $n\in \N$.

Let us denote by $I_{k,n,j,1}$ (resp. $I_{k,n,j,2}$) those points $x \in I_{k,n}$ for which $p(x) \leq p_+(I_{k,n}^{j})$ (resp. $p(x) > p_+(I_{k,n}^{j})$). Then
\begin{align*}
	\int_{\Omega}|U_sf(x)|^{p(x)} \,dx  
	&\lesssim \sum_{l=1}^{2}\int_{\Omega} \left( \sup_{n\in \N} \sum_{k=0}^{2^n-1} \chi_{I_{k,n}}(x) \sum_{j=0}^{n-1} 2^{(j-n)s} \frac{\chi_{I_{k,n,j,l}}(x)}{\mathbb P(I_{k,n}^{j})} \int_{I_{k,n}^{j}}|f(t)| \, dt\right)^{p(x)} \, dx \\
	&=: (A)+(B).
\end{align*}
Let $q(x):=p(x)/p_0>1$ for some $1<p_0<p_-$. Using the fact that the sets $I_{k,n}$ are disjoint for a fixed $n$ and the convexity of the function $t\mapsto t^{q(x)}$ ($x$ is fixed), we conclude
\begin{align*}
	(A) &\lesssim \int_{\Omega} \left( \sup_{n\in \N} \sum_{k=0}^{2^n-1} \chi_{I_{k,n}}(x) \left(\sum_{j=0}^{n-1} 2^{(j-n)s} \frac{\chi_{I_{k,n,j,1}}(x)}{\mathbb P(I_{k,n}^{j})} \int_{I_{k,n}^{j}}|f(t)| \, dt\right)^{q(x)}\right)^{p_0} \, dx \\
	&\lesssim \int_{\Omega} \left( \sup_{n\in \N} \sum_{k=0}^{2^n-1} \chi_{I_{k,n}}(x) \sum_{j=0}^{n-1} 2^{(j-n)s} \left(\frac{\chi_{I_{k,n,j,1}}(x)}{\mathbb P(I_{k,n}^{j})} \int_{I_{k,n}^{j}}|f(t)| \, dt\right)^{q(x)}\right)^{p_0} \, dx \\
	&\lesssim \int_{\Omega} \left( \sup_{n\in \N} \sum_{k=0}^{2^n-1} \chi_{I_{k,n}}(x) \sum_{j=0}^{n-1} 2^{(j-n)s} \left(\frac{\chi_{I_{k,n,j,1}}(x)}{\mathbb P(I_{k,n}^{j})} \int_{I_{k,n}^{j}}|f(t)| \, dt\right)^{q_+(I_{k,n}^{j})}\right)^{p_0} \, dx
\end{align*}
because of \eqref{e110} and the fact that $q(x) \leq q_+(I_{k,n}^{j})$ on $I_{k,n,j,1}$. By Lemma \ref{lemma of conditional expc},
\begin{align*}
	(A) &\lesssim \int_{\Omega} \left( \sup_{n\in \N} \sum_{k=0}^{2^n-1} \chi_{I_{k,n}}(x) \sum_{j=0}^{n-1} 2^{(j-n)s} \frac{\chi_{I_{k,n,j,1}}(x)}{\mathbb P(I_{k,n}^{j})} \int_{I_{k,n}^{j}}|f(t)|^{q(t)} \, dt\right)^{p_0} \, dx\\ 
	&\lesssim \left\| U_s(|f|^{q(\cdot)}) \right\|_{p_0}^{p_0} \lesssim \left\| |f|^{q(\cdot)} \right\|_{p_0}^{p_0} \leq C.
\end{align*}

To investigate $(B)$, let us observe that
\begin{align*}
	(B)	&\lesssim \int_{\Omega} \left( \sup_{n\in \N} \sum_{k=0}^{2^n-1} \chi_{I_{k,n}}(x)\left(\sum_{j=0}^{n-1} 2^{(j-n)(s-r)} 2^{(j-n)r} \frac{\chi_{I_{k,n,j,2}}(x)}{\mathbb P(I_{k,n}^{j})} \int_{I_{k,n}^{j}}|f(t)| \, dt\right)^{q(x)}\right)^{p_0} \, dx \\
	&\lesssim \int_{\Omega} \left( \sup_{n\in \N} \sum_{k=0}^{2^n-1} \chi_{I_{k,n}}(x) \sum_{j=0}^{n-1} 2^{(j-n)(s-r)} \left(2^{(j-n)r} \frac{\chi_{I_{k,n,j,2}}(x)}{\mathbb P(I_{k,n}^{j})} \int_{I_{k,n}^{j}}|f(t)| \, dt\right)^{q(x)}\right)^{p_0} \, dx
\end{align*}
for some $0<r<s$. By Hölder's inequality,
\begin{align*}
	(B)	&\lesssim \int_{\Omega} \left( \sup_{n\in \N} \sum_{k=0}^{2^n-1} \chi_{I_{k,n}}(x) \sum_{j=0}^{n-1} 2^{(j-n)(s-r)} 2^{(j-n)rq(x)} \right. \\
	&\qquad \left.\left(\frac{\chi_{I_{k,n,j,2}}(x)}{\mathbb P(I_{k,n}^{j})} \int_{I_{k,n}^{j}}|f(t)|^{q_-(I_{k,n}^{j})} \, dt\right)^{q(x)/q_-(I_{k,n}^{j})}\right)^{p_0} \, dx \\
	&\lesssim \int_{\Omega} \left( \sup_{n\in \N} \sum_{k=0}^{2^n-1} \chi_{I_{k,n}}(x) \sum_{j=0}^{n-1} 2^{(j-n)(s-r)} 2^{(j-n)rq(x)} 2^{nq(x)/q_-(I_{k,n}^{j})} \chi_{I_{k,n,j,2}}(x)\right. \\
	&\qquad \left.\left(\int_{I_{k,n}^{j}}|f(t)|^{q_-(I_{k,n}^{j})} \, dt\right)^{q(x)/q_-(I_{k,n}^{j})}\right)^{p_0} \, dx.
\end{align*}
Since $|f| \geq 1$ or $f=0$, $q(x)>q_-(I_{k,n}^{j})$ on $I_{k,n,j,2}$, $q_-(I_{k,n}^{j}) \leq q(t)< p(t)$ for all $t \in I_{k,n}^{j}$ and 
\[
	\int_{I_{k,n}^{j}}|f(t)|^{q_-(I_{k,n}^{j})} \, dt \leq \int_{I_{k,n}^{j}}|f(t)|^{p(t)} \, dt \leq  \frac{1}{2},
\]
we conclude
\begin{align*}
	(B)	&\lesssim \int_{\Omega} \left( \sup_{n\in \N} \sum_{k=0}^{2^n-1} \chi_{I_{k,n}}(x) \sum_{j=0}^{n-1} 2^{(j-n)(s-r)} 2^{(j-n)rq(x)} 2^{nq(x)/q_-(I_{k,n}^{j})} \chi_{I_{k,n,j,2}}(x)\right. \\
	&\qquad \left.\int_{I_{k,n}^{j}}|f(t)|^{q_-(I_{k,n}^{j})} \, dt \right)^{p_0} \, dx\\
	&\lesssim \int_{\Omega} \left( \sup_{n\in \N} \sum_{k=0}^{2^n-1} \chi_{I_{k,n}}(x) \sum_{j=0}^{n-1} 2^{(j-n)(s-r)} 2^{(j-n)rq(x)} 2^{nq(x)/q_-(I_{k,n}^{j})} \chi_{I_{k,n,j,2}}(x)\right. \\
	&\qquad \left. \frac{2^{-n}}{\mathbb P(I_{k,n}^{j})}\int_{I_{k,n}^{j}}|f(t)|^{q(t)} \, dt \right)^{p_0} \, dx.
\end{align*}
For fixed $k,n$ let $J_j$ denote the dyadic interval with length $2^{-j}$ and $I_{k,n} \subset J_j$. Then $I_{k,n}^{j} \subset J_j\dot + 2^{-j-1} = J_j$. Inequality \eqref{log} implies that $2^{-jp(x)} \sim 2^{-j p_-(I_{k,n}^{j})}$ for $x \in  I_{k,n}$. It is easy to check that for $x \in  I_{k,n,j,2}$,
\begin{align*}
	2^{^{jrq(x)}} &= 2^{^{jrq(x)}} 2^{jq(x)} 2^{-jq(x)} \lesssim 2^{^{j(r+1)q_-(I_{k,n}^{j})}} 2^{-jq(x)}< 2^{j \left(rq(x) - \frac{q(x)-q_-(I_{k,n}^{j})}{q_-(I_{k,n}^{j})}\right)}  = 2^{j \left(rq(x) - \frac{q(x)}{q_-(I_{k,n}^{j})}+1 \right)} 
\end{align*}
which is equivalent to the obvious inequality $q_-(I_{k,n}^{j})>1/(r+1)$. Furthermore,
\begin{align*}
	rq(x) - \frac{q(x)}{q_-(I_{k,n}^{j})}+1 &\geq q(x)\left(r - \frac{1}{q_-}\right)+1 \geq \left\{
		\begin{array}{ll}
		1, & \hbox{if $r - \frac{1}{q_-} \geq 0$;} \\ 
		q_+\left(r - \frac{1}{q_-}\right)+1, & \hbox{if $r - \frac{1}{q_-} < 0$.} 
	 \end{array} \right.
\end{align*}
Let $r_0:= \min \left(1,q_+\left(r - \frac{1}{q_-}\right)+1\right)$. Then $r_0>0$ if and only if 
\begin{equation}\label{e101}
	\frac{1}{q_-}-\frac{1}{q_+} <r.
\end{equation}
We estimate $(B)$ further by
\begin{align*}
	(B)	&\lesssim \int_{\Omega} \left( \sup_{n\in \N} \sum_{k=0}^{2^n-1} \chi_{I_{k,n}}(x) \sum_{j=0}^{n-1} 2^{(j-n)(s-r)} 2^{(j-n)\left(rq(x) - \frac{q(x)}{q_-(I_{k,n}^{j})}+1 \right)} \right. \\
	&\qquad \left. \frac{1}{\mathbb P(I_{k,n}^{j})}\int_{I_{k,n}^{j}}|f(t)|^{q(t)} \, dt \right)^{p_0} \, dx\\
	&\lesssim \int_{\Omega} \left( \sup_{n\in \N} \sum_{k=0}^{2^n-1} \chi_{I_{k,n}}(x) \sum_{j=0}^{n-1} 2^{(j-n)(s-r+r_0)} \frac{1}{\mathbb P(I_{k,n}^{j})}\int_{I_{k,n}^{j}}|f(t)|^{q(t)} \, dt \right)^{p_0} \, dx\\
	&\lesssim \left\| U_{s-r+r_0}(|f|^{q(\cdot)}) \right\|_{p_0}^{p_0} \lesssim \left\| |f|^{q(\cdot)} \right\|_{p_0}^{p_0} \leq C.
\end{align*}
Since $p_0$ can be arbitrarily near to $1$ and $r$ to $s$, inequality \eqref{e101} proves the theorem with the range \eqref{e102}.
\end{proof}

\begin{remark}
Inequality \eqref{e102} and Theorem \ref{t15} hold if $s \geq 1$, or more generally if $p_->\max(1/s,1)$.
\end{remark}

The operator $U_s$ is not bounded on $L_{p(\cdot)}$ outside the range of \eqref{e102}. More exactly, the following theorem holds.

\begin{theorem} \label{t10}
Let $p(\cdot)\in\mathcal{P}(\Omega)$ satisfy condition \eqref{log}, $1<p_-\leq p_+<\infty$ and $0<s<\infty$. If 
\begin{equation}\label{e103}
	\frac{1}{p_-(I_{0,n}\dot + 2^{-1})}-\frac{1}{p_+(I_{0,n})} >s
\end{equation}
for all $n \in \N$, then $U_s$ is not bounded on $L_{p(\cdot)}$.
\end{theorem}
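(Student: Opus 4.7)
The plan is to disprove boundedness by exhibiting an explicit family of test functions whose ratio $\|U_s f^{(n)}\|_{p(\cdot)}/\|f^{(n)}\|_{p(\cdot)}$ grows unboundedly. For each $n \in \mathbb N$ I would set $J_n := I_{0,n}\dot+2^{-1} = [2^{-1},2^{-1}+2^{-n})$ and take $f^{(n)} := \chi_{J_n}$. Since $J_n$ is an atom of $\mathcal F_n$, Lemma \ref{lemma of norm of set} yields immediately
\[
\|f^{(n)}\|_{p(\cdot)} \approx \mathbb{P}(J_n)^{1/p_-(J_n)} = 2^{-n/p_-(J_n)}.
\]

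Next I would bound $U_s f^{(n)}$ from below on $I_{0,n}$. Since $\mathbb E_n f^{(n)} = f^{(n)}$, taking $I=I_{0,n}$ in the defining supremum and retaining only the $j = 0$ summand (which exploits that $I_{0,n}\dot+2^{-1} = J_n$), one gets
\[
U_s f^{(n)}(x) \geq 2^{-ns}\cdot \frac{1}{\mathbb{P}(J_n)}\int_{J_n} f^{(n)}\,d\mathbb{P} = 2^{-ns}, \qquad x \in I_{0,n}.
\]
Thus $U_s f^{(n)} \geq 2^{-ns}\chi_{I_{0,n}}$, and a second application of Lemma \ref{lemma of norm of set} gives
\[
\|U_s f^{(n)}\|_{p(\cdot)} \gtrsim 2^{-ns}\|\chi_{I_{0,n}}\|_{p(\cdot)} \approx 2^{-ns}\cdot 2^{-n/p_+(I_{0,n})}.
\]

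Combining the two displays, any hypothetical boundedness constant $C$ for $U_s$ on $L_{p(\cdot)}$ would force $2^{n\epsilon_n} \lesssim C$ for every $n \in \mathbb N$, where $\epsilon_n := 1/p_-(J_n) - 1/p_+(I_{0,n}) - s$. The hypothesis \eqref{e103} is exactly $\epsilon_n > 0$ for all $n$. Because $J_n$ and $I_{0,n}$ form nested shrinking sequences, $p_-(J_n)$ is non-decreasing and $p_+(I_{0,n})$ is non-increasing, so $\epsilon_n$ itself is non-increasing; hence $\epsilon_n \geq \epsilon_\infty := \inf_{n\geq 1}\epsilon_n \geq 0$. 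Under the strict-inequality hypothesis, $\epsilon_\infty > 0$, and then letting $n \to \infty$ in $2^{n\epsilon_\infty} \lesssim C$ yields the desired contradiction.

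The chief obstacle is the geometric insight behind the choice of test function: the shift by $2^{-1}$ is precisely what makes the $j=0$ term of $U_s$ capture the whole mass of $f^{(n)}$, while the output $U_s f^{(n)}$ remains large on the distant set $I_{0,n}$. Once that is in place the estimates are routine bookkeeping with Lemma \ref{lemma of norm of set}; the subtler quantitative point is to extract a uniform lower bound $\epsilon_\infty > 0$ from the pointwise strict inequality in \eqref{e103}, which follows from monotonicity as explained above.
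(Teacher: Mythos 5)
Your construction coincides with the paper's: both test $U_s$ on (a multiple of) the indicator of $J_n := I_{0,n}\dot{+}\, 2^{-1}$, both retain only the $j=0$ term of $U_s$ to obtain the lower bound $U_s f^{(n)} \gtrsim 2^{-ns}\chi_{I_{0,n}}$, and both quantify the mismatch through Lemma \ref{lemma of norm of set}. The only cosmetic difference is that you estimate the ratio $\|U_sf^{(n)}\|_{p(\cdot)}/\|f^{(n)}\|_{p(\cdot)}$ directly, whereas the paper normalizes $f$ so that $\|f\|_{p(\cdot)}\leq C$ and shows the modular $\int_\Omega|U_sf|^{p(x)}\,d\mathbb{P}$ is unbounded in $n$; these are interchangeable, and both exploit condition \eqref{log} (you through Lemma \ref{lemma of norm of set}, the paper directly to trade $p(x)$ for $p_+(I_{0,n})$ in the exponent).

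The one genuine issue is the final inference. From ``$\epsilon_n>0$ for every $n$'' and ``$\epsilon_n$ non-increasing'' you conclude $\epsilon_\infty:=\inf_n\epsilon_n>0$, but this does not follow: a non-increasing sequence of strictly positive reals can have infimum $0$ (e.g.\ $\epsilon_n=1/n$). What the contradiction actually requires is $\sup_n n\epsilon_n=\infty$, which holds whenever $\epsilon_\infty>0$ but may fail when $\epsilon_n\to0$ quickly (while it still holds, say, for $\epsilon_n=n^{-1/2}$ with $\epsilon_\infty=0$). To be fair, the paper's own proof handles the identical point with the bare assertion that its lower bound ``tends to infinity as $n\to\infty$ if \eqref{e103} holds,'' so the gap is inherited rather than newly introduced; nonetheless, as written your justification of the last step is not a valid deduction, and you should either read \eqref{e103} as the uniform inequality $\inf_n\epsilon_n>0$, or supply a separate argument that $n\epsilon_n$ is unbounded.
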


\begin{proof}
	It is easy to see that
	\begin{align*}
	\int_{\Omega}&|U_sf(x)|^{p(x)} \,dx \geq \int_{\Omega} \chi_{I_{0,n}}(x) \left( 2^{-ns} \frac{1}{\mathbb P(I_{0,n}\dot + 2^{-1})} \int_{I_{0,n}\dot + 2^{-1}}|f(t)| \, dt\right)^{p(x)} \, dx.
	\end{align*}
	If
	\[
		f(t):= \chi_{I_{0,n}\dot + 2^{-1}}(t) 2^{n/p_-(I_{0,n}\dot + 2^{-1})},
	\]
	by Lemma \ref{lemma of norm of set}, 
	\[
		\|f\|_{p(\cdot)} =2^{n/p_-(I_{0,n}\dot + 2^{-1})} \|\chi_{I_{0,n}\dot + 2^{-1}}\|_{p(\cdot)} \leq C.
	\]
	This implies that
	\begin{align*}
	\int_{\Omega}|U_sf(x)|^{p(x)} \,dx &\geq \int_{I_{0,n}} 2^{-nsp(x)} 2^{np(x)/p_-(I_{0,n}\dot + 2^{-1})} \, dx \\
	&\geq C \int_{I_{0,n}} 2^{np_+(I_{0,n})(1/p_-(I_{0,n}\dot + 2^{-1})-s)} \, dx = C 2^{np_+(I_{0,n})(1/p_-(I_{0,n}\dot + 2^{-1})-s)} 2^{-n}
	\end{align*}
	which tends to infinity as $n\to \infty$ if \eqref{e103} holds.
\end{proof}

\begin{remark}\label{rem:U}
Combining the fact that $U$ is bounded on $L_\infty$, the above theorem and Lemma \ref{lemma for interpolation}, we know that
$U$ is bounded on $L_{p(\cdot),q}$ for $p(\cdot)\in \mathcal P(\Omega)$ satisfying $\eqref{log}$ and \eqref{e102}, $1<p_-\leq p_+<\infty$ and $0<q\leq\infty$.

These results, including the above theorem and the remark, should be compared with Theorem \ref{thm:maximal inequality} and Corollary \ref{c10}.
\end{remark}

\subsection{The maximal operator $V$}
We define the second version of dyadic maximal function by
\begin{eqnarray*}
V_{\alpha,s}f(x) := \sup_{x\in I} \sum_{j=0}^{n-1}\sum_{i=j}^{n-1} 2^{(j-n)\alpha} 2^{(j-i)s} \frac{1}{\mathbb P(I\dot + [2^{-j-1},2^{-j-1} \dot + 2^{-i}))}
\left| \int_{I\dot + [2^{-j-1},2^{-j-1} \dot + 2^{-i})} f_n d\mathbb{P} \right|,
\end{eqnarray*}
where $I$ is a dyadic interval with length $2^{-n}$ and $f=(f_n)$ is a martingale and  $s, \alpha$ are positive constants. Obviously,
\begin{eqnarray*}
V_{\alpha,s}f(x) := \sup_{n\in \N} \sum_{k=0}^{2^n-1} \chi_{I_{k,n}} \sum_{j=0}^{n-1}\sum_{i=j}^{n-1} 2^{(j-n)\alpha} 2^{(j-i)s} \frac{1}{\mathbb P(I_{k,n}^{j,i})}
\left| \int_{I_{k,n}^{j,i}} f_n d\mathbb{P}\right|,
\end{eqnarray*}
where, for brevity, we use the notation
\[
	I_{k,n}^{j,i}:= I_{k,n}\dot + [2^{-j-1},2^{-j-1} \dot + 2^{-i}).
\]

\begin{theorem}\label{t16}
Suppose that $0<p\leq \infty$ and $0< \alpha,s< \infty$. Then
\begin{equation*}
\|V_{\alpha,s} f\|_{p} \leq C_{p} \|f\|_{H_{p}} \qquad (f\in H_{p}).
\end{equation*}
\end{theorem}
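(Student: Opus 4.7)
The plan is to follow the same scheme as Theorem \ref{t12}, namely to invoke Theorem \ref{t14} with $r=\infty$. Thus the task reduces to two verifications: first, that $V_{\alpha,s}$ is bounded on $L_\infty$ (which also covers the case $p=\infty$), and second, that for every simple $(3,p,\infty)$-atom $a$ supported on a dyadic interval $I$ we have $\|V_{\alpha,s}a\,\chi_{I^c}\|_p\le C_p$.

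The $L_\infty$ bound is a direct calculation: for $f\in L_\infty$ one has $\|f_n\|_\infty\le\|f\|_\infty$ uniformly in $n$, so
$$
\|V_{\alpha,s}f\|_\infty \le \|f\|_\infty\,\sup_{n}\sum_{j=0}^{n-1}\sum_{i=j}^{n-1}2^{(j-n)\alpha}2^{(j-i)s}
\lesssim \|f\|_\infty,
$$
since both series are geometric with negative exponents (the inner sum in $i$ is dominated by the constant $\sum_{i\ge j}2^{(j-i)s}\lesssim 1$, and then the outer sum in $j$ by $\sum_{j\le n}2^{(j-n)\alpha}\lesssim 1$).

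For the atomic part take a simple $(3,p,\infty)$-atom $a$; after a dyadic translation we may assume its support is $I=[0,2^{-K})$, so $\|a\|_\infty\le 2^{K/p}$ and $\mathbb{E}_m a=0$ for all $m\le K$. Fix $x\in I^c$; I would then analyse, for every dyadic interval $J=I_{k,n}\ni x$ of length $2^{-n}$ and every admissible pair $(j,i)$ with $0\le j\le i\le n-1$, the set $J^{j,i}=J\dot+[2^{-j-1},2^{-j-1}\dot+2^{-i})$ and the associated integral $\int_{J^{j,i}}a\,d\mathbb{P}$. For $n\le K$ every such $J^{j,i}$ is measurable with respect to an $\mathcal{F}_m$ with $m\le K$, hence the integral vanishes by the moment condition on $a$. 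For $n>K$ one has to isolate, for each fixed $j$, those indices $i$ and those positions of $x$ for which $J^{j,i}\cap I\ne\emptyset$; outside this thin strip the integral again vanishes. On the strip one estimates $|\int_{J^{j,i}}a|\le 2^{K/p}\mathbb{P}(J^{j,i}\cap I)$ and obtains, after summing in $(j,i)$ and combining the two geometric factors $2^{(j-n)\alpha}$ and $2^{(j-i)s}$, a pointwise bound of the form $V_{\alpha,s}a(x)\lesssim 2^{K/p}\sum_{j<K}2^{(j-K)s}\chi_{E_j}(x)$ with $|E_j|\lesssim 2^{-K}$. Integrating the $p$-th power over $I^c$ then yields $\|V_{\alpha,s}a\,\chi_{I^c}\|_p\le C_p$ exactly as at the end of the proof of Theorem \ref{t12}.

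The main obstacle is the bookkeeping in the last step: the presence of the additional inner sum over $i$ complicates the identification of the geometric region where $J^{j,i}$ overlaps the support of $a$, because the dyadic sum of $J$ with an interval of length $2^{-i}$ wider than $|J|$ sweeps out a whole dyadic block at scale $2^{-i}$. Once the structure $J^{j,i}=J_i\dot+[2^{-j-1},2^{-j-1}\dot+2^{-i})$ (with $J_i$ the length-$2^{-i}$ dyadic interval containing $J$) is unwrapped, the factor $2^{(j-i)s}$ supplies just the right decay in $i$ to compensate the widening support, and the remaining $2^{(j-n)\alpha}$ drives the sum in $n$; the $p$-integral of the resulting pointwise estimate then reduces to the type of geometric sum already handled for the operator $U_s$.
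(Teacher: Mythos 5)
Your proposal is correct and follows essentially the same route as the paper: invoke Theorem \ref{t14} with $r=\infty$, verify the $L_\infty$ bound by the double geometric series, and then estimate $\|V_{\alpha,s}a\,\chi_{I^c}\|_p$ for a simple atom by showing the only surviving terms have $j<K$, $i>K$ and $x\in[2^{-j-1},2^{-j-1}+2^{-K})$. The only cosmetic difference is that your pointwise bound carries the factor $2^{(j-K)s}$ where the paper keeps $2^{(j-K)\alpha}$ (after absorbing $\sum_{i\geq K}2^{(j-i)s}$ into a constant); both are weaker than the sharp $2^{(j-K)(\alpha+s)}$ and both make the final geometric sum over $j$ converge, so the conclusion is the same.
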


\begin{proof}
The inequality holds for $p=\infty$ because
$$
\left\|V_{\alpha,s}f\right\|_\infty
\leq \sup_{n\in \N} \sum_{j=0}^{n-1}\sum_{i=j}^{n-1} 2^{(j-n)\alpha} 2^{(j-i)s}\left\|f\right\|_\infty \leq C\left\|f\right\|_\infty.
$$

Again, we are going to show that the operator $V_{\alpha,s}$ satisfies \eqref{e19} for each $0<p\leq 1$. We choose again a simple $(3,p,\infty)$-atom $a$ with support $I=[0,2^{-K})$. If $i\leq K$, then $\int_{I\dot + [2^{-j-1},2^{-j-1} \dot + 2^{-i})} a =0$. Thus $i>K$ and so $n>K$. Similarly to the proof of Theorem \ref{t12}, $j<K$ and $x\in [2^{-j-1},2^{-j-1}+2^{-K})$. Hence, in case $x\not\in [0,2^{-K})$,
\begin{eqnarray*}
\left| V_{\alpha,s}a(x) \right|
&\leq& \sup_{n>K} \chi_J(x) \sum_{j=0}^{K-1}\sum_{i=K}^{n-1} 2^{(j-n)\alpha} 2^{(j-i)s} \frac{1}{\mathbb P(J^{j,i})} \left| \int_{J^{j,i}} a \right| \chi_{[2^{-j-1},2^{-j-1}+2^{-K})}(x) \\
&\leq& 2^{K/p} \sup_{n>K} \chi_J(x) 2^{-n\alpha} \sum_{j=0}^{K-1} 2^{j \alpha} \sum_{i=K}^{n-1} 2^{(j-i)s} \chi_{[2^{-j-1},2^{-j-1}+2^{-K})}(x),
\end{eqnarray*}
where $J$ is a dyadic interval with length $2^{-n}$. Since
\begin{equation*}\label{}
	\sum_{i=K}^{n-1} 2^{(j-i)s} \leq 1,
\end{equation*}
we have
\begin{equation*}\label{}
	\left| V_{\alpha,s}a(x) \right| \leq 2^{K/p} 2^{-K\alpha} \sum_{j=0}^{K-1} 2^{j\alpha} \chi_{[2^{-j-1},2^{-j-1}+2^{-K})}(x).
\end{equation*}
Consequently,
$$
\int_{I^c} \left| V_{\alpha,s}a(x) \right|^p \leq 2^{K} 2^{-K\alpha p} \sum_{j=0}^{K-1} 2^{j\alpha p} 2^{-K} \leq C_p,
$$
which finishes the proof.
\end{proof}

Under the same conditions, the inequality
\begin{equation*}
\|V_{\alpha,s} f\|_{p} \leq C_{p} \|f\|_{p} \qquad (1<p\leq \infty,0<\alpha,s<\infty,f\in L_{p})
\end{equation*}
follows from Theorem \ref{t16}.

\begin{theorem} \label{t17}
Let $p(\cdot)\in\mathcal{P}(\Omega)$ satisfy condition \eqref{log}, $1<p_-\leq p_+<\infty$ and $0<\alpha,s<\infty$. If
\begin{equation}\label{e105}
	\frac{1}{p_-}-\frac{1}{p_+} < \alpha+s,
\end{equation}
then
$$
\|V_{\alpha,s}f\|_{p(\cdot)} \leq C_{p(\cdot)} \|f\|_{{p(\cdot)}} \qquad (f\in L_{p(\cdot)}).
$$
\end{theorem}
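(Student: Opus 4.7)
The plan is to isolate an auxiliary statement that mirrors the structure of Theorem \ref{t15}, and then deduce Theorem \ref{t17} by a pointwise majorization. Call the auxiliary statement $(\star)$: \emph{if $\alpha'>0$ and $s'>\frac{1}{p_-}-\frac{1}{p_+}$, then $V_{\alpha',s'}$ is bounded on $L_{p(\cdot)}$.} The reduction rests on the elementary inequality $2^{(j-n)\alpha_2}\leq 2^{(j-i)\alpha_2}$ (valid since $j-n\leq j-i\leq 0$ and $\alpha_2\geq 0$), which yields the pointwise bound
\[
V_{\alpha,s}f(x) \;\leq\; V_{\alpha_1,\alpha_2+s}f(x), \qquad \alpha=\alpha_1+\alpha_2,\; \alpha_1,\alpha_2\geq 0.
\]
Under hypothesis \eqref{e105} one can choose $\alpha_1\in(0,\alpha)$ small enough that $\alpha_2+s=\alpha-\alpha_1+s>\frac{1}{p_-}-\frac{1}{p_+}$; applying $(\star)$ to $V_{\alpha_1,\alpha_2+s}$ then completes the proof.

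The proof of $(\star)$ tracks that of Theorem \ref{t15} with the single sum $\sum_{j=0}^{n-1}$ replaced by the double sum $\sum_{j=0}^{n-1}\sum_{i=j}^{n-1}$ and the atom size $|I_{k,n}^{j}|=2^{-n}$ replaced by $|I_{k,n}^{j,i}|=2^{-i}$. Normalize $\|f\|_{p(\cdot)}\leq 1/2$; standard reductions leave only the case $|f|\geq 1$ or $f=0$ with every average $\frac{1}{|I_{k,n}^{j,i}|}\int_{I_{k,n}^{j,i}}|f|>1$. Decompose $I_{k,n}=I_{k,n,j,i,1}\cup I_{k,n,j,i,2}$ by whether $p(x)\leq p_+(I_{k,n}^{j,i})$, set $q(x)=p(x)/p_0$ with $1<p_0<p_-$, and split the integral into $(A)+(B)$. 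The $(A)$-estimate is the two-index analogue of the $(A)$-step in Theorem \ref{t15}: convexity of $t\mapsto t^{q(x)}$ with the summable weights $2^{(j-n)\alpha'}2^{(j-i)s'}$ moves $q(x)$ past the sum; the resulting averages raised to $q_+(I_{k,n}^{j,i})$ are dominated via Lemma \ref{lemma of conditional expc} by $\frac{1}{|I_{k,n}^{j,i}|}\int_{I_{k,n}^{j,i}}|f|^{q(t)}\,dt$; and Theorem \ref{t16} applied to $|f|^{q(\cdot)}\in L_{p_0}$ closes the argument.

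For $(B)$, split $2^{(j-i)s'}=2^{(j-i)(s'-r)}\cdot 2^{(j-i)r}$ for $r\in(0,s')$ close to $s'$, and apply convexity with the double-indexed weights $\lambda_{j,i}=2^{(j-n)\alpha'}2^{(j-i)(s'-r)}$ (summable in $(j,i)$ uniformly in $n$). Then raise the residual factor $2^{(j-i)r}\cdot\frac{1}{|I_{k,n}^{j,i}|}\int|f|$ to the power $q(x)$. Writing $J=I_{k,n}^{j,i}$ and using the H\"older-type bound
\[
\Bigl(\frac{1}{|J|}\int_J|f|\Bigr)^{q(x)} \;\leq\; 2^{i(q(x)/q_-(J)-1)}\cdot\frac{1}{|J|}\int_J|f|^{q(t)}\,dt
\]
(which follows from H\"older and $\int_J|f|^{p(t)}\leq\frac12$), together with the factorization $i=j+(i-j)$ and log-H\"older \eqref{log} applied at the common ancestor atom of $I_{k,n}$ and $J$---an atom of $\mathcal F_j$ of size $2^{-j}$, whence $2^{j(q(x)/q_-(J)-1)}\lesssim 1$---one obtains the net $(j-i)$-exponent $s'-r+1+q(x)(r-1/q_-(J))$. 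As in the proof of Theorem \ref{t15}, this quantity is bounded below by a positive constant $s_0$ whenever $r>\frac{1}{q_-}-\frac{1}{q_+}$; letting $r\uparrow s'$ and $p_0\downarrow 1$ recovers precisely $s'>\frac{1}{p_-}-\frac{1}{p_+}$, the hypothesis of $(\star)$. The final majorant has the form of $V_{\alpha',s_0}$ applied to $|f|^{q(\cdot)}\in L_{p_0}$, which is $L_{p_0}$-bounded by Theorem \ref{t16}.

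The main obstacle is understanding why the proof scheme of Theorem \ref{t15}, which in its natural form delivers only the condition $s>\frac{1}{p_-}-\frac{1}{p_+}$, actually yields the sharp condition $\alpha+s>\frac{1}{p_-}-\frac{1}{p_+}$ of \eqref{e105}. The resolution is that the gap is not bridged inside the atomic estimate itself but by the pointwise majorization step, which trades $(j-n)$-decay for $(j-i)$-decay via $j-n\leq j-i$; this trade is possible exactly because the kernel of $V_{\alpha,s}$ is summed over $i\geq j$, a structural feature absent from the single-index operator $U_s$.
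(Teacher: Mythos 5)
Your proof is correct and follows essentially the same route as the paper. The only real difference is organizational: the paper performs the trade $2^{(j-n)\alpha_0}\leq 2^{(j-i)\alpha_0}$ inline within its $(D)$-estimate (choosing $\alpha_0<\alpha$ and $r<\alpha_0+s$), whereas you extract it first as the clean pointwise bound $V_{\alpha,s}\leq V_{\alpha_1,\alpha_2+s}$ and then run the Theorem~\ref{t15} template for the auxiliary $(\star)$---a slightly clearer way to display why the hypothesis is $\alpha+s>\frac{1}{p_-}-\frac{1}{p_+}$ rather than $s>\frac{1}{p_-}-\frac{1}{p_+}$.
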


\begin{proof}
Similarly to the proof of Theorem \ref{t15},  we may suppose again that $\|f\|_{p(\cdot)}\leq 1/2$, $|f| \geq 1$ or $f=0$ and
\[
	\frac{1}{\mathbb P(I_{k,n}^{j,i})} \int_{I_{k,n}^{j,i}} |f(t)| \, dt >1.
\]
We denote by $I_{k,n,j,i,1}$ (resp. $I_{k,n,j,i,2}$) those points $x \in I_{k,n}$ for which $p(x) \leq p_+(I_{k,n}^{j,i})$ (resp. $p(x) > p_+(I_{k,n}^{j,i})$). Then
\begin{align*}
	\int_{\Omega}&|V_{\alpha,s}f(x)|^{p(x)} \,dx \\ 
	&\lesssim \sum_{l=1}^{2}\int_{\Omega} \left( \sup_{n\in \N} \sum_{k=0}^{2^n-1} \chi_{I_{k,n}}(x) \sum_{j=0}^{n-1}\sum_{i=j}^{n-1} 2^{(j-n)\alpha} 2^{(j-i)s} \frac{\chi_{I_{k,n,j,i,l}}(x)}{\mathbb P(I_{k,n}^{j,i})} \int_{I_{k,n}^{j,i}}|f(t)| \, dt\right)^{p(x)} \, dx \\
	&=: (C)+(D).
\end{align*}
Again, let $q(x):=p(x)/p_0>1$ for some $1<p_0<p_-$. By convexity and Lemma \ref{lemma of conditional expc},
\begin{align*}
	(C) &\lesssim \int_{\Omega} \left( \sup_{n\in \N} \sum_{k=0}^{2^n-1} \chi_{I_{k,n}}(x) \sum_{j=0}^{n-1}\sum_{i=j}^{n-1} 2^{(j-n)\alpha} 2^{(j-i)s} \left(\frac{\chi_{I_{k,n,j,i,1}}(x)}{\mathbb P(I_{k,n}^{j,i})} \int_{I_{k,n}^{j,i}}|f(t)| \, dt\right)^{q(x)}\right)^{p_0} \, dx \\
	&\lesssim \int_{\Omega} \left( \sup_{n\in \N} \sum_{k=0}^{2^n-1} \chi_{I_{k,n}}(x) \sum_{j=0}^{n-1}\sum_{i=j}^{n-1} 2^{(j-n)\alpha} 2^{(j-i)s} \left(\frac{\chi_{I_{k,n,j,i,1}}(x)}{\mathbb P(I_{k,n}^{j,i})} \int_{I_{k,n}^{j,i}}|f(t)| \, dt\right)^{q_+(I_{k,n}^{j,i})}\right)^{p_0} \, dx \\
	&\lesssim \int_{\Omega} \left( \sup_{n\in \N} \sum_{k=0}^{2^n-1} \chi_{I_{k,n}}(x) \sum_{j=0}^{n-1}\sum_{i=j}^{n-1} 2^{(j-n)\alpha} 2^{(j-i)s} \frac{\chi_{I_{k,n,j,i,1}}(x)}{\mathbb P(I_{k,n}^{j,i})} \int_{I_{k,n}^{j,i}}|f(t)|^{q(t)} \, dt\right)^{p_0} \, dx \\ 
	&\lesssim \left\| V_{\alpha,s}(|f|^{q(\cdot)}) \right\|_{p_0}^{p_0} \lesssim \left\| |f|^{q(\cdot)} \right\|_{p_0}^{p_0} \leq C.
\end{align*}

Again by convexity and Hölder's inequality, we obtain for some $0<\alpha_0 < \alpha$ and $0<r<s+\alpha_0$ that
\begin{align*}
	(D)	&\lesssim \int_{\Omega} \left( \sup_{n\in \N} \sum_{k=0}^{2^n-1} \chi_{I_{k,n}}(x)\left(\sum_{j=0}^{n-1}\sum_{i=j}^{n-1} 2^{(j-n)\alpha} 2^{(j-i)s} \frac{\chi_{I_{k,n,j,i,2}}(x)}{\mathbb P(I_{k,n}^{j,i})} \int_{I_{k,n}^{j,i}}|f(t)| \, dt\right)^{q(x)}\right)^{p_0} \, dx \\
	&\lesssim \int_{\Omega} \left( \sup_{n\in \N} \sum_{k=0}^{2^n-1} \chi_{I_{k,n}}(x) \sum_{j=0}^{n-1}\sum_{i=j}^{n-1} 2^{(j-n)(\alpha-\alpha_0)} 2^{(j-i)(\alpha_0+s-r)} \right. \\
	& \qquad \left. \left(2^{(j-i)r} \frac{\chi_{I_{k,n,j,i,2}}(x)}{\mathbb P(I_{k,n}^{j,i})} \int_{I_{k,n}^{j,i}}|f(t)| \, dt\right)^{q(x)}\right)^{p_0} \, dx
\end{align*}
and so
\begin{align*}
	(D)	&\lesssim \int_{\Omega} \left( \sup_{n\in \N} \sum_{k=0}^{2^n-1} \chi_{I_{k,n}}(x) \sum_{j=0}^{n-1}\sum_{i=j}^{n-1} 2^{(j-n)(\alpha-\alpha_0)} 2^{(j-i)(\alpha_0+s-r)} 2^{(j-i)rq(x)} \right. \\
	&\qquad \left.\left(\frac{\chi_{I_{k,n,j,i,2}}(x)}{\mathbb P(I_{k,n}^{j,i})} \int_{I_{k,n}^{j,i}}|f(t)|^{q_-(I_{k,n}^{j,i})} \, dt\right)^{q(x)/q_-(I_{k,n}^{j,i})}\right)^{p_0} \, dx \\
	&\lesssim \int_{\Omega} \left( \sup_{n\in \N} \sum_{k=0}^{2^n-1} \chi_{I_{k,n}}(x) \sum_{j=0}^{n-1}\sum_{i=j}^{n-1} 2^{(j-n)(\alpha-\alpha_0)} 2^{(j-i)(\alpha_0+s-r)} 2^{(j-i)rq(x)} 2^{iq(x)/q_-(I_{k,n}^{j,i})} \right. \\
	&\qquad \left.\chi_{I_{k,n,j,i,2}}(x)\left(\int_{I_{k,n}^{j,i}}|f(t)|^{q_-(I_{k,n}^{j,i})} \, dt\right)^{q(x)/q_-(I_{k,n}^{j,i})}\right)^{p_0} \, dx.
\end{align*}
Since
\[
	\int_{I_{k,n}^{j}}|f(t)|^{q_-(I_{k,n}^{j})} \, dt \leq \int_{I_{k,n}^{j}}|f(t)|^{p(t)} \, dt \leq  \frac{1}{2},
\]
we can see that
\begin{align*}
	(D)	&\lesssim \int_{\Omega} \left( \sup_{n\in \N} \sum_{k=0}^{2^n-1} \chi_{I_{k,n}}(x) \sum_{j=0}^{n-1}\sum_{i=j}^{n-1} 2^{(j-n)(\alpha-\alpha_0)} 2^{(j-i)(\alpha_0+s-r)} 2^{(j-i)rq(x)} 2^{iq(x)/q_-(I_{k,n}^{j,i})} \right. \\
	&\qquad \left. \chi_{I_{k,n,j,i,2}}(x)\frac{2^{-i}}{\mathbb P(I_{k,n}^{j,i})} \int_{I_{k,n}^{j,i}}|f(t)|^{q_-(I_{k,n}^{j,i})} \, dt\right)^{p_0} \, dx.
\end{align*}

Similarly to the proof of Theorem \ref{t15}, we get
\begin{align*}
	(D)	&\lesssim \int_{\Omega} \left( \sup_{n\in \N} \sum_{k=0}^{2^n-1} \chi_{I_{k,n}}(x) \sum_{j=0}^{n-1}\sum_{i=j}^{n-1} 2^{(j-n)(\alpha-\alpha_0)} 2^{(j-i)(\alpha_0+s-r)} 2^{(j-i)\left(rq(x) - \frac{q(x)}{q_-(I_{k,n}^{j,i})}+1 \right)} \right. \\
	&\qquad \left. \frac{1}{\mathbb P(I_{k,n}^{j,i})}\int_{I_{k,n}^{j,i}}|f(t)|^{q(t)} \, dt \right)^{p_0} \, dx\\
	&\lesssim \int_{\Omega} \left( \sup_{n\in \N} \sum_{k=0}^{2^n-1} \chi_{I_{k,n}}(x) \sum_{j=0}^{n-1}\sum_{i=j}^{n-1} 2^{(j-n)(\alpha-\alpha_0)} 2^{(j-i)(\alpha_0+s-r+r_0)} \frac{1}{\mathbb P(I_{k,n}^{j,i})}\int_{I_{k,n}^{j,i}}|f(t)|^{q(t)} \, dt \right)^{p_0} \, dx\\
	&\lesssim \left\| V_{\alpha- \alpha_0,\alpha_0+s-r+r_0}(|f|^{q(\cdot)}) \right\|_{p_0}^{p_0} \lesssim \left\| |f|^{q(\cdot)} \right\|_{p_0}^{p_0} \leq C,
\end{align*}
whenever \eqref{e101} holds. Note that $r_0$ was defined just before \eqref{e101}. Since $r$ can be arbitrarily near to $s+ \alpha_0$ and $\alpha_0$ to $\alpha$, this completes the proof.
\end{proof}

\begin{remark}
Inequality \eqref{e105} and Theorem \ref{t17} hold if $p_->\max(1/(\alpha+s),1)$.
\end{remark}

The operator $V_{\alpha,s}$ is not bounded on $L_{p(\cdot)}$ if \eqref{e105} is not true.

\begin{theorem} \label{t100}
Let $p(\cdot)\in\mathcal{P}(\Omega)$ satisfy condition \eqref{log}, $1<p_-\leq p_+<\infty$ and $0<\alpha,s<\infty$. If 
\begin{equation*}
	\frac{1}{p_-(I_{0,n}\dot + 2^{-1})}-\frac{1}{p_+(I_{0,n})} > \alpha+s
\end{equation*}
for all $n \in \N$, then $V_{\alpha,s}$ is not bounded on $L_{p(\cdot)}$.
\end{theorem}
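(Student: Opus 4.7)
The plan is to mimic the proof of Theorem \ref{t10}: I will exhibit a sequence of test functions $(f_n)$ with $\|f_n\|_{p(\cdot)}$ uniformly bounded, yet $\int_\Omega |V_{\alpha,s}f_n(x)|^{p(x)}\,dx \to \infty$. Since boundedness of $V_{\alpha,s}$ on $L_{p(\cdot)}$ would force the modular $\int|V_{\alpha,s}f_n|^{p(x)}\,dx$ to stay bounded (a bound on the Luxemburg norm controls the modular once the norm exceeds $1$, via $p_+$), this yields the desired contradiction.

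First I would set
$$
f_n(t) := 2^{n/p_-(I_{0,n}\dot + 2^{-1})}\,\chi_{I_{0,n}\dot + 2^{-1}}(t),
$$
exactly as in the proof of Theorem \ref{t10}. Lemma \ref{lemma of norm of set} gives $\|f_n\|_{p(\cdot)} \lesssim 2^{n/p_-(I_{0,n}\dot + 2^{-1})}\|\chi_{I_{0,n}\dot + 2^{-1}}\|_{p(\cdot)} \leq C$ uniformly in $n$. Next, to get a useful pointwise lower bound on $V_{\alpha,s}f_n$, I restrict the supremum in the definition of $V_{\alpha,s}$ to the single choice $n'=n$, $k=0$, $j=0$, $i=n-1$. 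A short bit-level computation of the dyadic sum shows
$$
I_{0,n}^{0,n-1} = [2^{-1},\,2^{-1} + 2^{-(n-1)}),
$$
which contains the support $I_{0,n}\dot+2^{-1} = [2^{-1},2^{-1}+2^{-n})$ of $f_n$. Consequently, for every $x\in I_{0,n}$,
$$
V_{\alpha,s}f_n(x) \;\geq\; 2^{-n\alpha}\cdot 2^{-(n-1)s}\cdot\frac{1}{\mathbb P(I_{0,n}^{0,n-1})}\int_{I_{0,n}^{0,n-1}}f_n\,d\mathbb P \;\gtrsim\; 2^{n\left[1/p_-(I_{0,n}\dot + 2^{-1}) - \alpha - s\right]}.
$$

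Raising this bound to the power $p(x)$ and integrating over $I_{0,n}$, I would then invoke condition \eqref{log} applied to the atom $I_{0,n}$, which gives $2^{n(p_+(I_{0,n})-p_-(I_{0,n}))}\leq K$; since the bracketed exponent $1/p_-(I_{0,n}\dot+2^{-1}) - \alpha - s$ is bounded (and positive, by the hypothesis), I can replace $p(x)$ by $p_+(I_{0,n})$ at the cost of an absolute constant. This yields
$$
\int_\Omega |V_{\alpha,s}f_n|^{p(x)}\,d\mathbb P \;\gtrsim\; 2^{n\left\{p_+(I_{0,n})\left[1/p_-(I_{0,n}\dot + 2^{-1}) - \alpha - s\right] - 1\right\}}.
$$
The assumed inequality rearranges precisely to $p_+(I_{0,n})\bigl[1/p_-(I_{0,n}\dot + 2^{-1}) - \alpha - s\bigr] > 1$, so the exponent is a positive multiple of $n$ and the integral tends to $\infty$ as $n\to\infty$.

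No single step is genuinely hard. The one place to be careful is Step 2, namely the bit-level identification $I_{0,n}^{0,n-1} = [2^{-1},2^{-1}+2^{-(n-1)})$ and the verification that the support of $f_n$ lies inside it, so that restricting the supremum to this single $(j,i)$ pair produces the claimed lower bound. The rest is bookkeeping plus the standard $p(x)\leftrightarrow p_+(I_{0,n})$ swap justified by \eqref{log}, exactly as in the proof of Theorem \ref{t10}.
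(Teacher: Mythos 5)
Your proposal is correct and is exactly what the paper does: the paper's proof of this theorem reads, in its entirety, ``Choosing $j=0$ and $i=n-1$, the theorem can be shown in the same way as Theorem \ref{t10},'' and you have supplied precisely that argument, including the right test function, the verification that $I_{0,n}^{0,n-1}=[2^{-1},2^{-1}+2^{-(n-1)})$ contains $I_{0,n}\dot+2^{-1}$, and the use of \eqref{log} to replace $p(x)$ by $p_+(I_{0,n})$ at the cost of a bounded factor.
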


\begin{proof}
	Choosing $j=0$ and $i=n-1$, the theorem can be shown in the same way as Theorem \ref{t10}.
\end{proof}

\begin{remark}
Similarly to Remark \ref{rem:U}, we know that $V_{\alpha,s}$ is also bounded on $L_{p(\cdot),q}$ for $p(\cdot)\in \mathcal P(\Omega)$ satisfying $\eqref{log}$ and \eqref{e105}, $1<p_-\leq p_+<\infty$ and $0<q\leq\infty$.
\end{remark}

\subsection{The maximal Fej{\'e}r operator on $H_{p(\cdot)}$}\label{7.2}
In this subsection,  we apply the atomic characterization via $(3,p(\cdot),\infty)$-atoms to prove the boundedness of $\sigma_*$ from $H_{p(\cdot)}$ to $L_{p(\cdot)}$. We first generalize Theorem \ref{t14} to the result below.

\begin{theorem}\label{t1 Leb}
Let $p(\cdot)\in\mathcal{P}(\Omega)$ satisfy \eqref{log} and $0<t<\underline p$. Suppose that the $\sigma$-sublinear operator $T:L_\infty\rightarrow L_\infty$ is bounded and
\begin{equation}\label{e1 Leb}
\left\|\sum_k\mu_k^{t}T(a^k)^{t}\chi_{\{\tau_k=\infty\}}\right\|_{\frac{p(\cdot)}{t}}\lesssim \left\|\sum_k 2^{kt} \chi_{\{\tau_k<\infty\}}\right\|_{\frac{p(\cdot)}{t}},
\end{equation}
where $\tau_k$ is the stopping time associated with $(3,p(\cdot),\infty)$-atom $a^k$.
Then we have
$$\|Tf\|_{p(\cdot)}\lesssim \|f\|_{H_{p(\cdot)}}.$$
\end{theorem}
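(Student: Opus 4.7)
The plan is to combine the atomic decomposition of $H_{p(\cdot)}$ by $(3,p(\cdot),\infty)$-atoms (Corollary \ref{cor:ad for M}, which identifies all five Hardy spaces in the regular dyadic setting with $H_{p(\cdot)}^{\rm at, 3,\infty}$) with the $\sigma$-sublinearity of $T$ and the concavity inequality $(\sum x_k)^t \leq \sum x_k^t$ that is available since $0<t<\underline p \leq 1$. A key observation is that the choice $t<\underline p$ forces $p_-/t>1$, so $L_{p(\cdot)/t}$ is a Banach function space and the ordinary triangle inequality applies in this quasi-norm.

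First I would fix $f\in H_{p(\cdot)}$ and write its atomic decomposition $f=\sum_k \mu_k a^k$ with $\mu_k=3\cdot 2^k\|\chi_{\{\tau_k<\infty\}}\|_{p(\cdot)}$ and each $a^k$ a $(3,p(\cdot),\infty)$-atom associated with the stopping time $\tau_k$. From $\sigma$-sublinearity, $|Tf|\leq \sum_k \mu_k |T(a^k)|$, and raising to the $t$-th power yields $|Tf|^t \leq \sum_k \mu_k^t |T(a^k)|^t$. Splitting each summand according to $\chi_{\{\tau_k<\infty\}}+\chi_{\{\tau_k=\infty\}}=1$ and applying the triangle inequality in $L_{p(\cdot)/t}$ gives
$$\|Tf\|_{p(\cdot)}^t = \||Tf|^t\|_{p(\cdot)/t}\leq I_1+I_2,$$
where
$$I_1=\Bigl\|\sum_k \mu_k^t |T(a^k)|^t\chi_{\{\tau_k<\infty\}}\Bigr\|_{p(\cdot)/t}, \qquad I_2=\Bigl\|\sum_k \mu_k^t |T(a^k)|^t\chi_{\{\tau_k=\infty\}}\Bigr\|_{p(\cdot)/t}.$$

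The term $I_2$ is controlled directly by the standing hypothesis \eqref{e1 Leb}, which yields $I_2 \lesssim \|\sum_k 2^{kt}\chi_{\{\tau_k<\infty\}}\|_{p(\cdot)/t}$. For $I_1$ one exploits the $L_\infty\to L_\infty$ boundedness: since $\|a^k\|_\infty \leq \|M(a^k)\|_\infty \leq \|\chi_{\{\tau_k<\infty\}}\|_{p(\cdot)}^{-1}$ by the definition of a $(3,p(\cdot),\infty)$-atom, one gets $\|T(a^k)\|_\infty \lesssim \|\chi_{\{\tau_k<\infty\}}\|_{p(\cdot)}^{-1}$. Multiplying by $\mu_k^t$ gives the pointwise bound
$$\mu_k^t |T(a^k)|^t\chi_{\{\tau_k<\infty\}} \lesssim 2^{kt}\chi_{\{\tau_k<\infty\}},$$
so that $I_1 \lesssim \|\sum_k 2^{kt}\chi_{\{\tau_k<\infty\}}\|_{p(\cdot)/t}$ as well.

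Combining the two estimates and using the comparison established in the proof of Theorem \ref{proposition of atomic decomposition of MS Leb} (equivalently Corollary \ref{cor:ad for M}), namely $\|\sum_k 2^{kt}\chi_{\{\tau_k<\infty\}}\|_{p(\cdot)/t}\approx \|f\|_{H_{p(\cdot)}}^t$, and taking the $t$-th root yields $\|Tf\|_{p(\cdot)}\lesssim \|f\|_{H_{p(\cdot)}}$, as desired. The only conceptual subtlety is that the hypothesis \eqref{e1 Leb} controls only the ``exterior'' piece of $T(a^k)$ on $\{\tau_k=\infty\}$, whereas the ``interior'' piece on $\{\tau_k<\infty\}$ must be handled independently via the $L_\infty$-boundedness of $T$; once this dichotomy is set up, the rest amounts to applying the atomic characterization.
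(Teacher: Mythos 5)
Your proof is correct and has the same overall skeleton as the paper's: decompose $f=\sum_k\mu_ka^k$ via Corollary \ref{cor:ad for M}, raise to the $t$-th power using $0<t<\underline p\leq 1$, and split the sum into an interior piece on $\{\tau_k<\infty\}$ and an exterior piece on $\{\tau_k=\infty\}$; the exterior piece is then handled in both proofs by the standing hypothesis \eqref{e1 Leb}. Where you genuinely depart from the paper is the interior term $I_1$: you observe that the $L_\infty\to L_\infty$ boundedness of $T$, combined with $\|a^k\|_\infty\leq\|M(a^k)\|_\infty\leq\|\chi_{\{\tau_k<\infty\}}\|_{p(\cdot)}^{-1}$ and the normalization $\mu_k=3\cdot 2^k\|\chi_{\{\tau_k<\infty\}}\|_{p(\cdot)}$, gives the pointwise bound $\mu_k^t|T(a^k)|^t\chi_{\{\tau_k<\infty\}}\lesssim 2^{kt}\chi_{\{\tau_k<\infty\}}$ a.e., after which the triangle inequality in the Banach space $L_{p(\cdot)/t}$ (valid since $(p(\cdot)/t)_-=p_-/t>1$) immediately gives $I_1\lesssim\|\sum_k 2^{kt}\chi_{\{\tau_k<\infty\}}\|_{p(\cdot)/t}$. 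The paper instead decomposes $\{\tau_k<\infty\}$ into $\mathcal F_j$-atoms $I_{k_l}$, dualizes against a $g\in L_{(p(\cdot)/t)'}$, applies H\"older's inequality on each $I_{k_l}$, and finishes with the Doob maximal inequality (Theorem \ref{thm:maximal inequality})—only to arrive at exactly the same bound $\|\sum_k(3\cdot 2^k)^t\chi_{\{\tau_k<\infty\}}\|_{p(\cdot)/t}$. Your route is therefore a genuine simplification of the interior-term estimate, and it makes visible that the Doob maximal inequality is not needed there; the atom's $L_\infty$-normalization alone suffices. One minor imprecision: you assert the fixed decomposition satisfies $\|\sum_k 2^{kt}\chi_{\{\tau_k<\infty\}}\|_{p(\cdot)/t}\approx\|f\|_{H_{p(\cdot)}}^t$, but what the decomposition constructed in the proof of Theorem \ref{proposition of atomic decomposition of MS Leb} directly gives (and all that you use) is the one-sided bound $\lesssim\|f\|_{H_{p(\cdot)}}^t$.
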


\begin{proof}
According to Corollary \ref{cor:ad for M}, $f$ can be written as
$$
f=\sum_k \mu_k a^k, \quad \mbox{where} \quad \mu_k=3\cdot 2^k \|\chi_{\{\tau_k<\infty\}}\|_{p(\cdot)}
$$
and $a^k$'s are $(3,p(\cdot),\infty)$-atoms associated with stopping times $(\tau_k)_{k\in\mathbb Z}$.
Then
\begin{align*}
\|Tf\|_{p(\cdot)} &\lesssim \left\| \sum_{k} \mu_k T(a^k)\chi_{\{\tau_k<\infty\}} \right\|_{p(\cdot)} +\left\| \sum_{k} \mu_k T(a^k)\chi_{\{\tau_k=\infty\}} \right\|_{p(\cdot)}
\\& =:Z_1+Z_2.
\end{align*}

We first estimate  $Z_1$. The sets $\left\{\tau_k=j\right\}$ are disjoint and there exist disjoint atoms $I_{k,j,i}\in \mathcal{F}_j$ such that
$\left\{\tau_k=j\right\}=\bigcup_i I_{k,j,i}.$
Thus
$$\{\tau_k<\infty\}=\bigcup_{j\in \mathbb N} \bigcup_i I_{k,j,i},$$
where $I_{k,j,i} $ are disjoint for fixed $k$. For convenience,  we will write
$$\{\tau_k<\infty\}=\bigcup_{l}  I_{k_l}.$$
Since $0<t <\underline p\leq 1$, we have
$$
Z_1\leq  \left\|\sum_k\mu_k^{t} \sum_{l}T(a^k)^{t}\chi_{I_{k_l}}\right\|_{\frac{p(\cdot)}{t}}^{\frac{1}{t}}.
$$
By Lemma \ref{lem:duality for variabl p}, choose $g\in L_{(\frac{p(\cdot)}{t})'}$  with norm less than $1$ such that
$$\left\|\sum_k\mu_k^{t} \sum_{l}T(a^k)^{t}\chi_{I_{k_l}}\right\|_{\frac{p(\cdot)}{t}}= \int_\Omega \sum_k\mu_k^{t} \sum_{l}T(a^k)^{t}\chi_{I_{k_l}}g d\mathbb P.$$
Note that $T:L_\infty\rightarrow L_\infty$.  Then, by H\"{o}lder's inequality for $p_+/t<r<\infty$ and the definition of $(3,p(\cdot),\infty)$-atoms, we obtain
\begin{align*}
Z_1^t& \leq \int \sum_k\mu_k^{t} \sum_{l}T(a^k)^{t}\chi_{I_{k_l}} g d\mathbb P
\\& \leq \sum_k\mu_k^{t} \sum_{l} \|T(a^k)^{t}\chi_{I_{k_l}}\|_{r} \|\chi_{I_{k_l}} g \|_{r'}
\\& \lesssim \sum_{k}\sum_{l}(3\cdot 2^k)^{t} \|\chi_{\tau_k<\infty}\|_{p(\cdot)}^{t}  \|T(a^k)^{t}\|_\infty \|\chi_{I_{k_l}}\|_{r} \|\chi_{I_{k_l}} g \|_{r'}
\\&\leq  \sum_{k}\sum_{l}(3\cdot 2^k)^{t } \mathbb P(I_{k_l}) \left(\frac{1}{\mathbb P(I_{k_l})}\int_{I_{k_l}}g^{r'} \right)^{\frac{1}{r'}}
\\& \leq \sum_{k}\sum_{l} (3\cdot 2^k)^{t} \int\chi_{I_{k_l}} [M(g^{r'})]^{\frac{1}{r'}}d\mathbb P
\\& \leq \left\|\sum_{k}\sum_{l} (3\cdot 2^k)^{t}\chi_{I_{k_l}} \right \|_{\frac{p(\cdot)}{t}} \|[M(g^{r'})]^{\frac{1}{r'}}\|_{{(\frac{p(\cdot)}{t})'}}.
\end{align*}
Note that $t<p_-$ and $p_+/t<r$ imply that
$$\left(\left(\frac{p(\cdot)}{t}\right)'\right)_+<\infty\quad \mbox{and}\quad \left(\frac{p(\cdot)}{t}\right)'>r' .$$
By Theorems \ref{thm:maximal inequality} and Theorem \ref{ad Leb}, we get
\begin{align*}
  Z_1&\lesssim \left\|\sum_{k}\sum_{l} (3\cdot 2^k)^{t}\chi_{I_{k_l}} \right \|_{\frac{p(\cdot)}{t}}^{\frac{1}{t}} \|g\|_{{(\frac{p(\cdot)}{t})'}}^{\frac 1t}
\\&\lesssim\left\|\sum_{k} (3\cdot 2^k)^{t}\chi_{\{\tau_k<\infty\}} \right \|_{\frac{p(\cdot)}{t}}^{\frac{1}{t}}\lesssim \|f\|_{H_{p(\cdot)}}.
\end{align*}
Again, by the condition of the theorem, Corollary \ref{cor:ad for M} and  Theorem \ref{ad Leb}, we have
\begin{align*}
Z_2 &\leq\left\|\sum_k\mu_k^{t}T(a^k)^{t}\chi_{\{\tau_k=\infty\}}\right\|_{\frac{p(\cdot)}{t}}^{\frac{1}{t}}
 \lesssim \left\|\sum_k 2^{kt} \chi_{\{\tau_k<\infty\}}\right\|_{\frac{p(\cdot)}{t}}^{\frac{1}{t}}
\lesssim \|f\|_{H_{p(\cdot)}}.
\end{align*}
Combing the estimates of $Z_1$ and $Z_2$, we  complete the proof.
\end{proof}

\begin{theorem}\label{t3 Leb}
Let $p(\cdot)\in\mathcal{P}(\Omega)$ satisfy \eqref{log} and $1/2<t<\underline p$. If
\begin{equation}\label{e104}
	\frac{1}{p_-}-\frac{1}{p_+} <1,
\end{equation}
then 
\begin{equation}
\left\|\sum_k\mu_k^{t}\sigma_*(a^k)^{t}\chi_{\{\tau_k=\infty\}}\right\|_{\frac{p(\cdot)}{t}}\lesssim \left\|\sum_k 2^{kt} \chi_{\{\tau_k<\infty\}}\right\|_{\frac{p(\cdot)}{t}},
\end{equation}
where $\tau_k$ is the stopping time associated with $a^k$.
\end{theorem}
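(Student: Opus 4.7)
My plan is to combine the mean-zero property of the atom $a^k$ on $\{\tau_k=\infty\}$ with the pointwise Walsh--Fej\'er kernel representations \eqref{e16}--\eqref{e17} in order to bound $\sigma_*(a^k)$ pointwise by the maximal operators $U_s$ and $V_{\alpha,s}$, and then to invoke their $L_{p(\cdot)/t}$-boundedness from Theorems \ref{t15} and \ref{t17}. The starting observation is that $\mathbb E_n(a^k)=0$ on $\{\tau_k\geq n\}\supseteq\{\tau_k=\infty\}$, so $\mathbb E_n(a^k)(x)=0$ for every $n$ whenever $x\in\{\tau_k=\infty\}$. At the dyadic index $n=2^N$, the exact identity \eqref{e17} then gives $\sigma_{2^N}a^k(x)=\tfrac{1}{2}\sum_{j=0}^{N}2^{j-N}\mathbb E_N(a^k)(x\dot+2^{-j-1})$, whose absolute value is immediately dominated by $U_1(a^k)(x)$, since $\mathbb E_N(a^k)(x\dot+2^{-j-1})$ is precisely the shifted average appearing in the definition of $U_s$ with $s=1$. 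For a general index $2^{N-1}\leq n<2^N$, the estimate \eqref{e16} --- opened in signed form rather than used as a black-box absolute-value bound --- produces, after integrating against $a^k$, the unshifted contributions $\mathbb E_i(a^k)(x)$ (vanishing on $\{\tau_k=\infty\}$) and the shifted contributions $\mathbb E_i(a^k)(x\dot+2^{-j-1})$, whose double sum $\sum_{j}2^{j-N}\sum_{i=j}^{N-1}|\mathbb E_i(a^k)(x\dot+2^{-j-1})|$ is dominated by $V_{\alpha,s}(a^k)(x)$ for a suitable pair $(\alpha,s)$ with $\alpha+s\geq 1$, after matching $I_n(x)\dot+[2^{-j-1},2^{-j-1}\dot+2^{-i})$ in the definition of $V_{\alpha,s}$ with the dyadic localisation carried by $D_{2^i}(\cdot\dot+2^{-j-1})$.

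Once the pointwise bound $\sigma_*(a^k)(x)\lesssim U_1(a^k)(x)+V_{\alpha,s}(a^k)(x)$ on $\{\tau_k=\infty\}$ is in hand, the $L_\infty$-size of the atom yields $|a^k|\leq \|\chi_{\{\tau_k<\infty\}}\|_{p(\cdot)}^{-1}\chi_{\{\tau_k<\infty\}}$; since $U_s$ and $V_{\alpha,s}$ are monotone in their positive (absolute-value) input, this gives
$$
\mu_k\sigma_*(a^k)(x)\chi_{\{\tau_k=\infty\}}(x)\lesssim 2^k\bigl(U_1(\chi_{\{\tau_k<\infty\}})(x)+V_{\alpha,s}(\chi_{\{\tau_k<\infty\}})(x)\bigr),
$$
using $\mu_k=3\cdot 2^k\|\chi_{\{\tau_k<\infty\}}\|_{p(\cdot)}$. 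Raising to the $t$-th power, summing over $k$, and taking the $\|\cdot\|_{p(\cdot)/t}$-norm reduces the target inequality to the summed bound
$$
\Bigl\|\sum_k 2^{kt}\bigl([U_1(\chi_{\{\tau_k<\infty\}})]^t+[V_{\alpha,s}(\chi_{\{\tau_k<\infty\}})]^t\bigr)\Bigr\|_{p(\cdot)/t}\lesssim \Bigl\|\sum_k 2^{kt}\chi_{\{\tau_k<\infty\}}\Bigr\|_{p(\cdot)/t}.
$$
The exponent $p(\cdot)/t$ satisfies $1<(p(\cdot)/t)_-\leq (p(\cdot)/t)_+<\infty$ and $\tfrac{1}{(p(\cdot)/t)_-}-\tfrac{1}{(p(\cdot)/t)_+}=t\bigl(\tfrac{1}{p_-}-\tfrac{1}{p_+}\bigr)<t<1$, so Theorems \ref{t15} and \ref{t17} apply with $s=1$ and any $\alpha>0$. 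To pass from these scalar bounds to the summed bound above, I would follow the duality and linearisation scheme of those proofs: pair with a test function $g\in L_{(p(\cdot)/t)'}$, select an optimal scale inside each supremum defining $U_1(\chi_{\{\tau_k<\infty\}})$ and $V_{\alpha,s}(\chi_{\{\tau_k<\infty\}})$, and invoke the Doob maximal inequality (Theorem \ref{thm:maximal inequality}) on the dual exponent $(p(\cdot)/t)'$ together with Lemmas \ref{lemma of norm of set} and \ref{lemma of conditional expc} in the same fashion as in Theorem \ref{t15}.

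The main obstacle is the pointwise bound for non-dyadic indices. Since \eqref{e16} is formulated with $|K_n|$, a direct application to $\int|a^k|\,|K_n(x\dot+t)|\,dt$ would replace $\mathbb E_i(a^k)$ by $\mathbb E_i(|a^k|)$ and therefore destroy the cancellation on $\{\tau_k=\infty\}$. To recover it, one must open up the proof of \eqref{e16} and keep the signed identities $\int a^k(t)D_{2^i}(x\dot+t)\,dt=\mathbb E_i(a^k)(x)$ and $\int a^k(t)D_{2^i}(x\dot+t\dot+2^{-j-1})\,dt=\mathbb E_i(a^k)(x\dot+2^{-j-1})$ intact, taking absolute values only \emph{after} recognising that the unshifted pieces vanish on $\{\tau_k=\infty\}$. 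The subsequent index bookkeeping --- matching the surviving shifted sums with $V_{\alpha,s}(a^k)$ via the precise set $I\dot+[2^{-j-1},2^{-j-1}\dot+2^{-i})$ in the definition --- is the most delicate step; the final summation and norm estimate are routine once the correct combination of $U_1$ and $V_{\alpha,s}$ has emerged.
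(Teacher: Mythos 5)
Your proposal takes a genuinely different route from the paper and has two gaps that I do not think are routine to close. The paper never applies $U_s$ or $V_{\alpha,s}$ to the atom. Instead it splits $a^k=\sum_l b^l$ with $b^l=a^k\chi_{I_l}$, applies \eqref{e16} in absolute-value form to $|a^k|$, and kills the unwanted terms by the \emph{support} of $b^l$ rather than by cancellation at the evaluation point: for $x\notin I_l$ and $i\geq K_l$, $D_{2^i}(x\dot+t)$ and $D_{2^i}(x\dot+t\dot+2^{-j-1})$ vanish on $t\in I_l$. This produces the explicit characteristic-function bound $\sigma_*a\lesssim\|\chi_{\{\tau<\infty\}}\|_{p(\cdot)}^{-1}(A+B)$ of \eqref{e7}, and the operators $U_t$ and $V_{2t-r/(r-t),\,r/(r-t)-t}$ enter only on the \emph{dual} side, applied to $|g|^{(r/t)'}$ through the H\"older--duality scheme. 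Your scheme is the primal counterpart: bound $\sigma_*a^k\lesssim U_1 a^k+V_{\alpha,s}a^k$ on $\{\tau_k=\infty\}$, replace $a^k$ by $\chi_{\{\tau_k<\infty\}}$, and finish with an $\ell^t$-valued inequality for $U_1$ and $V_{\alpha,s}$. That route is not in the paper.

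The first gap is the pointwise bound for non-dyadic indices. You are right that \eqref{e16}, being an inequality for $|K_n|$, forces $\mathbb E_i(|a^k|)$ rather than $\mathbb E_i(a^k)$ and so destroys the cancellation $\mathbb E_i a^k=0$ on $\{\tau_k=\infty\}$. But the signed version you want does not obviously exist: Fine's identity for $nK_n$ factors through Walsh characters $w_{n^{(i)}}$ with $n^{(i)}$ supported in bits above $i$, and these are not $\mathcal F_i$-measurable, so $\mathbb E_i(a^k w_{n^{(i)}})\ne w_{n^{(i)}}\,\mathbb E_i(a^k)$ and the cancellation is not available term-by-term. You flag this as ``the main obstacle'' and leave it unresolved; it is not mere bookkeeping. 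The second gap is the Fefferman--Stein type inequality $\bigl\|\sum_k 2^{kt}(U_1\chi_{\{\tau_k<\infty\}})^t\bigr\|_{p(\cdot)/t}\lesssim\bigl\|\sum_k 2^{kt}\chi_{\{\tau_k<\infty\}}\bigr\|_{p(\cdot)/t}$ (and its $V_{\alpha,s}$ analogue). Theorems \ref{t15} and \ref{t17} are scalar bounds proved by direct modular estimates, \emph{not} by the duality/linearisation scheme you propose to ``follow in the same fashion as in Theorem \ref{t15}''; the vector-valued extension is neither proved nor used in the paper. Finally, a sanity check that something must be lost in your reduction: the pointwise bound only needs $U_1$ and $V_{\alpha,s}$ with $\alpha+s\le 1$ bounded on $L_{p(\cdot)/t}$, which by \eqref{e102} and \eqref{e105} is the condition $t\bigl(\tfrac1{p_-}-\tfrac1{p_+}\bigr)<1$, i.e.\ $\tfrac1{p_-}-\tfrac1{p_+}<1/t$ with $1/t>1$ --- strictly weaker than the stated \eqref{e104}. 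Given the near-necessity result Theorem \ref{t101}, this strongly indicates a dropped constraint, most likely in the vector-valued step, where the paper's dual argument picks up the crucial factor through \eqref{e107}.
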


\begin{proof} We divide this proof into three steps.

\noindent\textbf{Step 1: estimate for $\sigma_*(a)$.}
The sets $\left\{\tau=j\right\}$ are disjoint and there exist disjoint dyadic intervals $I_{j,i}\in \mathcal{F}_j$ such that
$$
\left\{\tau=j\right\}=\bigcup_i I_{j,i}.
$$
Thus
$$
\left\{\tau<\infty\right\}=\bigcup_{j\in \N} \bigcup_i I_{j,i},
$$
where the dyadic intervals $I_{j,i}$ are disjoint. It follows from the definition of the atom that $\int_{I_{j,i}} a\, d\mathbb{P}=0$. For simplicity, instead of $a \chi_{I_{j,i}}$, we will write $b^l$, and so
$$
a= \sum_{j\in \N} \sum_{i} a \chi_{I_{j,i}} = \sum_{l} b^l.
$$
Then the support of $b^l$ is the dyadic interval $I_l$ with length $2^{-K_l}$ $(K_l\in \N)$, the sets $I_l$ are disjoint and $\int_{I_l}b^l\, d\lambda=0$.

It is easy to see that $\widehat b^l(n) =0$ if $n<2^{K_l}$ and in this case $\sigma_{n}a=0$. Therefore we can suppose that $n \geq 2^{K_l}$. If $j\geq K_l$ and $x \not\in I_l$, then $x \dot + 2^{-j-1} \not\in I_l$. Thus for $x \not\in I_l$, $t\in I_l$ and $i\geq j\geq K_l$, we have
$$
b^l(t) D_{2^i}(x \dot + t) = b^l(t) D_{2^i}(x \dot + t \dot + 2^{-j-1})=0.
$$
Since $n \geq 2^{K_l}$ and $2^{N} > n \geq  2^{N-1}$, one has $N-1\geq K_l$. By (\ref{e16}) we obtain for $x\not\in I_l$ that
\begin{align*}
|\sigma_{n}b^l(x)|
&\leq \sum_{j=0}^{N-1} 2^{j-N} \sum_{i=j}^{N-1}
\int_0^1 |a(t)| \Bigl(
(D_{2^i}(x \dot + t) + D_{2^i}(x \dot + t \dot + 2^{-j-1}))\Bigr) \, dt \\
&\lesssim \|\chi_{\{\tau<\infty\}}\|_{p(\cdot)}^{-1} 2^{-K_l} \sum_{j=0}^{K_l-1} 2^{j} \sum_{i=j}^{K_l-1}
\int_{I_l}
\Bigl(D_{2^i}(x \dot + t) + D_{2^i}(x \dot + t \dot + 2^{-j-1})\Bigr) \, dt\\
&\quad + \|\chi_{\{\tau<\infty\}}\|_{p(\cdot)}^{-1} \sum_{j=0}^{K_l-1} 2^{j} \sum_{i=K_l}^{\infty} 2^{-i}
\int_{I_l} \Bigl(D_{2^i}(x \dot + t) + D_{2^i}(x \dot + t \dot + 2^{-j-1})\Bigr) \, dt.
\end{align*}
Observe that the right hand side is independent of $n$.
Using (\ref{e5}), we can verify that for $x \not\in I_l$,
$$
\int_{I_l} D_{2^i}(x \dot + t \dot + 2^{-j-1}) \, dt = 2^{i-K_l}
1_{I_l \dot + [2^{-j-1},2^{-j-1} \dot + 2^{-i})}(x) = 2^{i-K_l} 1_{I_l^{j,i}}(x)
$$
if $j\leq i \leq K_l-1$,
$$
\int_{I_l} D_{2^i}(x \dot + t) \, dt = 2^{i-K_l} 1_{I_l\dot +[2^{-K_l},2^{-i})}(x)
$$
if $i \in \N$ and
$$
\int_{I_l} D_{2^i}(x \dot + t \dot + 2^{-j-1}) \, dt =
1_{I_l\dot +[2^{-j-1},2^{-j-1} \dot + 2^{-K_l})}(x)  =
1_{I_l^{j}}(x)
$$
if $i \geq K_l$. Therefore, for $x \not\in I_l$,
\begin{eqnarray*}
\sigma_*b^l(x)
&\lesssim& \|\chi_{\{\tau<\infty\}}\|_{p(\cdot)}^{-1} \sum_{j=0}^{K_l-1} 2^{j} \sum_{i=K_l}^\infty 2^{-i}
1_{I_l^{j}}(x) \\
&&{} + \|\chi_{\{\tau<\infty\}}\|_{p(\cdot)}^{-1} 2^{-K_l} \sum_{j=0}^{K_l-1} 2^{j} \sum_{i=j}^{K_l-1} 
\Bigl(2^{i-K_l} (I_l\dot +1_{[2^{-K},2^{-i})}(x) + 1_{I_l^{j,i}}(x)) \Bigr)\\
&\lesssim& \|\chi_{\{\tau<\infty\}}\|_{p(\cdot)}^{-1} \sum_{j=0}^{K_l-1} 2^{j-K_l}
1_{I_l^{j}}(x)+ \|\chi_{\{\tau<\infty\}}\|_{p(\cdot)}^{-1} \sum_{j=0}^{K_l-1} 2^{j-K_l} \sum_{i=j}^{K_l-1} 2^{i-K_l} 1_{I_l^{j,i}}(x).
\end{eqnarray*}
Consequently, for $x \in \{\tau=\infty\}$,
\begin{eqnarray}\label{e7}
\sigma_*a(x)
&\lesssim& \|\chi_{\{\tau<\infty\}}\|_{p(\cdot)}^{-1} \left(\sum_{l} \sum_{j=0}^{K_l-1} 2^{j-K_l}
1_{I_l^{j}}(x) \n + \sum_{l} \sum_{j=0}^{K_l-1} 2^{j-K_l} \sum_{i=j}^{K_l-1} 2^{i-K_l} 1_{I_l^{j,i}}(x)\right) \n \\
&=:& \|\chi_{\{\tau<\infty\}}\|_{p(\cdot)}^{-1} \left(A(x)+B(x)\right).
\end{eqnarray}
For the atom $a^k$, we denote $l$, $K_l$ $A$ and $B$ above by $k_l$, $K_{k_l}$, $A_k$ and $B_k$.
Then
\begin{equation*}
\left\|\sum_k\mu_k^{t}\sigma_*(a^k)^{t}\chi_{\{\tau_k=\infty\}}\right\|_{\frac{p(\cdot)}{t}}\lesssim \left\|\sum_k 2^{kt} A_k^t\right\|_{\frac{p(\cdot)}{t}}+\left\|\sum_k 2^{kt} B_k^t\right\|_{\frac{p(\cdot)}{t}}
=:Z_1+Z_2.
\end{equation*}

\smallskip
\noindent\textbf{Step 2: estimate for $Z_1$.}
By Lemma \ref{lem:duality for variabl p}, there is  $g\in L_{(\frac{p(\cdot)}{t})'}$  with norm less than $1$ such that
\begin{align*}
Z_1&\lesssim  \int_\Omega \sum_k 2^{kt}\sum_{l} \sum_{j=0}^{K_{k_l}-1} 2^{(j-K_{k_l}) t }
\chi_{I_{k_l}^{j}}|g|d\mathbb P \\
& \leq  \sum_k 2^{kt}\sum_{{l}} \sum_{j=0}^{K_{k_l}-1} 2^{(j-K_{k_l}) t} \|\chi_{I_{k_l}^{j}}\|_{{\frac{r}{t}}} \| \chi_{I_{k_l}^{j}} g\|_{{(\frac{r}{t})'}}   \\
&\lesssim\sum_k2^{kt}\sum_{{l}} \sum_{j=0}^{K_{k_l}-1} 2^{(j-K_{k_l}) t} \int \chi_{I_{k_l}} \left(\frac{1}{\mathbb P(I_{k_l}^{j})}\int_{I_{k_l}^{j}} \left| g\right|^{(\frac{r}{t})'}\right)^{1/(\frac{r}{t})'}d\mathbb P
\end{align*}
because $\mathbb P(I_{k_l})=\mathbb P(I_{k_l}\dot + 2^{-j-1})=2^{-K_{k_l}}$. Choosing $\max(1,p_+)<r<\infty$ and applying H\"{o}der's inequality again, we conclude
\begin{eqnarray*}
Z_1&\lesssim& \int \sum_k 2^{kt}\sum_{{l}} \chi_{I_{k_l}} \sum_{j=0}^{K_{k_l}-1} 2^{(j-K_{k_l}) t (1/(\frac{r}{t})+1/(\frac{r}{t})')} \left(\frac{1}{\mathbb P(I_{k_l}^{j})}\int_{I_{k_l}^{j}} \left| g\right|^{(\frac{r}{t})'}\right)^{1/(\frac{r}{t })'}d\mathbb P \\
&\lesssim& \int  \sum_k 2^{kt}\sum_{{l}} \chi_{I_{k_l}} \left(\sum_{j=0}^{K_{k_l}-1} 2^{(j-K_{k_l})t} \right) ^{1/(\frac{r}{t})} \\
&&{}
\left(\sum_{j=0}^{K_{k_l}-1} 2^{(j-K_{k_l})t}
\left(\frac{1}{\mathbb P(I_{k_l}^{j})}\int_{I_{k_l}^{j}} \left| g\right|^{(\frac{r}{t})'}\right)\right) ^{1/(\frac{r}{t})'} d\mathbb P.
\end{eqnarray*}
Furthermore,
\begin{eqnarray*}
Z_1&\lesssim& \int  \sum_k 2^{kt}\sum_{l} \chi_{I_{k_l}}
\left(\sum_{j=0}^{K_{k_l}-1} 2^{(j-K_{k_l})t} \left(\frac{1}{\mathbb P(I_{k_l}^{j})}\int_{I_{k_l}^{j}} \left| g\right|^{(\frac{r}{t})'}\right) \right) ^{1/(\frac{r}{t})'}d\mathbb P \\
&\leq& \int \sum_k2^{kt}\sum_{{l}} \chi_{I_{k_l}} \left(U_t(|g|^{(\frac{r}{t})'})\right)^{1/(\frac{r}{t})'} d\mathbb P \\
&\leq& \left \|\sum_k 2^{kt}\sum_{{l}} \chi_{I_{k_l}} \right \|_{{p(\cdot)/t}}  \left\|\left(U_t(|g|^{(\frac{r}{t})'})\right)^{1/(\frac{r}{t })'}\right\|_{{(p(\cdot)/t)'}}.
\end{eqnarray*}
Using Theorem \ref{t15} and Corollary \ref{cor:ad for M}, we get
\begin{align*}
Z_1 &\lesssim \left\|\sum_{k}\sum_{l} 2^{kt}\chi_{I_{k_l}} \right \|_{\frac{p(\cdot)}{t}} \|g\|_{L_{(\frac{p(\cdot)}{t})'}}
\lesssim\left\|\sum_{k} 2^{kt}\chi_{\{\tau_k<\infty\}} \right \|_{\frac{p(\cdot)}{t}},
\end{align*}
whenever
\begin{equation}\label{e107}
	\frac{1}{\left((p(\cdot)/t)'/(r/t)'\right)_-}-\frac{1}{\left((p(\cdot)/t)'/(r/t)'\right)_+} = \frac{r/(r-t)}{p_+/(p_+-t)} - \frac{r/(r-t)}{p_-/(p_--t)} <t.
\end{equation}

Since $r$ can be arbitrarily large, this means that
\[
	\frac{p_+-t}{p_+}-\frac{p_--t}{p_-} <t,
\]
which is exactly \eqref{e104}.

\smallskip
\noindent\textbf{Step 3: estimate for $Z_2$.} In this estimate,  we have to use  $p_->t>1/2$.
We choose again a function $g\in L_{(\frac{p(\cdot)}{\varepsilon})'}$ with $\|g\|_{L_{(\frac{p(\cdot)}{\varepsilon})'}}\leq 1$ such that
\begin{align*}
 \left\|\sum_k\mu_k^{t} B_k^{t}\right\|_{\frac{p(\cdot)}{t}} &= \int_\Omega \sum_k\mu_k^t B_k(x)^{t} \chi_{\{\tau=\infty\}} gd\mathbb P.
\end{align*}
Take $\max(1,p_+)<r<\infty$  large enough such that $2t>r/(r-t)$. Let us apply H\"{o}lder's inequality to obtain
\begin{align*}
Z_2 &\lesssim \int_{\Omega} \sum_k 2^{kt}\sum_{l} \sum_{j=0}^{K_{k_l}-1}\sum_{i=j}^{K_{k_l}-1}  2^{(j-K_{k_l})t} 2^{(i-K_{k_l})t} \chi_{I_{k_l}^{j,i}}|g| \, dx \\
&\lesssim  \sum_k 2^{kt}\sum_{l} \sum_{j=0}^{K_{k_l}-1}\sum_{i=j}^{K_{k_l}-1} 2^{(j-K_{k_l})t} 2^{(i-K_{k_l})t} \|\chi_{I_{k_l}^{j,i}}\|_{{\frac{r}{t}}}
\| \chi_{I_{k_l}^{j,i}} g\|_{{(\frac{r}{t})'}}
\\
&\lesssim  \sum_k 2^{kt} \sum_{l} \sum_{j=0}^{K_{k_l}-1}\sum_{i=j}^{K_{k_l}-1} 2^{(j-K_{k_l})t} 2^{(i-K_{k_l})t} 2^{K_{k_l}-i} \int \chi_{I_{k_l}} \left(\frac{1}{\mathbb P(I_{k_l}^{j,i})}\int_{I_{k_l}^{j,i}} \left| g\right|^{(\frac{r}{t})'}\right)^{1/(\frac{r}{t})'}d\mathbb P.
\end{align*}
Moreover,
\begin{eqnarray*}
Z_2
&\lesssim& \int  \sum_k 2^{kt}\sum_{l} \chi_{I_{k_l}} \sum_{j=0}^{K_{k_l}-1}\sum_{i=j}^{K_{k_l}-1} 2^{(i+j-2K_{k_l})t(1/(\frac{r}{t})+1/(\frac{r}{t})')} 2^{K_{k_l}-i}  \left(\frac{1}{\mathbb P(I_{k_l}^{j,i})}\int_{I_{k_l}^{j,i}} \left| g\right|^{(\frac{r}{t})'}\right)^{1/(\frac{r}{t})'}d\mathbb P\\
&\lesssim& \int  \sum_k 2^{kt}\sum_{l} \chi_{I_{k_l}} \left(\sum_{j=0}^{K_{k_l}-1}\sum_{i=j}^{K_{k_l}-1} 2^{(j-K_{k_l})t} 2^{(i-K_{k_l})t} \right)^{\frac{t}{r}} \left(\sum_{j=0}^{K_{k_l}-1}\sum_{i=j}^{K_{k_l}-1} 2^{(j-K_{k_l})t} 2^{(i-K_{k_l})t}\right.\\
&&{} \left. 2^{(K_{k_l}-i)(\frac{r}{t})'} \frac{1}{\mathbb P(I_{k_l}^{j,i})}\int_{I_{k_l}^{j,i}} \left| g\right|^{(\frac{r}{t})'}\right)^{1/(\frac{r}{t})'}d\mathbb P\\
&\lesssim& \int  \sum_k 2^{kt}\sum_{l} \chi_{I_{k_l}} \left(\sum_{j=0}^{K_{k_l}-1}\sum_{i=j}^{K_{k_l}-1} 
2^{(j-K_{k_l})(2t-r/(r-t))} 2^{(j-i)(r/(r-t)-t)} \frac{1}{\mathbb P(I_{k_l}^{j,i})}\int_{I_{k_l}^{j,i}} \left| g\right|^{(\frac{r}{t})'}\right)^{1/(\frac{r}{t})'}d\mathbb P.
\end{eqnarray*}
Note that $((p(\cdot)/t)')_+<\infty$ and $(\frac{r}{t})'<(p(\cdot)/t)'$. Taking into account the definition of the maximal operator $V$, Theorem \ref{t17} and Corollary \ref{cor:ad for M}, we obtain
\begin{eqnarray*}
Z_2
&\leq& \int \sum_k 2^{kt} \sum_{l} \chi_{I_{k_l}} \left(V_{2t-r/(r-t),r/(r-t)-t}(|g|^{(\frac{r}{t})'})\right)^{1/(\frac{r}{t})'} d\mathbb P \\
&\leq& \left \| \sum_k 2^{kt}\sum_{l} \chi_{I_{k_l}} \right \|_{{p(\cdot)/t}}  \left\|\left(V_{2t-r/(r-t),r/(r-t)-t}(|g|^{(\frac{r}{t})'})\right)^{1/(\frac{r}{t})'}\right\|_{{(p(\cdot)/t)'}} \\
&\lesssim& \left \| \sum_k 2^{kt}\chi_{\{\tau<\infty\}} \right \|_{{\frac{p(\cdot)}t}},
\end{eqnarray*}
whenever \eqref{e107}
and \eqref{e104} hold.
Combining the estimates of $Z_1$ and $Z_2$, we finish the proof.
\end{proof}

\begin{remark}
If $1 \leq  p_- <\infty$, then \eqref{e104} holds for all $p_+$. If $1/2<p_-<1$, then $p_+$ can be chosen such that $p_+>1$ and \eqref{e104} holds. 
\end{remark}

We immediately get the boundedness of $\sigma_*$ from $H_{p(\cdot)}$ to $L_{p(\cdot)}$ by the above theorems. For the constant $p=1$ it is due to Fujii \cite{fu} (see also Schipp and Simon \cite{schs}). For other constant $p$'s with $1/2<p\leq \infty$, the theorem was proved by the third author in \cite{wces3}.

\begin{theorem} \label{t4 Leb}
Let $p(\cdot)\in\mathcal{P}(\Omega)$ satisfy conditions \eqref{log} and \eqref{e104}. If $1/2<p_-<\infty$, then
$$
\left\|\sigma_*f\right\|_{{p(\cdot)}}\lesssim \left\|f\right\|_{H_{p(\cdot)}},\quad f\in H_{p(\cdot)}.
$$
\end{theorem}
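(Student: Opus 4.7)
The plan is to derive Theorem \ref{t4 Leb} as an immediate consequence of the two preceding results, namely the general atomic boundedness criterion of Theorem \ref{t1 Leb} and the specific tail estimate for $\sigma_*$ provided by Theorem \ref{t3 Leb}. No new analytic work should be needed here; all the hard estimates (involving the two new maximal operators $U_t$ and $V_{\alpha,s}$) have already been absorbed into Theorem \ref{t3 Leb}.

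First I would verify the two structural hypotheses required by Theorem \ref{t1 Leb}. The operator $\sigma_*$ is $\sigma$-sublinear: since each $\sigma_n$ is linear,
\[
\Bigl|\sigma_n\Bigl(\sum_k f_k\Bigr)\Bigr| \leq \sum_k |\sigma_n f_k| \leq \sum_k \sigma_* f_k,
\]
and taking the supremum in $n$ gives the required inequality. The operator $\sigma_*$ is bounded on $L_\infty$ because $\sigma_n f = f \ast K_n$ (dyadic convolution) and the Walsh--Fej\'er kernels satisfy $\sup_n \|K_n\|_1 \leq C$, whence $\|\sigma_n f\|_\infty \leq C\|f\|_\infty$ uniformly in $n$.

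Next I would select an appropriate exponent $t$. Since $p_- > 1/2$ implies $\underline{p} = \min(p_-,1) > 1/2$, the open interval $(1/2,\underline{p})$ is nonempty, so I fix any $t$ with $1/2 < t < \underline{p}$. With this choice of $t$, the hypothesis \eqref{e104} of Theorem \ref{t3 Leb} is exactly the condition \eqref{e104} assumed in the present theorem, so Theorem \ref{t3 Leb} applies and yields
\[
\Bigl\|\sum_k \mu_k^{t}\sigma_*(a^k)^{t}\chi_{\{\tau_k=\infty\}}\Bigr\|_{p(\cdot)/t} \lesssim \Bigl\|\sum_k 2^{kt} \chi_{\{\tau_k<\infty\}}\Bigr\|_{p(\cdot)/t}
\]
for any atomic decomposition of $f\in H_{p(\cdot)}$ into $(3,p(\cdot),\infty)$-atoms $a^k$ with stopping times $\tau_k$ and coefficients $\mu_k = 3\cdot 2^k\|\chi_{\{\tau_k<\infty\}}\|_{p(\cdot)}$. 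This is precisely condition \eqref{e1 Leb} of Theorem \ref{t1 Leb}.

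Finally, invoking Theorem \ref{t1 Leb} with $T=\sigma_*$ produces the desired bound $\|\sigma_* f\|_{p(\cdot)} \lesssim \|f\|_{H_{p(\cdot)}}$, where $H_{p(\cdot)}$ is interpreted via the $(3,p(\cdot),\infty)$-atomic characterization guaranteed by Corollary \ref{cor:ad for M} under the regularity of the dyadic filtration. Since all the substantive work has been front-loaded into Theorem \ref{t3 Leb}, the only thing to watch is that the range $1/2 < t < \underline{p}$ required there matches the hypothesis $p_- > 1/2$ here---which it does. There is no real obstacle at this stage; the proof is essentially a one-line corollary of the two previous theorems.
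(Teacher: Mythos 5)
Your proposal is correct and is essentially the paper's own proof: the paper states Theorem \ref{t4 Leb} as an immediate consequence of combining Theorems \ref{t1 Leb} and \ref{t3 Leb}, and you have supplied exactly the routine verifications (the $\sigma$-sublinearity and $L_\infty$-boundedness of $\sigma_*$, and the choice of $t\in(1/2,\underline p)$ made possible by $p_->1/2$) that the paper leaves implicit. No gap.
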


If $p(\cdot)=p$ and $p\leq 1/2$, then the theorem is not true anymore (see Simon and Weisz \cite{wgyenge}, Simon \cite{si7} and G{\'a}t and Goginava \cite{gago}). If \eqref{e104} does not hold, then a counterexample can be found in Theorem \ref{t101} below. This theorem implies the next consequences about the convergence of $\sigma_nf$. First we consider the almost everywhere convergence.

\begin{corollary} \label{c20}
Let $p(\cdot)\in\mathcal{P}(\Omega)$ satisfy conditions \eqref{log} and \eqref{e104}. If $1/2<p_-<\infty$ and $f\in H_{p(\cdot)}$, then $\sigma_{n} f$ converges almost everywhere on $[0,1)$.
\end{corollary}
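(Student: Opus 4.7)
The plan is to deduce pointwise convergence from the maximal inequality of Theorem \ref{t4 Leb} combined with convergence on a dense subclass of Walsh polynomials, via the standard Banach-principle argument.

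First, I would observe that for any Walsh polynomial $T$ of degree at most $N$ one has $s_k T = T$ for all $k \geq N$, so that
\[
\sigma_n T = T + \frac{1}{n} \sum_{k=1}^{N-1} (s_k T - T) \longrightarrow T
\]
uniformly on $[0,1)$ as $n \to \infty$. Next, I would establish that Walsh polynomials form a dense subset of $H_{p(\cdot)}$. When $p_- > 1$, Corollary \ref{c100} identifies $H_{p(\cdot)}$ with $L_{p(\cdot)}$, and density is obtained exactly as in the proof of Corollary \ref{c11 Leb}. When $1/2 < p_- \leq 1$, I would exploit that the dyadic basis is regular, so by Theorem \ref{theorem of martingale inequalities Leb} we may work in $H_{p(\cdot)}^M$; each stopped martingale $f^n$ is a Walsh polynomial (since $f_n$ is constant on dyadic intervals of length $2^{-n}$), and $\|f - f^n\|_{H_{p(\cdot)}^M} \to 0$ follows by dominated convergence applied to $M(f - f^n) \leq 2M(f) \in L_{p(\cdot)}$ (which has absolutely continuous norm since $p_+ < \infty$).

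To finish, I would fix $f \in H_{p(\cdot)}$ and $\lambda > 0$, and for an arbitrary $\varepsilon > 0$ choose a Walsh polynomial $T$ with $\|f - T\|_{H_{p(\cdot)}} < \varepsilon$. Since $\sigma_n T \to T$ everywhere, the pointwise bound
\[
\limsup_{n \to \infty} |\sigma_n f(x) - f(x)| \leq \sigma_*(f - T)(x) + |f(x) - T(x)|
\]
holds for a.e.\ $x \in [0,1)$, so
\[
\{\limsup_n |\sigma_n f - f| > 2\lambda\} \subset \{\sigma_*(f - T) > \lambda\} \cup \{|f - T| > \lambda\}.
\]
By Theorem \ref{t4 Leb} and the obvious embedding $H_{p(\cdot)}^M \hookrightarrow L_{p(\cdot)}$, both $\|\sigma_*(f - T)\|_{p(\cdot)}$ and $\|f - T\|_{p(\cdot)}$ are controlled by a constant multiple of $\varepsilon$. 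Then the Chebyshev-type bound $\|\chi_{\{|g| > \lambda\}}\|_{p(\cdot)} \leq \|g\|_{p(\cdot)} / \lambda$, together with the elementary estimate $\mathbb P(A) \leq \|\chi_A\|_{p(\cdot)}^{p_-}$ (valid when $\|\chi_A\|_{p(\cdot)} \leq 1$, since the modular satisfies $\lambda^{-p_-}\mathbb P(A) \leq \rho(\chi_A/\lambda) \leq 1$ for $\lambda = \|\chi_A\|_{p(\cdot)}$), yields
\[
\mathbb P \{\limsup_n |\sigma_n f - f| > 2\lambda\} \lesssim (\varepsilon/\lambda)^{p_-}.
\]
Letting $\varepsilon \to 0$ and then $\lambda \to 0$ along a countable sequence gives the claim.

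The main technical obstacle will be verifying the density of Walsh polynomials in $H_{p(\cdot)}$ in the quasi-Banach regime $1/2 < p_- \leq 1$; once the regularity of the dyadic basis is used to reduce to convergence of the stopped martingales $f^n$ in the maximal quasi-norm, the rest is the routine Banach-principle deduction of almost everywhere convergence from a maximal inequality.
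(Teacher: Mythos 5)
Your proof is correct and shares the essential ingredients with the paper's argument: density of Walsh polynomials in $H_{p(\cdot)}$, the maximal inequality of Theorem \ref{t4 Leb}, and a Banach-principle deduction. There is, however, one genuinely different design choice worth highlighting. You bound $\limsup_n|\sigma_n f(x) - f(x)|$, which tacitly treats the martingale $f$ as a pointwise-defined function $f_\infty = \lim_n f_n$. In the quasi-Banach range $1/2 < p_- \leq 1$, the martingale need not be $L_1$-bounded, so the a.e.\ existence of $f_\infty$ is not automatic and must be supplied by the local convergence theorem for regular (dyadic) martingales: on $\{M(f)<\infty\}$, and hence a.e., the sequence $f_n$ converges. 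You do not state this; it should be invoked explicitly. The paper sidesteps the issue altogether by proving an a.e.\ Cauchy criterion for the Fejér means: it sets $g_N := \sup_{n,k\geq N}|\sigma_n f - \sigma_k f|$, shows that the limit $g$ is dominated by a constant multiple of $\sigma_*(f - f_m)$ for every $m$, and since $g$ is independent of $m$ concludes directly that $\|g\|_{p(\cdot)} = 0$, hence $g = 0$ a.e. This also makes the Chebyshev-type step $\mathbb P(A) \leq \|\chi_A\|_{p(\cdot)}^{p_-}$ unnecessary, and explains why the paper defers the identification of the limit with $f$ to Corollaries \ref{c50} and \ref{c51}, which assume local integrability of $f$. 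Once the existence of $f_\infty$ is supplied, your argument actually proves the stronger statement $\sigma_n f \to f_\infty$ a.e.\ in the full range $1/2<p_-<\infty$, which subsumes those corollaries; but for the statement as written the paper's Cauchy-criterion route is cleaner and dispenses with the additional martingale-theoretic fact.
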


\begin{proof}
Fix $f\in H_{p(\cdot)}$ and set
$$
g_N(x):= \sup_{n,k\geq N} |\sigma_nf(x)-\sigma_kf(x)|, \qquad g(x):= \lim_{N \to \infty} g_N(x) \qquad (x\in [0,1)).
$$
It is sufficient to show that $g=0$ almost everywhere.

Observe that $f_m$ is a Walsh polynomial,
$$
\|f-f_n\|_{H_{p(\cdot)}} \to 0 \qquad \mbox{and} \qquad \sigma_nf_m\to f_m
$$
as $n \to \infty$. Since
$$
|\sigma_nf(x)-\sigma_kf(x)| \leq 2\sigma_*f(x)
$$
and
$$
g_N(x) \leq \sup_{n\geq N} |\sigma_n(f-f_m)(x)| +
\sup_{n,k\geq N} |\sigma_nf_m(x)-\sigma_kf_m(x)| + \sup_{k\geq N}|\sigma_k(f_m-f)(x)|,
$$
we conclude that
$$
g(x) \leq 4 \sigma_*(f-f_m)(x)
$$
for all $m \in \N$ and $x\in I$. Henceforth, by Theorem \ref{t4 Leb},
$$
\left\|g\right\|_{{p(\cdot)}}
\leq 4 \left\|\sigma_*(f-f_m)\right\|_{{p(\cdot)}} \lesssim \left\|f-f_m\right\|_{H_{p(\cdot)}} \to 0
$$
as $m\to \infty$. Hence $g=0$ almost everywhere.
\end{proof}

For an integrable function $f$ belonging to the Hardy spaces, the limit of $\sigma_nf$ is exactly the function $f$.
Let $I\in \F_k$ be an atom of $\F_k$. The restriction of a martingale $f$ to the atom $I$ is defined by
$$
f\chi_I:=(\mathbb E_nf \chi_I,n\geq k).
$$

\begin{corollary}\label{c50}
Let $p(\cdot)\in\mathcal{P}(\Omega)$ satisfy conditions \eqref{log} and \eqref{e104}, $1/2<p_-<\infty$ and $f\in H_{p(\cdot)}$. If there exists a dyadic interval $I$ such that the restriction $f \chi_I \in L_1(I)$, then
$$
\lim_{n\to\infty}\sigma_{n} f(x)=f(x) \qquad \mbox{for a.e. $x\in I$.}
$$
\end{corollary}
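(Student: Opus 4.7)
The plan is to combine the maximal inequality of Theorem \ref{t4 Leb} with a Walsh-polynomial density argument in $H_{p(\cdot)}$. By Corollary \ref{c20} we already know that $\tilde f(x):=\lim_{n\to\infty}\sigma_n f(x)$ exists for a.e.\ $x\in[0,1)$, so the task reduces to showing $\tilde f=f$ almost everywhere on $I$.

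First, I would establish that $f_m:=\E_m f$ converges to $f$ in $H_{p(\cdot)}$. Each $f_m$ is a Walsh polynomial of order less than $2^m$, hence $\sigma_n f_m\to f_m$ uniformly as $n\to\infty$. Since the dyadic stochastic basis is regular, Theorem \ref{theorem of martingale inequalities Leb} permits me to measure the $H_{p(\cdot)}$-norm by the conditional square function. From
$$
s(f-f_m)^2=\sum_{n>m}|d_n f|^2\downarrow 0 \quad\text{a.e.,}
$$
dominated by $s(f)\in L_{p(\cdot)}$, the dominated convergence theorem in $L_{p(\cdot)}$ (see \cite{Cruz2013}) yields $\|f-f_m\|_{H_{p(\cdot)}^s}\to 0$, and the regularity-driven equivalence of the five Hardy norms then gives $\|f-f_m\|_{H_{p(\cdot)}}\to 0$.

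Next, Theorem \ref{t4 Leb} delivers $\|\sigma_*(f-f_m)\|_{p(\cdot)}\to 0$. Since convergence in $L_{p(\cdot)}$ implies convergence in probability, a subsequence $(m_j)$ satisfies $\sigma_*(f-f_{m_j})\to 0$ almost everywhere on $[0,1)$. At any $x\in I$ lying outside the relevant negligible sets,
$$
|\tilde f(x)-f_{m_j}(x)|=\lim_{n\to\infty}|\sigma_n(f-f_{m_j})(x)|\leq \sigma_*(f-f_{m_j})(x),
$$
using that $\sigma_n f_{m_j}\to f_{m_j}$ uniformly. The hypothesis $f\chi_I\in L_1(I)$ together with Doob's martingale convergence theorem applied to the restricted martingale $(\E_n(f\chi_I))$ forces $f_m(x)\to f(x)$ for almost every $x\in I$. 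Letting $j\to\infty$ in the displayed inequality therefore produces $\tilde f(x)=f(x)$, which is the desired conclusion.

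The main obstacle I expect is the density step $\|f-f_m\|_{H_{p(\cdot)}}\to 0$; the clean route is exactly the one sketched above, working through $H_{p(\cdot)}^s$ and invoking both the equivalence of the five Hardy spaces under regularity and the $L_{p(\cdot)}$ dominated convergence theorem. The remaining ingredients—extraction of an a.e.\ subsequence from $L_{p(\cdot)}$-convergence, and the pointwise martingale convergence of $(f_m)$ on $I$—are standard.
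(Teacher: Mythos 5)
Your argument is correct and rests on the same three ingredients the paper uses: density of the partial sums $f_m$ in $H_{p(\cdot)}$, the maximal inequality of Theorem \ref{t4 Leb}, and Doob's pointwise martingale convergence of $(f_m)$ on $I$ furnished by the hypothesis $f\chi_I\in L_1(I)$. The only cosmetic difference is that the paper avoids the subsequence extraction: it estimates $\bigl\|\limsup_n|\sigma_nf-f|\bigr\|_{p(\cdot)}$ directly via the pointwise bound $\limsup_n|\sigma_nf-f|\leq \sigma_*(f-f_m)+|f-f_m|$ on $I$, lets $m\to\infty$, and concludes the limsup vanishes a.e.; your fuller justification of $\|f-f_m\|_{H_{p(\cdot)}}\to 0$ via the conditional square function and the $L_{p(\cdot)}$ dominated convergence theorem spells out a step the paper leaves implicit.
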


\begin{proof}
For $f\in H_{p(\cdot)}$, let
$$
g(x):= \limsup_{n \to \infty} |\sigma_nf(x)-f(x)| \qquad (x\in I).
$$
There exists $k\in \N$ such that $I$ is an atom of $\F_k$. Observe that
\begin{eqnarray*}
g(x) &\leq& \limsup_{n \to \infty} |\sigma_n(f-f_m)(x)| +
\limsup_{n \to \infty} |\sigma_nf_m(x)-f_m(x)| + |f_m(x)-f(x)| \\
&\leq& \sigma_*(f-f_m)(x) + |f(x)-f_m(x)|
\end{eqnarray*}
for all $m \in \N$ and $x\in I$. Theorem \ref{t4} implies
\begin{eqnarray*}
\left\|g\right\|_{{p(\cdot)}}
&\leq& \left\|\sigma^*(f-f_m)\chi_I\right\|_{{p(\cdot)}} + \left\|(f-f_m)\chi_I\right\|_{{p(\cdot)}} \\
&\leq& \left\|\sigma^*(f-f_m)\right\|_{{p(\cdot)}} + \left\|\sup_{n\geq k} \left|\mathbb E_n(f-f_m)\chi_I\right|\right\|_{{p(\cdot)}} \\
&\leq& 2\left\|f-f_m\right\|_{H_{p(\cdot)}} \to 0
\end{eqnarray*}
as $m\to \infty$. Hence $g=0$ almost everywhere.
\end{proof}

Since $f\in H_{p(\cdot)}$ with $1\leq p_-<\infty$ implies that $f$ is integrable, we obtain the next corollary.

\begin{corollary}\label{c51}
Let $p(\cdot)\in\mathcal{P}(\Omega)$ satisfy conditions \eqref{log} and \eqref{e104}, $1\leq p_-<\infty$
and $f\in H_{p(\cdot)}$. Then
$$
\lim_{n\to\infty}\sigma_{n} f(x)=f(x) \qquad \mbox{for a.e. $x\in [0,1)$.}
$$
\end{corollary}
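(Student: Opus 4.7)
The plan is to reduce Corollary \ref{c51} to Corollary \ref{c50} by showing that under the hypothesis $p_-\geq 1$ the martingale $f$ is automatically integrable, so that $f\chi_I \in L_1(I)$ holds for $I=[0,1)$.

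First I would exploit the fact that in the setting of Section \ref{sec5} the underlying dyadic filtration is regular (Example \ref{ex:regular}). Thus Theorem \ref{theorem of martingale inequalities Leb} applies and gives the identification $H_{p(\cdot)} = H_{p(\cdot)}^M$ with equivalent quasi-norms. In particular, the hypothesis $f\in H_{p(\cdot)}$ yields $M(f)\in L_{p(\cdot)}([0,1))$.

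Next I would use the embedding $L_{p(\cdot)}\hookrightarrow L_1$ on the probability space $[0,1)$, which follows from Lemma \ref{lemma for embedding} applied with the constant exponent $q(\cdot)\equiv 1\leq p(\cdot)$, since $p_-\geq 1$. Therefore $M(f)\in L_1([0,1))$, which implies that $(f_n)_{n\geq 0}$ is $L_1$-bounded and dominated by the integrable majorant $M(f)$. By the classical martingale convergence theorem, $f_n$ converges a.e.\ and in $L_1$ to some $f_\infty\in L_1([0,1))$, and we identify $f$ with its closure $f_\infty$. In particular $f=f\chi_{[0,1)}\in L_1([0,1))$.

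Finally I would apply Corollary \ref{c50} with the dyadic interval $I=[0,1)$ to conclude that $\sigma_n f(x)\to f(x)$ for almost every $x\in[0,1)$, which is precisely the claim. The only real content of the argument is the integrability of $f$; there is no genuine analytic obstacle, since once $f\in L_1$ is established, the pointwise convergence is handed to us by the already-proved Corollary \ref{c50} (whose proof in turn rested on Theorem \ref{t4 Leb} via the density of Walsh polynomials). The slight subtlety to watch is that Theorem \ref{theorem of martingale inequalities Leb} must be invoked in the regular case to avoid any restriction of the form $p_->1$ when identifying $H_{p(\cdot)}$ with $H_{p(\cdot)}^M$.
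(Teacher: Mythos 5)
Your argument is correct and coincides exactly with the paper's own (one-line) justification: the hypothesis $p_-\geq 1$ gives $L_{p(\cdot)}\hookrightarrow L_1$, hence $f\in H_{p(\cdot)}$ is integrable, and Corollary~\ref{c50} applies with $I=[0,1)$. The fleshed-out details you supply (regularity $\Rightarrow H_{p(\cdot)}=H_{p(\cdot)}^M$, Lemma~\ref{lemma for embedding}, martingale convergence) are precisely what the paper's sentence "Since $f\in H_{p(\cdot)}$ with $1\leq p_-<\infty$ implies that $f$ is integrable" compresses.
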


In the next subsection in Corollary \ref{c1}, we will show the almost everywhere convergence for all integrable functions.

For the norm convergence, we can prove the following consequences similarly.

\begin{corollary} \label{c30}
Let $p(\cdot)\in\mathcal{P}(\Omega)$ satisfy conditions \eqref{log} and \eqref{e104}. If $1/2<p_-<\infty$ and $f\in H_{p(\cdot)}$, then $\sigma_{n} f$ converges in the $L_{p(\cdot)}$-norm.
\end{corollary}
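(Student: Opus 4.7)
The plan is to combine the boundedness of $\sigma_*$ from Theorem \ref{t4 Leb} with a Cauchy-sequence argument in the complete quasi-Banach space $L_{p(\cdot)}$, using the martingale sections $f_m := \mathbb{E}_m f$ as an approximating family of Walsh polynomials. The scheme has three ingredients: (i) $f_m \to f$ in $H_{p(\cdot)}$; (ii) for each fixed $m$, $\sigma_n f_m \to f_m$ in $L_{p(\cdot)}$ as $n\to\infty$; (iii) the $\underline{p}$-triangle inequality \eqref{equation of b trangile inequality} together with Theorem \ref{t4 Leb} to conclude that $(\sigma_n f)$ is Cauchy.

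For (i), since the dyadic filtration is regular, Theorem \ref{theorem of martingale inequalities Leb} lets me replace $H_{p(\cdot)}$ with $H_{p(\cdot)}^s$. A direct computation of martingale differences gives $d_k(f-f_m)=0$ for $k\leq m$ and $d_k f$ for $k>m$, hence $s(f-f_m)^2=s(f)^2-s_m(f)^2\downarrow 0$ pointwise, with domination $s(f-f_m)\leq s(f)\in L_{p(\cdot)}$. Because $p_+<\infty$, the dominated convergence theorem in $L_{p(\cdot)}$ (see e.g.\ \cite{Cruz2013}) yields $\|s(f-f_m)\|_{p(\cdot)}\to 0$, i.e.\ $\|f-f_m\|_{H_{p(\cdot)}}\to 0$. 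For (ii), $f_m=\sum_{j=0}^{2^m-1}\widehat{f}(j)w_j$ is a Walsh polynomial, so $s_k f_m=f_m$ for all $k\geq 2^m$, and therefore
\[
\sigma_n f_m - f_m \;=\; \frac{1}{n}\sum_{k=1}^{2^m-1}(s_k f_m - f_m) \qquad (n\geq 2^m),
\]
which is a fixed bounded function divided by $n$ and tends to $0$ in $L_\infty$, hence in $L_{p(\cdot)}$ by Lemma \ref{lemma for embedding}.

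For (iii), using the $\underline p$-quasi-triangle inequality \eqref{equation of b trangile inequality} and Theorem \ref{t4 Leb},
\[
\|\sigma_n f - \sigma_k f\|_{p(\cdot)}^{\underline p}
\leq 2\|\sigma_*(f-f_m)\|_{p(\cdot)}^{\underline p} + \|\sigma_n f_m - \sigma_k f_m\|_{p(\cdot)}^{\underline p}
\lesssim \|f-f_m\|_{H_{p(\cdot)}}^{\underline p} + \|\sigma_n f_m - \sigma_k f_m\|_{p(\cdot)}^{\underline p}.
\]
Given $\varepsilon>0$, I first fix $m$ so that the first summand is $<\varepsilon/2$ by step (i), and then take $n,k$ sufficiently large (depending on this $m$) so that the second summand is $<\varepsilon/2$ by step (ii). Completeness of the quasi-Banach space $L_{p(\cdot)}$ produces the norm-limit; when $1\leq p_-<\infty$ one identifies this limit with $f$ via Corollary \ref{c51}.

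The main obstacle is step (i): one must pass from the almost-everywhere decay of $s(f-f_m)$ to the norm decay in the variable Lebesgue space, which relies on the dominated convergence theorem for $L_{p(\cdot)}$ (valid because $p_+<\infty$) and on the equivalence of the five Hardy spaces under regularity. Everything else is a standard density argument, with Theorem \ref{t4 Leb} furnishing the uniform bound on $\sigma_*$ required to transfer polynomial convergence to general $f\in H_{p(\cdot)}$.
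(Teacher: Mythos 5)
Your argument is correct and follows exactly the route the paper intends: it says the norm-convergence corollaries "can be proved similarly" to Corollary \ref{c20}, i.e.\ by a density argument with the maximal bound of Theorem \ref{t4 Leb} transferring polynomial convergence to the whole space. You also flesh out the one step the paper only asserts ("Observe that \dots $\|f-f_n\|_{H_{p(\cdot)}}\to 0$"): using regularity to work in $H^s_{p(\cdot)}$, the monotone identity $s(f-f_m)^2=s(f)^2-s_m(f)^2$, domination by $s(f)\in L_{p(\cdot)}$, and the dominated convergence theorem in $L_{p(\cdot)}$ (valid since $p_+<\infty$), which is a clean and correct way to justify that density. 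One tiny bookkeeping note: for the bare statement of Corollary \ref{c30} one only needs that $(\sigma_n f)$ is Cauchy, so the final identification of the limit with $f$ (which you rightly restrict to $1\le p_-$ via the a.e.\ result) is an optional addendum rather than part of what must be shown.
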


\begin{corollary}\label{c52}
Let $p(\cdot)\in\mathcal{P}(\Omega)$ satisfy conditions \eqref{log} and \eqref{e104}, $1/2<p_-<\infty$
and $f\in H_{p(\cdot)}$. If there exists a dyadic interval $I$ such that the restriction $f \chi_I \in L_1(I)$, then
$$
\lim_{n\to\infty}\sigma_{n} f=f \qquad \mbox{in the $L_{p(\cdot)}(I)$-norm.}
$$
\end{corollary}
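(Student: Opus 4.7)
The plan is to parallel the almost-everywhere argument in Corollary~\ref{c50}, replacing pointwise control by an $L_{p(\cdot)}$-quasi-norm estimate. The two crucial inputs will be the boundedness $\sigma_*\colon H_{p(\cdot)}\to L_{p(\cdot)}$ supplied by Theorem~\ref{t4 Leb}, together with an auxiliary density statement: the martingale truncations $f_m=\mathbb{E}_m f$, which are $\mathcal{F}_m$-measurable and hence Walsh polynomials of degree strictly less than $2^m$, approximate $f$ in $H_{p(\cdot)}$.

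To justify this density I would argue as follows. Because the dyadic basis is regular (Example~\ref{ex:regular}), Theorem~\ref{theorem of martingale inequalities Leb} identifies all five variable martingale Hardy spaces with equivalent quasi-norms, so it suffices to work with the square-function norm. Since $d_k(f-f_m)=0$ for $k\leq m$, one has $S(f-f_m)^{2}=S(f)^{2}-S_m(f)^{2}$, which decreases to $0$ almost everywhere and is dominated by $S(f)\in L_{p(\cdot)}$. The dominated convergence theorem in $L_{p(\cdot)}$ (valid since $p_+<\infty$; see \cite{Cruz2013}) then yields $\|S(f-f_m)\|_{p(\cdot)}\to 0$. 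Thus, given $\varepsilon>0$, I can fix $m$ with $\|f-f_m\|_{H_{p(\cdot)}}<\varepsilon$.

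With such an $m$ fixed, I would split, using the linearity of $\sigma_n$,
\[
  f-\sigma_n f \;=\; (f-f_m) \;+\; (f_m-\sigma_n f_m) \;-\; \sigma_n(f-f_m),
\]
and, for $0<b\leq \underline{p}$, apply the quasi-triangle inequality \eqref{equation of b trangile inequality} to bound $\|(f-\sigma_n f)\chi_I\|_{p(\cdot)}^{b}$ by the sum of the three corresponding $b$-th powers. The first term is handled exactly as in Corollary~\ref{c50}: the assumption $f\chi_I\in L_{1}(I)$ lets us identify $f$ with a function on $I$, and then
\[
  \|(f-f_m)\chi_I\|_{p(\cdot)} \;\leq\; \|M(f-f_m)\|_{p(\cdot)} \;\lesssim\; \|f-f_m\|_{H_{p(\cdot)}} \;<\; \varepsilon,
\]
using the regularity-based equivalence $H_{p(\cdot)}^{M}\approx H_{p(\cdot)}$. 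The third term is controlled uniformly in $n$ by Theorem~\ref{t4 Leb}: $\|\sigma_n(f-f_m)\chi_I\|_{p(\cdot)}\leq \|\sigma_*(f-f_m)\|_{p(\cdot)}\lesssim \|f-f_m\|_{H_{p(\cdot)}}<\varepsilon$. For the middle term, since $f_m$ is a Walsh polynomial of degree less than $2^m$ we have $s_k f_m=f_m$ for all $k\geq 2^m$, whence $\sigma_n f_m\to f_m$ uniformly on $[0,1)$; the crude estimate $\|(f_m-\sigma_n f_m)\chi_I\|_{p(\cdot)}\leq \|f_m-\sigma_n f_m\|_{\infty}\|\chi_I\|_{p(\cdot)}$ then shows this piece tends to $0$ as $n\to\infty$.

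Passing to $\limsup_{n\to\infty}$ therefore gives $\limsup_{n}\|(f-\sigma_n f)\chi_I\|_{p(\cdot)}^{b}\lesssim \varepsilon^{b}$, and since $\varepsilon$ was arbitrary the desired $L_{p(\cdot)}(I)$-norm convergence follows. The only non-routine step in the plan is the $H_{p(\cdot)}$-density of the truncations $(f_m)$, which reduces to dominated convergence in $L_{p(\cdot)}$; the remainder is a standard $\varepsilon/3$-type decomposition that essentially packages Theorem~\ref{t4 Leb} together with the Hardy-space equivalences already recorded in the paper.
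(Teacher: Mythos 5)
Your argument is correct and is essentially the adaptation the paper has in mind when it states ``For the norm convergence, we can prove the following consequences similarly'' after Corollary~\ref{c50}: you use the same three inputs (density of the truncations $f_m$ in $H_{p(\cdot)}$, the boundedness of $\sigma_*$ from Theorem~\ref{t4 Leb}, and the identification of $f$ with a function on $I$ via $M(f-f_m)$), only replacing the pointwise $\limsup$ of Corollary~\ref{c50} by a direct $\varepsilon/3$-type bound on the $L_{p(\cdot)}(I)$-quasi-norm. The one place where you go beyond the paper is in supplying a proof of $\|f-f_m\|_{H_{p(\cdot)}}\to 0$ via $S(f-f_m)^2=S(f)^2-S_m(f)^2$ and dominated convergence in $L_{p(\cdot)}$; the paper simply asserts this density in the proof of Corollary~\ref{c20}, so your version is a welcome (and correct) filling-in of that gap, not a different route.
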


\begin{corollary}\label{c53}
Let $p(\cdot)\in\mathcal{P}(\Omega)$ satisfy conditions \eqref{log} and \eqref{e104}, $1\leq p_-<\infty$
and $f\in H_{p(\cdot)}$. Then
$$
\lim_{n\to\infty}\sigma_{n} f=f  \qquad \mbox{in the $L_{p(\cdot)}$-norm.}
$$
\end{corollary}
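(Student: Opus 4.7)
The plan is to read this off Corollary \ref{c52} by taking the dyadic interval to be all of $[0,1)$; the only nontrivial point is justifying the integrability hypothesis of that corollary under the global assumption $p_-\geq 1$.

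First I would verify that $f\in H_{p(\cdot)}$ with $p_-\geq 1$ automatically belongs to $L_1([0,1))$. Since the dyadic filtration is regular (Example \ref{ex:regular}), Theorem \ref{theorem of martingale inequalities Leb} tells me that all five Hardy quasi-norms are equivalent, so $M(f)\in L_{p(\cdot)}$. Applying Lemma \ref{lemma for embedding} (with $p_-\leq p(\cdot)$) gives $L_{p(\cdot)}\hookrightarrow L_{p_-}$, and on a probability space $L_{p_-}\hookrightarrow L_1$ because $p_-\geq 1$. Hence $M(f)\in L_1$, so in particular $f_\infty=f\in L_1$.

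Next I would apply Corollary \ref{c52} with $I=[0,1)$. The restriction $f\chi_I$ is just $f$, which by the previous step lies in $L_1(I)$, so the hypothesis is met. The conclusion of Corollary \ref{c52} is $\sigma_n f\to f$ in the $L_{p(\cdot)}(I)$-norm, which is the statement of Corollary \ref{c53}.

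There is essentially no obstacle: the one-line reduction rests entirely on the embedding $L_{p(\cdot)}\hookrightarrow L_1$ on a probability space when $p_-\geq 1$, and everything else is already in place. As a sanity check, one may also argue independently by combining Corollary \ref{c30} (which already yields $L_{p(\cdot)}$-norm convergence of $(\sigma_n f)$ to \emph{some} limit $g$) with Corollary \ref{c51} (which gives $\sigma_n f\to f$ a.e.): the same embedding $L_{p(\cdot)}\hookrightarrow L_1$ turns $L_{p(\cdot)}$-convergence into convergence in measure, so a subsequence of $(\sigma_n f)$ tends to $g$ a.e., and uniqueness of the a.e.\ limit forces $g=f$.
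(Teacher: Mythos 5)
Your proof is correct and matches the paper's (implicit) argument: the paper deduces Corollary \ref{c53} from Corollary \ref{c52} with $I=[0,1)$ using exactly the observation you verify in detail, namely that $1\leq p_-$ forces $f\in H_{p(\cdot)}$ to be integrable via $L_{p(\cdot)}\hookrightarrow L_{p_-}\hookrightarrow L_1$. Your alternative sanity-check argument combining Corollary \ref{c30} and Corollary \ref{c51} is also valid, though the paper does not take that route.
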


Note that $H_{p(\cdot)}$ is equivalent to $L_{p(\cdot)}$ if $1<p_-<\infty$.
Considering only $\sigma_{2^n}f$, we do not need the restriction $1/2<p_-$ about $p(\cdot)\in\mathcal{P}(\Omega)$.

\begin{theorem} \label{t9}
Let $p(\cdot)\in\mathcal{P}(\Omega)$ satisfy condition \eqref{log} and \eqref{e104} and $0<t<\underline p$.
Then
\begin{equation}
\left\|\sum_k\mu_k^{t}\sup_{n\in\mathbb N}|\sigma_{2^n}(a^k)|^{t}\chi_{\{\tau_k=\infty\}}\right\|_{\frac{p(\cdot)}{t}}\lesssim \left\|\sum_k 2^{kt} \chi_{\{\tau_k<\infty\}}\right\|_{\frac{p(\cdot)}{t}},
\end{equation}
where $\tau_k$ is the stopping time associated with $a^k$.
\end{theorem}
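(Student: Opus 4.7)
The plan is to mimic the proof of Theorem \ref{t3 Leb}, but observe that the simpler kernel formula \eqref{e17} for $K_{2^n}$ (single sum, no inner $i$-summation) will produce only the $A$-type term from \eqref{e7} and none of the $B$-type term. Consequently, only the operator $U_t$ will appear, the boundedness of $V_{\alpha,s}$ will never be needed, and the restriction $t>1/2$ (which came from forcing $2t>r/(r-t)$ in Step 3 of Theorem \ref{t3 Leb}) can be dropped.

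First I would fix an atom $a^k$ with associated stopping time $\tau_k$ and, following Step 1 of the proof of Theorem \ref{t3 Leb}, decompose $\{\tau_k<\infty\} = \bigcup_l I_{k_l}$ into disjoint dyadic intervals $I_{k_l}$ of length $2^{-K_{k_l}}$ with $\int_{I_{k_l}} a^k\,d\mathbb{P}=0$, and write $b^{k,l}:=a^k \chi_{I_{k_l}}$. Since $\widehat{b^{k,l}}(m)=0$ for all $m<2^{K_{k_l}}$, one has $\sigma_{2^n}b^{k,l}=0$ whenever $n\leq K_{k_l}$, so only $n>K_{k_l}$ matters. Using the identity \eqref{e17} and the bound $|b^{k,l}|\leq \|\chi_{\{\tau_k<\infty\}}\|_{p(\cdot)}^{-1}$, one computes that for $x\not\in I_{k_l}$ and $n>K_{k_l}$,
$$
\int_{I_{k_l}} D_{2^n}(x\dot + t)\,dt = 0,
\qquad
\int_{I_{k_l}} D_{2^n}(x\dot + t\dot + 2^{-j-1})\,dt =
\begin{cases}
1_{I_{k_l}^{j}}(x), & 0\leq j<K_{k_l},\\
0, & j\geq K_{k_l},
\end{cases}
$$
because $I_{k_l}\dot + 2^{-j-1}=I_{k_l}$ when $j\geq K_{k_l}$ (so the indicator vanishes on $\{\tau_k=\infty\}$), while for $j<K_{k_l}$ the analysis is identical to that in the proof of Theorem \ref{t3 Leb}. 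Supremum over $n>K_{k_l}$ is attained at $n=K_{k_l}+1$, giving
$$
\sup_{n\in\N}|\sigma_{2^n}a^k(x)|\,\chi_{\{\tau_k=\infty\}}(x) \lesssim \|\chi_{\{\tau_k<\infty\}}\|_{p(\cdot)}^{-1}\sum_{l}\sum_{j=0}^{K_{k_l}-1} 2^{j-K_{k_l}}\,1_{I_{k_l}^{j}}(x).
$$
The right-hand side is precisely $\|\chi_{\{\tau_k<\infty\}}\|_{p(\cdot)}^{-1}A_k(x)$ in the notation of \eqref{e7}; no $B_k$-term appears.

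Having reduced matters to the $A$-term, I would then copy verbatim Step 2 of the proof of Theorem \ref{t3 Leb}. Namely, with $\mu_k=3\cdot 2^k\|\chi_{\{\tau_k<\infty\}}\|_{p(\cdot)}$ one has
$$
\Big\|\sum_k\mu_k^{t}\sup_{n\in\N}|\sigma_{2^n}a^k|^{t}\chi_{\{\tau_k=\infty\}}\Big\|_{\frac{p(\cdot)}{t}} \lesssim \Big\|\sum_k 2^{kt}\sum_{l}\sum_{j=0}^{K_{k_l}-1}2^{(j-K_{k_l})t}\chi_{I_{k_l}^{j}}\Big\|_{\frac{p(\cdot)}{t}}.
$$
Pick a positive $g\in L_{(p(\cdot)/t)'}$ with norm at most $1$ realizing the $L_{p(\cdot)/t}$-norm by duality (Lemma \ref{lem:duality for variabl p}), apply H\"older's inequality with some $\max(1,p_+)<r<\infty$, and recognize the resulting expression as being bounded by $\bigl(U_{t}(|g|^{(r/t)'})\bigr)^{1/(r/t)'}$ tested against $\sum_k 2^{kt}\sum_l \chi_{I_{k_l}}$. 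Pairing with H\"older gives the product $\|\sum_k 2^{kt}\chi_{\{\tau_k<\infty\}}\|_{p(\cdot)/t}\,\|(U_t(|g|^{(r/t)'}))^{1/(r/t)'}\|_{(p(\cdot)/t)'}$.

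The final step is to invoke Theorem \ref{t15} for the maximal operator $U_t$ on $L_{(p(\cdot)/t)'/(r/t)'}$, which is admissible precisely under the condition
$$
\frac{r/(r-t)}{p_+/(p_+-t)}-\frac{r/(r-t)}{p_-/(p_--t)}<t;
$$
since $r$ can be taken arbitrarily large, in the limit this reduces to $\frac{1}{p_-}-\frac{1}{p_+}<1$, i.e.\ \eqref{e104}, which is in the hypotheses. The second factor is then $\lesssim \|g\|_{(p(\cdot)/t)'}\leq 1$, and we obtain the desired bound by $\|\sum_k 2^{kt}\chi_{\{\tau_k<\infty\}}\|_{p(\cdot)/t}^{1/t}$. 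The main technical point is the verification of the kernel-integral identities above, which is a straightforward (but slightly delicate) computation in the dyadic group based on whether $2^{-j-1}$ flips a bit within the first $K_{k_l}$ positions or not.
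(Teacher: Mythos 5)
Your proposal is correct and follows essentially the same route as the paper's own proof: both use the kernel identity \eqref{e17} for $K_{2^n}$, observe that the single $j$-sum produces only the $A$-type term from \eqref{e7} (so that $U_t$ suffices and the auxiliary operator $V_{\alpha,s}$, and hence the constraint $t>1/2$, are unnecessary), and then reduce to Step 2 of Theorem \ref{t3 Leb}. The paper compresses this to a few lines by reusing the $A(x)$ notation and simply stating that the rest follows as in Theorem \ref{t3 Leb}; you fill in the kernel-integral computations explicitly (the vanishing of the $2^{-n}D_{2^n}(x\dot+t)$ term and of the $j\geq K_{k_l}$ terms for $x\notin I_{k_l}$, and the identity $\int_{I_{k_l}}D_{2^n}(x\dot+t\dot+2^{-j-1})\,dt=1_{I_{k_l}^j}(x)$ for $j<K_{k_l}\leq n-1$), and carry out the $r\to\infty$ limit in \eqref{e102} to recover \eqref{e104}. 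These are exactly the details the paper leaves implicit, and your computation is correct.
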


\begin{proof}
Taking into account (\ref{e17}) and the proof of Theorem \ref{t3 Leb}, we can suppose that $2^n\geq 2^{K_l}$. For $x\not\in I_l$, we obtain  that
\begin{eqnarray*}
\left|\sigma_{2^n}b^l(x)\right|
&\lesssim& \sum_{j=0}^{n} 2^{j-n} \int_0^1 |a(t)| D_{2^n}(x \dot + t \dot + 2^{-j-1}) \, dt \\
&\lesssim& \|\chi_{\{\tau<\infty\}}\|_{p(\cdot)}^{-1} \sum_{j=0}^{K_l-1} 2^{j-K_l}
\int_{I_l} D_{2^n}(x \dot + t \dot + 2^{-j-1}) \, dt \\
&\lesssim& \|\chi_{\{\tau<\infty\}}\|_{p(\cdot)}^{-1} \sum_{j=0}^{K_l-1} 2^{j-K_l}
\chi_{I_l^{j}}(x).
\end{eqnarray*}
If $x \in \{\tau=\infty\}$, then
$$\sup_{n\in \N}|\sigma_{2^n}a|
\lesssim \|\chi_{\{\tau<\infty\}}\|_{p(\cdot)}^{-1} \sum_{l} \sum_{j=0}^{K_l-1} 2^{j-K_l}
\chi_{I_l^{j}}(x) = \|\chi_{\{\tau<\infty\}}\|_{p(\cdot)}^{-1} A(x)
$$
and the proof can be finished as in Theorem \ref{t3 Leb}.
\end{proof}

We deduce the next result from this and Theorem \ref{t1 Leb}.

\begin{theorem} \label{t5 Leb}
If $p(\cdot)\in\mathcal{P}(\Omega)$ satisfies conditions \eqref{log} and \eqref{e104}, then
$$\left\|\sup_{n\in \mathbb N}|\sigma_{2^n}f|\right\|_{{p(\cdot)}}\lesssim \left\|f\right\|_{H_{p(\cdot)}},\quad f\in H_{p(\cdot)}.$$
\end{theorem}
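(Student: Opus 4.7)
The plan is to invoke Theorem \ref{t1 Leb} with the $\sigma$-sublinear operator $Tf:=\sup_{n\in\mathbb{N}}|\sigma_{2^n}f|$. This reduces the statement to verifying three things: that $T$ is $\sigma$-sublinear, that $T:L_\infty\to L_\infty$ is bounded, and that $T$ satisfies the atomic inequality \eqref{e1 Leb} with some $0<t<\underline{p}$.

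The $\sigma$-sublinearity is immediate: each $\sigma_{2^n}$ is linear, so $|\sigma_{2^n}(\sum_k f_k)|\leq\sum_k|\sigma_{2^n}(f_k)|$, and taking the supremum in $n$ preserves this inequality; the homogeneity $|T(\alpha f)|=|\alpha||Tf|$ is equally clear. The $L_\infty$ bound will be obtained by estimating the $L_1$ norm of the Walsh-Fejér kernel $K_{2^n}$ uniformly in $n$. From the explicit formula \eqref{e17} together with \eqref{e5} (which gives $\|D_{2^i}\|_1=1$), I would deduce
$$
\|K_{2^n}\|_1 \leq \frac{1}{2}\Bigl(2^{-n}\|D_{2^n}\|_1+\sum_{j=0}^{n}2^{j-n}\|D_{2^n}\|_1\Bigr)\leq 2,
$$
so $\|\sigma_{2^n}f\|_\infty\leq 2\|f\|_\infty$ via the convolution representation $\sigma_{2^n}f(x)=\int_0^1 f(t)K_{2^n}(x\,\dot{+}\,t)\,dt$, and consequently $\|Tf\|_\infty\leq 2\|f\|_\infty$.

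The remaining hypothesis \eqref{e1 Leb} is precisely the content of Theorem \ref{t9}, which is valid for every $0<t<\underline{p}$ under the standing hypotheses \eqref{log} and \eqref{e104}. With all three ingredients in place, Theorem \ref{t1 Leb} applied to $T$ yields $\|Tf\|_{p(\cdot)}\lesssim\|f\|_{H_{p(\cdot)}}$, which is exactly the desired inequality.

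There is essentially no obstacle here: the analytic heart of the matter has already been settled in the atomic estimate of Theorem \ref{t9}, and the present theorem is simply a packaging of that estimate through the general atomic principle Theorem \ref{t1 Leb}. The only routine checks left are the $\sigma$-sublinearity and the uniform $L^1$-boundedness of the dyadic Fejér kernels, both of which are elementary.
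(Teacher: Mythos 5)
Your proof is correct and follows exactly the paper's own route: the paper derives Theorem \ref{t5 Leb} as an immediate consequence of the atomic estimate in Theorem \ref{t9} fed into the transference principle Theorem \ref{t1 Leb}, which is precisely your argument. Your verification of the $L_\infty$-boundedness via $\|K_{2^n}\|_1 \lesssim 1$ (using \eqref{e17} and \eqref{e5}) and of the $\sigma$-sublinearity of $\sup_n|\sigma_{2^n}\cdot|$ are the routine hypotheses the paper leaves implicit; both are fine.
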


Neither Theorem \ref{t5 Leb} nor \ref{t4 Leb} hold if \eqref{e104} is not satisfied. More exactly, we show

\begin{theorem}\label{t101}
Let $p(\cdot)\in\mathcal{P}(\Omega)$ satisfy condition \eqref{log}. If 
\begin{equation}\label{e109}
	\frac{1}{p_-(I_{0,n-1})}-\frac{1}{p_+(I_{0,n}^{0})} >1
\end{equation}
for all $n \in \N$, then $\sigma_*$ as well as $\sup_{n\in \mathbb N}|\sigma_{2^n}|$ are not bounded from $H_{p(\cdot)}$ to $L_{p(\cdot)}$.
\end{theorem}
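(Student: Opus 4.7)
The plan is to construct a sequence of $(3,p(\cdot),\infty)$-atoms $(a_n)_{n\geq 1}$ with uniformly bounded $H_{p(\cdot)}$-norm but with $\|\sigma_{2^n}a_n\|_{p(\cdot)}\to\infty$. Since $\sup_k|\sigma_{2^k}f|\leq \sigma_*f$, the same atoms will defeat both boundedness assertions. For $n\geq 1$ set $C_n:=\|\chi_{I_{0,n-1}}\|_{p(\cdot)}^{-1}$ and define
$$a_n:=C_n(\chi_{I_{0,n}}-\chi_{I_{1,n}}),$$
so that $a_n$ is supported on $I_{0,n-1}=I_{0,n}\cup I_{1,n}$, has mean zero, and $\|a_n\|_\infty=C_n$.

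First I would check that $a_n$ is a $(3,p(\cdot),\infty)$-atom for the stopping time $\tau_n=(n-1)\chi_{I_{0,n-1}}+\infty\cdot\chi_{I_{0,n-1}^c}$. For any $k\leq n-1$ and any level-$k$ dyadic atom $A$, either $A\cap I_{0,n-1}=\emptyset$ (and $\mathbb{E}_k a_n=0$ on $A$ since $a_n$ vanishes there) or $A\supset I_{0,n-1}$ (and $\int_A a_n=0$ by mean-zero), so $\mathbb{E}_k a_n=0$ for all $k\leq n-1$. Because $a_n$ is $\mathcal{F}_n$-measurable, $M(a_n)$ is supported on $I_{0,n-1}$ and attains the value $C_n=\|\chi_{\{\tau_n<\infty\}}\|_{p(\cdot)}^{-1}$, meeting the atom normalization. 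Corollary \ref{cor:ad for M} then gives $\|a_n\|_{H_{p(\cdot)}}\lesssim 1$ uniformly in $n$.

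The key step is an exact evaluation of $\sigma_{2^n}a_n$ on $I_{0,n-1}^0=I_{0,n-1}\dot+2^{-1}$ via formula \eqref{e17}. Straightforward bit-counting with $D_{2^n}(z)=2^n\chi_{[0,2^{-n})}(z)$ shows that only the $j=0$ summand of $K_{2^n}$ contributes on $I_{0,n}^0$, while every summand vanishes on $I_{1,n}^0$, yielding
$$K_{2^n}(y)=\tfrac12\chi_{I_{0,n}^0}(y)\quad\text{for }y\in I_{0,n-1}^0.$$
Combining this with the observation that $x\dot+t\in I_{0,n}^0$ precisely when the $(n-1)$-st binary digits of $x\in I_{0,n-1}^0$ and $t\in I_{0,n-1}$ agree, one computes
$$|\sigma_{2^n}a_n(x)|=C_n\cdot 2^{-n-1},\qquad x\in I_{0,n-1}^0,$$
the sign coming out $+$ on $I_{0,n}^0$ and $-$ on $I_{1,n}^0$.

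Finally, restricting to $I_{0,n}^0\subset I_{0,n-1}^0$ and invoking Lemma \ref{lemma of norm of set} twice,
$$\|\sigma_*a_n\|_{p(\cdot)}\geq C_n\cdot 2^{-n-1}\|\chi_{I_{0,n}^0}\|_{p(\cdot)}\gtrsim 2^{(n-1)/p_-(I_{0,n-1})-n/p_+(I_{0,n}^0)-n-1},$$
whose exponent equals
$$n\Bigl[\tfrac{1}{p_-(I_{0,n-1})}-\tfrac{1}{p_+(I_{0,n}^0)}-1\Bigr]-\tfrac{1}{p_-(I_{0,n-1})}-1.$$
Hypothesis \eqref{e109} makes the bracketed quantity strictly positive, so this lower bound tends to $+\infty$ while $\|a_n\|_{H_{p(\cdot)}}\lesssim 1$, contradicting any inequality $\|\sigma_*f\|_{p(\cdot)}\lesssim\|f\|_{H_{p(\cdot)}}$ (and likewise for $\sup_k|\sigma_{2^k}f|$). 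The chief technical difficulty is the dyadic bookkeeping in the third paragraph that confirms the vanishing of $K_{2^n}$ on $I_{1,n}^0$ and the precise support pattern of the convolution $\int a_n(t)K_{2^n}(x\dot+t)\,dt$; all the remaining manipulations are routine applications of results already established in the paper.
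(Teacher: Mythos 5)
Your proposal is correct and follows essentially the same route as the paper: both take the mean-zero atom $\approx 2^{(n-1)/p_-(I_{0,n-1})}(\chi_{I_{0,n}}-\chi_{I_{1,n}})$, use \eqref{e17} together with \eqref{e5} to isolate the $j=0$ contribution to $\sigma_{2^n}$ on $I_{0,n}^0$, and then invoke Lemma \ref{lemma of norm of set} to see that the resulting exponent diverges under \eqref{e109}. The only cosmetic differences are that you normalize the atom exactly by $\|\chi_{I_{0,n-1}}\|_{p(\cdot)}^{-1}$ and compute $\sigma_{2^n}a_n$ exactly on $I_{0,n-1}^0$ rather than merely bounding it from below, and you then estimate $\|\sigma_*a_n\|_{p(\cdot)}$ rather than the modular $\int|\sigma_*a_n|^{p(\cdot)}$; these yield the same conclusion.
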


\begin{proof}
	Let 
	$$
	a_{n-1}(t)=2^{(n-1)/p_-(I_{0,n-1})} (\chi_{I_{0,n}}-\chi_{I_{1,n}})
	$$
	and $x \notin I_{0,n-1}$. It is easy to see that $a_{n-1}$ is an atom for all $n \geq 1$, and so $\left\|a_{n-1}\right\|_{H_{p(\cdot)}} \leq 1$. As in Theorem \ref{t9},
	\begin{align*}
		\left|\sigma_{2^n}a_{n-1}(x)\right| &= \left|\sum_{j=0}^{n} 2^{j-n} \int_0^1 a_{n-1}(t) D_{2^n}(x \dot + t \dot + 2^{-j-1}) \, dt \right|\\
		&= \left|\sum_{j=0}^{n-2} 2^{j-n} \chi_{I_{0,n-1}^{j}}(x)\int_{I_{0,n-1}} a_{n-1}(t) D_{2^n}(x \dot + t \dot + 2^{-j-1}) \, dt \right|\\
		&= \sum_{j=0}^{n-2} 2^{j-n} \chi_{I_{0,n-1}^{j}}(x)\left|\int_{I_{0,n-1}} a_{n-1}(t) D_{2^n}(x \dot + t \dot + 2^{-j-1}) \, dt \right|.
	\end{align*}
	We choose $j=0$ and the left half of $I_{0,n-1}^{0}$:
	\begin{align*}
		\left|\sigma_{2^n}a_{n-1}(x)\right|
		& \geq \chi_{I_{0,n}^{0}}(x) 2^{-n} 2^{(n-1)/p_-(I_{0,n-1})}.
	\end{align*}
	Then
	\begin{align*}
	\int_{\Omega}\sup_{k\in \mathbb N}|\sigma_{2^k}a_{n-1}(x)|^{p(x)} \,dx 
	&\geq \int_{\Omega}|\sigma_{2^n}a_{n-1}(x)|^{p(x)} \,dx  \\
	&\geq \int_{I_{0,n}^{0}} 2^{-np(x)} 2^{(n-1)p(x)/p_-(I_{0,n-1})} \, dx \\
	&\geq C \int_{I_{0,n}^{0}} 2^{np_+(I_{0,n}^{0})(1/p_-(I_{0,n-1})-1)} \, dx \\
	&= C 2^{np_+(I_{0,n}^{0})(1/p_-(I_{0,n-1})-1)} 2^{-n}
	\end{align*}
	which tends to infinity as $n\to \infty$ if \eqref{e109} holds.
\end{proof}

The following corollaries can be shown as above.

\begin{corollary} \label{c60}
Let $p(\cdot)\in\mathcal{P}(\Omega)$ satisfy conditions \eqref{log} and \eqref{e104}. If $f\in H_{p(\cdot)}$, then $\sigma_{2^n} f$ converges almost everywhere on $[0,1)$.
\end{corollary}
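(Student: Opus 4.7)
The plan is to follow the same dyadic density argument used in the proof of Corollary~\ref{c20}, but to replace the appeal to Theorem~\ref{t4 Leb} with Theorem~\ref{t5 Leb}. Since Theorem~\ref{t5 Leb} bounds $\sup_{n}|\sigma_{2^n} f|$ without any assumption $p_->1/2$, this allows us to drop that restriction here. Fix $f\in H_{p(\cdot)}$ and define, for $N \in \N$,
\[
g_N(x):=\sup_{n,k\geq N}\bigl|\sigma_{2^n}f(x)-\sigma_{2^k}f(x)\bigr|,\qquad g(x):=\lim_{N\to\infty}g_N(x).
\]
Since the sequence $(g_N)$ is non-increasing and non-negative, $g$ is well defined, and $\{g=0\}$ is precisely the set on which $(\sigma_{2^n}f)$ is Cauchy. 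Hence it suffices to prove $g=0$ a.e.\ on $[0,1)$.

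Next, I would pick an approximation of $f$ by Walsh polynomials. A natural choice is $f_m=\mathbb E_m f$, which is a Walsh polynomial of degree less than $2^m$. Observe that for any Walsh polynomial $P$ of degree less than $2^m$ and all $n\geq m$, one has $s_{2^n}P=P$, from which a short computation on $\sigma_{2^n}P=\tfrac{1}{2^n}\sum_{k=1}^{2^n}s_kP$ yields $\sigma_{2^n}P\to P$ pointwise; consequently $\sup_{n,k\geq N}|\sigma_{2^n}P-\sigma_{2^k}P|\to 0$ as $N\to\infty$. I would also invoke the density of such approximations: under condition~\eqref{log}, $\|f-f_m\|_{H_{p(\cdot)}}\to 0$ as $m\to\infty$, which follows from Remark~\ref{remark of desenty} together with the equivalence of the Hardy spaces in Theorem~\ref{theorem of martingale inequalities Leb} (the dyadic basis is regular, cf.\ Example~\ref{ex:regular}).

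Then, for any such polynomial $P=f_m$, the sublinearity of the supremum gives
\[
g_N(x)\leq \sup_{n\geq N}\bigl|\sigma_{2^n}(f-P)(x)\bigr|+\sup_{n,k\geq N}\bigl|\sigma_{2^n}P(x)-\sigma_{2^k}P(x)\bigr|+\sup_{k\geq N}\bigl|\sigma_{2^k}(P-f)(x)\bigr|.
\]
Letting $N\to\infty$, the middle term tends to $0$ pointwise, so
\[
g(x)\leq 2\sup_{n\in\N}\bigl|\sigma_{2^n}(f-P)(x)\bigr|.
\]
Applying Theorem~\ref{t5 Leb} yields
\[
\|g\|_{p(\cdot)}\leq 2\Bigl\|\sup_{n\in\N}|\sigma_{2^n}(f-f_m)|\Bigr\|_{p(\cdot)}\lesssim \|f-f_m\|_{H_{p(\cdot)}}\longrightarrow 0
\]
as $m\to\infty$. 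Hence $\|g\|_{p(\cdot)}=0$, so $g=0$ a.e.\ on $[0,1)$, which proves the a.e.\ convergence of $\sigma_{2^n}f$.

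The only mildly delicate point is the density statement $\|f-f_m\|_{H_{p(\cdot)}}\to 0$, which in the variable setting has to lean on the atomic decomposition and the regularity of the dyadic basis rather than on an elementary $L^p$ approximation. Once that is in hand, the rest is a clean extraction-by-polynomial-approximation argument identical in spirit to Corollary~\ref{c20}, with the crucial gain that Theorem~\ref{t5 Leb}, unlike Theorem~\ref{t4 Leb}, imposes no lower bound on $p_-$ beyond $p_->0$.
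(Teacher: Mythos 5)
Your proposal is correct and matches the paper's intent: the paper disposes of Corollary \ref{c60} with ``The following corollaries can be shown as above,'' meaning precisely the density-by-Walsh-polynomials argument from Corollary \ref{c20} with $\sigma_*$ replaced by $\sup_n|\sigma_{2^n}\cdot|$ and Theorem \ref{t4 Leb} replaced by Theorem \ref{t5 Leb}, which is exactly what you do. Your discussion of why $\|f-f_m\|_{H_{p(\cdot)}}\to 0$ (dominated convergence on $s(f-f_m)\leq s(f)$ plus the equivalence of the Hardy norms for the regular dyadic filtration) fills in the detail the paper leaves implicit, and is the right justification.
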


\begin{corollary}\label{c70}
Let $p(\cdot)\in\mathcal{P}(\Omega)$ satisfy conditions \eqref{log} and \eqref{e104}. If $f\in H_{p(\cdot)}$ and there exists a dyadic interval $I$ such that the restriction $f \chi_I \in L_1(I)$, then
$$
\lim_{n\to\infty}\sigma_{2^n} f(x)=f(x) \qquad \mbox{for a.e. $x\in I$.}
$$
\end{corollary}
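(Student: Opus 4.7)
The proof is a direct imitation of Corollary \ref{c50}, with Theorem \ref{t5 Leb} (boundedness of $\sup_{n\in\N}|\sigma_{2^n}\cdot|$ from $H_{p(\cdot)}$ to $L_{p(\cdot)}$) replacing the role played there by Theorem \ref{t4 Leb}. The plan is to fix $f\in H_{p(\cdot)}$ with $f\chi_I\in L_1(I)$, approximate $f$ by its conditional expectations $f_m=\E_m f$, and estimate the error in the $L_{p(\cdot)}$-quasinorm.

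I would set $g(x):=\limsup_{n\to\infty}|\sigma_{2^n}f(x)-f(x)|$ for $x\in I$; it suffices to show $\|g\chi_I\|_{p(\cdot)}=0$. Picking $k$ with $I\in\F_k$ and any $m\ge k$, the triangle inequality gives
$$g(x)\le \sup_{n\in\N}|\sigma_{2^n}(f-f_m)(x)|+\limsup_{n\to\infty}|\sigma_{2^n}f_m(x)-f_m(x)|+|f_m(x)-f(x)|.$$
The middle term vanishes identically: $f_m$ is a Walsh polynomial of degree less than $2^m$, so $s_j f_m=f_m$ for every $j\ge 2^m$, whence $\sigma_{2^n}f_m\to f_m$ uniformly as $n\to\infty$. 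Taking $\|\cdot\|_{p(\cdot)}$ and applying Theorem \ref{t5 Leb} to the first summand yields
$$\|g\chi_I\|_{p(\cdot)}\lesssim \|f-f_m\|_{H_{p(\cdot)}}+\|(f-f_m)\chi_I\|_{p(\cdot)}.$$
For the second term, the hypothesis $f\chi_I\in L_1(I)$ together with the classical martingale convergence theorem (applied to $\E_n(f\chi_I)=f_n\chi_I$, valid for $n\ge k$) gives $f_n\to f$ almost everywhere on $I$, so $|(f-f_m)\chi_I|\le 2M(f-f_m)\chi_I$, and this term is bounded by $2\|M(f-f_m)\|_{p(\cdot)}\lesssim \|f-f_m\|_{H_{p(\cdot)}}$ via the dyadic regularity and Theorem \ref{theorem of martingale inequalities Leb}.

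It remains to verify that $\|f-f_m\|_{H_{p(\cdot)}}\to 0$ as $m\to\infty$. I would work with the $H_{p(\cdot)}^s$ representative, observing that $s(f-f_m)^2=\sum_{n>m}\E_{n-1}|d_nf|^2$ decreases to zero almost everywhere while being dominated by $s(f)\in L_{p(\cdot)}$, so the dominated convergence theorem in $L_{p(\cdot)}$ delivers $\|s(f-f_m)\|_{p(\cdot)}\to 0$; the equivalence of the five Hardy quasinorms under regularity (Theorem \ref{theorem of martingale inequalities Leb}) then transfers this back to $\|f-f_m\|_{H_{p(\cdot)}}$. The only mildly delicate point is this last density assertion in a variable-exponent space; everything else is routine bookkeeping once Theorem \ref{t5 Leb} is in hand.
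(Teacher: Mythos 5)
Your proof is correct and follows exactly the route the paper intends: the paper proves Corollary~\ref{c70} by the phrase ``can be shown as above,'' meaning one repeats the argument of Corollary~\ref{c50} with $\sigma_{2^n}$ in place of $\sigma_n$ and Theorem~\ref{t5 Leb} in place of Theorem~\ref{t4 Leb}, which is precisely what you do. Your explicit justification of $\|f-f_m\|_{H_{p(\cdot)}}\to 0$ via monotone decrease of $s(f-f_m)$ dominated by $s(f)$ and the variable-exponent dominated convergence theorem is a detail the paper leaves implicit, and it is a valid and welcome supplement.
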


\begin{corollary} \label{c40}
Let $p(\cdot)\in\mathcal{P}(\Omega)$ satisfy conditions \eqref{log} and \eqref{e104}. If $f\in H_{p(\cdot)}$, then $\sigma_{2^n} f$ converges in the $L_{p(\cdot)}$-norm.
\end{corollary}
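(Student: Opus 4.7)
The plan is to follow the same pattern as Corollary \ref{c30}, with $\sigma_{2^n}$ replacing $\sigma_n$ so that the key maximal estimate is Theorem \ref{t5 Leb} rather than Theorem \ref{t4 Leb}. In particular, the restriction $p_->1/2$ drops out because Theorem \ref{t5 Leb} only requires \eqref{log} and \eqref{e104}.

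First I would reduce the problem to approximation by Walsh polynomials. In the dyadic setting, $f_m := \mathbb{E}_m f$ is a Walsh polynomial (of degree less than $2^m$), so it suffices to show $\|f-f_m\|_{H_{p(\cdot)}} \to 0$ as $m\to\infty$. Working in the $H_{p(\cdot)}^M$ picture (all five Hardy spaces are equivalent by Theorem \ref{theorem of martingale inequalities Leb}, since the dyadic basis is regular), note that $M(f-f_m) = \sup_{n\geq m}|f_n - f_m| \to 0$ a.e.\ and is dominated by $2M(f)\in L_{p(\cdot)}$; dominated convergence in $L_{p(\cdot)}$ (applicable because $p_+<\infty$, see \cite{Cruz2013}) then gives $\|M(f-f_m)\|_{p(\cdot)}\to 0$.

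Next, for a fixed $m$, I would check that $\sigma_{2^n} f_m \to f_m$ in $L_{p(\cdot)}$ as $n\to\infty$. Since $s_k f_m = f_m$ for $k>2^m$, one has
$$
\sigma_{2^n} f_m = \frac{1}{2^n}\sum_{k=1}^{2^m} s_k f_m + \frac{2^n-2^m}{2^n}f_m,
$$
which converges to $f_m$ uniformly on $[0,1)$ as $n\to\infty$; uniform convergence on a probability space implies convergence in $L_{p(\cdot)}$. Then, applying the $\underline p$-triangle inequality \eqref{equation of b trangile inequality},
$$
\|\sigma_{2^n}f - f\|_{p(\cdot)}^{\underline p} \lesssim \|\sigma_{2^n}(f-f_m)\|_{p(\cdot)}^{\underline p} + \|\sigma_{2^n}f_m - f_m\|_{p(\cdot)}^{\underline p} + \|f_m - f\|_{p(\cdot)}^{\underline p}.
$$
For the first term, Theorem \ref{t5 Leb} yields
$$
\|\sigma_{2^n}(f-f_m)\|_{p(\cdot)} \leq \Big\|\sup_{k\in\N}|\sigma_{2^k}(f-f_m)|\Big\|_{p(\cdot)} \lesssim \|f-f_m\|_{H_{p(\cdot)}},
$$
uniformly in $n$. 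For the third term, $\|f_m - f\|_{L_{p(\cdot)}}\leq \|M(f-f_m)\|_{p(\cdot)} = \|f-f_m\|_{H_{p(\cdot)}^M}$. Both can be made smaller than a prescribed $\varepsilon$ by choosing $m$ large, after which the second term is pushed below $\varepsilon$ by choosing $n$ large.

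The step I expect to be the only real subtlety is the density argument, namely verifying that $\mathbb{E}_m f \to f$ in $H_{p(\cdot)}$; the potential trap is that the standard dominated-convergence argument in $L_{p(\cdot)}$ requires $p_+<\infty$, which is part of the standing assumption $p(\cdot)\in\mathcal{P}(\Omega)$, so this causes no trouble here. Everything else reduces to the uniform convergence $\sigma_{2^n}f_m\to f_m$ for Walsh polynomials and the maximal inequality already proved in Theorem \ref{t5 Leb}.
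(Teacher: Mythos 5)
Your proof is correct and follows the same route the paper has in mind when it says these corollaries ``can be shown as above'': approximate by Walsh polynomials $f_m=\mathbb E_m f$, which are dense in $H_{p(\cdot)}$, use the uniform convergence $\sigma_{2^n}f_m\to f_m$ for fixed $m$, and control the remainder with the maximal inequality of Theorem~\ref{t5 Leb} together with the $\underline p$-power triangle inequality \eqref{equation of b trangile inequality}. One small point worth making explicit: for $p_-\leq 1$ the martingale $f$ need not be an integrable function, so the symbol ``$f$'' in your displays must be read as the a.e.\ limit $f_\infty$ of the martingale, which exists a.e.\ because $s(f)<\infty$ a.e.\ (stopping-time localization to an $L_2$-bounded martingale), and lies in $L_{p(\cdot)}$ since $|f_\infty|\leq M(f)$; with this reading your argument is complete and in fact identifies the limit, which is slightly more than Corollary~\ref{c40} itself claims (cf.\ Corollary~\ref{c54}, where the paper identifies the limit under an extra local integrability hypothesis). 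Alternatively, you could run the density step through $H^s_{p(\cdot)}$, where $s(f-f_m)=(\sum_{n>m}\mathbb E_{n-1}|d_nf|^2)^{1/2}$ decreases to $0$ pointwise without appealing to a.e.\ convergence of $(f_n)$, which is a shade cleaner and closer to Remark~\ref{remark of desenty}.
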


\begin{corollary}\label{c54}
Let $p(\cdot)\in\mathcal{P}(\Omega)$ satisfy conditions \eqref{log} and \eqref{e104}. If $f\in H_{p(\cdot)}$ and there exists a dyadic interval $I$ such that the restriction $f \chi_I \in L_1(I)$, then
$$
\lim_{n\to\infty}\sigma_{2^n} f=f \qquad \mbox{in the $L_{p(\cdot)}(I)$-norm.}
$$
\end{corollary}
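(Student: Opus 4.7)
The plan is to mimic the argument used for the pointwise statement in Corollary \ref{c70} / \ref{c50}, but replace pointwise control by the $L_{p(\cdot)}$ quasi-norm, and use Theorem \ref{t5 Leb} in place of the almost-everywhere machinery. Concretely, I would approximate $f$ by the dyadic conditional expectations $f_m := \mathbb E_m f = s_{2^m}f$, which are Walsh polynomials of degree strictly less than $2^m$. Because the dyadic basis is regular, Corollary \ref{cor:ad for M} gives $H_{p(\cdot)} \simeq H_{p(\cdot)}^M$, and one checks (exactly as is used in the preceding corollaries) that $\|f-f_m\|_{H_{p(\cdot)}} \to 0$ as $m\to\infty$ via dominated-convergence estimates for $M(f-f_m) \leq 2M(f)\in L_{p(\cdot)}$.

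For fixed $m$, since $s_k f_m = f_m$ whenever $k\geq 2^m$, a direct calculation shows
\[
\sigma_{2^n}f_m = \frac{1}{2^n}\sum_{k=1}^{2^m-1} s_k f_m + \frac{2^n-2^m+1}{2^n}\,f_m \longrightarrow f_m
\]
uniformly on $[0,1)$ as $n\to\infty$. Using the $\underline p$-quasi-triangle inequality (property (3) in Section \ref{sec2}), I split
\[
\|(\sigma_{2^n}f-f)\chi_I\|_{p(\cdot)}^{\underline p} \lesssim \|\sigma_{2^n}(f-f_m)\chi_I\|_{p(\cdot)}^{\underline p} + \|(\sigma_{2^n}f_m-f_m)\chi_I\|_{p(\cdot)}^{\underline p} + \|(f-f_m)\chi_I\|_{p(\cdot)}^{\underline p}.
\]
Theorem \ref{t5 Leb} bounds the first piece by $\|f-f_m\|_{H_{p(\cdot)}}^{\underline p}$. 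The middle piece is at most $\|\sigma_{2^n}f_m-f_m\|_\infty^{\underline p}\,\|\chi_I\|_{p(\cdot)}^{\underline p}$, which tends to $0$ as $n\to\infty$ by the uniform convergence just established. For the third piece, the hypothesis $f\chi_I\in L_1(I)$ ensures (by the classical $L_1$ martingale convergence applied on $I$, noting $I\in\mathcal F_k$ for some $k$) that $(f-f_m)\chi_I=\lim_n (f_n-f_m)\chi_I$ a.e., so $|(f-f_m)\chi_I|\leq M(f-f_m)$ and therefore
\[
\|(f-f_m)\chi_I\|_{p(\cdot)} \leq \|M(f-f_m)\|_{p(\cdot)} \lesssim \|f-f_m\|_{H_{p(\cdot)}}.
\]

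Given $\varepsilon>0$, one first chooses $m$ large so that $\|f-f_m\|_{H_{p(\cdot)}}<\varepsilon$, making the first and third terms small, and then chooses $n$ large so that $\|\sigma_{2^n}f_m-f_m\|_\infty <\varepsilon/\|\chi_I\|_{p(\cdot)}$, which handles the middle term. The only delicate point is the density step $\|f-f_m\|_{H_{p(\cdot)}}\to 0$: one must invoke the equivalence of the different Hardy spaces under regularity (Theorem \ref{theorem of martingale inequalities Leb}) together with the dominated-convergence behaviour in $L_{p(\cdot)}$ for $M(f-f_m)$, rather than the more familiar norm convergence of conditional expectations that holds only for $p_-\geq 1$.
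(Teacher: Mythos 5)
Your proof is correct and follows essentially the same density argument the paper intends (it proves Corollary \ref{c50} explicitly via the same approximation $f_m = \mathbb{E}_m f$ with a $\limsup$ estimate, and states the norm-convergence variants such as this one follow "in the same way"). The three-term split controlled by the maximal bound of Theorem \ref{t5 Leb}, uniform convergence of $\sigma_{2^n}f_m \to f_m$ for fixed $m$, and the domination $|(f-f_m)\chi_I|\leq M(f-f_m)$ is exactly the intended argument, and your treatment of the density step $\|f - f_m\|_{H_{p(\cdot)}}\to 0$ via dominated convergence is the right way to handle $p_-\leq 1$.
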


\subsection{The maximal Fej{\'e}r operator on $H_{p(\cdot),q}$}
In this subsection, we extend the main results in  Subsection \ref{7.2} to the variable Hardy-Lorentz space setting. Our method is new even in the classical case (\cite{wces3}).

\begin{theorem} \label{t1}
Let $p(\cdot)\in\mathcal{P}(\Omega)$ satisfy condition \eqref{log}, $0<q\leq \infty$ and $1<r\leq \infty$ with $p_+<r$. Suppose that $T:H_r^s \rightarrow L_r$ is a bounded $\sigma$-sublinear operator and
\begin{equation}\label{e2}
\left\| |Ta|^{\beta} \chi_{\{\tau=\infty\}} \right\|_{p(\cdot)} \leq C \left\|\chi_{\{\tau<\infty\}}\right\|_{p(\cdot)}^{1-\beta }
\end{equation}
for some $0<\beta<1$ and all $(1,p(\cdot),\infty)$-atoms $a$, where $\tau$ is the stopping time associated with $a$. Then we have
$$\|Tf\|_{L_{p(\cdot),q}}\lesssim \|f\|_{H_{p(\cdot),q}^s},\quad f\in H_{p(\cdot),q}^s.$$
\end{theorem}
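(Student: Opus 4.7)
The strategy mirrors that of Theorem \ref{theorem of boundedness 1}, adapting the two places where condition \eqref{condition for boundedness 1 Leb} was used so that the weaker quantitative bound \eqref{e2} suffices. By Theorem \ref{theorem of atomic decomposition pq}, decompose $f=\sum_k\mu_k a^k$ with $\mu_k=3\cdot 2^k\|\chi_{\{\tau_k<\infty\}}\|_{p(\cdot)}$ and $(1,p(\cdot),\infty)$-atoms $a^k$ attached to stopping times $\tau_k$, and for a fixed integer $k_0$ split $f=F_1+F_2$ with $F_1=\sum_{k<k_0}\mu_k a^k$ and $F_2=\sum_{k\geq k_0}\mu_k a^k$. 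The goal is to control $\sum_{k_0}2^{k_0 q}\|\chi_{\{|Tf|>2^{k_0+1}\}}\|_{p(\cdot)}^{q}$ by $\|f\|_{H^s_{p(\cdot),q}}^{q}$, treating $q=\infty$ as the limiting variant with suprema in place of sums.

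For $TF_1$, I would follow the $F_1$ calculation in the proof of Theorem \ref{theorem of boundedness 1} step by step: fix $L\in(1,r/p_+)$ and $\ell\in(0,1-1/L)$, apply H\"older to get $|TF_1|^{L}\leq 2^{k_0\ell L}\sum_{k<k_0}[2^{-k\ell}\mu_k|T(a^k)|]^{L}$, and invoke $\varepsilon$-subadditivity in $L_{p(\cdot)/L}$. The sole modification is that the appeal to Theorem \ref{theorem of boundedness 1 Leb} is replaced by direct use of the hypothesis: since $T:H_r^s\to L_r$ is bounded and $\|s(a^k)\|_\infty\leq\|\chi_{\{\tau_k<\infty\}}\|_{p(\cdot)}^{-1}$, one obtains $\|T(a^k)\|_r\lesssim \|\chi_{\{\tau_k<\infty\}}\|_r/\|\chi_{\{\tau_k<\infty\}}\|_{p(\cdot)}$, and then transfers to $L_{p(\cdot)L}$ via Lemma \ref{lemma for embedding} together with Lemma \ref{lemma of log condition} (which gives $\|\chi_A\|_{p(\cdot)L}^{L}\approx \|\chi_A\|_{p(\cdot)}$ on atoms, under \eqref{log}). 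Condition \eqref{e2} plays no role here.

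The genuinely new step is the estimate for $TF_2$. Split $|TF_2|\leq G_1+G_2$ pointwise, where
$$G_1=\sum_{k\geq k_0}\mu_k|T(a^k)|\chi_{\{\tau_k<\infty\}},\qquad G_2=\sum_{k\geq k_0}\mu_k|T(a^k)|\chi_{\{\tau_k=\infty\}}.$$
Since $G_1$ vanishes off $\bigcup_{k\geq k_0}\{\tau_k<\infty\}$, the bound $\|\chi_{\{G_1>2^{k_0}\}}\|_{p(\cdot)}\leq \|\sum_{k\geq k_0}\chi_{\{\tau_k<\infty\}}\|_{p(\cdot)}$ reduces the $G_1$ contribution to the $F_2$-calculation already performed in the proof of Theorem \ref{theorem of atomic decomposition pq}. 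For $G_2$, apply Chebyshev in $L_{p(\cdot)}$ to \eqref{e2} to extract the weak-type inequality
$$\|\chi_{\{|T(a^k)|>\lambda\}\cap\{\tau_k=\infty\}}\|_{p(\cdot)}\lesssim \lambda^{-\beta}\|\chi_{\{\tau_k<\infty\}}\|_{p(\cdot)}^{1-\beta},\qquad \lambda>0.$$
Introduce weights $\alpha_k=(1-2^{-\gamma})2^{-(k-k_0)\gamma}$ with $\gamma>0$ chosen so that $\eta:=\beta(1+\gamma)<1$ (possible because $\beta<1$) and $\sum_{k\geq k_0}\alpha_k=1$. Then $\{G_2>2^{k_0}\}\subset \bigcup_{k\geq k_0}\{|T(a^k)|>\alpha_k 2^{k_0}/\mu_k\}\cap\{\tau_k=\infty\}$. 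Combining the weak-type bound with $\varepsilon$-subadditivity of $\|\cdot\|_{p(\cdot)}$ for $\varepsilon\in(0,\min(\underline{p},q))$, and using $\mu_k^{\beta\varepsilon}\|\chi_{\{\tau_k<\infty\}}\|_{p(\cdot)}^{(1-\beta)\varepsilon}\approx 2^{k\beta\varepsilon}\|\chi_{\{\tau_k<\infty\}}\|_{p(\cdot)}^{\varepsilon}$, yields
$$\|\chi_{\{G_2>2^{k_0}\}}\|_{p(\cdot)}^{\varepsilon}\lesssim 2^{-k_0\eta\varepsilon}\sum_{k\geq k_0}2^{k\eta\varepsilon}\|\chi_{\{\tau_k<\infty\}}\|_{p(\cdot)}^{\varepsilon}.$$
A final H\"older step in $k$ (identical to the one at the end of the proof of Theorem \ref{theorem of atomic decomposition pq}) collapses the double sum over $k_0$ and $k$ to $\sum_k 2^{kq}\|\chi_{\{\tau_k<\infty\}}\|_{p(\cdot)}^q\approx \|f\|^q_{H^s_{p(\cdot),q}}$.

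The hard part is the $G_2$ estimate: one must simultaneously pick $\gamma$ so that $\eta=\beta(1+\gamma)<1$ (to obtain geometric decay in $k_0-k$), choose $\varepsilon\in(0,\min(\underline{p},q))$ for the quasi-norm sub-additivity, and tune the auxiliary exponent in the concluding H\"older step so that every $k_0$-summation converges and reassembles into exactly the atomic quasi-norm. The $q=\infty$ case runs with the same weights but with suprema replacing the sums.
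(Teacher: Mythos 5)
Your overall architecture matches the paper's: decompose $f=F_1+F_2$ at level $k_0$, treat $TF_1$ by H\"older/geometric weighting, and split $TF_2$ on $\{\tau_k<\infty\}$ versus $\{\tau_k=\infty\}$. Your $G_2$ estimate is a correct and clean reworking of the paper's central step: where the paper applies Chebyshev directly to the whole sum (using $(\sum x_k)^{\beta}\le\sum x_k^{\beta}$ together with the $\varepsilon$-triangle inequality in $L_{p(\cdot)/\varepsilon}$, arriving at the bound $\|\chi_{\{G_2>2^{k_0}\}}\|_{p(\cdot)}\lesssim 2^{-\beta k_0}\bigl(\sum_{k\ge k_0}2^{k\beta\varepsilon}\|\chi_{\{\tau_k<\infty\}}\|_{p(\cdot)}^{\varepsilon}\bigr)^{1/\varepsilon}$), you apply Chebyshev per atom and absorb the union bound with geometric weights $\alpha_k$, obtaining the same type of bound with exponent $\eta=\beta(1+\gamma)$ in place of $\beta$. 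Since both $\beta<1$ and $\eta<1$, the concluding H\"older step and $k_0$-summation go through identically. This part is fine.

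The gap is in your $F_1$ estimate for $r<\infty$. You propose to replace the paper's appeal to Theorem \ref{theorem of boundedness 1 Leb} by
$\|Ta^k\|_{p(\cdot)L}\le 2\|Ta^k\|_r\lesssim\|s(a^k)\|_r\le\|\chi_{\{\tau_k<\infty\}}\|_{p(\cdot)}^{-1}\|\chi_{\{\tau_k<\infty\}}\|_r$
via Lemma \ref{lemma for embedding}. To make the $F_1$ chain close, one then needs
$\|\chi_{\{\tau_k<\infty\}}\|_{p(\cdot)}^{L\varepsilon}\|Ta^k\|_{p(\cdot)L}^{L\varepsilon}\lesssim\|\chi_{\{\tau_k<\infty\}}\|_{p(\cdot)}^{\varepsilon}$,
which after cancellation reduces to $\|\chi_{\{\tau_k<\infty\}}\|_r^{L}\lesssim\|\chi_{\{\tau_k<\infty\}}\|_{p(\cdot)}$. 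With $A=\{\tau_k<\infty\}$, this asks for $\mathbb P(A)^{L/r}\lesssim\|\chi_A\|_{p(\cdot)}$; but $\|\chi_A\|_{p(\cdot)}\le\mathbb P(A)^{1/p_+}$ when $\mathbb P(A)\le 1$, and $L/r<1/p_+$, so in fact $\mathbb P(A)^{L/r}/\|\chi_A\|_{p(\cdot)}\ge\mathbb P(A)^{L/r-1/p_+}\to\infty$ as $\mathbb P(A)\to 0$. So the inequality you need fails, and neither Lemma \ref{lemma of log condition} nor the identity $\|\chi_A\|_{p(\cdot)L}^{L}=\|\chi_A\|_{p(\cdot)}$ rescues it, because the quantity you produced on the right is $\|\chi_A\|_r$, not $\|\chi_A\|_{p(\cdot)L}$. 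The paper's $F_1$ estimate in Theorem \ref{theorem of boundedness 1} hinges on the variable-exponent bound $\|Ta^k\|_{p(\cdot)L}\lesssim\|s(a^k)\|_{p(\cdot)L}$, obtained from Theorem \ref{theorem of boundedness 1 Leb} under the hypothesis \eqref{condition for boundedness 1 Leb}; this pointwise localization information is precisely what is not assumed in Theorem \ref{t1}, and the $L_r$-embedding does not recover it. (The paper's own treatment of $r<\infty$ is itself only a one-line ``Similarly to...''; its explicit argument is for $r=\infty$, where $\|TF_1\|_\infty\le 3\cdot 2^{k_0}$ makes the low-level contribution vanish and only $F_2$ has to be estimated. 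Your write-up does not invoke that $L_\infty$ shortcut either.)
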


\begin{proof}
Let $r=\infty$. We decompose again the martingale $f\in H_{p(\cdot),q}^s$ into the sum of $F_1$ and $F_2$, $f=F_1+F_2$ as in the proof of Theorem \ref{theorem of boundedness 1}. Then (\ref{e2}) holds and
\begin{eqnarray*}
\|TF_1\|_\infty
&\leq& \sum_{k=-\infty}^{k_0-1} \mu_k \|Ta^k\|_\infty
\leq  \sum_{k=-\infty}^{k_0-1} \mu_k \|s(a^k)\|_\infty \\
&\leq& \sum_{k=-\infty}^{k_0-1} \mu_k \|\chi_{\{\tau_k<\infty\}}\|_{p(\cdot)}^{-1}\leq 3 \cdot 2^{k_0}.
\end{eqnarray*}
Thus
$$
2^{k_0}\|\chi_{\{Tf>6 \cdot 2^{k_0}\}}\|_{p(\cdot)} \leq 2^{k_0}\|\chi_{\{TF_2> 3\cdot2^{k_0}\}}\|_{p(\cdot)},
$$
so we have to consider
\begin{equation}\label{e14}
|TF_2|
\leq \sum_{k=k_0}^{\infty} \mu_k |Ta^k| \chi_{\{\tau_k<\infty\}} + \sum_{k=k_0}^{\infty} \mu_k |Ta^k| \chi_{\{\tau_k=\infty\}}.
\end{equation}
For the first term, we obtain similarly to Step 2 that
$$
\left\|\chi_{\{\sum_{k=k_0}^{\infty} \mu_k |Ta^k| \chi_{\{\tau_k<\infty\}}> 3 \cdot 2^{k_0-1}\}} \right\|_{p(\cdot)}
\leq\| \sum_{k=k_0}^\infty\chi_{\{\tau_k<\infty\}}\|_{p(\cdot)}
$$
and
$$
\left\|\sum_{k=k_0}^{\infty} \mu_k |Ta^k| \chi_{\{\tau_k<\infty\}}\right\|_{L_{p(\cdot),q}}\lesssim \Bigg(\sum_{k=-\infty}^\infty\mu_k^q\Bigg)^{1/q}\lesssim \|f\|_{H_{p(\cdot),q}^s} \qquad (f\in H_{p(\cdot),q}^s)
$$
if $q<\infty$. In the last inequality we have used Theorem \ref{theorem of atomic decomposition pq}. If $q=\infty$, then similarly to (\ref{estimate for F2}),
\begin{eqnarray*}
\left\|\chi_{\{\sum_{k=k_0}^{\infty} \mu_k |Ta^k| \chi_{\{\tau_k<\infty\}}> 3 \cdot 2^{k_0-1}\}} \right\|_{p(\cdot)}
&\leq& \| \sum_{k=k_0}^\infty\chi_{\{\tau_k<\infty\}}\|_{p(\cdot)}\\
&\leq & \left(\sum_{k=k_0}^\infty 2^{-k{\varepsilon}}2^{k{\varepsilon}}\|\chi_{\{\tau_k<\infty\}}\|_{p(\cdot)}
^{{\varepsilon}}\right)^{1/{\varepsilon}} \\
&\leq & \left(\sum_{k=k_0}^\infty 2^{-k{\varepsilon}}\right)^{1/{\varepsilon}} \sup_{k} \mu_k\\
&\lesssim& 2^{-k_0} \sup_{k} \mu_k.
\end{eqnarray*}
Hence
$$
\left\|\sum_{k=k_0}^{\infty} \mu_k |Ta^k| \chi_{\{\tau_k<\infty\}}\right\|_{L_{p(\cdot),\infty}}\lesssim \sup_{k} \mu_k \lesssim \|f\|_{H_{p(\cdot),\infty}^s} \qquad (f\in H_{p(\cdot),\infty}^s).
$$

To investigate the second term of (\ref{e14}), let $\beta<\delta<1$ and $\epsilon<\min\{\underline{p},q\}$.
Observe that inequality (\ref{e2}) is equivalent to
$$
\left\| |\sigma_*a|^{\beta \epsilon} \chi_{\{\tau=\infty\}} \right\|_{p(\cdot)/\epsilon} \leq C \left\|\chi_{\{\tau<\infty\}}\right\|_{p(\cdot)/\epsilon} \left\|\chi_{\{\tau<\infty\}}\right\|_{p(\cdot)}^{-\beta \epsilon}.
$$
From this it follows that
\begin{align}\label{e15}
\left\| \chi_{\{\sum_{k=k_0}^{\infty} \mu_k |Ta^k| \chi_{\{\tau_k=\infty\}}>3 \cdot 2^{k_0-1} \}} \right\|_{p(\cdot)}
&\leq \left\|\frac{\sum_{k=k_0}^{\infty} \mu_k^\beta |Ta^k|^\beta \chi_{\{\tau_k=\infty\}}}{3^\beta 2^{\beta(k_0-1)}} \right\|_{{p(\cdot)}} \n\\
&\lesssim 2^{-\beta k_0} \left\|\sum_{k=k_0}^{\infty} \mu_k^{\beta \epsilon} |Ta^k|^{\beta \epsilon} \chi_{\{\tau_k=\infty\}} \right\|_{{p(\cdot)/\epsilon}}^{1/\epsilon} \n\\
&\lesssim 2^{-\beta k_0} \left(\sum_{k=k_0}^{\infty} \mu_k^{\beta \epsilon} \left\||Ta^k|^{\beta \epsilon} \chi_{\{\tau_k=\infty\}}\right\|_{{p(\cdot)}/\epsilon} \right) ^{1/\epsilon} \n\\
&\lesssim 2^{-\beta k_0} \left(\sum_{k=k_0}^{\infty} 2^{k\beta \epsilon} \|\chi_{\{\tau_k<\infty\}}\|_{{p(\cdot)}/\epsilon} \right) ^{1/\epsilon} \n\\
&\leq 2^{-\beta k_0} \left(\sum_{k=k_0}^{\infty} 2^{k(\beta-\delta)\epsilon} 2^{k\delta\epsilon} \|\chi_{\{\tau_k<\infty\}}\|_{p(\cdot)}^{\epsilon} \right) ^{1/\epsilon}.
\end{align}
If $q<\infty$, let us again use H\"{o}lder's inequality with $\frac{q-\varepsilon}{q}+\frac{\varepsilon}{q}=1$:
\begin{eqnarray*}
\lefteqn{\left\| \chi_{\{\sum_{k=k_0}^{\infty} \mu_k |Ta^k| \chi_{\{\tau_k=\infty\}}>3 \cdot 2^{k_0-1} \}} \right\|_{p(\cdot)}  } \n\\
&\lesssim& 2^{-\beta k_0} \left(\sum_{k=k_0}^{\infty} 2^{k(\beta-\delta)\epsilon\frac{q}{q-\epsilon}}\right)^{\frac{q-\epsilon}{\epsilon q}}
\left(\sum_{k=k_0}^{\infty} 2^{k\delta q}\|\chi_{\{\tau_k<\infty\}}\|_{p(\cdot)}^{q}\right)^{1/q}\\
&\lesssim& 2^{-k_0\delta}  \left(\sum_{k=k_0}^{\infty} 2^{k\delta q}\|\chi_{\{\tau_k<\infty\}}\|_{p(\cdot)}^{q}\right)^{1/q}.
\end{eqnarray*}
By changing the order of the sums, we obtain
 \begin{align*}
 \sum_{k_0=-\infty}^\infty 2^{k_0q}\left\| \chi_{\{\sum_{k=k_0}^{\infty} \mu_k |Ta^k| \chi_{\{\tau_k=\infty\}}>3 \cdot 2^{k_0-1} \}} \right\|_{p(\cdot)}^{q}
&\lesssim  \sum_{k_0=-\infty}^\infty 2^{k_0(1-\delta)q}
 \sum_{k=k_0}^{\infty}2^{k\delta q}\|\chi_{\{\tau_k<\infty\}}\|_{p(\cdot)}^{q}
 \\&=\sum_{k=-\infty}^\infty 2^{k\delta q}\|\chi_{\{\tau_k<\infty\}}\|_{p(\cdot)}^{q} \sum_{k_0=-\infty}^{k} 2^{k_0(1-\delta)q}
 \\&\lesssim \sum_{k=-\infty}^\infty 2^{kq}\|\chi_{\{\tau_k<\infty\}}\|_{p(\cdot)}^{q}.
 \end{align*}
This implies that
$$
\left\|\sum_{k=k_0}^{\infty} \mu_k |Ta^k| \chi_{\{\tau_k=\infty\}}\right\|_{L_{p(\cdot),q}}
\lesssim \Bigg(\sum_{k=-\infty}^\infty\mu_k^q\Bigg)^{1/q} \lesssim \|f\|_{H_{p(\cdot),q}^s}.
$$

If $q=\infty$, we use (\ref{e15}) with $\delta=1$ to obtain
\begin{align*}
\left\| \chi_{\{\sum_{k=k_0}^{\infty} \mu_k |Ta^k| \chi_{\{\tau_k=\infty\}}>3 \cdot 2^{k_0-1} \}} \right\|_{p(\cdot)}
&\leq 2^{-\beta k_0} \left(\sum_{k=k_0}^{\infty} 2^{k(\beta-1)\epsilon} 2^{k\epsilon} \|\chi_{\{\tau_k<\infty\}}\|_{p(\cdot)}^{\epsilon} \right) ^{1/\epsilon} \\
&\leq 2^{-\beta k_0} \left(\sum_{k=k_0}^{\infty} 2^{k(\beta-1)\epsilon} \right) ^{1/\epsilon} \sup_{k} \mu_k\\
&\lesssim 2^{-k_0} \sup_{k} \mu_k
\end{align*}
and so
$$
\left\|\sum_{k=k_0}^{\infty} \mu_k |Ta^k| \chi_{\{\tau_k=\infty\}}\right\|_{L_{p(\cdot),\infty}}\lesssim \sup_{k} \mu_k \lesssim \|f\|_{H_{p(\cdot),\infty}^s} \qquad (f\in H_{p(\cdot),\infty}^s).
$$

Now let $r<\infty$. Similarly to the proof of Theorem \ref{theorem of boundedness 1}, we obtain
$$
\|T(F_1)\|_{L_{p(\cdot),q}}\lesssim \|f\|_{H_{p(\cdot),q}^s}.
$$
On the other hand, the inequality
$$
\|T(F_2)\|_{L_{p(\cdot),q}}\lesssim \|f\|_{H_{p(\cdot),q}^s}
$$
holds in the same way as above. This completes the proof.
\end{proof}

The $\sigma$-sublinearity cannot be omitted in general (see  Bownik, Li, Yang and Zhou \cite{{Bownik2005},Bownik2010,Yang2009}). However, if $T$ is a linear operator and $q<\infty$, then $T$ can be uniquely extended.

\begin{theorem} \label{t7}
Let $p(\cdot)\in\mathcal{P}(\Omega)$ satisfy condition \eqref{log}, $0<q<\infty$ and $1<r<\infty$ with $p_+<r$. Suppose that $T:H_r^s \rightarrow L_r$ is a bounded linear operator and
$$
\left\| |Ta|^{\beta} \chi_{\{\tau=\infty\}} \right\|_{p(\cdot)} \leq C \left\|\chi_{\{\tau<\infty\}}\right\|_{p(\cdot)}^{1-\beta }
$$
for some $0<\beta<1$ and all $(1,p(\cdot),\infty)$-atoms $a$, where $\tau$ is the stopping time associated with $a$. Then
$$
\|Tf\|_{L_{p(\cdot),q}}\lesssim \|f\|_{H_{p(\cdot),q}^s},\quad f\in H_{r}^s \cap H_{p(\cdot),q}^s
$$
and $T$ can be uniquely extended to a bounded linear operator from $H_{p(\cdot),q}^s$ to $L_{p(\cdot),q}^s$.
The theorem holds for $q=\infty$ as well if we change $H^s_{p(\cdot),\infty}$ by $\mathscr H^s_{p(\cdot),\infty}$.
\end{theorem}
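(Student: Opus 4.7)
My plan is to reduce the proof to the $\sigma$-sublinear case (Theorem \ref{t1}) by exploiting linearity together with a careful analysis of convergence of the atomic decomposition in two different quasi-norms simultaneously. The main point is that, once we can legitimately write $Tf = \sum_k \mu_k Ta^k$ with pointwise a.e.\ meaning, the estimates of Theorem \ref{t1} go through verbatim.

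First, given $f \in H_r^s \cap H_{p(\cdot),q}^s$, I would invoke the atomic decomposition from Theorem \ref{theorem of atomic decomposition pq}: using the stopping times $\tau_k = \inf\{n : s_{n+1}(f) > 2^k\}$, one has $f = \sum_k \mu_k a^k$ with $\mu_k = 3\cdot 2^k \|\chi_{\{\tau_k<\infty\}}\|_{p(\cdot)}$ and each $a^k$ a $(1,p(\cdot),\infty)$-atom. The partial sums equal $f^{\tau_{n+1}} - f^{\tau_m}$, so they converge to $f$ in $H^s_{p(\cdot),q}$ by Remark \ref{remark of desenty}. Since $f \in H_r^s$, the same dominated convergence argument applied to $r$ (using $s(f-f^{\tau_{n+1}}), s(f^{\tau_m}) \leq s(f) \in L_r$ and their a.e.\ convergence to $0$) shows that the decomposition also converges to $f$ in $H_r^s$. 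Hence the boundedness of $T : H_r^s \to L_r$ yields $Tf = \sum_k \mu_k Ta^k$ in $L_r$, so after passing to a subsequence this series converges to $Tf$ a.e., giving
\[
|Tf(x)| \leq \sum_{k} \mu_k |Ta^k(x)| \quad \text{a.e.}
\]

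With this pointwise domination in hand, I would split the sum as $f = F_1 + F_2$ at an arbitrary threshold $k_0$ exactly as in the proof of Theorem \ref{t1}, and reuse every estimate there verbatim. The $F_1$ part is bounded using $\|Ta^k\|_\infty \lesssim \|s(a^k)\|_\infty$ (which follows from $H_r^s$-boundedness after replacing $r=\infty$ with finite $r$: the estimate of $\|T F_1\|_{L_{p(\cdot),q}}$ in the proof of Theorem \ref{theorem of boundedness 1} applies directly here, since that argument only uses the $L_r$-boundedness of $T$). The $F_2$ part splits into the portion supported on $\{\tau_k<\infty\}$ (handled as in Theorem \ref{theorem of boundedness 1}, which does not require sublinearity of the target inequality but only the atom structure) and the portion on $\{\tau_k=\infty\}$, where we apply the hypothesis $\||Ta|^\beta \chi_{\{\tau=\infty\}}\|_{p(\cdot)} \leq C\|\chi_{\{\tau<\infty\}}\|_{p(\cdot)}^{1-\beta}$ with $\beta < \delta < 1$ and $\varepsilon < \min\{\underline p, q\}$, and finish with the same $\ell_q$-rearrangement trick used at the end of Theorem \ref{t1}.

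For the unique extension, I would verify that $H_r^s \cap H^s_{p(\cdot),q}$ is dense in $H^s_{p(\cdot),q}$ when $q<\infty$. This follows from Remark \ref{remark of desenty}: each partial sum $\sum_{k=m}^n \mu_k a^k = f^{\tau_{n+1}} - f^{\tau_m}$ satisfies $s(f^{\tau_{n+1}}) \leq 2^{n+1}$ and $s(f^{\tau_m}) \leq 2^m$, so it is a bounded $L_\infty$-martingale, in particular in $H_r^s$ for every $r$; and these partial sums converge to $f$ in $H^s_{p(\cdot),q}$. The established inequality then gives a uniformly continuous linear map on a dense subspace, which extends uniquely. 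For $q=\infty$, density in $\mathscr{H}^s_{p(\cdot),\infty}$ uses Remark \ref{remark of desenty}(2), which provides the analogous convergence via Lemma \ref{lemma of control} once one knows $s(f) \in \mathscr L_{p(\cdot),\infty}$.

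The main obstacle I anticipate is justifying the pointwise inequality $|Tf| \leq \sum_k \mu_k|Ta^k|$, because without $\sigma$-sublinearity one only controls $Tf$ via $L_r$-limits; the subsequence argument is the essential bridge and relies crucially on $f$ lying in $H_r^s$ as well (which is exactly why the theorem is stated on the intersection $H_r^s \cap H^s_{p(\cdot),q}$ and the extension is argued separately). The case $q=\infty$ with $\mathscr H^s_{p(\cdot),\infty}$ requires slightly more care, since the partial sums only converge in the smaller absolutely-continuous subspace, but this is precisely why the statement substitutes $\mathscr H^s_{p(\cdot),\infty}$ for $H^s_{p(\cdot),\infty}$.
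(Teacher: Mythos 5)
Your proposal is correct and follows essentially the same route as the paper's proof. The paper likewise observes that the atomic decomposition of Theorem \ref{theorem of atomic decomposition pq} converges both in $H^s_{p(\cdot),q}$ (by Remark \ref{remark of desenty}) and in $H^s_r$ for $f \in H^s_r$, exploits linearity and $L_r$-boundedness of $T$ to pass the decomposition through $T$ and obtain the pointwise domination $|TF_i| \le \sum \mu_k |Ta^k|$ a.e.\ (after passing to an a.e.-convergent subsequence of the $L_r$-convergent partial sums), and then finishes by invoking the estimates of Theorem \ref{t1}; the density statement and the $q=\infty$ modification via $\mathscr H^s_{p(\cdot),\infty}$ are handled just as you describe.
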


\begin{proof}
By Remark \ref{remark of desenty}, the atomic decomposition converges in the $H_{p(\cdot),q}^s$-norm. Similarly, writing $p(\cdot)=q=r$, the atomic decomposition converges to $f$ in the $H_{r}^s$-norm if $f\in H_{r}^s$. Moreover, $H_{r}^s \cap H_{p(\cdot),q}^s$ is dense in $H_{p(\cdot),q}^s$. Let us define $F_1$ and $F_2$ again as in the proof of Theorem \ref{theorem of boundedness 1}. Then for $f\in H_{r}^s \cap H_{p(\cdot),q}^s$,
$$
F_1=\sum_{k=-\infty}^{k_0-1} \mu_k a^k\quad \mbox{in the $H_{r}^s$-norm}.
$$
Since $T:H_r^s \rightarrow L_r$ is a bounded linear operator, we have
$$
TF_1=\sum_{k=-\infty}^{k_0-1} \mu_k Ta^k\quad \mbox{in the $L_{r}$-norm}
$$
and
$$
\left| TF_1\right|=\sum_{k=-\infty}^{k_0-1} \mu_k \left| Ta^k\right|.
$$
The analogous inequality holds for $TF_2$. The proof can be finished as in Theorem \ref{t1}.
\end{proof}

The next two theorems can be shown similarly.

\begin{theorem} \label{t2}
Let $p(\cdot)\in\mathcal{P}(\Omega)$ satisfy condition \eqref{log}, $0<q\leq \infty$ and $1<r\leq \infty$ with $p_+<r$.
Suppose that $T:L_r \rightarrow L_r$ is a bounded $\sigma$-sublinear operator and
$$
\left\| |Ta|^{\beta} \chi_{\{\tau=\infty\}} \right\|_{p(\cdot)} \leq C \left\|\chi_{\{\tau<\infty\}}\right\|_{p(\cdot)}^{1-\beta }
$$
for some $0<\beta<1$ and all $(3,p(\cdot),\infty)$-atoms $a$, where $\tau$ is the stopping time associated with $a$. Then
$$\|Tf\|_{L_{p(\cdot),q}}\lesssim \|f\|_{P_{p(\cdot),q}},\quad f\in P_{p(\cdot),q}.$$
\end{theorem}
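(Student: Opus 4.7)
The proof follows the same pattern as that of Theorem \ref{t1}, modified in two ways: first, I would invoke the $(3,p(\cdot),\infty)$-atomic decomposition of $P_{p(\cdot),q}$ furnished by Theorem \ref{theorem of atomic decomposition for rest} in place of the $(1,p(\cdot),\infty)$-atomic decomposition of $H_{p(\cdot),q}^s$; second, every occurrence of $\|s(a^k)\|_\infty$ would be replaced by $\|M(a^k)\|_\infty$. Accordingly, I write
$$f=\sum_{k\in\mathbb Z}\mu_k a^k,\qquad \mu_k=3\cdot 2^k\|\chi_{\{\tau_k<\infty\}}\|_{p(\cdot)},$$
where each $a^k$ is a $(3,p(\cdot),\infty)$-atom associated with a stopping time $\tau_k$, $\|(\mu_k)\|_{\ell_q}\lesssim \|f\|_{P_{p(\cdot),q}}$, and crucially $|a^k|\le \|M(a^k)\|_\infty\le \|\chi_{\{\tau_k<\infty\}}\|_{p(\cdot)}^{-1}$ with $M(a^k)$ supported on $\{\tau_k<\infty\}$. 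For an arbitrary $k_0\in\mathbb Z$, set $F_1=\sum_{k<k_0}\mu_k a^k$ and $F_2=f-F_1$.

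In the case $r=\infty$, the boundedness $T\colon L_\infty\to L_\infty$ together with the pointwise bound on $a^k$ gives $\|TF_1\|_\infty\le\sum_{k<k_0}\mu_k\|Ta^k\|_\infty\lesssim 2^{k_0}$, so that
$$2^{k_0}\|\chi_{\{|Tf|>c\cdot 2^{k_0}\}}\|_{p(\cdot)}\lesssim 2^{k_0}\|\chi_{\{|TF_2|>c'\cdot 2^{k_0}\}}\|_{p(\cdot)}.$$
I would then split $|TF_2|\le I_1+I_2$ with
$$I_1=\sum_{k\ge k_0}\mu_k|Ta^k|\chi_{\{\tau_k<\infty\}},\qquad I_2=\sum_{k\ge k_0}\mu_k|Ta^k|\chi_{\{\tau_k=\infty\}}.$$
The piece $I_1$ is controlled via the containment $\{I_1>3\cdot 2^{k_0-1}\}\subset\bigcup_{k\ge k_0}\{\tau_k<\infty\}$, the quasi-triangle inequality \eqref{equation of b trangile inequality}, and the $\varepsilon$-H\"older trick from the proof of Theorem \ref{theorem of atomic decomposition pq}. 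The piece $I_2$ is controlled by the hypothesis \eqref{e2}: raising to power $\beta$, passing to $L_{p(\cdot)/\epsilon}$ with $\epsilon<\min(\underline p,q)$, and following verbatim the computation in display \eqref{e15} from the proof of Theorem \ref{t1} yields a bound of the form $2^{-\beta k_0}\bigl(\sum_{k\ge k_0}2^{k\beta\epsilon}\|\chi_{\{\tau_k<\infty\}}\|_{p(\cdot)}^\epsilon\bigr)^{1/\epsilon}$, which after one further H\"older step (for $q<\infty$) or a supremum argument (for $q=\infty$) combines with the $I_1$ bound to produce $\|TF_2\|_{L_{p(\cdot),q}}\lesssim\|(\mu_k)\|_{\ell_q}\lesssim\|f\|_{P_{p(\cdot),q}}$.

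For $r<\infty$ the estimate of $T(F_1)$ parallels the duality argument in the proof of Theorem \ref{theorem of boundedness 1}: test against a unit $g\in L_{(p(\cdot)/t)'}$ via Lemma \ref{lem:duality for variabl p}, apply H\"older with exponent $r/t$, and bound $\|Ta^k\|_r\lesssim \|a^k\|_r\le \|M(a^k)\|_\infty\|\chi_{\{\tau_k<\infty\}}\|_r$ (this is where the support of $M(a^k)$ in $\{\tau_k<\infty\}$ is used), closing the estimate with the Doob maximal inequality (Theorem \ref{thm:maximal inequality}) on $L_{(p(\cdot)/t)'}$ and Theorem \ref{theorem of atomic decomposition for rest}. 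The piece $T(F_2)$ is then handled by the same two-term split $I_1+I_2$ as above, with hypothesis \eqref{e2} invoked on $\{\tau_k=\infty\}$.

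The main obstacle, and the only genuinely new point compared with Theorem \ref{theorem of boundedness 1}, is the control of $I_2$: since $T$ can spread the support of $a^k$ off $\{\tau_k<\infty\}$ while $a^k$ vanishes there, the naive containment argument used for $I_1$ is unavailable. Hypothesis \eqref{e2} is exactly the replacement for the missing support property, and its exponent $\beta\in(0,1)$ produces the geometric factor $2^{-\beta k_0}$ needed to perform the $\ell_q$-summation over the dyadic scales indexed by $k_0$.
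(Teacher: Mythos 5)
For $r=\infty$ your argument is exactly what the paper intends by ``can be shown similarly'': replace the $(1,p(\cdot),\infty)$-atomic decomposition by the $(3,p(\cdot),\infty)$-one of $P_{p(\cdot),q}$ from Theorem~\ref{theorem of atomic decomposition for rest}, use $\|M(a^k)\|_\infty$ in place of $\|s(a^k)\|_\infty$ to get $\|TF_1\|_\infty\lesssim 2^{k_0}$, and treat $TF_2$ with the split $I_1+I_2$ and hypothesis \eqref{e2} on $I_2$. That case is sound, and it is the only one the paper subsequently invokes (in Theorem~\ref{t6}, where $T_*=\sigma_*$ is bounded on $L_\infty$).

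Your handling of $T(F_1)$ for $1<r<\infty$, however, does not close. In Theorem~\ref{theorem of boundedness 1} the $T(F_1)$ estimate ultimately needs $\|Ta^k\|_{Lp(\cdot)}\lesssim\|\chi_{\{\tau_k<\infty\}}\|_{p(\cdot)}^{-1}\|\chi_{\{\tau_k<\infty\}}\|_{Lp(\cdot)}$, which is obtained through Theorem~\ref{theorem of boundedness 1 Leb} and Lemma~\ref{lem-atom}; both require the commutation hypothesis $T(a)\chi_F=T(a\chi_F)$, whose role is precisely to localize $Ta^k$ to $\{\tau_k<\infty\}$ and to turn the conditional expectation $\mathbb{E}_{\tau_k}(|Ta^k|^r)$ into a pointwise bound. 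Theorem~\ref{t2} deliberately drops that hypothesis --- the very reason \eqref{e2} is imposed is that $Ta^k$ may spread onto $\{\tau_k=\infty\}$. Your substitute $\|Ta^k\|_r\lesssim\|a^k\|_r\leq\|M(a^k)\|_\infty\|\chi_{\{\tau_k<\infty\}}\|_r$ is a correct global $L_r$ estimate (the parenthetical about the support of $M(a^k)$ does justify the last step), but it does not localize $Ta^k$; pushing it through the $b$-triangle/H\"older machinery produces $\|\chi_{\{\tau_k<\infty\}}\|_r$ where $\|\chi_{\{\tau_k<\infty\}}\|_{p(\cdot)}$ is required, and since $r>p_+$ the former can exceed the latter by an unbounded ratio as $\mathbb{P}(\{\tau_k<\infty\})\to 0$, so the resulting sum is not controlled by $\|(\mu_k)\|_{\ell_q}$. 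Nor does a rescue by splitting $T(F_1)$ across $\{\tau_k<\infty\}$ and $\{\tau_k=\infty\}$ and applying \eqref{e2} to the latter succeed: the factor $2^{-\beta k_0}$ from \eqref{e2} is strictly weaker than $2^{-k_0}$ because $\beta<1$, and it cannot absorb the weight $2^{k_0 q}$ when the inner $k$-sum runs over $k<k_0$. To be fair, the paper's own ``similarly to the proof of Theorem~\ref{theorem of boundedness 1}'' at the end of the proof of Theorem~\ref{t1} glosses over exactly the same point, so your proposal matches the paper's level of detail; but as a stand-alone argument for the full range $1<r\leq\infty$ this step is incomplete in both.
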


\begin{theorem} \label{t8}
Let $p(\cdot)\in\mathcal{P}(\Omega)$ satisfy condition \eqref{log}, $0<q<\infty$ and $1<r<\infty$ with $p_+<r$. Suppose that $T:L_r \rightarrow L_r$ is a bounded linear operator and
$$
\left\| |Ta|^{\beta} \chi_{\{\tau=\infty\}} \right\|_{p(\cdot)} \leq C \left\|\chi_{\{\tau<\infty\}}\right\|_{p(\cdot)}^{1-\beta }
$$
for some $0<\beta<1$ and all $(3,p(\cdot),\infty)$-atoms $a$, where $\tau$ is the stopping time associated with $a$. Then
$$
\|Tf\|_{L_{p(\cdot),q}}\lesssim \|f\|_{P_{p(\cdot),q}},\quad f\in L_{r} \cap P_{p(\cdot),q}
$$
and $T$ can be uniquely extended to a bounded linear operator from $P_{p(\cdot),q}$ to $L_{p(\cdot),q}$.
The theorem holds for $q=\infty$ as well if we change $P_{p(\cdot),\infty}$ by $\mathscr P_{p(\cdot),\infty}$.
\end{theorem}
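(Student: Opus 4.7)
The plan is to parallel the proof of Theorem \ref{t7}, but substituting the $(3,p(\cdot),\infty)$-atomic decomposition of $P_{p(\cdot),q}$ (Theorem \ref{theorem of atomic decomposition for rest}) for the $(1,p(\cdot),\infty)$-atomic decomposition of $H_{p(\cdot),q}^s$, and then reusing the pointwise estimates from the proof of Theorem \ref{t2} once linearity lets us commute $T$ with the infinite sum. The conclusion on the dense subspace $L_r\cap P_{p(\cdot),q}$ is extended by continuity to all of $P_{p(\cdot),q}$.

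First I would observe that for $f\in P_{p(\cdot),q}$ with $q<\infty$, Theorem \ref{theorem of atomic decomposition for rest} yields a decomposition $f=\sum_{k\in\Z}\mu_k a^k$ with $\mu_k=3\cdot 2^k\|\chi_{\{\tau_k<\infty\}}\|_{p(\cdot)}$ and $(3,p(\cdot),\infty)$-atoms $a^k$. The argument underlying Remark \ref{remark of desenty}(1), applied to the dominating process $(\lambda_n)$ associated with $P_{p(\cdot),q}$ in place of $s(f)$, shows that the partial sums converge to $f$ in the $P_{p(\cdot),q}$-norm and that $L_\infty$ is dense in $P_{p(\cdot),q}$. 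Specializing the same reasoning to the constant-exponent case $p(\cdot)=q=r$ (where $P_r$ is equivalent to $L_r$ by Doob's inequality and $r>1$), the identical decomposition converges in the $L_r$-norm whenever $f\in L_r$; in particular $L_r\cap P_{p(\cdot),q}$ is dense in $P_{p(\cdot),q}$.

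Now fix $f\in L_r\cap P_{p(\cdot),q}$ and an arbitrary $k_0\in\Z$, and split $f=F_1+F_2$ with $F_1=\sum_{k<k_0}\mu_k a^k$ and $F_2=\sum_{k\geq k_0}\mu_k a^k$, both partial sums converging in $L_r$. Linearity and continuity of $T:L_r\to L_r$ then give $TF_i=\sum \mu_k Ta^k$ in the $L_r$-norm, so along an a.e.\ convergent subsequence $|TF_i|\leq \sum \mu_k |Ta^k|$ almost everywhere. From this point the estimates of Theorem \ref{t2} apply verbatim: the bound $\|TF_1\|_\infty\lesssim 2^{k_0}$ (from $\|Ta^k\|_\infty\leq \|M(a^k)\|_\infty\leq \|\chi_{\{\tau_k<\infty\}}\|_{p(\cdot)}^{-1}$) reduces matters to $|TF_2|$, which is split into the $\{\tau_k<\infty\}$ and $\{\tau_k=\infty\}$ pieces; the first is handled as in the corresponding step of Theorem \ref{theorem of boundedness 1}, and the second by the hypothesis $\||Ta|^{\beta}\chi_{\{\tau=\infty\}}\|_{p(\cdot)}\leq C\|\chi_{\{\tau<\infty\}}\|_{p(\cdot)}^{1-\beta}$ combined with the H\"older-in-$\ell^q$ manipulation of \eqref{e15}. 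Together these give $\|Tf\|_{L_{p(\cdot),q}}\lesssim \|f\|_{P_{p(\cdot),q}}$ on $L_r\cap P_{p(\cdot),q}$, and the unique linear extension to $P_{p(\cdot),q}$ follows by density. For $q=\infty$, one replaces $P_{p(\cdot),\infty}$ by $\mathscr P_{p(\cdot),\infty}$ and invokes Lemma \ref{lemma of control} together with Lemma \ref{lemma for closed absolute norm} to obtain the analogous $\mathscr P_{p(\cdot),\infty}$-norm convergence of the partial sums; the rest of the argument, using $\delta=1$ in \eqref{e15}, is unchanged.

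The main obstacle is the commutation of $T$ with the series: we need both $L_r$-norm convergence of the atomic decomposition and linearity to conclude $TF_i=\sum\mu_k Ta^k$ in $L_r$, since we no longer have the pointwise $\sigma$-sublinear bound $|T(\sum f_k)|\leq \sum |Tf_k|$ available. Once this commutation is secured on the dense subspace $L_r\cap P_{p(\cdot),q}$, the $\sigma$-sublinear estimates from Theorem \ref{t2} transfer with no essential change, since $\sigma$-sublinearity was used in that proof only to produce precisely this pointwise bound.
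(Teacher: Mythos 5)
Your plan matches the paper's intended route: the paper proves Theorem \ref{t8} as a companion to Theorem \ref{t7}, by exactly the scheme you describe — take the $(3,p(\cdot),\infty)$-atomic decomposition from Theorem \ref{theorem of atomic decomposition for rest}, observe (as in Remark \ref{remark of desenty}) that the partial sums converge both in the $P_{p(\cdot),q}$-norm and, for $f\in L_r$, in the $L_r$-norm, use linearity and $L_r$-continuity of $T$ to commute $T$ past the series and get $|TF_i|\leq\sum\mu_k|Ta^k|$ a.e., then run the $\sigma$-sublinear estimates of Theorem \ref{t2} and extend by density, with $\mathscr P_{p(\cdot),\infty}$ replacing $P_{p(\cdot),\infty}$ when $q=\infty$. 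That is the correct and intended architecture.

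One step in your account, however, would fail as written. Theorem \ref{t8} requires $1<r<\infty$ strictly, so $T$ is \emph{not} assumed bounded on $L_\infty$; the inequality $\|Ta^k\|_\infty\leq\|M(a^k)\|_\infty\leq\|\chi_{\{\tau_k<\infty\}}\|_{p(\cdot)}^{-1}$, and the resulting reduction ``$\|TF_1\|_\infty\lesssim 2^{k_0}$, hence pass to $TF_2$,'' belong to the $r=\infty$ branch of the proof of Theorem \ref{t1}/\ref{t2}, which is unavailable here. In the finite-$r$ case one must instead bound $\|TF_1\|_{L_{p(\cdot),q}}$ directly via the H\"older-in-$k$ argument used for $F_1$ in the proof of Theorem \ref{theorem of boundedness 1} (there the $L_\infty$ normalization is applied to the atoms $a^k$, not to $Ta^k$, together with the boundedness of $T$ on $L_r$), and then estimate $TF_2$ through the $\{\tau_k<\infty\}$/$\{\tau_k=\infty\}$ split together with hypothesis \eqref{e2} and the manipulation \eqref{e15}, as you indicate. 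This is a local, fixable slip; the commutation idea, the density argument, and the rest of the outline are sound and coincide with the paper's approach.
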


For linear operators $T_n$ let the maximal operators be defined by
$$
T_*f:=\sup_{n\in \N} \left|T_nf\right|, \qquad T_{N,*}f:=\sup_{n\leq N} \left|T_nf\right|.
$$

\begin{theorem} \label{t6}
Let $p(\cdot)\in\mathcal{P}(\Omega)$ satisfy condition \eqref{log}, $0<q<\infty$ and $1<r\leq \infty$ with $p_+<r$. Suppose that $T_n:L_1\to L_1$ is a bounded linear operator for each $n\in \N$ and
\begin{equation}\label{e10}
T_kf_n=T_kf \qquad \mbox{for} \qquad 0\leq k\leq 2^n.
\end{equation}
Suppose that $T_*:L_r \rightarrow L_r$ is bounded and
$$
\left\| |T_*a|^{\beta} \chi_{\{\tau=\infty\}} \right\|_{p(\cdot)} \leq C \left\|\chi_{\{\tau<\infty\}}\right\|_{p(\cdot)}^{1-\beta }
$$
for some $0<\beta<1$ and all $(3,p(\cdot),\infty)$-atoms $a$, where $\tau$ is the stopping time associated with $a$. Then
$$
\|T_*f\|_{L_{p(\cdot),q}}\lesssim \|f\|_{H^S_{p(\cdot),q}},\quad f\in H^S_{p(\cdot),q}.
$$
The theorem holds for $q=\infty$ as well if we change $H^S_{p(\cdot),\infty}$ by $\mathscr H^S_{p(\cdot),\infty}$.
\end{theorem}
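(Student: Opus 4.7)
The plan is to reduce directly to the already-established Theorem \ref{t2}. Since the whole section is set on the dyadic stochastic basis (Example \ref{ex:regular}), $(\mathcal{F}_n)_{n\geq 0}$ is regular. By the last assertion of Theorem \ref{theorem of martingale inequalities} (equivalently Corollary \ref{cor:equi Lor}), the five variable Lorentz-Hardy spaces coincide with equivalent quasi-norms; in particular
$$\|f\|_{H^S_{p(\cdot),q}} \approx \|f\|_{P_{p(\cdot),q}}.$$
Thus it suffices to prove $\|T_*f\|_{L_{p(\cdot),q}} \lesssim \|f\|_{P_{p(\cdot),q}}$.

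Next I would verify that $T_*$ is a $\sigma$-sublinear operator. This is automatic from the linearity of each $T_n$: for any sequence $(g_k)$,
$$T_*\Big(\sum_k g_k\Big) = \sup_n \Big|\sum_k T_n g_k\Big| \leq \sum_k \sup_n |T_n g_k| = \sum_k T_* g_k,$$
and $T_*(\alpha f) = |\alpha| T_* f$ is clear. The operator $T_*$ is bounded from $L_r$ to $L_r$ by hypothesis, and the atomic estimate assumed on $(3,p(\cdot),\infty)$-atoms is precisely the hypothesis of Theorem \ref{t2}. Applying that theorem with $T = T_*$ therefore yields
$$\|T_*f\|_{L_{p(\cdot),q}} \lesssim \|f\|_{P_{p(\cdot),q}},$$
and combining with the equivalence above gives the desired inequality. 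For the case $q=\infty$ the same reduction works once $H^S_{p(\cdot),\infty}$ is replaced by $\mathscr{H}^S_{p(\cdot),\infty}$, as in Theorem \ref{t8}.

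The only substantive technicality is the well-definedness of $T_*f$ when $f = (f_n)$ is a martingale that need not itself lie in $L_r$. The hypothesis $T_k f_n = T_k f$ for all $0\leq k\leq 2^n$ allows us to interpret $T_k f$ unambiguously as $T_k f_n$ for any $n$ with $2^n\geq k$, and it is this compatibility that lets $T_*$ commute with the $H^{\rm at,3,\infty}_{p(\cdot),q}$ atomic decomposition $f = \sum_k \mu_k a^k$ guaranteed by Theorem \ref{proposition of atomic decomposition of MS}. Indeed, by Remark \ref{remark of desenty} the partial sums converge to $f$ in $H^s_{p(\cdot),q}$ for $q<\infty$, hence in $L_1$ on each atom of the stochastic basis, and the $\sigma$-sublinearity then produces $T_*f \leq \sum_k \mu_k\, T_* a^k$ pointwise almost everywhere. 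Once this identification is in place, no further obstacle appears; all genuine analytic content is already packaged in Theorem \ref{t2} (atomic-to-Lorentz bound) and Theorem \ref{theorem of martingale inequalities} (equivalence of Hardy spaces under regularity).
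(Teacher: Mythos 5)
There is a genuine gap in your reduction. The point of Theorem \ref{t6}, and the reason the extra hypothesis \eqref{e10} appears, is precisely that $T_*$ cannot be applied directly to a general martingale $f\in H^S_{p(\cdot),q}$: if $f$ is not an $L_1$-bounded martingale, $T_k f$ has no a priori meaning, and the $\sigma$-sublinearity estimate $T_*\bigl(\sum_k\mu_k a^k\bigr)\leq\sum_k|\mu_k|T_*a^k$, which the proof of Theorem \ref{t2} needs, is not automatic. Your attempt to close this gap — ``by Remark \ref{remark of desenty} the partial sums converge to $f$ in $H^s_{p(\cdot),q}$ for $q<\infty$, hence in $L_1$ on each atom of the stochastic basis'' — is not valid: the theorem does not assume $p_->1$, and for $p_-\leq 1$ convergence in the $H^s_{p(\cdot),q}$-quasinorm does not control the $L_1$-norm even locally on a fixed dyadic atom. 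So you have not established the $\sigma$-sublinearity of $T_*$ on the atomic decomposition of a general $f\in P_{p(\cdot),q}$, and Theorem \ref{t2} cannot simply be invoked with $T=T_*$.

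The paper resolves this by two additional steps that you skip. First, it applies Theorem \ref{t2} only to $f\in H^S_1$: for such $f$ the atomic decomposition does converge in $L_1$ (this uses $f\in H^S_1$, not $H^s_{p(\cdot),q}$-convergence), so the $L_1$-boundedness and linearity of each $T_n$ give $T_nf=\sum_k\mu_k T_na^k$ and hence the needed $\sigma$-sublinearity. Second, for a general $f\in H^S_{p(\cdot),q}$ it applies this to the integrable, finitely supported martingales $f_n\in H^S_1$, observes that $T_{2^n,*}f_n=T_{2^n,*}f$ by \eqref{e10}, and uses the fact that $T_{2^n,*}f\uparrow T_*f$ a.e.\ together with the dominated convergence theorem in $L_{p(\cdot),q}$ (Lemma \ref{lemma of control}) to transfer the bound from $f_n$ to $f$. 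This monotone limiting step is exactly where \eqref{e10} is essential, and it is the step missing from your argument. Your opening reduction using $H^S_{p(\cdot),q}\approx P_{p(\cdot),q}$ by regularity, and the identification of $T_*$ as the operator to which Theorem \ref{t2} should be applied, are both correct and agree with the paper; what needs repair is the passage from the bound on $L_1$-bounded martingales to the bound on general elements of $H^S_{p(\cdot),q}$.
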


\begin{proof}
It is easy to see that the atomic decomposition of Theorem \ref{theorem of atomic decomposition pq} converges in the $L_1$-norm,
\begin{equation*}
\sum_{k\in \mathbb{Z}} \mu_k a^k=f \quad\mbox{in the $L_1$-norm}
\end{equation*}
if $f\in H^S_1$. Thus, in this case,
$$
T_nf = \sum_{k\in \mathbb{Z}} \mu_k T_na^k
$$
and
$$
T_*f \leq \sum_{k\in \mathbb{Z}}|\mu_k| T_*a^k.
$$
Observe that for $f\in H^S_{p(\cdot),q}$, $f_n\in H^S_1$ because $f_n$ is integrable $(n\in \N)$. Theorem \ref{t2} implies that
$$
\|T_*f\|_{L_{p(\cdot),q}}\lesssim \|f\|_{H^S_{p(\cdot),q}} \qquad (f\in H^S_{1}).
$$
Hence
$$
\|T_{2^n,*}f_n\|_{L_{p(\cdot),q}} \leq \|T_*f_n\|_{L_{p(\cdot),q}}\lesssim \|f_n\|_{H^S_{p(\cdot),q}}
\qquad (f\in H^S_{p(\cdot),q}).
$$
Since
$$
\lim_{n\to\infty}f_n=f \qquad  \mbox{in the $H^S_{p(\cdot),q}$-norm}
$$
because of the dominated convergence theorem Lemma \ref{lemma of control},
$T_{2^n,*}f_n$ converges in the $L_{p(\cdot),q}$-norm, say
$$
\lim_{n\to\infty}T_{2^n,*}f_n=Vf \qquad  \mbox{in the $L_{p(\cdot),q}$-norm}.
$$
However,
$$
T_{2^n,*}f_n=T_{2^n,*}f \qquad (f\in H^S_{p(\cdot),q})
$$
by the condition of the theorem. Obviously,
$$
\lim_{n\to\infty} T_{2^n,*}f = T_*f \qquad \mbox{a.e.} \qquad (f\in H^S_{p(\cdot),q})
$$
increasingly and so in the $L_{p(\cdot),q}$-norm, too. Hence $T_*f=Vf$ for all $f\in H^S_{p(\cdot),q}$, which proves the theorem.
\end{proof}

Now we are able to prove the boundedness of $\sigma_*$ from $H_{p(\cdot),q}$ to $L_{p(\cdot),q}$.

\begin{theorem} \label{t3}
Let $p(\cdot)\in\mathcal{P}(\Omega)$ satisfy conditions \eqref{log} and \eqref{e104}. If $1/2<p_-<\infty$, then
\begin{equation}\label{e6}
\left\| |\sigma_*a|^{\beta } \chi_{\{\tau=\infty\}} \right\|_{p(\cdot)} \leq C \left\|\chi_{\{\tau<\infty\}}\right\|_{p(\cdot)}^{1-\beta }
\end{equation}
for some $0<\beta<1$ and for all $(3,p(\cdot),\infty)$-atoms $a$, where $\tau$ is the stopping time associated with $a$.
\end{theorem}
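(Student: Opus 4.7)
The plan is to reduce Theorem \ref{t3} to a single-atom norm estimate for the two auxiliary functions $A$ and $B$ already isolated in the proof of Theorem \ref{t3 Leb}. First I will reuse the pointwise kernel bound established there from the identities \eqref{e16} and \eqref{e17}: for any $(3,p(\cdot),\infty)$-atom $a$ with stopping time $\tau$ and for $x\in\{\tau=\infty\}$,
\[
\sigma_*a(x)\;\lesssim\;\|\chi_{\{\tau<\infty\}}\|_{p(\cdot)}^{-1}\bigl(A(x)+B(x)\bigr),
\]
where $A$ and $B$ are built only from the dyadic atoms $I_l$ decomposing $\{\tau<\infty\}$ and do not see the atomic normalization of $a$. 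Using $0<\beta<1$ and the subadditivity $(A+B)^\beta\leq A^\beta+B^\beta$, followed by the quasi-triangle inequality \eqref{equation of b trangile inequality} in $L_{p(\cdot)}$, the theorem reduces to showing
\[
\|A^\beta\|_{p(\cdot)}+\|B^\beta\|_{p(\cdot)}\;\lesssim\;\|\chi_{\{\tau<\infty\}}\|_{p(\cdot)}.
\]

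Next, I will exploit the elementary Luxemburg identities $\|f^\beta\|_{p(\cdot)}=\|f\|_{\beta p(\cdot)}^{\beta}$ and $\|\chi_E\|_{\beta p(\cdot)}=\|\chi_E\|_{p(\cdot)}^{1/\beta}$, both immediate from the definition of the norm after the substitution $\mu=\lambda^{1/\beta}$. They convert the display above into the equivalent single-atom target
\[
\|A\|_{q(\cdot)}+\|B\|_{q(\cdot)}\;\lesssim\;\|\chi_{\{\tau<\infty\}}\|_{q(\cdot)},\qquad q(\cdot):=\beta p(\cdot).
\]
I will then pick $\beta\in(0,1)$ so that $q(\cdot)$ still obeys the hypotheses powering Theorem \ref{t3 Leb}: the log-condition \eqref{log} is inherited automatically, while \eqref{e104} for $q(\cdot)$ becomes $(1/p_--1/p_+)/\beta<1$ and the existence of an admissible auxiliary exponent $t\in(1/2,\underline q)=(1/2,\min(\beta p_-,1))$ requires $\beta p_->1/2$. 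Both constraints are compatible with $\beta<1$ because \eqref{e104} gives $1/p_--1/p_+<1$ and $p_->1/2$ gives $1/(2p_-)<1$, so any $\beta\in\bigl(\max(1/p_--1/p_+,\,1/(2p_-)),\,1\bigr)$ is admissible.

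With this choice of $\beta$ the desired bound on $A$ and $B$ is exactly what Steps~2 and 3 of the proof of Theorem \ref{t3 Leb} deliver, now read with the exponent $p(\cdot)$ there replaced by $q(\cdot)=\beta p(\cdot)$ and with the sum $\sum_k$ collapsed to a single term; those steps only use the atomic structure $\{\tau<\infty\}=\bigcup_l I_l$, duality in $L_{q(\cdot)}$ (Lemma \ref{lem:duality for variabl p}), classical Hölder's inequality with large auxiliary $r$, and the boundedness on $L_{(q/t)'/(r/t)'}$ of the dyadic maximal operators $U_t$ and $V_{\alpha,s}$ furnished by Theorems \ref{t15} and \ref{t17}. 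Chaining the $A,B$ bound back through the Luxemburg identities and the pointwise kernel estimate then produces
\[
\||\sigma_*a|^\beta\chi_{\{\tau=\infty\}}\|_{p(\cdot)}\;\lesssim\;\|\chi_{\{\tau<\infty\}}\|_{p(\cdot)}^{-\beta}\,\|\chi_{\{\tau<\infty\}}\|_{p(\cdot)}=\|\chi_{\{\tau<\infty\}}\|_{p(\cdot)}^{1-\beta},
\]
which is the claim. The main obstacle will be verifying that the $U_t$ and $V_{\alpha,s}$ inequalities of Theorems \ref{t15} and \ref{t17} remain available at the shifted exponent $q(\cdot)=\beta p(\cdot)$; this is precisely where the three arithmetic constraints on $\beta$ are consumed, and once it is in place no new ideas beyond the proof of Theorem \ref{t3 Leb} are needed.
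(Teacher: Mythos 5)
Your proposal is correct and, at bottom, takes the same route as the paper: the pointwise kernel bound $\sigma_*a\lesssim\|\chi_{\{\tau<\infty\}}\|_{p(\cdot)}^{-1}(A+B)$ on $\{\tau=\infty\}$, followed by duality, H\"older with large $r$, and the maximal operators $U_t$ and $V_{\alpha,s}$ of Theorems \ref{t15} and \ref{t17}. The paper carries the powers $\beta\varepsilon$ and the exponent $p(\cdot)/\varepsilon$ through a fresh run of the duality estimate, whereas you use the Luxemburg identities to move everything into the shifted exponent $q(\cdot)=\beta p(\cdot)$ and then literally cite the single-atom versions of Steps~2 and~3 of Theorem \ref{t3 Leb}; one can check this is a correct re-parametrization of the same computation (in particular $\|A^{\beta\varepsilon}\|_{p(\cdot)/\varepsilon}=\|A\|_{\beta p(\cdot)}^{\beta\varepsilon}$ matches the paper's inequality \eqref{e21}), and your arithmetic constraints $\beta>\max(1/(2p_-),\,1/p_--1/p_+)$ correctly match the ranges needed for \eqref{e102}, \eqref{e105} and the existence of $t\in(1/2,\underline q)$.
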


\begin{proof}
We can chose $0<\beta<1$ and $1/2<\epsilon<\underline{p}$ such that $\beta\epsilon>1/2$. Instead of (\ref{e6}), we will show that
$$
\left\| |\sigma_*a|^{\beta \epsilon} \chi_{\{\tau=\infty\}} \right\|_{p(\cdot)/\epsilon} \leq C \left\|\chi_{\{\tau<\infty\}}\right\|_{p(\cdot)/\epsilon} \left\|\chi_{\{\tau<\infty\}}\right\|_{p(\cdot)}^{-\beta \epsilon}.
$$
We use same symbols as in the proof of Theorem \ref{t3 Leb}.  Then, by the estimate of $\sigma_*(a)$ in (\ref{e7}),  we have
\begin{equation}\label{e20}
\left\| |\sigma_*a|^{\beta \epsilon} \chi_{\{\tau=\infty\}} \right\|_{p(\cdot)/\epsilon} \leq \left\|\chi_{\{\tau<\infty\}}\right\|_{p(\cdot)}^{-\beta \epsilon}
\left(\left\| A^{\beta \epsilon} \chi_{\{\tau=\infty\}} \right\|_{p(\cdot)/\epsilon} + \left\| B^{\beta \epsilon} \chi_{\{\tau=\infty\}} \right\|_{p(\cdot)/\epsilon}\right).
\end{equation}
By Lemma \ref{lem:duality for variabl p}, we can choose a function $g\in L_{(\frac{p(\cdot)}{\varepsilon})'}$ with $\|g\|_{{(\frac{p(\cdot)}{\varepsilon})'}}\leq 1$ such that
\begin{align*}
\left\| A^{\beta \epsilon} \chi_{\{\tau=\infty\}} \right\|_{p(\cdot)/\epsilon} &= \int_\Omega A(x)^{\beta \epsilon} \chi_{\{\tau=\infty\}} gd\mathbb P.
\end{align*}
Choosing $\max(1,\beta p_+)<r<\infty$ and applying H\"{o}lder's inequality, we obtain
\begin{eqnarray*}
\left\| A^{\beta \epsilon} \chi_{\{\tau=\infty\}} \right\|_{p(\cdot)/\epsilon}
&\leq & \int_\Omega \sum_{l} \sum_{j=0}^{K_l-1} 2^{(j-K_l)\beta \epsilon}
\chi_{I_l^{j}}|g|d\mathbb P \\
& \leq & \sum_{l} \sum_{j=0}^{K_l-1} 2^{(j-K_l)\beta \epsilon} \|\chi_{I_l^{j}}\|_{{\frac{r}{\beta\epsilon}}} \| \chi_{I_l^{j}} g\|_{{(\frac{r}{\beta\epsilon})'}}
\\
&\lesssim& \sum_{l} \sum_{j=0}^{K_l-1} 2^{(j-K_l)\beta \epsilon} \int \chi_{I_l} \left(\frac{1}{\mathbb P(I_l^{j})}\int_{I_l^{j}} \left| g\right|^{(\frac{r}{\beta\epsilon})'}\right)^{1/(\frac{r}{\beta\epsilon})'}d\mathbb P
\end{eqnarray*}
because $\mathbb P(I_l)=\mathbb P(I_l^{j})=2^{-K_l}$. Again by H\"older's inequality,
\begin{eqnarray*}
\left\| A^{\beta \epsilon} \chi_{\{\tau=\infty\}} \right\|_{p(\cdot)/\epsilon}
&\lesssim& \int \sum_{l} \chi_{I_l} \sum_{j=0}^{K_l-1} 2^{(j-K_l)\beta \epsilon (1/(\frac{r}{\beta\epsilon})+1/(\frac{r}{\beta\epsilon})')} \left(\frac{1}{\mathbb P(I_l^{j})}\int_{I_l^{j}} \left| g\right|^{(\frac{r}{\beta\epsilon})'}\right)^{1/(\frac{r}{\beta\epsilon})'}d\mathbb P \\
&\lesssim& \int \sum_{l} \chi_{I_l} \left(\sum_{j=0}^{K_l-1} 2^{(j-K_l)\beta \epsilon} \right) ^{1/(\frac{r}{\beta\epsilon})} \\
&&{}
\left(\sum_{j=0}^{K_l-1} 2^{(j-K_l)\beta \epsilon}
\left(\frac{1}{\mathbb P(I_l^{j})}\int_{I_l^{j}} \left| g\right|^{(\frac{r}{\beta\epsilon})'}\right)\right) ^{1/(\frac{r}{\beta\epsilon})'} d\mathbb P.
\end{eqnarray*}
From this it follows that
\begin{eqnarray*}
\left\| A^{\beta \epsilon} \chi_{\{\tau=\infty\}} \right\|_{p(\cdot)/\epsilon}
&\lesssim& \int \sum_{l} \chi_{I_l}
\left(\sum_{j=0}^{K_l-1} 2^{(j-K_l)\beta \epsilon}
\left(\frac{1}{\mathbb P(I_l^{j})}\int_{I_l^{j}} \left| g\right|^{(\frac{r}{\beta\epsilon})'}\right) \right) ^{1/(\frac{r}{\beta\epsilon})'}d\mathbb P \\
&\leq& \int \sum_{l} \chi_{I_l} \left(U_{\beta \epsilon}(|g|^{(\frac{r}{\beta\epsilon})'})\right)^{1/(\frac{r}{\beta\epsilon})'} d\mathbb P \\
&\leq& \left \|\sum_{l} \chi_{I_l} \right \|_{{p(\cdot)/\epsilon}}  \left\|\left(U_{\beta \epsilon}(|g|^{(\frac{r}{\beta\epsilon})'})\right)^{1/(\frac{r}{\beta\epsilon})'}\right\|_{{(p(\cdot)/\epsilon)'}}.
\end{eqnarray*}
Since  $r>\beta p_+$ and $\epsilon <p_-$, we get
$$(\frac{r}{\beta\epsilon})' <{(p(\cdot)/\epsilon)'} \quad\mbox{and }\quad ((p(\cdot)/\epsilon)')_+<\infty  .
$$
Then Theorem \ref{t15} implies that
$$
\left\|\left(U_{\beta \epsilon}(|g|^{(\frac{r}{\beta\epsilon})'})\right)^{1/(\frac{r}{\beta\epsilon})'}\right\|_{{(p(\cdot)/\epsilon)'}} 
= \left\|U_{\beta \epsilon}(|g|^{(\frac{r}{\beta\epsilon})'}) \right\|_{{\frac{(p(\cdot)/\varepsilon)'}{(r/\beta\epsilon)'}}}^{1/(\frac{r}{\beta\epsilon})'}
\lesssim \|g\|_{{(p(\cdot)/\varepsilon)'}}\leq 1,
$$
which shows that
\begin{equation}\label{e21}
\left\| A^{\beta \epsilon}  \chi_{\{\tau=\infty\}} \right\|_{p(\cdot)/\epsilon} \lesssim \left \|\chi_{\{\tau<\infty\}} \right \|_{{p(\cdot)/\epsilon}},
\end{equation}
whenever
\begin{equation}\label{e108}
	\frac{1}{\left((p(\cdot)/\epsilon)'/(r/\beta \epsilon)'\right)_-}-\frac{1}{\left((p(\cdot)/\epsilon)'/(r/\beta \epsilon)'\right)_+} = \frac{r/(r- \beta \epsilon)}{p_+/(p_+-\epsilon)} - \frac{r/(r- \beta \epsilon)}{p_-/(p_--\epsilon)} < \beta \epsilon.
\end{equation}

This means that
\[
	\frac{p_+- \epsilon}{p_+}-\frac{p_--\epsilon}{p_-} < \beta \epsilon,
\]
in other words,
\[
	\frac{1}{p_-}-\frac{1}{p_+} < \beta.
\]
Since $\beta$ can arbitrarily near to $1$, we obtain \eqref{e104}.

Now let us investigate the second term of (\ref{e20}). We choose again a function $g\in L_{(\frac{p(\cdot)}{\varepsilon})'}$ with $\|g\|_{{(\frac{p(\cdot)}{\varepsilon})'}}\leq 1$ such that
\begin{align*}
\left\| B^{\beta \epsilon} \chi_{\{\tau=\infty\}} \right\|_{p(\cdot)/\epsilon} &= \int_\Omega B(x)^{\beta \epsilon} \chi_{\{\tau=\infty\}} gd\mathbb P.
\end{align*}
Let us apply H\"{o}lder's inequality to obtain
\begin{eqnarray*}
\lefteqn{\left\| B^{\beta \epsilon} \chi_{\{\tau=\infty\}} \right\|_{p(\cdot)/\epsilon} } \n\\
&\lesssim& \int_{\Omega} \sum_{l} \sum_{j=0}^{K_l-1}\sum_{i=j}^{K_l-1}  2^{(j-K_l)\beta\epsilon} 2^{(i-K_l)\beta\epsilon} \chi_{I_l^{j,i}}|g| \, dx \\
&\lesssim& \sum_{l} \sum_{j=0}^{K_l-1}\sum_{i=j}^{K_l-1} 2^{(j-K_l)\beta\epsilon} 2^{(i-K_l)\beta\epsilon}
\|\chi_{I_l^{j,i}}\|_{{\frac{r}{\beta\epsilon}}} \| \chi_{I_l^{j,i}} g\|_{{(\frac{r}{\beta\epsilon})'}}
\\
&\lesssim& \sum_{l} \sum_{j=0}^{K_l-1}\sum_{i=j}^{K_l-1} 2^{(j-K_l)\beta\epsilon} 2^{(i-K_l)\beta\epsilon} 2^{K_l-i} \int \chi_{I_l} \left(\frac{1}{\mathbb P(I_l^{j,i})}\int_{I_l^{j,i}} \left| g\right|^{(\frac{r}{\beta\epsilon})'}\right)^{1/(\frac{r}{\beta\epsilon})'}d\mathbb P,
\end{eqnarray*}
whenever $\max(1,\beta p_+)<r<\infty$ is large enough such that $2\beta\epsilon>r/(r-\beta\epsilon)$. Moreover,
\begin{eqnarray*}
\lefteqn{\left\| B^{\beta \epsilon} \chi_{\{\tau=\infty\}} \right\|_{p(\cdot)/\epsilon} } \n\\
&\lesssim& \int \sum_{l} \chi_{I_l} \sum_{j=0}^{K_l-1}\sum_{i=j}^{K_l-1} 2^{(i+j-2K_l)\beta\epsilon(1/(\frac{r}{\beta\epsilon})+1/(\frac{r}{\beta\epsilon})')} 2^{K_l-i} \left(\frac{1}{\mathbb P(I_l^{j,i})}\int_{I_l^{j,i}} \left| g\right|^{(\frac{r}{\beta\epsilon})'}\right)^{1/(\frac{r}{\beta\epsilon})'}d\mathbb P\\
&\lesssim& \int \sum_{l} \chi_{I_l} \left(\sum_{j=0}^{K_l-1}\sum_{i=j}^{K_l-1} 2^{(j-K_l)\beta\epsilon} 2^{(i-K_l)\beta\epsilon} \right)^{1/(\frac{r}{\beta\epsilon})} \left(\sum_{j=0}^{K_l-1}\sum_{i=j}^{K_l-1} 2^{(j-K_l)\beta\epsilon} 2^{(i-K_l)\beta\epsilon} \right.\\
&&{} \left.2^{(K_l-i)(\frac{r}{\beta\epsilon})'}\frac{1}{\mathbb P(I_l^{j,i})}\int_{I_l^{j,i}} \left| g\right|^{(\frac{r}{\beta\epsilon})'}\right)^{1/(\frac{r}{\beta\epsilon})'}d\mathbb P\\
&\lesssim& \int \sum_{l} \chi_{I_l} \left(\sum_{j=0}^{K_l-1}\sum_{i=j}^{K_l-1} 2^{(j-K_l)(2\beta\epsilon-r/(r-\beta\epsilon))} 2^{(j-i)(r/(r-\beta\epsilon)-\beta\epsilon)} \frac{1}{\mathbb P(I_l^{j,i})}\int_{I_l^{j,i}} \left| g\right|^{(\frac{r}{\beta\epsilon})'}\right)^{1/(\frac{r}{\beta\epsilon})'}d\mathbb P.
\end{eqnarray*}
Taking into account the definition of the maximal operator $V_{\alpha,t}$ and Theorem \ref{t17}, we obtain
\begin{align*}
\left\| B^{\beta \epsilon} \chi_{\{\tau=\infty\}} \right\|_{p(\cdot)/\epsilon}
&\leq \int \sum_{l} \chi_{I_l} \left(V_{2\beta\epsilon-r/(r-\beta\epsilon),r/(r-\beta\epsilon)-\beta\epsilon}(|g|^{(\frac{r}{\beta\epsilon})'})\right)^{1/(\frac{r}{\beta\epsilon})'} d\mathbb P \\
&\leq \left \|\sum_{l} \chi_{I_l} \right \|_{{p(\cdot)/\epsilon}}  \left\|\left(V_{2\beta\epsilon-r/(r-\beta\epsilon),r/(r-\beta\epsilon)-\beta\epsilon}(|g|^{(\frac{r}{\beta\epsilon})'})\right)^{1/(\frac{r}{\beta\epsilon})'}\right\|_{{(p(\cdot)/\epsilon)'}} \\
&\lesssim \left \|\chi_{\{\tau<\infty\}} \right \|_{{p(\cdot)/\epsilon}}
\end{align*}
as in (\ref{e21}), whenever \eqref{e108} and \eqref{e104} hold. The proof of the theorem is complete.
\end{proof}

Now we are able to prove the next theorem. It was proved by the third author in \cite{wces3} for $H_{p,q}$ with constant $p$.

\begin{theorem} \label{t4}
Let $p(\cdot)\in\mathcal{P}(\Omega)$ satisfy conditions \eqref{log} and \eqref{e104}. If $1/2<p_-<\infty$ and $0<q\leq \infty$, then
$$
\left\|\sigma_*f\right\|_{L_{p(\cdot),q}}\lesssim \left\|f\right\|_{H_{p(\cdot),q}},\quad f\in H_{p(\cdot),q}.
$$
\end{theorem}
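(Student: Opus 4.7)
The plan is to assemble Theorem \ref{t4} from three pieces that are already in place in the paper: the general atomic-type boundedness result of Theorem \ref{t2}, the key atomic estimate for $\sigma_*$ proved in Theorem \ref{t3}, and the equivalence of the five variable Lorentz-Hardy spaces guaranteed by regularity (Corollary \ref{cor:equivalence}), since the dyadic stochastic basis of Subsection \ref{sec71} is regular.

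First I would verify the hypotheses of Theorem \ref{t2} for $T=\sigma_*$. The maximal Fej\'er operator is $\sigma$-sublinear by the triangle inequality applied to the integral representation $\sigma_n f(x)=\int_0^1 f(t)K_n(x\dot+t)\,dt$. Since $p_+<\infty$, we may pick any $r$ with $\max(p_+,1)<r<\infty$; by the classical $L_r$-boundedness of $\sigma_*$ for $1<r\leq\infty$ (see e.g.\ \cite{sws, Weisz1994book}), $\sigma_*:L_r\to L_r$ is bounded. Next, choose $\beta\in(0,1)$ so close to $1$ that condition \eqref{e104} is preserved; Theorem \ref{t3} then provides the crucial atomic inequality
\[
\bigl\| |\sigma_*a|^{\beta}\chi_{\{\tau=\infty\}}\bigr\|_{p(\cdot)}\lesssim \bigl\|\chi_{\{\tau<\infty\}}\bigr\|_{p(\cdot)}^{1-\beta}
\]
for every $(3,p(\cdot),\infty)$-atom $a$ associated with a stopping time $\tau$. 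This is exactly the hypothesis of Theorem \ref{t2}, so we obtain
\[
\|\sigma_*f\|_{L_{p(\cdot),q}}\lesssim \|f\|_{P_{p(\cdot),q}},\qquad f\in P_{p(\cdot),q}.
\]

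Finally, since $(\mathcal{F}_n)_{n\geq 0}$ is the regular dyadic filtration, Corollary \ref{cor:equivalence} (together with Theorem \ref{theorem of martingale inequalities}) gives $P_{p(\cdot),q}=H_{p(\cdot),q}$ with equivalent quasi-norms, regardless of which of the five Hardy spaces is denoted by $H_{p(\cdot),q}$. Combining the last display with this identification yields
\[
\|\sigma_*f\|_{L_{p(\cdot),q}}\lesssim \|f\|_{H_{p(\cdot),q}},\qquad f\in H_{p(\cdot),q},
\]
which is the claimed inequality.

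Since every technical obstacle has already been resolved earlier in the paper (the $L_{p(\cdot)}$-boundedness of the new maximal operators $U_s$ and $V_{\alpha,s}$ in Theorems \ref{t15} and \ref{t17} feed into Theorem \ref{t3}, and the interpolation/regularity machinery delivers Theorem \ref{t2} and Corollary \ref{cor:equivalence}), the proof of Theorem \ref{t4} reduces to the bookkeeping above. The only subtle point is verifying that the exponent $\beta$ produced by Theorem \ref{t3} and the exponent $r>p_+$ in the $L_r$-boundedness can be chosen \emph{simultaneously} so that Theorem \ref{t2} applies; this is straightforward because $\beta$ may be taken as close to $1$ as one wishes under \eqref{e104}, and $r$ is free in $(p_+,\infty)$. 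No new analytic difficulty arises in this final assembly step.
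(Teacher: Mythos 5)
Your overall blueprint is close to the paper's, and you have correctly identified that the atomic estimate of Theorem~\ref{t3}, a boundedness transfer theorem from the atomic side, and the equivalence of the variable Lorentz--Hardy spaces under regularity are the three needed ingredients. However, there is a genuine gap in the assembly step: the paper invokes Theorem~\ref{t6} rather than Theorem~\ref{t2}, and this is not a cosmetic difference.

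Theorem~\ref{t2} is an abstract result for a $\sigma$-sublinear $T$, and its proof tacitly uses the pointwise bound $|Tf|\leq\sum_k\mu_k\,|Ta^k|$ for the atomic decomposition $f=\sum_k\mu_ka^k$. For $T=\sigma_*$ this inequality is \emph{not} automatic once $f$ is a general martingale in $P_{p(\cdot),q}$ with $p_-<1$: such an $f$ need not be represented by an integrable function, and $\sigma_nf$ is defined via the martingale's Walsh--Fourier coefficients, not via the integral representation $\sigma_nf(x)=\int_0^1f(t)K_n(x\dot+t)\,dt$ that you cite to justify $\sigma$-sublinearity. That representation, and hence your justification of $\sigma$-sublinearity of $\sigma_*$, holds only for $f\in L_1$. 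One must therefore show that the ``naturally defined'' $\sigma_*f$ on singular martingales is controlled by the atomic sum. This is exactly what Theorem~\ref{t6} accomplishes: it exploits the compatibility condition~\eqref{e10}, namely $\sigma_kf_n=\sigma_kf$ for $0\leq k\leq 2^n$, to reduce the estimate to the integrable martingales $f_n\in H^S_1$ (where the atomic decomposition converges in $L_1$-norm and $\sigma$-sublinearity is honest), applies Theorem~\ref{t2} there, and then passes to $f$ by a monotone-limit argument using the dominated convergence theorem of Lemma~\ref{lemma of control}. Your proposal skips this limiting step and applies Theorem~\ref{t2} directly to $f\in P_{p(\cdot),q}$, which is precisely the situation the paper's Theorems~\ref{t7}, \ref{t8}, \ref{t6} were introduced to handle. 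Inserting Theorem~\ref{t6} (after checking~\eqref{e10} for the Fej\'er means, which is immediate) closes the gap and recovers the paper's proof.
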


\begin{proof}
It is easy to see that (\ref{e10}) holds in this case, i.e.,
$$
\sigma_kf_n=\sigma_kf \qquad \mbox{for} \qquad 0\leq k\leq 2^n.
$$
Applying Theorems \ref{t6} and \ref{t3}, we can complete the proof.
\end{proof}

For a constant $p$ with $p\leq 1/2$, the theorem does not hold (see Simon and Weisz \cite{wgyenge}, Simon \cite{si7} and G{\'a}t and Goginava \cite{gago}).
Since the Walsh polynomials are dense in $H_{p(\cdot),q}$ as well, the next three consequences can be proved as Corollaries \ref{c20} and \ref{c50} and \ref{c51}.

\begin{corollary} \label{c2}
Let $p(\cdot)\in\mathcal{P}(\Omega)$ satisfy conditions \eqref{log} and \eqref{e104}, $1/2<p_-<\infty$ and $0<q\leq \infty$. If $f\in H_{p(\cdot),q}$, then $\sigma_{n} f$ converges almost everywhere on $[0,1)$.
\end{corollary}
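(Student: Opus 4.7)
The argument will parallel the proof of Corollary~\ref{c20}, with Theorem~\ref{t4} replacing Theorem~\ref{t4 Leb}. Fix $f\in H_{p(\cdot),q}$ and set
\[
g_N(x):=\sup_{n,k\geq N}|\sigma_nf(x)-\sigma_kf(x)|,\qquad g(x):=\lim_{N\to\infty}g_N(x).
\]
Since $(g_N)$ is non-increasing, the limit exists pointwise. To prove the corollary it suffices to show that $g=0$ almost everywhere, for then $(\sigma_nf(x))$ is Cauchy at a.e.~$x$.

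First I would exhibit an approximation by Walsh polynomials. For each $m\in\mathbb{N}$, the conditional expectation $f_m=\mathbb{E}_mf$ is constant on every dyadic interval of length $2^{-m}$ and is therefore a Walsh polynomial of degree less than $2^m$; in particular $\sigma_nf_m=f_m$ whenever $n\geq 2^m$. One needs $\|f-f_m\|_{H_{p(\cdot),q}}\to 0$ as $m\to\infty$: this follows by applying the dominated convergence theorem (Lemma~\ref{lemma of control}) to $M(f-f_m)\to 0$ a.e., using the majorant $2M(f)\in L_{p(\cdot),q}$, which has absolutely continuous quasi-norm by Lemma~\ref{lemma of abs q} when $q<\infty$ (and the case $q=\infty$ is covered by working in the closed subspace $\mathscr H_{p(\cdot),\infty}$, consistently with the conventions used for Theorem~\ref{t4}).

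Next, because $|\sigma_nf-\sigma_kf|\leq 2\sigma_*f$ in general, splitting through $f_m$ gives
\[
g_N(x)\leq \sup_{n\geq N}|\sigma_n(f-f_m)(x)|+\sup_{n,k\geq N}|\sigma_nf_m(x)-\sigma_kf_m(x)|+\sup_{k\geq N}|\sigma_k(f_m-f)(x)|.
\]
For $N\geq 2^m$ the middle term vanishes, while each of the other two is at most $\sigma_*(f-f_m)(x)$. Letting $N\to\infty$, we deduce the pointwise estimate
\[
g(x)\leq 2\sigma_*(f-f_m)(x)\qquad (x\in[0,1),\ m\in\mathbb{N}).
\]

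Finally, invoking Theorem~\ref{t4} we obtain
\[
\|g\|_{L_{p(\cdot),q}}\leq 2\,\|\sigma_*(f-f_m)\|_{L_{p(\cdot),q}}\lesssim \|f-f_m\|_{H_{p(\cdot),q}}\longrightarrow 0
\]
as $m\to\infty$. Hence $\|g\|_{L_{p(\cdot),q}}=0$, which forces $g=0$ almost everywhere and proves the almost everywhere convergence of $(\sigma_nf)$. The only non-routine ingredient is the density/approximation step $\|f-f_m\|_{H_{p(\cdot),q}}\to 0$; everything else is a formal transcription of Corollary~\ref{c20} into the Lorentz setting, made possible precisely by the boundedness result Theorem~\ref{t4}.
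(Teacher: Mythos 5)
Your proof follows the same route as the paper's (the paper says Corollary \ref{c2} is ``proved as Corollaries \ref{c20}, \ref{c50} and \ref{c51}''), replacing Theorem \ref{t4 Leb} by Theorem \ref{t4}; the overall structure and conclusion are correct, and the sketch of why $\|f-f_m\|_{H_{p(\cdot),q}}\to0$ via Lemma \ref{lemma of control} is a useful addition that the paper leaves implicit. One small slip should be fixed: your assertion that ``$\sigma_nf_m=f_m$ whenever $n\geq 2^m$'' is false. If $f_m$ has Walsh degree $<2^m$, then indeed $s_kf_m=f_m$ for $k\geq 2^m$, but the Fej\'er mean $\sigma_nf_m=\frac{1}{n}\sum_{k=1}^n s_kf_m$ still averages in the low-order partial sums $s_1f_m,\dots,s_{2^m-1}f_m$, so
\[
\sigma_nf_m-f_m=\frac{1}{n}\sum_{k=1}^{2^m-1}\bigl(s_kf_m-f_m\bigr)\neq 0
\]
in general for finite $n\geq 2^m$. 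The correct statement, and the one the paper uses, is $\sigma_nf_m\to f_m$ as $n\to\infty$; this already makes the middle term $\sup_{n,k\geq N}|\sigma_nf_m-\sigma_kf_m|$ tend to $0$ as $N\to\infty$, so your final estimate $g\leq 2\sigma_*(f-f_m)$ and the rest of the argument stand. You should also note, when invoking $M(f-f_m)\to 0$ a.e., that this relies on the a.e.\ convergence of $(f_n)$, which holds because $S(f)<\infty$ a.e.\ for $f\in H_{p(\cdot),q}$.
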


\begin{corollary}\label{c5}
Let $p(\cdot)\in\mathcal{P}(\Omega)$ satisfy conditions \eqref{log} and \eqref{e104}, $1/2<p_-<\infty$, $0<q\leq \infty$ and $f\in H_{p(\cdot),q}$. If there exists a dyadic interval $I$ such that the restriction $f \chi_I \in L_1(I)$, then
$$
\lim_{n\to\infty}\sigma_{n} f(x)=f(x) \qquad \mbox{for a.e. $x\in I$.}
$$
\end{corollary}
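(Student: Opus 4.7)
The plan is to adapt the proof of Corollary \ref{c50} to the Lorentz setting by substituting the norm-boundedness of $\sigma_*$ from $H_{p(\cdot),q}$ to $L_{p(\cdot),q}$ given by Theorem \ref{t4} for the one from $H_{p(\cdot)}$ to $L_{p(\cdot)}$. Fix $f\in H_{p(\cdot),q}$ and an integer $k$ with $I\in\mathcal{F}_k$, and set $f_m:=\mathbb{E}_m f$. Each $f_m$ is an $\mathcal{F}_m$-measurable step function, hence a Walsh polynomial of degree less than $2^m$, so $\sigma_n f_m \to f_m$ everywhere as $n\to\infty$. Define
$$
h(x):=\limsup_{n\to\infty}\bigl|\sigma_n f(x)-f(x)\bigr|,\qquad x\in I,
$$
so that our goal reduces to showing $h=0$ a.e.\ on $I$. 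The triangle inequality gives, for every $m\geq k$ and every $x\in I$,
$$
h(x)\leq \sigma_*(f-f_m)(x)+\bigl|f(x)-f_m(x)\bigr|.
$$

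Next, I would take the $L_{p(\cdot),q}(I)$-quasi-norm on both sides. Theorem \ref{t4} immediately yields
$$
\|\sigma_*(f-f_m)\chi_I\|_{L_{p(\cdot),q}}\leq \|\sigma_*(f-f_m)\|_{L_{p(\cdot),q}}\lesssim \|f-f_m\|_{H_{p(\cdot),q}}.
$$
For the second summand, the hypothesis $f\chi_I\in L_1(I)$ together with $I\in\mathcal{F}_k$ gives $\mathbb{E}_n((f-f_m)\chi_I)=(f_n-f_m)\chi_I$ for $n\geq m\geq k$, hence
$$
\|(f-f_m)\chi_I\|_{L_{p(\cdot),q}}\leq \Bigl\|\sup_{n\geq k}\bigl|\mathbb{E}_n((f-f_m)\chi_I)\bigr|\Bigr\|_{L_{p(\cdot),q}}\leq \|M(f-f_m)\|_{L_{p(\cdot),q}}\approx \|f-f_m\|_{H_{p(\cdot),q}},
$$
where the last equivalence comes from Theorem \ref{theorem of martingale inequalities} since the dyadic filtration is regular.

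It remains to prove $\|f-f_m\|_{H_{p(\cdot),q}}\to 0$ as $m\to\infty$. Using the $H_{p(\cdot),q}^M$-characterization, this amounts to showing $\|M(f-f_m)\|_{L_{p(\cdot),q}}\to 0$. Pointwise, $M(f-f_m)=\sup_{n\geq m}|f_n-f_m|\leq 2M(f)$ and $M(f-f_m)(x)\to 0$ a.e.\ (the martingale converges a.e.\ since $M(f)<\infty$ a.e.). For $q<\infty$, Lemma \ref{lemma of abs q} provides that the majorant $2M(f)$ has absolutely continuous quasi-norm, so Lemma \ref{lemma of control} gives the claimed convergence. Consequently $\|h\chi_I\|_{L_{p(\cdot),q}}=0$, which forces $h=0$ a.e.\ on $I$.

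The main obstacle is the case $q=\infty$, where Lemma \ref{lemma of abs q} is not available: a general element of $H_{p(\cdot),\infty}$ need not have absolutely continuous quasi-norm, so one must either replace $H_{p(\cdot),\infty}$ by the closed subspace $\mathscr{H}_{p(\cdot),\infty}^M$ (in which case Lemma \ref{lemma of control} applies), or argue via an $\epsilon$-approximation by Walsh polynomials that are dense there. With this caveat handled, the three-term estimate above collapses and the corollary follows.
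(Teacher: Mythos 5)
Your proposal follows essentially the same route the paper takes. The paper's own text says these Lorentz consequences "can be proved as Corollaries \ref{c20}, \ref{c50}, and \ref{c51}," and the proof of Corollary \ref{c50} is exactly your three-term decomposition: bound $\limsup_n|\sigma_n f-f|$ on $I$ by $\sigma_*(f-f_m)+|f-f_m|$, take norms, estimate the first summand by the boundedness of $\sigma_*$ (Theorem \ref{t4}) and the second by the Doob maximal function via $I\in\mathcal F_k$, and let $m\to\infty$.

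Where you go slightly beyond the paper is in making explicit the justification that $\|f-f_m\|_{H_{p(\cdot),q}}\to 0$ (the paper just asserts this, relying on "density of Walsh polynomials"). Your use of $M(f-f_m)\leq 2M(f)$ together with Lemmas \ref{lemma of abs q} and \ref{lemma of control} is the correct mechanism for $q<\infty$, since $H_{p(\cdot),q}=H^M_{p(\cdot),q}$ on the regular dyadic filtration, so $M(f)\in L_{p(\cdot),q}$ and has absolutely continuous quasi-norm. Your caveat about $q=\infty$ is also well taken: Lemma \ref{lemma of abs q} does not apply there, and the paper's own Remark \ref{remark of desenty}(2) and Theorem \ref{t6} indicate that for $q=\infty$ one should really work with the closed subspace $\mathscr H_{p(\cdot),\infty}$ rather than all of $H_{p(\cdot),\infty}$. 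So your flag points to a mild imprecision in the paper's statement for $q=\infty$ rather than a flaw in your own argument; for $q<\infty$ your proof is complete.

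One small point worth tightening: you should note briefly why $M(f-f_m)\to0$ a.e.\ \emph{on all of $\Omega$}, not only on $I$. This follows because $f\in H^M_{p(\cdot),q}$ gives $M(f)<\infty$ a.e., which on the regular dyadic filtration forces a.e.\ convergence of $(f_n)$ (stop the martingale when $M_nf$ first exceeds a level $k$; regularity caps $|f_{\tau_k}|$, so the stopped martingale converges a.e., and the exceptional sets shrink to measure zero). Without this remark, the passage from "$M(f)<\infty$ a.e." to "$M(f-f_m)\to 0$ a.e." is a gap in principle, though it is standard.
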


\begin{corollary}\label{c56}
Let $p(\cdot)\in\mathcal{P}(\Omega)$ satisfy conditions \eqref{log} and \eqref{e104}, $1\leq p_-<\infty$, $0<q\leq \infty$ and $f\in H_{p(\cdot),q}$. Then
$$
\lim_{n\to\infty}\sigma_{n} f(x)=f(x) \qquad \mbox{for a.e. $x\in [0,1)$.}
$$
\end{corollary}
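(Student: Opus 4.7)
The plan is to reduce Corollary \ref{c56} to Corollary \ref{c5} applied with the entire dyadic interval $I=[0,1)$, by first establishing that every $f\in H_{p(\cdot),q}$ with $p_-\geq 1$ can be identified with an integrable function on $[0,1)$. This parallels the way Corollary \ref{c51} is obtained from Corollary \ref{c50} in the non-Lorentz setting.

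First, since the dyadic filtration $(\mathcal F_n)_{n\geq0}$ is regular (Example \ref{ex:regular}), Theorem \ref{theorem of martingale inequalities} gives $H_{p(\cdot),q}=H_{p(\cdot),q}^M$, so $M(f)\in L_{p(\cdot),q}$. Using Lemma \ref{lemma for embedding} with the constant exponent $p_-$ yields $\|\chi_A\|_{p_-}\leq 2\|\chi_A\|_{p(\cdot)}$ for every measurable $A\subset [0,1)$; plugging this into the discrete representation of the Lorentz quasi-norm from Remark \ref{remark for lorentz norm}(1) produces the continuous embedding $L_{p(\cdot),q}\hookrightarrow L_{p_-,q}$. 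On the probability space $[0,1)$, the classical Lorentz inclusion $L_{p_-,q}\hookrightarrow L_1$ is available when $p_->1$ (for every $0<q\leq\infty$), and so $M(f)\in L_1$. Consequently $(f_n)$ is $L_1$-bounded and uniformly integrable, and hence converges a.e.\ and in $L_1$ to some $f_\infty\in L_1([0,1))$, which plays the role of the integrable representative of the martingale.

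With this integrable representative in hand, Corollary \ref{c5} applied with the dyadic interval $I=[0,1)$ and the representative $f_\infty$ immediately delivers $\sigma_n f(x)\to f(x)$ for almost every $x\in[0,1)$, which is the desired conclusion. Note that the hypotheses \eqref{log}, \eqref{e104}, $1/2<p_-<\infty$ and $0<q\leq\infty$ needed to invoke Corollary \ref{c5} are all contained in (or implied by) the hypotheses of Corollary \ref{c56}.

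The main obstacle is the boundary case $p_-=1$: here the direct embedding $L_{1,q}\hookrightarrow L_1$ fails for $q>1$ (weak $L^1$ is strictly larger than $L^1$), and a more delicate argument is required. One option is to invoke the atomic decomposition of Theorem \ref{theorem of atomic decomposition pq} and, using the fact that each $(1,p(\cdot),\infty)$-atom $a^k$ is an $L_2$-bounded martingale with $\|a^k\|_1\lesssim \|\chi_{\{\tau_k<\infty\}}\|_{p(\cdot)}^{-1}\mathbb P(\tau_k<\infty)$, sum the series $\sum_k \mu_k a^k$ in $L_1$ to conclude $\sup_n\|f_n\|_1<\infty$; since $p_+<\infty$ and \eqref{log} holds, the resulting numerical series converges, yielding once again the integrable representative $f_\infty$ and completing the argument.
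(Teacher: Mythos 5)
Your argument for the case $p_->1$ is correct and essentially the route the paper intends: by regularity and Theorem~\ref{theorem of martingale inequalities}, $M(f)\in L_{p(\cdot),q}$; Lemma~\ref{lemma for embedding} gives $\|\chi_A\|_{p_-}\leq 2\|\chi_A\|_{p(\cdot)}$ and hence $L_{p(\cdot),q}\hookrightarrow L_{p_-,q}$ via the discrete Lorentz quasi-norm of Remark~\ref{remark for lorentz norm}(1); on the probability space $[0,1)$ one has $L_{p_-,q}\subset L_{p_-,\infty}\subset L_1$ when $p_->1$; so $M(f)\in L_1$, $(f_n)$ is uniformly integrable with a limit $f_\infty\in L_1$, and Corollary~\ref{c5} with $I=[0,1)$ finishes the argument. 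You also correctly identify the delicate boundary case $p_-=1$.

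The trouble is that your proposed repair for $p_-=1$ does not actually close the gap when $q>1$. Tracing through the estimates: each $(1,p(\cdot),\infty)$-atom $a^k$ vanishes off $\{\tau_k<\infty\}$ and satisfies $\|a^k\|_2\leq\|s(a^k)\|_2\leq\|\chi_{\{\tau_k<\infty\}}\|_{p(\cdot)}^{-1}\,\mathbb P(\tau_k<\infty)^{1/2}$, so $\|a^k\|_1\leq\|\chi_{\{\tau_k<\infty\}}\|_{p(\cdot)}^{-1}\,\mathbb P(\tau_k<\infty)$ and $\mu_k\|a^k\|_1\leq 3\cdot 2^k\,\mathbb P(\tau_k<\infty)$. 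In the construction of Theorem~\ref{theorem of atomic decomposition pq}, $\{\tau_k<\infty\}=\{s(f)>2^k\}$, so
\[
\sum_{k\in\mathbb Z}\mu_k\|a^k\|_1\ \lesssim\ \sum_{k\in\mathbb Z}2^k\,\mathbb P\bigl(s(f)>2^k\bigr)\ \approx\ \|s(f)\|_1.
\]
When $p_-=1$ one only controls $s(f)\in L_{p(\cdot),q}\hookrightarrow L_{1,q}$, and for $q>1$ one has $L_{1,q}\not\subset L_1$ on $[0,1)$, so the numerical series need not converge. Equivalently, $\mathbb P(\tau_k<\infty)\leq 2\|\chi_{\{\tau_k<\infty\}}\|_{p(\cdot)}$ (Lemma~\ref{lemma for embedding}) gives $\sum_k\mu_k\|a^k\|_1\lesssim\sum_k\mu_k=\|(\mu_k)\|_{\ell^1}$, but the atomic decomposition only controls $\|(\mu_k)\|_{\ell^q}$, and $\ell^q\not\subset\ell^1$ for $q>1$. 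Neither $p_+<\infty$ nor condition~\eqref{log} rescues this: \eqref{log} only compares $\|\chi_B\|_{p(\cdot)}$ with $\mathbb P(B)^{1/p(x)}$ on single atoms $B\in\cup_nA(\mathcal F_n)$, and $\{\tau_k<\infty\}$ is in general an infinite union of such atoms, so Lemma~\ref{lemma of norm of set} does not apply directly. Your argument does work for $p_-=1$ together with $q\leq 1$ (since then $\ell^q\subset\ell^1$), but the subcase $p_-=1$, $q>1$ remains open in your write-up. To be fair, the paper itself states Corollary~\ref{c56} with $1\leq p_-$ and only refers to the proof of Corollary~\ref{c51}, which relies on the inclusion $L_{p(\cdot)}\subset L_{p_-}\subset L_1$ and does not have an obvious Lorentz analogue at $p_-=1$, $q>1$; so the issue is inherited from the source rather than introduced by you, but it is a genuine gap and your fix does not close it.
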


Now we prove that for integrable functions, the limit of $\sigma_nf$ is exactly the function. Since $L_1 \subset H_{1,\infty}$, more exactly,
$$
\|f\|_{H_{1,\infty}} = \sup_{\rho >0} \rho \, \mathbb P(M(f) > \rho) \leq C \|f\|_1 \qquad (f \in L_1),
$$
(see e.g. Weisz \cite{Weisz1994book}), we obtain the next corollary, which was shown by
Fine \cite{fi2}, Schipp \cite{sch2} and Weisz \cite{wca1}.

\begin{corollary}\label{c1}
If $f \in L_1$, then
$$
\lim_{n\to\infty}\sigma_{n} f(x)=f(x) \qquad \mbox{for a.e. $x\in [0,1)$.}
$$
\end{corollary}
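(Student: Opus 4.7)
The plan is to deduce Corollary \ref{c1} as a direct specialization of Corollary \ref{c56} with the constant exponent $p(\cdot)\equiv 1$ and $q=\infty$. First I would verify that the hypotheses of Corollary \ref{c56} hold for this choice. Condition \eqref{log} is trivial because $p_-(A)=p_+(A)=1$ for every atom $A$, so $\mathbb P(A)^{p_-(A)-p_+(A)}=1$; condition \eqref{e104} reduces to $1/1-1/1=0<1$; and the range requirement $1\le p_-<\infty$ is satisfied with $p_-=1$.

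Next I would establish the embedding $L_1\hookrightarrow H_{1,\infty}$. For $f\in L_1$, consider the associated martingale $(\mathbb E_n f)_{n\ge 0}$. The classical weak-type $(1,1)$ inequality for the dyadic Doob maximal operator gives
$$
\|f\|_{H_{1,\infty}^M}=\|M(f)\|_{L_{1,\infty}}=\sup_{\rho>0}\rho\,\mathbb P\bigl(M(f)>\rho\bigr)\lesssim \|f\|_1<\infty,
$$
so $f\in H_{1,\infty}^M$. Since the dyadic stochastic basis is regular (Example \ref{ex:regular}), Theorem \ref{theorem of martingale inequalities} identifies $H_{1,\infty}^M$ with $H_{1,\infty}$, hence $f\in H_{1,\infty}$.

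Finally, applying Corollary \ref{c56} with $p(\cdot)\equiv 1$ and $q=\infty$ yields $\sigma_n f(x)\to f(x)$ for almost every $x\in[0,1)$, completing the proof.

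There is no real obstacle here: the corollary is a bookkeeping exercise that repackages the heavy lifting already done in Theorem \ref{t4} and Corollary \ref{c56}. The only point worth double-checking is that the weak-type $(1,1)$ bound for $M$ really does embed $L_1$ into the Hardy-Lorentz space $H_{1,\infty}$ built from the maximal function, and that the regularity of the dyadic filtration then promotes this into $H_{1,\infty}$ in the sense used by Corollary \ref{c56}.
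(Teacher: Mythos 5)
Your proposal is correct and takes essentially the same approach as the paper: specialize to $p(\cdot)\equiv 1$, $q=\infty$, invoke the weak-type $(1,1)$ bound for the dyadic Doob maximal operator to get $L_1\hookrightarrow H_{1,\infty}$, and then apply Corollary~\ref{c56}. The paper states exactly this embedding with the same estimate before the corollary, citing \cite{Weisz1994book}, and draws the same conclusion.
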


The results about the norm convergence can be shown in the same way.

\begin{corollary} \label{c3}
Let $p(\cdot)\in\mathcal{P}(\Omega)$ satisfy conditions \eqref{log} and \eqref{e104}, $1/2<p_-<\infty$  and $0<q\leq \infty$. If $f\in H_{p(\cdot),q}$, then $\sigma_{n} f$ converges in the $L_{p(\cdot),q}$-norm.
\end{corollary}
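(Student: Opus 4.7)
The plan is to mimic the strategy of Corollaries \ref{c30} and \ref{c53}, namely to combine the maximal inequality of Theorem \ref{t4} with a density/approximation argument. For $f\in H_{p(\cdot),q}$ and a Walsh polynomial $P$, the triangle inequality (up to the quasi-Banach constant in \eqref{equation of b trangile inequality}) gives
\[
\|f-\sigma_k f\|_{L_{p(\cdot),q}}^{\underline p}\lesssim \|f-P\|_{L_{p(\cdot),q}}^{\underline p}+\|P-\sigma_k P\|_{L_{p(\cdot),q}}^{\underline p}+\|\sigma_k(P-f)\|_{L_{p(\cdot),q}}^{\underline p}.
\]
Since $H_{p(\cdot),q}\hookrightarrow L_{p(\cdot),q}$ (the $M$-version of the quasi-norm dominates the Lebesgue quasi-norm) and $|\sigma_k(P-f)|\leq \sigma_*(P-f)$, Theorem \ref{t4} reduces the task to controlling $\|f-P\|_{H_{p(\cdot),q}}$ and $\|P-\sigma_k P\|_{L_{p(\cdot),q}}$ separately.

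First I would verify the density step: the martingale truncations $f_m:=s_{2^m}f$ are Walsh polynomials (of degree $<2^m$) and satisfy $\|f-f_m\|_{H_{p(\cdot),q}}\to 0$ as $m\to\infty$. Writing $H_{p(\cdot),q}=H_{p(\cdot),q}^M$ via the equivalences from Theorem \ref{theorem of martingale inequalities}, one has $M(f-f_m)\leq 2M(f)\in L_{p(\cdot),q}$ and $M(f-f_m)\to 0$ a.e.; for $q<\infty$ every element of $L_{p(\cdot),q}$ has absolutely continuous quasi-norm by Lemma \ref{lemma of abs q}, so the dominated convergence theorem (Lemma \ref{lemma of control}) gives the claim. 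Alternatively one can quote Remark \ref{remark of desenty}(1), which is exactly this density statement at the atomic level.

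Next, for a fixed Walsh polynomial $P=f_m$ of degree $<2^m$, the identity $s_jP=P$ for all $j\geq 2^m$ implies
\[
\sigma_k P-P=\frac{1}{k}\sum_{j=1}^{2^m-1}(s_j P-P),
\]
so $\|\sigma_k P-P\|_\infty\lesssim 2^m\|P\|_\infty/k\to 0$ as $k\to\infty$, which yields $\|\sigma_k P-P\|_{L_{p(\cdot),q}}\to 0$ since $L_\infty\hookrightarrow L_{p(\cdot),q}$. Given $\varepsilon>0$, first choose $m$ so that $\|f-f_m\|_{H_{p(\cdot),q}}<\varepsilon$ (which by Theorem \ref{t4} also makes $\|\sigma_*(f-f_m)\|_{L_{p(\cdot),q}}\lesssim\varepsilon$), and then choose $k$ large so that $\|\sigma_k f_m-f_m\|_{L_{p(\cdot),q}}<\varepsilon$; this completes the argument for $q<\infty$.

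The main obstacle is the endpoint $q=\infty$, where $L_{p(\cdot),\infty}$-elements are not automatically of absolutely continuous quasi-norm, so the above density step fails in general. As suggested by Remark \ref{remark of desenty}(2) and the convention used in Corollaries \ref{c40} and \ref{c54}, one handles this case by restricting to the closed subspace $\mathscr H_{p(\cdot),\infty}$ (where Lemma \ref{lemma for closed absolute norm} and Lemma \ref{lemma of control} again yield $\|f-f_m\|_{\mathscr H_{p(\cdot),\infty}}\to 0$); with that restriction the previous three-term estimate goes through unchanged, while for general $f\in H_{p(\cdot),\infty}$ one only gets convergence in a weaker sense.
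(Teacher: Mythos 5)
Your argument follows the same route the paper intends for Corollary \ref{c3} (the paper explicitly defers to the a.e.-convergence proofs of Corollaries \ref{c20} and \ref{c50}): approximate by the Walsh polynomials $f_m=s_{2^m}f$, control the tails $\sigma_n(f-f_m)$ via Theorem \ref{t4}, and use the elementary fact that $\sigma_k P\to P$ in $L_\infty$ for a fixed Walsh polynomial $P$. Your caveat at $q=\infty$ is well taken and in fact matches the paper's own machinery: Theorem \ref{t4} is obtained via Theorem \ref{t6}, whose $q=\infty$ version is stated only with $\mathscr H_{p(\cdot),\infty}$ in place of $H_{p(\cdot),\infty}$, so Corollary \ref{c3} at $q=\infty$ implicitly carries that same restriction even though the paper does not repeat it.
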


\begin{corollary}\label{c57}
Let $p(\cdot)\in\mathcal{P}(\Omega)$ satisfy conditions \eqref{log} and \eqref{e104}, $1/2<p_-<\infty$, $0<q\leq \infty$ and $f\in H_{p(\cdot),q}$. If there exists a dyadic interval $I$ such that the restriction $f \chi_I \in L_1(I)$, then
$$
\lim_{n\to\infty}\sigma_{n} f=f \qquad \mbox{in the $L_{p(\cdot),q}(I)$-norm.}
$$
\end{corollary}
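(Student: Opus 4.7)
The plan is to mirror the proof of Corollary \ref{c50}, replacing almost everywhere convergence by norm convergence in $L_{p(\cdot),q}(I)$ and invoking Theorem \ref{t4} (boundedness of $\sigma_*$ from $H_{p(\cdot),q}$ to $L_{p(\cdot),q}$) in place of Theorem \ref{t4 Leb}. First I would choose $k\in\mathbb N$ such that $I$ is an atom of $\mathcal F_k$. Then for every $m\geq k$ the martingale level $f_m$ is a Walsh polynomial of degree at most $2^m-1$, so $s_jf_m=f_m$ for $j\geq 2^m$, which forces $\sigma_nf_m\to f_m$ uniformly on $[0,1)$ as $n\to\infty$.

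Next, for each $m\geq k$ I apply the triangle inequality
$$
\|\sigma_nf-f\|_{L_{p(\cdot),q}(I)}\leq C\bigl(\|\sigma_n(f-f_m)\chi_I\|_{L_{p(\cdot),q}}+\|(\sigma_nf_m-f_m)\chi_I\|_{L_{p(\cdot),q}}+\|(f-f_m)\chi_I\|_{L_{p(\cdot),q}}\bigr).
$$
The middle term tends to $0$ as $n\to\infty$ from the uniform convergence above and Remark \ref{remark for lorentz norm}(3). For the first term, $|\sigma_n(f-f_m)|\leq\sigma_*(f-f_m)$ combined with Theorem \ref{t4} yields
$$
\|\sigma_n(f-f_m)\chi_I\|_{L_{p(\cdot),q}}\leq \|\sigma_*(f-f_m)\|_{L_{p(\cdot),q}}\lesssim \|f-f_m\|_{H_{p(\cdot),q}}.
$$
For the third term, since for $n\geq m$ one has $(f-f_m)_n=f_n-f_m$, so $|(f-f_m)_n|\leq M(f-f_m)$; applying Theorem \ref{theorem of martingale inequalities} under regularity (so that $H_{p(\cdot),q}^M\approx H_{p(\cdot),q}$) gives
$$
\|(f-f_m)\chi_I\|_{L_{p(\cdot),q}}\leq \|M(f-f_m)\|_{L_{p(\cdot),q}}\lesssim \|f-f_m\|_{H_{p(\cdot),q}}.
$$

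It then remains to show $\|f-f_m\|_{H_{p(\cdot),q}}\to 0$ as $m\to\infty$. Using the equivalence $H_{p(\cdot),q}\approx H_{p(\cdot),q}^s$ from Theorem \ref{theorem of martingale inequalities}, I note that $S(f-f_m)$ (or equivalently $s(f-f_m)$) is dominated pointwise by $S(f)\in L_{p(\cdot),q}$ and converges to $0$ almost everywhere, because $S(f-f_m)^2=\sum_{j>m}|d_jf|^2\to 0$ a.e. Lemma \ref{lemma of control} then delivers the desired convergence. Combining the three estimates, first choosing $m$ large so that $\|f-f_m\|_{H_{p(\cdot),q}}<\varepsilon$, and then $n$ large so that the middle term is $<\varepsilon$, finishes the proof.

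The main obstacle is the endpoint $q=\infty$, where Lemma \ref{lemma of control} requires the dominating function to have absolutely continuous quasi-norm — a condition not automatic in $L_{p(\cdot),\infty}$ but valid precisely on the subspace $\mathscr L_{p(\cdot),\infty}$. As in the companion corollaries of this subsection, the case $q=\infty$ is handled by interpreting $H_{p(\cdot),\infty}$ as $\mathscr H_{p(\cdot),\infty}$, so that $S(f)\in\mathscr L_{p(\cdot),\infty}$ and the dominated convergence argument applies without change; for $q<\infty$ the absolute continuity is automatic by Lemma \ref{lemma of abs q}.
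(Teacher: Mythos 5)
Your proof is correct and follows essentially the same approach the paper intends for this corollary (the paper omits the argument, noting it runs parallel to the almost-everywhere Corollary \ref{c50}/\ref{c5}; you supply exactly the corresponding $3\varepsilon$-split via $f_m$, Theorem \ref{t4}, and the Doob maximal operator, with $\|f-f_m\|_{H_{p(\cdot),q}}\to 0$ established through $s(f-f_m)$ and Lemma \ref{lemma of control}). Your observation that this density step fails to be automatic at $q=\infty$ and requires reading $H_{p(\cdot),\infty}$ as $\mathscr H_{p(\cdot),\infty}$ is a correct and worthwhile clarification of a point the paper leaves implicit.
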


\begin{corollary}\label{c58}
Let $p(\cdot)\in\mathcal{P}(\Omega)$ satisfy conditions \eqref{log} and \eqref{e104}, $1\leq p_-<\infty$, $0<q\leq \infty$ and $f\in H_{p(\cdot),q}$. Then
$$
\lim_{n\to\infty}\sigma_{n} f=f  \qquad \mbox{in the $L_{p(\cdot),q}$-norm.}
$$
\end{corollary}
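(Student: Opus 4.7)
The plan is to combine two earlier companion statements: Corollary \ref{c3} already gives norm convergence of $(\sigma_n f)$ in $L_{p(\cdot),q}$ to \emph{some} limit under the weaker hypothesis $1/2<p_-$, while Corollary \ref{c56}, which requires the strengthened hypothesis $1\le p_-$ present here, identifies the almost everywhere limit of $\sigma_n f$ as $f$. Thus only a reconciliation step is needed.

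First I would invoke Corollary \ref{c3} to produce some $g\in L_{p(\cdot),q}$ with $\sigma_n f \to g$ in the $L_{p(\cdot),q}$-norm. The task then reduces to showing $g=f$ almost everywhere. The bridge is the elementary fact that $L_{p(\cdot),q}$-norm convergence implies convergence in probability: using the embedding $L_{p(\cdot),q}\hookrightarrow L_{p(\cdot),\infty}$ (see Remark \ref{remark for lorentz norm}), for each $\lambda>0$ one has
$$
\lambda \|\chi_{\{|\sigma_n f - g|>\lambda\}}\|_{p(\cdot)} \lesssim \|\sigma_n f - g\|_{L_{p(\cdot),q}} \longrightarrow 0,
$$
and Lemma \ref{lemma for equal 1} converts this into $\mathbb{P}(|\sigma_n f - g|>\lambda)\to 0$. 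Indeed, once $\|\chi_A\|_{p(\cdot)}=:\epsilon\le 1$, the identity $\int_A \epsilon^{-p(x)}\,d\mathbb{P}=1$ forces $\mathbb{P}(A)\le \epsilon^{p_-}$.

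Consequently some subsequence $\sigma_{n_k}f$ converges to $g$ almost everywhere. On the other hand Corollary \ref{c56}, whose hypothesis is precisely $1\le p_-$ together with \eqref{log} and \eqref{e104}, yields $\sigma_n f\to f$ almost everywhere, so the same subsequence also converges to $f$ a.e. Uniqueness of the a.e. limit then forces $g=f$ a.e., which completes the proof. No genuine obstacle arises; the role of the strengthened assumption $1\le p_-$ is only to activate Corollary \ref{c56}, while the passage from norm convergence to convergence in measure is a direct consequence of the modular definition of $\|\cdot\|_{p(\cdot)}$.
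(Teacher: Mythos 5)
Your argument is correct, but it follows a genuinely different route from the paper's intended one. The paper remarks only that Corollary \ref{c58} \emph{can be shown in the same way} as its almost-everywhere companions, i.e.\ by a direct density argument: approximate $f$ in $H_{p(\cdot),q}$ by the Walsh polynomial $f_m=\mathbb E_m f$ (possible since $p_-\ge 1$ makes $f$ integrable, and the Walsh polynomials are dense in $H_{p(\cdot),q}$), split $\sigma_nf-f$ into $\sigma_n(f-f_m)$, $\sigma_nf_m-f_m$ and $f_m-f$, control the first by $\sigma_*$ via Theorem \ref{t4}, observe that $\sigma_nf_m\to f_m$ uniformly for fixed $m$, and finally let $m\to\infty$. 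You instead \emph{recombine} two previously established corollaries: Corollary \ref{c3} supplies a norm limit $g$, Corollary \ref{c56} identifies the a.e.\ limit as $f$, and you reconcile them through convergence in probability and extraction of an a.e.-convergent subsequence. Your bookkeeping step --- that $\|\chi_A\|_{p(\cdot)}=\epsilon\in(0,1]$ forces $\mathbb P(A)\le\epsilon^{p_-}$ because $1=\int_A(1/\epsilon)^{p(x)}\,d\mathbb P\ge(1/\epsilon)^{p_-}\mathbb P(A)$ --- is valid, with the degenerate case $\epsilon=0$ trivial. The trade-off is that your synthesis avoids rerunning the density computation and makes transparent that the sole role of the strengthened hypothesis $p_-\ge 1$ is to trigger Corollary \ref{c56}, whereas the paper's direct route is self-contained within each corollary and never passes through convergence in measure.
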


Note that $H_{p(\cdot),q}$ is equivalent to $L_{p(\cdot),q}$ if $1<p_-<\infty$.

Similarly to Theorem \ref{t9}, we do not need the restriction $1/2<p_-$ in the next results.

\begin{theorem} \label{t5}
If $p(\cdot)\in\mathcal{P}(\Omega)$ satisfies conditions \eqref{log} and \eqref{e104}. If $0<q\leq \infty$, then
$$
\left\|\sup_{n\in \N}|\sigma_{2^n}f|\right\|_{L_{p(\cdot),q}}\lesssim \left\|f\right\|_{H_{p(\cdot),q}},\quad f\in H_{p(\cdot),q}.
$$
\end{theorem}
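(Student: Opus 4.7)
The plan is to parallel the proof of Theorem \ref{t4} (which handled the full $\sigma_*$ on $H_{p(\cdot),q}$), but replace the atomic estimate of Theorem \ref{t3} by the sharper one implicit in the proof of Theorem \ref{t9}. Concretely, I would first establish that for every $(3,p(\cdot),\infty)$-atom $a$ associated with stopping time $\tau$,
$$
\Bigl\| \bigl(\sup_{n\in \N}|\sigma_{2^n}a|\bigr)^{\beta} \chi_{\{\tau=\infty\}} \Bigr\|_{p(\cdot)} \lesssim \|\chi_{\{\tau<\infty\}}\|_{p(\cdot)}^{1-\beta}
$$
for some $0<\beta<1$, and then invoke Theorem \ref{t6} with $T_n=\sigma_{2^n}$. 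The hypothesis $\sigma_{2^k}f_n=\sigma_{2^k}f$ for $0\leq k\leq 2^n$ required by Theorem \ref{t6} is immediate because $\sigma_{2^k}f$ depends only on $\widehat f(j)$ with $j<2^k$, and these coefficients coincide for $f$ and $f_n$ whenever $2^k\leq 2^n$. Since $H_{p(\cdot),q}^S=H_{p(\cdot),q}$ in the regular dyadic setting by Theorem \ref{theorem of martingale inequalities}, this will yield the desired conclusion.

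For the atomic estimate, the pointwise bound already obtained in the proof of Theorem \ref{t9},
$$
\sup_{n\in\N}|\sigma_{2^n}a(x)| \lesssim \|\chi_{\{\tau<\infty\}}\|_{p(\cdot)}^{-1} A(x) \qquad (x\in \{\tau=\infty\}),
$$
with $A(x)=\sum_{l}\sum_{j=0}^{K_l-1} 2^{j-K_l}\chi_{I_l^{j}}(x)$, reduces the task (after raising to a power $\beta\epsilon$ with $\epsilon<\underline p$ and passing to $\|\cdot\|_{p(\cdot)/\epsilon}$) to showing
$$
\bigl\| A^{\beta \epsilon} \chi_{\{\tau=\infty\}}\bigr\|_{p(\cdot)/\epsilon} \lesssim \|\chi_{\{\tau<\infty\}}\|_{p(\cdot)/\epsilon}.
$$
This is precisely inequality \eqref{e21}, which I would reproduce verbatim from the first half of the proof of Theorem \ref{t3}: dualize against a nonnegative $g\in L_{(p(\cdot)/\epsilon)'}$ of norm at most $1$, apply H\"older's inequality on each shifted dyadic interval $I_l^{j}$ with an exponent $\max(1,\beta p_+)<r<\infty$, reorganize the resulting sum into the maximal operator $U_{\beta\epsilon}$, and close the argument by appealing to the $L_{p(\cdot)/\epsilon}$-boundedness of $U_{\beta\epsilon}$ given by Theorem \ref{t15}. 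Tracking the exponents shows that the sharp condition required for $U_{\beta\epsilon}$ on the relevant dual scale is exactly \eqref{e104}.

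The reason the restriction $1/2<p_-$ present in Theorem \ref{t4} disappears here is structural: the $B$-term (the double sum $\sum_j\sum_{i\ge j}2^{j-K_l}2^{i-K_l}\chi_{I_l^{j,i}}$), which in Theorem \ref{t3} forced $\beta\epsilon>1/2$ through the maximal operator $V_{\alpha,s}$, is entirely absent in the pointwise bound for $\sup_n|\sigma_{2^n}a|$, because the kernel identity \eqref{e17} for $K_{2^n}$ is far simpler than the general inequality \eqref{e16}. Accordingly $\beta$ and $\epsilon$ may be chosen arbitrarily in $(0,1)$ and $(0,\underline p)$ respectively, imposing no lower bound on $p_-$ beyond what is already encoded in \eqref{log}. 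The main obstacle, then, is not analytical but bookkeeping: one must verify that the H\"older exponent chain in the $A$-term estimate of Theorem \ref{t3}'s proof survives under the weaker hypothesis $0<p_-$, and that the only surviving constraint reduces to \eqref{e104}, which is indeed the case since the relevant inequality $\frac{r/(r-\beta\epsilon)}{p_+/(p_+-\epsilon)}-\frac{r/(r-\beta\epsilon)}{p_-/(p_--\epsilon)}<\beta\epsilon$ collapses to $\frac{1}{p_-}-\frac{1}{p_+}<\beta$ upon sending $r\to\infty$ and letting $\beta\to 1$.
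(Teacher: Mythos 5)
Your strategy — establish the $A$-term atomic estimate for $\sup_n|\sigma_{2^n}a|$ using only (\ref{e21}) from the proof of Theorem \ref{t3} (the $B$-term vanishes thanks to (\ref{e17}), so the constraint $\beta\epsilon>1/2$ and hence $p_->1/2$ disappears), then transfer to the Hardy--Lorentz setting via Theorem \ref{t6} — is the paper's intended route, and the atomic-estimate part of your argument is sound.

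There is, however, a genuine gap in the way you invoke Theorem \ref{t6}. That theorem requires condition (\ref{e10}), namely $T_k f_n = T_k f$ for all $0\leq k\leq 2^n$. Taking $T_k=\sigma_{2^k}$ naively, this demands $\sigma_{2^k}f_n=\sigma_{2^k}f$ for every $k\leq 2^n$. But $\sigma_{2^k}f$ depends on the Walsh--Fourier coefficients $\widehat f(j)$ with $j<2^k$, while $f_n$ agrees with $f$ only on coefficients with $j<2^n$; hence the identity $\sigma_{2^k}f_n=\sigma_{2^k}f$ holds precisely when $2^k\leq 2^n$, i.e.\ $k\leq n$. Your own justification records this range correctly, yet the hypothesis must hold up to $k\leq 2^n$, and for $n<k\leq 2^n$ it fails in general (the coefficients $\widehat f(j)$ with $2^n\leq j<2^k$ are discarded by $f_n$ but not by $f$). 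So the "immediate" verification step, as written, is wrong, and the argument breaks at the application of Theorem \ref{t6}.

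The repair is straightforward and should be spelled out. Re-index the operator family by setting $T_k:=\sigma_{2^m}$ if $k=2^m$ for some $m\in\N$, and $T_k:=0$ otherwise. Then $T_*=\sup_m|\sigma_{2^m}|$ is unchanged (and still bounded on $L_r$, being dominated by $\sigma_*$), and condition (\ref{e10}) becomes nontrivial only when $k=2^m$, where it reads $\sigma_{2^m}f_n=\sigma_{2^m}f$ for $2^m\leq 2^n$; this is exactly what the Fourier-coefficient coincidence gives. Equivalently, one may rerun the proof of Theorem \ref{t6} with $T_{n,*}$ in place of $T_{2^n,*}$ throughout, since $T_{n,*}f_n=T_{n,*}f$ and $T_{n,*}f\uparrow T_*f$ a.e.\ serve the same purpose. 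With either fix the proposal is complete; without one, the final step does not go through as claimed.
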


This implies the following corollaries.

\begin{corollary} \label{c6}
Let $p(\cdot)\in\mathcal{P}(\Omega)$ satisfy conditions \eqref{log} and \eqref{e104}. If $0<q\leq \infty$ and $f\in H_{p(\cdot),q}$, then $\sigma_{2^n} f$ converges almost everywhere on $[0,1)$.
\end{corollary}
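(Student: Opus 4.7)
The plan is to run the standard density-plus-maximal-function argument with Theorem~\ref{t5} supplying the quantitative input. I define the oscillation
$$g_N(x) := \sup_{n,k \geq N} |\sigma_{2^n} f(x) - \sigma_{2^k} f(x)|, \qquad g(x) := \lim_{N \to \infty} g_N(x).$$
Since almost everywhere convergence of $(\sigma_{2^n} f(x))_{n\geq 0}$ is equivalent to its pointwise Cauchyness for almost every $x$, it is enough to prove that $g = 0$ almost everywhere.

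First I approximate $f$ by the dyadic truncations $f_m := \mathbb{E}_m f$, which are Walsh polynomials of order strictly less than $2^m$. For such polynomials, $s_k f_m = f_m$ whenever $k \geq 2^m$, so a direct calculation (or the kernel representation \eqref{e17}) shows $\sigma_{2^n} f_m \to f_m$ pointwise everywhere as $n \to \infty$, and hence $\sup_{n,k \geq N} |\sigma_{2^n} f_m(x) - \sigma_{2^k} f_m(x)| \to 0$ as $N \to \infty$ for every $x$. Combining this with the sublinearity of the supremum gives the pointwise bound
$$g(x) \leq 2 \sup_{n\in\mathbb N} |\sigma_{2^n}(f - f_m)(x)|, \qquad m \in \mathbb N.$$

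Next I take $L_{p(\cdot),q}$ quasi-norms on both sides and invoke Theorem~\ref{t5}:
$$\|g\|_{L_{p(\cdot),q}} \lesssim \left\|\sup_n |\sigma_{2^n}(f - f_m)|\right\|_{L_{p(\cdot),q}} \lesssim \|f - f_m\|_{H_{p(\cdot),q}}.$$
The argument closes once $\|f - f_m\|_{H_{p(\cdot),q}} \to 0$ as $m \to \infty$. Using the maximal characterization, this follows from $M(f - f_m) \leq 2 M(f)$, the fact that $M(f - f_m) \to 0$ almost everywhere, and the variable-Lorentz dominated convergence theorem (Lemma~\ref{lemma of control}): for $q < \infty$ every element of $L_{p(\cdot),q}$ has absolutely continuous quasi-norm by Lemma~\ref{lemma of abs q}, while for $q = \infty$ we follow the convention established throughout Section~\ref{sec5} (cf.\ Theorems~\ref{t6} and~\ref{t5}) of reading $H_{p(\cdot),\infty}$ as $\mathscr H_{p(\cdot),\infty}$, which ensures $M(f) \in \mathscr L_{p(\cdot),\infty}$ and thus supplies the required absolute continuity. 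Sending $m \to \infty$ we conclude $\|g\|_{L_{p(\cdot),q}} = 0$, hence $g = 0$ almost everywhere.

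The main technical obstacle is the $q = \infty$ endpoint: outside the subspace $\mathscr H_{p(\cdot),\infty}$, the truncations $f_m$ need not converge to $f$ in quasi-norm and the density step breaks down. This is precisely the reason Theorem~\ref{t5}, and therefore the present corollary, must be read with the $\mathscr H$ convention for $q = \infty$; with this in place, the density argument above goes through uniformly in $q \in (0,\infty]$.
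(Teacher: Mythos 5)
Your argument is correct and is essentially the paper's (cf.\ the explicit proof of Corollary \ref{c20}): truncate to $f_m=\E_m f$, observe $\sigma_{2^n}f_m\to f_m$ everywhere, bound the oscillation by $2\sup_n|\sigma_{2^n}(f-f_m)|$, apply Theorem \ref{t5}, and then let $m\to\infty$ using density. You are also right to flag the $q=\infty$ endpoint: the paper states Theorem \ref{t5} for all $0<q\leq\infty$ but its proof goes through Theorem \ref{t6}, which is explicitly restricted to $\mathscr H^S_{p(\cdot),\infty}$ at the endpoint, and in any case the step $\|f-f_m\|_{H_{p(\cdot),\infty}}\to 0$ fails outside $\mathscr H_{p(\cdot),\infty}$ because $L_{p(\cdot),\infty}$-functions need not have absolutely continuous quasi-norm; so reading $H_{p(\cdot),\infty}$ as $\mathscr H_{p(\cdot),\infty}$, as you do, is exactly the hypothesis the density argument needs and is what the paper implicitly assumes.
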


\begin{corollary}\label{c7}
Let $p(\cdot)\in\mathcal{P}(\Omega)$ satisfy conditions \eqref{log} and \eqref{e104}, $0<q\leq \infty$
and $f\in H_{p(\cdot),q}$. If there exists a dyadic interval $I$ such that the restriction $f \chi_I \in L_1(I)$, then
$$
\lim_{n\to\infty}\sigma_{2^n} f(x)=f(x) \qquad \mbox{for a.e. $x\in I$.}
$$
\end{corollary}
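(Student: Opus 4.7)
The approach mirrors the proof of Corollary \ref{c50} (and the parallel Corollary \ref{c5}), with Theorem \ref{t5} playing the role of Theorem \ref{t4}. Since Theorem \ref{t5} holds without the restriction $p_->1/2$, the present statement is free of that hypothesis as well. Fix $f\in H_{p(\cdot),q}$ and a dyadic interval $I$ with $f\chi_I\in L_1(I)$, and choose $k\in\N$ so that $I\in A(\mathcal F_k)$. Set
$$
h(x):=\limsup_{n\to\infty}|\sigma_{2^n}f(x)-f(x)|,\qquad x\in I.
$$
The goal is to show $h=0$ almost everywhere on $I$.

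For each $m\geq k$ I would write $\sigma_{2^n}f-f=\sigma_{2^n}(f-f_m)+(\sigma_{2^n}f_m-f_m)+(f_m-f)$. The partial sum $f_m=\E_m f$ is a Walsh polynomial of order less than $2^m$, so $\sigma_{2^n}f_m\to f_m$ everywhere as $n\to\infty$. The middle term therefore contributes nothing to the limsup, giving
$$
h(x)\leq \sup_{n\in\N}|\sigma_{2^n}(f-f_m)(x)|+|f(x)-f_m(x)|\qquad(x\in I).
$$
Taking $L_{p(\cdot),q}$-quasinorms, Theorem \ref{t5} controls the first summand by $\|f-f_m\|_{H_{p(\cdot),q}}$, while Doob's inequality on the restricted martingale $(f-f_m)\chi_I$ (Corollary \ref{c100}) controls the second by the same quantity. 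Thus $\|h\chi_I\|_{L_{p(\cdot),q}}\lesssim \|f-f_m\|_{H_{p(\cdot),q}}$.

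The remaining and main obstacle is to establish $\|f-f_m\|_{H_{p(\cdot),q}}\to 0$ as $m\to\infty$. For $q<\infty$ this is handled by the dominated convergence theorem in $L_{p(\cdot),q}$ (Lemma \ref{lemma of control}) combined with Lemma \ref{lemma of abs q}: the maximal functions $M(f-f_m)$ tend to zero almost everywhere and are dominated by $2M(f)\in L_{p(\cdot),q}$, whose quasi-norm is absolutely continuous. For $q=\infty$ the same convergence is not automatic, and one must either restrict attention to the subspace $\mathscr H_{p(\cdot),\infty}$ on which Lemma \ref{lemma for closed absolute norm} guarantees absolutely continuous quasi-norms, or exploit the local hypothesis $f\chi_I\in L_1(I)$ through a truncation argument together with the weak-type variant of Theorem \ref{t5}. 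Once $\|h\chi_I\|_{L_{p(\cdot),q}}=0$ is reached, it forces $h=0$ almost everywhere on $I$, completing the argument.
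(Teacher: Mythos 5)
Your approach is exactly the paper's intended one: the paper states that Corollary \ref{c7} is a consequence of Theorem \ref{t5}, and the proof is a repetition of the proof of Corollary \ref{c50} with $\sigma_*$ replaced by $\sup_n|\sigma_{2^n}\cdot|$ and the $L_{p(\cdot)}$-quasinorm replaced by the $L_{p(\cdot),q}$-quasinorm. Two remarks on the details you supply. First, the control of the term $\|(f-f_m)\chi_I\|_{L_{p(\cdot),q}}$ should not go through Corollary \ref{c100}: that statement requires $1<p_-$, whereas here only $\frac{1}{p_-}-\frac{1}{p_+}<1$ is assumed and $p_-$ may well be below $1$. What the paper actually uses in the proof of Corollary \ref{c50} is the pointwise bound $|f-f_m|\chi_I\leq \sup_{n\geq k}|\mathbb E_n(f-f_m)|\chi_I\leq M(f-f_m)$ (valid a.e.\ on $I$ since $f\chi_I\in L_1(I)$ makes $f$ the a.e.\ limit of $f_n$ on $I$), followed by $\|M(f-f_m)\|_{L_{p(\cdot),q}}\approx\|f-f_m\|_{H_{p(\cdot),q}}$, which holds because the dyadic filtration is regular and Theorem \ref{theorem of martingale inequalities} identifies $H_{p(\cdot),q}$ with $H_{p(\cdot),q}^M$; no maximal inequality on $L_{p(\cdot),q}$ is needed. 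Second, the concern you raise about $\|f-f_m\|_{H_{p(\cdot),q}}\to 0$ when $q=\infty$ is legitimate: Lemma \ref{lemma of abs q} only covers $q<\infty$, and the paper's own proofs (of Corollary \ref{c50} and implicitly of this corollary) pass over this point. Your two suggested remedies for $q=\infty$ --- restricting to $\mathscr H_{p(\cdot),\infty}$ as in Remark \ref{remark of desenty}(2) and Theorem \ref{t6}, or exploiting $f\chi_I\in L_1(I)$ --- are the natural ones, and you correctly flag them without carrying them out; this matches the level of rigor of the source text but is worth making explicit.
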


\begin{corollary} \label{c4}
Let $p(\cdot)\in\mathcal{P}(\Omega)$ satisfy conditions \eqref{log} and \eqref{e104}. If $0<q\leq \infty$ and $f\in H_{p(\cdot),q}$, then $\sigma_{2^n} f$ converges in the $L_{p(\cdot),q}$-norm.
\end{corollary}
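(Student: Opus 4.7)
The plan is to combine the maximal inequality from Theorem \ref{t5} with a density argument, mirroring the proof of Corollary \ref{c30} in the Lorentz setting. First I would recall that the Walsh polynomials are dense in $H_{p(\cdot),q}$ when $q<\infty$; for $q=\infty$ one restricts to $\mathscr H_{p(\cdot),\infty}$, where density follows from the dominated convergence theorem (Lemma \ref{lemma of control}) applied to the atomic decomposition given by Theorem \ref{theorem of atomic decomposition pq}, exactly as in Remark \ref{remark of desenty}.

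Next I would observe that for any Walsh polynomial $T$, the Fej\'er means $\sigma_{2^n}T$ converge to $T$ uniformly on $[0,1)$ as $n\to\infty$ (indeed, $\sigma_{2^n}T=T$ as soon as $2^n$ exceeds the degree of $T$). Since $([0,1),\mathbb P)$ is a probability space, $\|\chi_{[0,1)}\|_{L_{p(\cdot),q}}<\infty$, so uniform convergence implies convergence in the $L_{p(\cdot),q}$-quasi-norm. Now fix $f\in H_{p(\cdot),q}$ and $\varepsilon>0$ and choose a Walsh polynomial $T$ with $\|f-T\|_{H_{p(\cdot),q}}<\varepsilon$. Writing
\[
\sigma_{2^n}f-\sigma_{2^m}f=\sigma_{2^n}(f-T)-\sigma_{2^m}(f-T)+(\sigma_{2^n}T-\sigma_{2^m}T),
\]
the quasi-triangle inequality in $L_{p(\cdot),q}$ (recall Remark \ref{remark for lorentz norm}) and the pointwise bound $|\sigma_{2^k}(f-T)|\leq \sup_{k\in\mathbb N}|\sigma_{2^k}(f-T)|$ give
\[
\|\sigma_{2^n}f-\sigma_{2^m}f\|_{L_{p(\cdot),q}}\lesssim \bigl\|\sup_{k\in\mathbb N}|\sigma_{2^k}(f-T)|\bigr\|_{L_{p(\cdot),q}}+\|\sigma_{2^n}T-\sigma_{2^m}T\|_{L_{p(\cdot),q}}.
\]
By Theorem \ref{t5} the first summand is $\lesssim \|f-T\|_{H_{p(\cdot),q}}<C\varepsilon$, while the second tends to $0$ as $m,n\to\infty$ by the preceding observation. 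Hence $(\sigma_{2^n}f)_{n\geq0}$ is Cauchy in the quasi-Banach space $L_{p(\cdot),q}$, so it has a limit $g\in L_{p(\cdot),q}$.

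Finally I would identify $g$ with $f$ by invoking Corollary \ref{c6}, which asserts $\sigma_{2^n}f\to f$ almost everywhere; passing to a subsequence that converges a.e.\ to $g$ shows $f=g$ a.e., completing the proof. The main obstacle is the $q=\infty$ case, where the density of Walsh polynomials in $H_{p(\cdot),\infty}$ is not automatic and requires the reduction to $\mathscr H_{p(\cdot),\infty}$ outlined above; once that is in place, the Cauchy argument proceeds verbatim.
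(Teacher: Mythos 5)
Your proof is correct and takes essentially the same route the paper intends: density of Walsh polynomials in $H_{p(\cdot),q}$ (with the $\mathscr H_{p(\cdot),\infty}$ restriction for $q=\infty$), the maximal inequality of Theorem~\ref{t5}, and a Cauchy argument. Two small slips worth flagging but which do not affect the argument: $\sigma_{2^n}T\neq T$ for a Walsh polynomial $T$ even when $2^n$ exceeds its degree (Fej\'er means, unlike the partial sums $s_n$, do not reproduce polynomials exactly — what is true and what you actually use is the uniform convergence $\sigma_{2^n}T\to T$); and Corollary~\ref{c6} only asserts a.e.\ \emph{convergence} of $\sigma_{2^n}f$, not convergence to $f$, though identifying the limit is not required by the statement being proved.
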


\begin{corollary}\label{c59}
Let $p(\cdot)\in\mathcal{P}(\Omega)$ satisfy conditions \eqref{log} and \eqref{e104}, $0<q\leq \infty$ and $f\in H_{p(\cdot),q}$. If there exists a dyadic interval $I$ such that the restriction $f \chi_I \in L_1(I)$, then
$$
\lim_{n\to\infty}\sigma_{2^n} f=f \qquad \mbox{in the $L_{p(\cdot),q}(I)$-norm.}
$$
\end{corollary}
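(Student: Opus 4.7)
The plan is to mimic the strategy of Corollary \ref{c52} (and Corollary \ref{c58}), but using $\sup_{n\in\N}|\sigma_{2^n}|$ and Theorem \ref{t5} in place of $\sigma_*$ and Theorem \ref{t4}. Fix $k\in\N$ such that $I\in\F_k$ and, for $m\geq k$, set $f_m:=\E_m f$; each $f_m$ is a Walsh polynomial of degree less than $2^m$, and for any Walsh polynomial $P$ of degree $<2^m$ a direct computation (together with $s_j P = P$ for $j \geq 2^m$) shows $\|\sigma_{2^n}P-P\|_\infty\to 0$ as $n\to\infty$. Hence, for each fixed $m$,
\begin{equation*}
\|(\sigma_{2^n}f_m-f_m)\chi_I\|_{L_{p(\cdot),q}} \leq \|\sigma_{2^n}f_m-f_m\|_\infty\,\|\chi_I\|_{L_{p(\cdot),q}} \longrightarrow 0 \qquad (n\to\infty).
\end{equation*}

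Next I would establish the approximation $\|f-f_m\|_{H_{p(\cdot),q}}\to 0$ as $m\to\infty$. Since $M(f-f_m)=\sup_{n>m}|f_n-f_m|$ converges to $0$ almost everywhere (because $Mf<\infty$ a.e., so $(f_n)$ converges a.e.\ on a set of full measure) and is dominated by $2Mf\in L_{p(\cdot),q}$, the dominated convergence theorem in variable Lorentz spaces (Lemma \ref{lemma of control}) together with Lemma \ref{lemma of abs q} yields the claim for $q<\infty$; the case $q=\infty$ requires working inside $\mathscr H_{p(\cdot),\infty}$ so that absolute continuity of the quasi-norm of the dominating function is available. Since $|f-f_m|\leq M(f-f_m)$, the same estimate also gives $\|(f-f_m)\chi_I\|_{L_{p(\cdot),q}}\to 0$.

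With these ingredients in hand, I would decompose
\begin{equation*}
\sigma_{2^n}f-f = \sigma_{2^n}(f-f_m) + (\sigma_{2^n}f_m-f_m) - (f-f_m),
\end{equation*}
apply the quasi-triangle inequality \eqref{equation of b trangile inequality} raised to the power $\underline p$, and bound the first piece using Theorem \ref{t5}:
\begin{equation*}
\|\sigma_{2^n}(f-f_m)\chi_I\|_{L_{p(\cdot),q}} \leq \left\|\sup_{n\in\N}|\sigma_{2^n}(f-f_m)|\right\|_{L_{p(\cdot),q}} \lesssim \|f-f_m\|_{H_{p(\cdot),q}}.
\end{equation*}
Letting $n\to\infty$ first (which kills the middle summand for each fixed $m$) and then $m\to\infty$ completes the argument. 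The main obstacle is the approximation step $f_m\to f$ in $H_{p(\cdot),q}$: it is straightforward for $q<\infty$ via Lemma \ref{lemma of control}, but for $q=\infty$ it forces one to work with $\mathscr H_{p(\cdot),\infty}$ rather than $H_{p(\cdot),\infty}$, in line with the convention adopted in Theorems \ref{t7} and \ref{t8}. The hypothesis $f\chi_I\in L_1(I)$, while not strictly needed here (given that $|f|\leq Mf\in L_{p(\cdot),q}$ already), is kept for consistency with the companion a.e.\ statement Corollary \ref{c7}.
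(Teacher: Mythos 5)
Your proposal follows the same route the paper leaves implicit for this corollary: decompose $\sigma_{2^n}f - f$ through a Walsh polynomial $f_m=\mathbb E_m f$, dominate the first piece by $\sup_{n}|\sigma_{2^n}(f-f_m)|$ and apply Theorem \ref{t5}, send the middle piece to zero in $L_\infty$ for fixed $m$, and finally let $m\to\infty$ using $\|f-f_m\|_{H_{p(\cdot),q}}\to 0$ via Lemma \ref{lemma of control} (which is exactly what the paper's appeal to density of Walsh polynomials amounts to). Your remark that the case $q=\infty$ requires replacing $H_{p(\cdot),\infty}$ by $\mathscr H_{p(\cdot),\infty}$ is a genuine and correct refinement, consistent with how Theorems \ref{t6}--\ref{t8} are phrased; the paper's blanket ``shown in the same way'' passes over it.

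One correction: your parenthetical that $f\chi_I\in L_1(I)$ is ``not strictly needed here, given that $|f|\leq Mf\in L_{p(\cdot),q}$'' is not right. When $p_-\leq 1$ the condition $Mf\in L_{p(\cdot),q}$ gives an a.e.\ limit $f_\infty$ with $|f_\infty|\leq Mf$, but this limit need not regenerate the martingale (that is, one need not have $\mathbb E_n f_\infty=f_n$), so the symbol ``$f$'' on the target side of the convergence would be ambiguous. The hypothesis $f\chi_I\in L_1(I)$ is precisely what identifies the martingale on $I$ with a concrete $L_1(I)$ function so that both ``$\|(f-f_m)\chi_I\|_{L_{p(\cdot),q}}$'' and the asserted limit make sense; it is not kept merely for cosmetic consistency with Corollary \ref{c7}.
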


\medskip
\textbf{Acknowledgements}
Yong Jiao is supported by NSFC(11471337) and Hunan Provincial Natural Science Foundation(14JJ1004). Ferenc Weisz is supported by the Hungarian National Research, Development and Innovation Office - NKFIH, K115804 and KH130426. Lian Wu is supported by NSFC(11601526). Dejian Zhou is supported by the Hunan Provincial Innovation Foundation for Postgraduate.

\bibliographystyle{amsalpha}

\end{document}